\theoremstyle{plain}
\newtheorem{prop} {Proposition} [section]
\newtheorem{thm}[prop] {Theorem} 
\newtheorem{dfn} [prop]{Definition}
\newtheorem{exam}[prop]{Example}
\newtheorem{cor}[prop]{Corollary}
\newtheorem{lem}[prop]{Lemma}
\theoremstyle{definition}
\newtheorem{prob*}{Problem}
\newtheorem{conj}{Conjecture}  
\newtheorem{rem}[prop]{Remark}
\newcounter{num}
\newcommand{\Rnum}[1]{\setcounter{num}{#1} \Roman{num}}
\title[SPHERICAL COMPLETION]%
{An application of the spherical completion to finite-dimensional normed spaces}
\author[ISHIZUKA]%
{KOSUKE ISHIZUKA}
\address{Mathematical Institute, Graduate School of Science, Tohoku University, 6-3 Aramakiaza, Aoba, Sendai, Miyagi 980-8578, Japan.}
\date{}
\email{kosuke.ishizuka.r7@dc.tohoku.ac.jp}
\subjclass[2020]{Primary~46S10, Secondary~12J25}
\keywords{spherical completion; finite-dimensional normed spaces.}
\begin{document}
\begin{abstract}
In this paper, we will establish a general method of studying finite-dimensional normed spaces, and apply this method to classifying $3$-dimensional and $4$-dimensional normed spaces over a non-spherically complete field. For this purpose, we will use the spherical completion. From the perspective of the spherical completion, each finite-dimensional normed space can be embedded into a simple space. In order to study simple spaces, the orthogonality is important. The orthogonality allows us to find a classification of finite-dimensional normed spaces. As an application of our study, we can get a characterization of strictly epicompact sets, which is an open problem.
\end{abstract}

\maketitle

\section{Introduction and preliminaries}

\subsection{Introduction}
In this paper, we will study an isometry classification of finite-dimensional normed spaces over a non-spherically complete valued field. If the coefficient field is spherically complete, then each finite-dimensional normed space is a direct sum of one-dimensional spaces. Thus, the classification problem is done (see Theorem \ref{.1thm1} and Remark \ref{rem1}). On the other hand, if the coefficient field is non-spherically complete, the problem is much more complicated. \par
In \cite{open}, a hole is used to classify $2$-dimensional normed spaces. Indeed, $2$-dimensional normed spaces can be classified by the equivalence relation on holes. On the other hand, holes are complicated and it is seen that they are not suitable for classifying finite-dimensional normed spaces of dimension $n \ge 3$. Therefore, we use the spherical completion instead of holes. \par
From the perspective of the spherical completion, each finite-dimensional normed space can be embedded into a direct sum of the spherical completion of the coefficient field (Theorem \ref{pthm1}). This observation leads us to study finite-dimensional normed spaces more simply. To study the spherical completion of the coefficient field, the orthogonality is important (Theorem \ref{thmh4}). When we determine whether the orthogonality is valid or not, we often use a well-known formula
\begin{align*}
  d(\pi(x), F / D) = d(x, D), \pi : E \to E / D,
\end{align*}
where $D \subseteq F \subseteq E$ is a sequence of normed spaces. This is very useful because a quotient normed space $E / D$ has a lower dimension than $E$. \par
In section \ref{str}, we recall a strictly epicompact set, which was introduced in \cite{equ}. In \cite{equ}, Schikhof suggested a problem with the characterization of strictly epicompact sets, which is still open. By Schikhof duality, studying strictly epicompact sets is equivalent to studying the condition (SE). For the condition (SE), see Proposition \ref{propst1}. This condition is a slightly modified condition appearing in \cite[Proposition 7.3]{equ}. In studying the condition (SE), it is important to understand the structure of a subspace of a normed space and its dual space (Corollary \ref{secor1} and Lemma \ref{selem1}). \par
In sections \ref{3d} and \ref{4d}, by using the orthogonality, we will classify $3$-dimensional normed spaces into five types, type $\mathrm{\Rnum{1}}_3 \sim \mathrm{\Rnum{5}}_3$, and classify $4$-dimensional normed spaces into seventeen types, type $\mathrm{\Rnum{1}}_4 \sim \mathrm{\Rnum{17}}_4$. As an appendix, in section \ref{appe}, we summarize properties of these types in tables. To show the validity of these types, we will use the results obtained in section \ref{sys}. In section \ref{sys}, we will study a systematic approach to not only $3$-dimensional and $4$-dimensional normed spaces but also all finite-dimensional normed spaces. Our key theorem is Theorem \ref{systhm1}, which is not difficult to prove but has many applications. \par
 According to the types defined in sections \ref{3d} and \ref{4d}, we can identify all subspaces and the dual space of a $3$-dimensional or $4$-dimensional normed space. As a result, we can characterize whether $3$-dimensional and $4$-dimensional normed spaces satisfy the condition (SE) or not. The characterization is related to the norm of an element of a normed space. Therefore, in this paper, the norm of an element of a normed space does not necessarily have a value in the valuation group of the coefficient field. \par

\subsection{Preliminaries}
In this paper, $K$ is a non-archimedean non-trivially valued field which is complete under the metric induced by the valuation $|\cdot| : K \to [0,\infty)$. The unit ball of $K$ is denoted by $B_K := \{ x \in K : |x| \leq 1 \}$. We denote by $V_K$ the valuation group of $K$, that is, $V_K := \{|x| : x \in K \setminus \{0\}\}$. \par
Throughout, $(E,\|\cdot\|_E)$ is a non-archimedean Banach space over $K$. Let $a \in E$ and $r > 0$. We write $B_E(a,r)$ for the closed ball with radius $r$ about $a$, that is, $B_E(a,r) := \{ x \in E : \|x-a\|_E \leq r \}$. If $a = 0$ and $r = 1$, we write $B_E := B_E(0,1)$\par
For a subset $X \subseteq E$, we denote by $[X]_K$ the $K$-vector space generated by $X$. Also, for an element $a \in E$, we set $d_E(a,X) := \inf_{x \in X} \|a - x\|_E$. If there is no confusion possible, we abbreviate $E,K$. \par
For a closed subspace $D \subseteq E$, we always consider $(E/D,\|\cdot\|)$ as the Banach space equipped with the quotient norm, $\|\pi(x)\| := d(x,D)$ where $\pi : E \to E/D$. A subset $X \subseteq E \setminus \{0\}$ is called orthogonal if for each finite subset $\{x_1, \cdots, x_n\} \subseteq X$, we have
\begin{align*}
  \|\lambda_1 x_1 + \cdots + \lambda_n x_n\| = \max_{1\le i \le n} \|\lambda_i x_i\|, \ (\lambda_1, \cdots, \lambda_n \in K).
\end{align*}
We denote by $(E',\|\cdot\|)$ the Banach space of all continuous $K$-linear functionals on $E$ with the usual operator norm. For a closed subspace $D \subseteq E$, we set $D^{\perp} := \{f \in E' : D \subseteq \mathrm{Ker}f\}$. If $E$ is finite-dimensional, then $D'$ is canonically isometrically isomorphic to $E'/D^{\perp}$ (cf. \cite{van}). For two Banach spaces $(E,\|\cdot\|),(F,\|\cdot\|)$, we write $E \cong F$ if there exists an isometric isomorphism from $E$ to $F$. \par
Let $(E_1,\|\cdot\|_{E_1})$, $(E_2,\|\cdot\|_{E_2})$ be two Banach spaces. We denote by $(E_1 \oplus E_2,\|\cdot\|_{E_1 \oplus E_2})$ the direct sum  of $E_1$ and $E_2$,
\begin{align*}
  \|(x_1,x_2)\|_{E_1 \oplus E_2} := \|x_1\|_{E_1} \lor \|x_2\|_{E_2} \ (= \max \{\|x_1\|_{E_1}, \|x_2\|_{E_2}\}).
\end{align*}
A Banach space $E$ is said to be decomposable if there exist two non-zero Banach spaces $E_1$ and $E_2$ such that $E \cong E_1 \oplus E_2$. If $E$ is not decomposable, then $E$ is said to be indecomposable. \par
We denote by $(E^{\lor},\|\cdot\|)$ the spherical completion of $(E,\|\cdot\|)$ (see \cite[Theorem 4.43]{van}), which is spherically complete and an immediate extension of $(E,\|\cdot\|)$. Here, for a closed subspace $D$ of $E$, we say that $E$ is an immediate extension of $D$ if for each $x \in E$, $x \neq 0$, there exists $d \in D$ for which $\|x - d\| < \|x\|$. The following lemma is fundamental, and we will use the lemma without reference in this paper.
\begin{lem}[{\cite[Lemma 4.42]{van}}]
  Let $E$ be an immediate extension of a closed linear subspace $D$, and $(F,\|\cdot\|)$ be a spherically complete Banach space. Then each linear isometry $T : D \to F$ can be extended to a linear isometry from $E$ to $F$.
\end{lem}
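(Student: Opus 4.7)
The approach is a Zorn's lemma argument that reduces the extension problem to a one-dimensional step. I would consider the poset $\mathcal{P}$ of pairs $(D', T')$, where $D \subseteq D' \subseteq E$ is a linear subspace and $T' \colon D' \to F$ is a linear isometry extending $T$, ordered by extension. Every chain admits the obvious union as an upper bound, so Zorn's lemma produces a maximal element $(D^*, T^*)$; since $F$ is complete, $T^*$ extends by continuity to $\overline{D^*}$ as a linear isometry, so maximality forces $D^*$ to be closed.

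The goal then is to show $D^* = E$. Suppose for contradiction that $D^* \subsetneq E$ and pick $x \in E \setminus D^*$. It suffices to produce $y \in F$ such that the formula $d + \lambda x \mapsto T^*(d) + \lambda y$ defines a linear isometry on $D^* \oplus Kx$, since this strictly extends $(D^*, T^*)$. A routine scaling calculation shows that the isometry condition is equivalent to requiring
\[
  \|y - T^*(d)\|_F = \|x - d\|_E \quad \text{for every } d \in D^*.
\]

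For the inequality $\le$ I would invoke spherical completeness of $F$. The closed balls $B_F(T^*(d), \|x - d\|_E)$, indexed by $d \in D^*$, pairwise intersect, because for $d_1, d_2 \in D^*$,
\[
  \|T^*(d_1) - T^*(d_2)\|_F = \|d_1 - d_2\|_E \le \max\bigl(\|x - d_1\|_E,\, \|x - d_2\|_E\bigr),
\]
placing one center inside the other ball. Spherical completeness then supplies a common point $y \in F$ with $\|y - T^*(d)\|_F \le \|x - d\|_E$ for all $d \in D^*$.

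The main obstacle is upgrading this inequality to equality, and this is precisely where the immediacy hypothesis enters. If $\|y - T^*(d_0)\|_F < \|x - d_0\|_E$ for some $d_0 \in D^*$, I would apply the immediacy of $E$ over $D$ to the non-zero element $x - d_0$ to find $d_1 \in D$ with $\|(x - d_0) - d_1\|_E < \|x - d_0\|_E$, and set $d_0' := d_0 + d_1 \in D^*$, so that $\|x - d_0'\|_E < \|x - d_0\|_E$. Both $\|y - T^*(d_0)\|_F$ and $\|y - T^*(d_0')\|_F \le \|x - d_0'\|_E$ are then strictly less than $\|x - d_0\|_E$, so the ultrametric inequality together with the fact that $T^*$ is an isometry yields $\|d_0 - d_0'\|_E < \|x - d_0\|_E$. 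Decomposing $x - d_0 = (x - d_0') + (d_0' - d_0)$ and applying the strong triangle inequality produces the contradiction $\|x - d_0\|_E < \|x - d_0\|_E$. The required equality follows, giving a strict isometric extension of $T^*$ and hence $D^* = E$.
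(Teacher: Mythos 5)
Your proof is correct; note that the paper itself gives no proof of this lemma, citing it directly from van Rooij's book, and your Zorn's lemma argument with the one-dimensional extension step (pairwise intersecting balls $B_F(T^*(d),\|x-d\|_E)$ plus spherical completeness to find $y$, then immediacy over $D$ to force equality $\|y-T^*(d)\|_F=\|x-d\|_E$) is precisely the standard proof of that cited result. The only cosmetic remark is that the final contradiction can be reached one line sooner: by the isosceles triangle principle, $\|d_0-d_0'\|_E=\|d_1\|_E=\|x-d_0\|_E$ exactly, which directly contradicts the strict inequality $\|d_0-d_0'\|_E<\|x-d_0\|_E$ you derive from the two ball memberships.
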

From now on, in this paper, $K$ is assumed to be \textbf{non-spherically complete}. Moreover, we fix a spherically complete field $(K^{\lor},|\cdot|)$ which is an immediate extension of $K$ (cf. \cite[Theorem 4.49]{van}). Then we have $V_K = V_{K^{\lor}}$. \par
For $t \in \mathbb{R}_{>0}$ and $x \in K^{\lor}$, we set $|x|_t := t|x|$. Also, we denote by $(K^{\lor},|\cdot|_t)$ the $K$-normed space $K^{\lor}$ equipped with the norm $|\cdot|_t$. Similarly, for $t_1, \cdots, t_n \in \mathbb{R}_{>0}$, we set
\begin{align*}
  ((K^{\lor})^n, |\cdot|_{t_1} \times \cdots \times |\cdot|_{t_n}) := (K^{\lor},|\cdot|_{t_1}) \oplus \cdots \oplus (K^{\lor},|\cdot|_{t_n}),
\end{align*}
the direct sum of the normed spaces $(K^{\lor},|\cdot|_{t_1}), \cdots, (K^{\lor},|\cdot|_{t_n})$. Moreover, for any $K$-subspace $D \subseteq (K^{\lor})^n$, we write
\begin{align*}
  (D,|\cdot|_{t_1} \times \cdots \times |\cdot|_{t_n})
\end{align*}
when the norm on $D$ is induced by $((K^{\lor})^n, |\cdot|_{t_1} \times \cdots \times |\cdot|_{t_n})$. \par

 In this section, suppose that the cardinality of a maximal orthogonal subset of $E$ is finite. Fix one structure of Banach space over $K^{\lor}$ on $E^{\lor}$ compatible with its structure as a $K$-Banach space (\cite[3.20]{gru}).

\begin{lem}
  Let $Y \subseteq E$ be an orthogonal subset. Then $Y \subseteq E^{\lor}$ is also an orthogonal subset with respect to the structure as a $K^{\lor}$-Banach space. 
\end{lem}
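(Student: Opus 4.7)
The plan is to deduce orthogonality over $K^{\lor}$ from orthogonality over $K$ by approximating each scalar in $K^{\lor}$ by a scalar in $K$ that has the same absolute value, and then invoking the ultrametric triangle inequality. The key input is that $K^{\lor}$ is an immediate extension of $K$, so that arbitrary $K^{\lor}$-linear combinations of elements of $Y$ differ only by a ``small error'' from $K$-linear combinations.

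More precisely, I would start by fixing a finite subfamily $y_1,\dots,y_n \in Y$ and scalars $\lambda_1,\dots,\lambda_n \in K^{\lor}$, and (after discarding the trivial terms) assume each $\lambda_i \neq 0$. Set $\sigma := \max_{i}|\lambda_i|\,\|y_i\|$; the goal is to verify $\|\sum_i \lambda_i y_i\| = \sigma$, as the inequality $\leq$ is automatic from the ultrametric norm on $E^{\lor}$.

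The main step is to apply the immediate-extension hypothesis on $K^{\lor}/K$: for each $i$, choose $\mu_i \in K$ with $|\lambda_i - \mu_i| < |\lambda_i|$. A one-line ultrametric argument then forces $|\mu_i| = |\lambda_i|$. Writing
\begin{align*}
  \sum_{i=1}^{n}\lambda_i y_i \;=\; \sum_{i=1}^{n}\mu_i y_i \;+\; \sum_{i=1}^{n}(\lambda_i-\mu_i) y_i,
\end{align*}
the orthogonality of $Y$ over $K$ (applied to $\mu_1,\dots,\mu_n \in K$) gives $\|\sum_i \mu_i y_i\| = \max_i |\mu_i|\,\|y_i\| = \sigma$, while the ultrametric inequality together with $|\lambda_i - \mu_i| < |\lambda_i|$ yields $\|\sum_i (\lambda_i - \mu_i) y_i\| \leq \max_i |\lambda_i-\mu_i|\,\|y_i\| < \sigma$. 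The strong triangle inequality (a vector of norm $\sigma$ plus a vector of norm strictly less than $\sigma$) delivers $\|\sum_i \lambda_i y_i\| = \sigma$, as desired.

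I do not anticipate a serious obstacle; the proof is essentially one clever use of the definition of immediate extension, and the mild care needed is (i) handling the case where some $\lambda_i$ vanishes (simply drop those terms) and (ii) checking that the norm on $E^{\lor}$ genuinely restricts on the $K$-subspace spanned by $y_1,\dots,y_n$ to the original norm on $E$, which is immediate because $E \hookrightarrow E^{\lor}$ is an isometric $K$-linear embedding.
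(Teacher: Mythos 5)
Your proof is correct: approximating each $\lambda_i \in K^{\lor}\setminus\{0\}$ by $\mu_i \in K$ with $|\lambda_i - \mu_i| < |\lambda_i|$ (hence $|\mu_i| = |\lambda_i|$), splitting the sum, and applying orthogonality over $K$ together with the strong triangle inequality is exactly the intended argument. The paper states this lemma without proof, so there is nothing to compare against; your write-up supplies the missing details correctly, including the two points of care you flag (dropping zero coefficients and the isometric embedding $E \hookrightarrow E^{\lor}$).
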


\begin{prop}
  Let $X \subseteq E$ be a maximal orthogonal subset of $E$. Then the $K^{\lor}$-subspace $[X]_{K^{\lor}} \subseteq E^{\lor}$ generated by $X$ is equal to $E^{\lor}$. Consequently, we have $\# X = \dim_{K^{\lor}}E^{\lor}$ and $\dim_{K^{\lor}}E^{\lor}$ is independent of the choice of the structure of a Banach space over $K^{\lor}$ on $E^{\lor}$.
\end{prop}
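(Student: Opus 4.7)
The plan is to reduce everything to the standard fact that a spherically complete Banach space admits no proper immediate extension. First, I would show that $E$ is an immediate extension of the $K$-subspace $[X]_K$ (which is closed, being finite-dimensional over the complete field $K$). Given $x \in E \setminus [X]_K$, maximality of $X$ means $X \cup \{x\}$ fails to be orthogonal, so there exist $\mu, \mu_1, \dots, \mu_n \in K$ with $\mu \neq 0$ (otherwise the orthogonality of $X$ itself would be violated) and distinct $x_1, \dots, x_n \in X$ such that $\|\mu x + \sum \mu_i x_i\| < \max(\|\mu x\|, \|\mu_i x_i\|)$. Dividing by $|\mu|$ and setting $d = -\sum (\mu_i/\mu) x_i \in [X]_K$, an ultrametric case analysis rules out both $\|d\| < \|x\|$ and $\|d\| > \|x\|$, forcing $\|d\| = \|x\|$ and $\|x - d\| < \|x\|$.

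Next, I would transport this to $E^{\lor}$. Since $E^{\lor}$ is by construction an immediate extension of $E$, transitivity yields that $E^{\lor}$ is an immediate extension of $[X]_K$ over $K$; and because immediate extension is a purely norm-theoretic condition and $[X]_K \subseteq [X]_{K^{\lor}}$, it follows that $E^{\lor}$ is an immediate extension of the $K^{\lor}$-subspace $[X]_{K^{\lor}}$ as $K^{\lor}$-Banach spaces. By the previous lemma, $X$ remains orthogonal in $E^{\lor}$ over $K^{\lor}$, so with $X = \{x_1, \dots, x_n\}$ one obtains
\begin{align*}
[X]_{K^{\lor}} \cong ((K^{\lor})^n, |\cdot|_{\|x_1\|} \times \cdots \times |\cdot|_{\|x_n\|}),
\end{align*}
a finite direct sum of spherically complete spaces and hence spherically complete. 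Since no spherically complete Banach space has a proper immediate extension, $[X]_{K^{\lor}} = E^{\lor}$. The $K^{\lor}$-linear independence of $X$ (coming from its $K^{\lor}$-orthogonality) then gives $\#X = \dim_{K^{\lor}} E^{\lor}$, and since $\#X$ is intrinsic to the $K$-Banach space $E$, this dimension is independent of the chosen $K^{\lor}$-structure.

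The main obstacle I expect is the first step: squeezing the cleaner conclusion $\|x - d\| < \|x\|$ out of the weaker-looking relation $\|\mu x + \sum \mu_i x_i\| < \max(\|\mu x\|, \|\mu_i x_i\|)$ that maximality provides. Everything else is essentially bookkeeping between the $K$- and $K^{\lor}$-structures, combined with the standard equivalence between spherical completeness and absence of proper immediate extensions.
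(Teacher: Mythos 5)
Your proposal is correct and follows essentially the same route as the paper: show that $[X]_{K^{\lor}}$ is spherically complete (via the preceding lemma and $\#X<\infty$) and that $E^{\lor}$ is an immediate extension of it, then conclude equality. The only difference is that you supply the justification for the immediate-extension step (maximality of $X$ forces $\|x-d\|<\|x\|$ for some $d\in[X]_K$, then transitivity through $E$), which the paper asserts without proof; your ultrametric argument for that step is sound.
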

\begin{proof}
  Since $\# X < \infty$, $[X]_{K^{\lor}}$ is spherically complete. Therefore, since $E^{\lor}$ is an immediate extension of $[X]_{K^{\lor}}$, we have $[X]_{K^{\lor}} = E^{\lor}$. Now by the preceding lemma, we obtain $\# X = \dim_{K^{\lor}}E^{\lor}$.
\end{proof}

Due to the above proposition, we will denote by $\dim_{K^{\lor}}E^{\lor}$ the cardinality of a maximal orthogonal subset of $E$. The above proposition says that $E$ can be embedded into $(K^{\lor})^{\# X}$. More precisely, we have the following theorem.

\begin{thm}\label{pthm1}
  Let $X = \{a_1, \cdots, a_m\} \subseteq E$ be a maximal orthogonal subset of $E$, and put $t_1 = \|a_1\|, \cdots, t_m = \|a_m\|$. Then there exists a $K$-linear isometry 
  \begin{align*}
    T : E \to ((K^{\lor})^m, |\cdot|_{t_1} \times \cdots \times |\cdot|_{t_m})
  \end{align*}
  such that $T(a_1) = e_1, \cdots, T(a_m) = e_m$ where $e_1, \cdots, e_m \in (K^{\lor})^m$ are the canonical unit vectors.
\end{thm}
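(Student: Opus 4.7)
The plan is to build the isometry at the level of the spherical completion $E^{\lor}$ and then restrict to $E$. By the preceding lemma, the set $X = \{a_1,\ldots,a_m\}$ remains orthogonal in $E^{\lor}$ with respect to its $K^{\lor}$-Banach structure, and by the preceding proposition we have $[X]_{K^{\lor}} = E^{\lor}$. Together with the linear independence guaranteed by orthogonality, this shows that $X$ is a $K^{\lor}$-basis of $E^{\lor}$.

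First I would define the candidate isometry $\widetilde{T}: E^{\lor} \to ((K^{\lor})^m, |\cdot|_{t_1}\times\cdots\times|\cdot|_{t_m})$ as the unique $K^{\lor}$-linear map sending $a_i$ to $e_i$ for each $i$. For any element $\lambda_1 a_1 + \cdots + \lambda_m a_m \in E^{\lor}$ with $\lambda_i \in K^{\lor}$, the $K^{\lor}$-orthogonality of $X$ combined with the definition of the product norm gives
\begin{align*}
  \|\lambda_1 a_1 + \cdots + \lambda_m a_m\|_{E^{\lor}}
  &= \max_{1 \le i \le m} |\lambda_i|\, \|a_i\|
   = \max_{1 \le i \le m} t_i |\lambda_i| \\
  &= \|(\lambda_1,\ldots,\lambda_m)\|_{|\cdot|_{t_1}\times\cdots\times|\cdot|_{t_m}},
\end{align*}
so $\widetilde{T}$ is a $K^{\lor}$-linear isometry.

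Then I would simply set $T := \widetilde{T}|_E$. Since $E$ embeds into $E^{\lor}$ as a $K$-subspace (it is an immediate extension), $T$ is automatically $K$-linear and isometric, and it satisfies $T(a_i) = e_i$ by construction. There is no serious obstacle here: the only subtlety is the conceptual step of passing to $E^{\lor}$ to exploit the fact that, although $X$ need not $K$-span $E$, it does $K^{\lor}$-span $E^{\lor}$, which is what allows an isometric embedding into the model space $((K^{\lor})^m, |\cdot|_{t_1}\times\cdots\times|\cdot|_{t_m})$ to exist in the first place.
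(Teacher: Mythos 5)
Your proof is correct and follows exactly the route the paper intends: the theorem is stated without an explicit proof, but the preceding lemma and proposition (orthogonality of $X$ persists over $K^{\lor}$, and $[X]_{K^{\lor}} = E^{\lor}$) are set up precisely so that one defines the $K^{\lor}$-linear isometry $E^{\lor} \to ((K^{\lor})^m, |\cdot|_{t_1} \times \cdots \times |\cdot|_{t_m})$ sending $a_i \mapsto e_i$ and restricts it to $E$. No gaps.
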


Thus, to study finite-dimensional normed spaces, we only have to study the finite-dimensional normed space of the form
\begin{align*}
  ([e_1, \cdots, e_m, f_1, \cdots, f_n], |\cdot|_{t_1} \times \cdots \times |\cdot|_{t_m})
\end{align*}
of dimension $m + n$, where $e_1, \cdots, e_m \in (K^{\lor})^m$ are the canonical unit vectors and each $f_j$ satisfies $f_j = \sum_i a_{i,j} e_i$ with $a_{i,j} \in K^{\lor} \setminus K$ or $a_{i,j} = 0$. \par

From now on, throughout this paper, all normed spaces $(E, \|\cdot\|)$ are assumed to be \textbf{finite-dimensional}.

\section{holes}

In \cite{open}, a maximal chain of balls of $K$ with an empty intersection is called a hole. Holes are useful to study $2$-dimensional normed spaces over $K$. On the other hand, an element $x \in K^{\lor} \setminus K$ corresponds to a hole. It is seen that an element $x \in K^{\lor} \setminus K$ is clearer than a hole and therefore we shall study finite-dimensional normed spaces via $K^{\lor}$. The definition below is introduced in \cite{fin}.

\begin{dfn}[{\cite[Proposition 2.7]{fin}}]
  Let $x,y \in K^{\lor} \setminus K$. We say that $x$ and $y$ are equivalent if there exist $\lambda, \mu \in K$ such that $\lambda x + \mu \in B(y, d(y, K))$. We write $x \sim y$ if $x$ and $y$ are equivalent.
\end{dfn}

\begin{rem}
  If $x \sim y$, then we can choose $\lambda, \mu \in K$ such that
  \begin{align*}
    |y - (\lambda x + \mu)| < d(y,K).
  \end{align*}
\end{rem}

\begin{prop} \label{proph1}
  If $z \in B(x,d(x,K))$ and $x \sim y$, then we have $z \sim y$. Therefore, the relation $\sim$ is an equivalence relation on $K^{\lor} \setminus K$.
\end{prop}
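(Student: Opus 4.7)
The plan is to show first that with the \emph{same} $\lambda,\mu \in K$ witnessing $x \sim y$, the element $\lambda z + \mu$ still lies in $B(y, d(y,K))$; the assertion that $\sim$ is an equivalence relation will then follow by a short formal argument.

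By the Remark I may assume the strict form $|y - (\lambda x + \mu)| < d(y,K)$ of the definition of $x \sim y$. This strict inequality forces $\lambda \neq 0$: otherwise $\mu \in K$ would satisfy $|y - \mu| < d(y,K)$, contradicting the very definition of $d(y,K)$. The crucial intermediate step I would extract is the identity
\begin{align*}
  |\lambda|\, d(x,K) \;=\; d(\lambda x + \mu, K) \;=\; d(y, K).
\end{align*}
The first equality holds by translation and scaling invariance (using $\lambda \neq 0$). The second is a standard application of the strong triangle inequality: for any $c \in K$, the estimate $|y - (\lambda x + \mu)| < d(y,K) \leq |y - c|$ forces $|\lambda x + \mu - c| = |y - c|$, whence the two infima coincide.

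With this identity in hand, the main computation is a single line. Since $z \in B(x,d(x,K))$, I split
\begin{align*}
  y - (\lambda z + \mu) \;=\; \bigl(y - (\lambda x + \mu)\bigr) + \lambda(x - z),
\end{align*}
where the first summand has absolute value $< d(y,K)$ and the second has absolute value $\leq |\lambda|\, d(x,K) = d(y,K)$. The strong triangle inequality then gives $|y - (\lambda z + \mu)| \leq d(y,K)$, i.e., $\lambda z + \mu \in B(y, d(y,K))$, so $z \sim y$.

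The equivalence relation assertion is then formal. Reflexivity is witnessed by $(\lambda,\mu) = (1,0)$. For symmetry, the strict inequality above rearranges to $|x - (\lambda^{-1} y - \lambda^{-1}\mu)| < d(y,K)/|\lambda| = d(x,K)$, giving $y \sim x$ directly. For transitivity, if $x \sim y$ and $y \sim z$, pick $\lambda,\mu \in K$ with $\lambda x + \mu \in B(y, d(y,K))$ and apply the first part of the proposition to the pair ``$\lambda x + \mu \in B(y, d(y,K))$ and $y \sim z$'' to conclude $\lambda x + \mu \sim z$; composing the two resulting $K$-affine transformations delivers $x \sim z$. The only delicate point I anticipate is the identity $|\lambda|\, d(x,K) = d(y,K)$, whose proof crucially depends on the \emph{strict} inequality supplied by the Remark (and hence on $\lambda \neq 0$); once it is in place, everything else reduces to bookkeeping with the strong triangle inequality.
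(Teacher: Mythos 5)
Your proof is correct and follows essentially the same route as the paper's: both hinge on the identity $d(y,K) = d(\lambda x + \mu, K) = |\lambda|\,d(x,K)$ (valid because $d(y,K)$ is not attained) and then conclude via the strong triangle inequality. You merely spell out the reflexivity/symmetry/transitivity bookkeeping that the paper dismisses as "easily follows."
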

\begin{proof}
  Let $\lambda, \mu \in K$, $\lambda \neq 0$, satisfy $\lambda x + \mu \in B(y, d(y, K))$. Since $d(y,K)$ is not attained, we have 
  \begin{align*}
    d(y,K) = d(\lambda x + \mu, K) = |\lambda| \cdot d(x,K).
  \end{align*}
  Thus, we have the desired inequality
  \begin{align*}
    |\lambda z + \mu - (\lambda x + \mu)| \le |\lambda| \cdot |x-z| \le d(y, K).
  \end{align*}
   Now, the latter statement easily follows.
\end{proof}

From the proof of the above proposition, we have the following.

\begin{prop} \label{prph5}
  Let $x,y \in K^{\lor} \setminus K$ and $\lambda, \mu \in K$ with $\lambda x + \mu \in B(y, d(y, K))$. Then we have $|\lambda| = d(y,K)/d(x,K)$.
\end{prop}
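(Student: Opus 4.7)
The statement is essentially extracted from the computation already performed in the proof of Proposition \ref{proph1}, so my plan is to repackage that computation cleanly and justify the two non-attainment facts it relies on.

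First I would dispatch the case $\lambda=0$. If $\lambda=0$, the hypothesis becomes $\mu \in B(y,d(y,K))\cap K$, which gives $|y-\mu|\le d(y,K)$; combined with the trivial lower bound $|y-\mu|\ge d(y,K)$ this forces equality, so $d(y,K)$ would be attained at some element of $K$. But $K^{\lor}$ is an immediate extension of $K$, and the standard argument (apply the immediate-extension property to $z:=y-\mu\ne 0$) shows that $d(y,K)$ cannot be attained for any $y\in K^{\lor}\setminus K$. Thus $\lambda\ne 0$, and one notes the same reasoning applies to $d(x,K)$, so both quantities in the desired ratio are strictly positive.

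Next I would compute $d(\lambda x+\mu,K)$ in two ways. On one hand, translation by $\mu\in K$ does not change distance to $K$, and pulling $\lambda$ out of $\inf_{k\in K}|\lambda x-k|=\inf_{k\in K}|\lambda||x-k/\lambda|$ gives $d(\lambda x+\mu,K)=|\lambda|\cdot d(x,K)$. On the other hand, since $\lambda x+\mu\in B(y,d(y,K))$ and $d(y,K)$ is not attained, for every $k\in K$ one has $|y-k|>d(y,K)\ge |y-(\lambda x+\mu)|$, and the ultrametric inequality then yields
\begin{align*}
|\lambda x+\mu-k| = |y-k|.
\end{align*}
Taking the infimum over $k\in K$ gives $d(\lambda x+\mu,K)=d(y,K)$.

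Combining the two evaluations produces $|\lambda|\cdot d(x,K)=d(y,K)$, hence $|\lambda|=d(y,K)/d(x,K)$, as required. The only real subtlety is the non-attainment of $d(y,K)$, which I expect to be the main point worth stating carefully; everything else is a direct translation of the ultrametric calculation already present in the proof of Proposition \ref{proph1}.
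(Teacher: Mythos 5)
Your proposal is correct and follows exactly the route the paper intends: the paper proves this proposition by pointing back to the computation in the proof of Proposition \ref{proph1}, namely that non-attainment of $d(y,K)$ forces $d(\lambda x+\mu,K)=d(y,K)$ while translation and scaling give $d(\lambda x+\mu,K)=|\lambda|\,d(x,K)$. Your only addition is the explicit treatment of the case $\lambda=0$ and of non-attainment via the immediate-extension property, both of which are accurate and merely make explicit what the paper leaves implicit.
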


Let $(E, \|\cdot\|)$ be a $2$-dimensional normed space with $\dim_{K^{\lor}} E^{\lor} = 1$, and let $e_1,e_2 \in E$ be its basis. Then there exists a linear isometry $T : E \to (K^{\lor},|\cdot|_{t})$ such that $T(e_1) = 1, T(e_2) \in K^{\lor} \setminus K$ where $t = \|e_1\|$. If a sequence $\lambda_1, \lambda_2, \cdots \in K$ satisfies $\|e_2 - \lambda_n e_1\| \to d_E(e_2,[e_1])$ as $n \to \infty$, then we easily see $|T(e_2) - \lambda_n| \to d(T(e_2),K)$ as $n \to \infty$. Conversely, we have the following.

\begin{prop} \label{proph9}
  Let $(E,\|\cdot\|)$ be a $2$-dimensional normed space with $\dim_{K^{\lor}} E^{\lor} = 1$, and let $e_1,e_2 \in E$ be its base. Choose a sequence $\lambda_1, \lambda_2, \cdots \in K$ such that  $\|e_2 - \lambda_n e_1\| \to d_E(e_2,[e_1])$ as $n \to \infty$. Then for each $x \in K^{\lor} \setminus K$ with $|x - \lambda_n| \to d(x,K)$ as $n \to \infty$ (e.g. $T(e_2)$ above), the map
  \begin{align*}
    (E,\|\cdot\|) &\to ([1, x], |\cdot|_{t}) \\
    e_1 &\mapsto 1 \\
    e_2 &\mapsto x
  \end{align*}
  where $t = \|e_1\|$ gives a linear isometry.
\end{prop}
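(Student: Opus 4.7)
My plan is to reduce the isometry assertion to a pointwise ultrametric identity between $x$ and the canonical value $y := T(e_2)$ produced by the embedding theorem, and then verify that identity using the non-attainment of $d(\cdot, K)$.

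First I would apply Theorem \ref{pthm1} to the maximal orthogonal subset $\{e_1\}$ (maximal because $\dim_{K^{\lor}} E^{\lor} = 1$) to obtain a $K$-linear isometry $T : E \to (K^{\lor}, |\cdot|_t)$ with $T(e_1) = 1$, and set $y := T(e_2)$. Since $E$ is $2$-dimensional but $(K, |\cdot|_t)$ is $1$-dimensional, $y$ must lie in $K^{\lor} \setminus K$. The identity $\|e_2 - \lambda_n e_1\| = t|y - \lambda_n|$ translates the hypothesis on $\lambda_n$ into $|y - \lambda_n| \to d(y, K)$. The candidate map $\alpha e_1 + \beta e_2 \mapsto \alpha + \beta x$ is a $K$-linear bijection onto $[1, x]_K$, and composing with $T$ reduces the isometry statement to
\begin{equation*}
  |y - \mu| = |x - \mu| \qquad \text{for every } \mu \in K,
\end{equation*}
the case $\beta = 0$ being trivial and $\mu = -\alpha/\beta$ absorbing the coefficient after dividing by $|\beta|$.

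The heart of the proof --- and the step I expect to be the main obstacle --- is the inequality $|x - y| \le d(y, K)$. I would prove this by contradiction: if $|x - y| > d(y, K)$, then for all $n$ sufficiently large we have $|y - \lambda_n| < |x - y|$, and the strong triangle inequality forces $|x - \lambda_n| = |(x - y) + (y - \lambda_n)| = |x - y|$. Since $|x - \lambda_n| \to d(x, K)$, this makes $|x - \lambda_n|$ eventually constant at $d(x, K)$, contradicting the fact (stemming from the immediacy of $K^{\lor}/K$, and used tacitly in Proposition \ref{proph1}) that $d(x, K)$ is not attained for $x \in K^{\lor} \setminus K$.

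Once $|x - y| \le d(y, K)$ is established, the conclusion is immediate: for any $\mu \in K$ the non-attainment of $d(y, K)$ gives $|y - \mu| > d(y, K) \ge |x - y|$, so the ultrametric inequality yields $|x - \mu| = |(y - \mu) + (x - y)| = |y - \mu|$, which is the required pointwise identity.
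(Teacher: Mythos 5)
Your proposal is correct and follows essentially the same route as the paper: pass to $y := T(e_2)$ via the isometry $T$ with $T(e_1)=1$, show $|x-y|\le d(y,K)$ using the shared approximating sequence $(\lambda_n)$ and the non-attainment of the distance to $K$, and deduce $|x-\mu|=|y-\mu|$ for all $\mu\in K$. The only cosmetic difference is that you obtain $|x-y|\le d(y,K)$ by contradiction, whereas the paper gets $|x-y|\le d(x,K)\lor d(y,K)$ directly from the ultrametric inequality and then observes $d(x,K)=d(y,K)$; both are sound.
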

\begin{proof}
 Let $T : E \to (K^{\lor},|\cdot|_{t})$ be a linear isometry such that $T(e_1) = 1$. Then we have $|T(e_2) - \lambda_n| \to d(T(e_2),K)$ as $n \to \infty$. Hence, we have
 \begin{align*}
   |T(e_2) - x| \le |T(e_2) - \lambda_n| \lor |x - \lambda_n| \to d(T(e_2),K) \lor d(x,K) (n \to \infty).
 \end{align*}
 and it is easy to see $d(T(e_2),K) = d(x,K)$. Moreover, since $d(T(e_2),K)$ is not attained, for each $\lambda \in K$, we have
 \begin{align*}
   |x - \lambda| = |x-T(e_2) - (T(e_2) - \lambda)| = |T(e_2) - \lambda|.
 \end{align*}
 Therefore, the map
 \begin{align*}
   ([1, T(e_2)], |\cdot|)  &\to ([1, x], |\cdot|) \\
    1 &\mapsto 1 \\
    T(e_2) &\mapsto x
 \end{align*}
 gives a linear isometry, which completes the proof.
\end{proof}

\begin{lem} \label{lemh2}
  Let $x \in K^{\lor} \setminus K$ and $\begin{pmatrix}
    a & b \\ c & d \\
  \end{pmatrix} \in \mathrm{GL}_2(K)$ where $\mathrm{GL}_2(K)$ is the general linear group over $K$ of degree $2$. Then we have
  \begin{align*}
    y := \frac{c + dx}{a + bx} \sim x.
  \end{align*}
\end{lem}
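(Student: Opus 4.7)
The plan is to use transitivity of $\sim$ (established in Proposition \ref{proph1}) to reduce the assertion to the single basic equivalence $1/u \sim u$ for $u \in K^{\lor} \setminus K$, and then to prove that by an explicit quadratic approximation argument.

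First I would dispose of well-definedness and the degenerate case. Since $x \notin K$, the denominator $a + bx$ is non-zero, and if $y$ lay in $K$ then clearing denominators would force $ad - bc = 0$, contradicting invertibility; so $y \in K^{\lor} \setminus K$ and $\sim$ is applicable. When $b = 0$, the lemma is immediate: $y = c/a + (d/a)x$ is a $K$-affine function of $x$ with non-zero linear coefficient, so the choice $\lambda := d/a$, $\mu := c/a$ in the definition of equivalence gives $y \sim x$ on the nose. For $b \neq 0$, the partial-fraction decomposition
\begin{align*}
  y = \frac{d}{b} - \frac{ad - bc}{b^{2}} \cdot \frac{1}{x + a/b}
\end{align*}
expresses $y$ as a non-trivial $K$-affine function of $1/u$, where $u := x + a/b$. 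Since $u$ itself is a non-trivial $K$-affine function of $x$, the same trivial observation gives $x \sim u$ and $y \sim 1/u$, and by transitivity everything reduces to showing $1/u \sim u$.

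For the key step I would exploit the algebraic identity
\begin{align*}
  \frac{1}{u} - \left(-\frac{1}{\lambda_{0}^{2}}\right) u - \frac{2}{\lambda_{0}} = \frac{(u - \lambda_{0})^{2}}{\lambda_{0}^{2}\, u},
\end{align*}
valid for any $\lambda_{0} \in K^{*}$. A separate short computation (approximating $1/u$ by $1/\lambda_{n}$ along a sequence $\lambda_{n} \in K$ with $|\lambda_{n}| = |u|$ eventually, which is possible because $V_{K} = V_{K^{\lor}}$, and doing the symmetric bound in the other direction) yields the distance formula $d(1/u, K) = d(u, K)/|u|^{2}$. With $\lambda := -1/\lambda_{0}^{2}$ and $\mu := 2/\lambda_{0}$, the required bound $|1/u - (\lambda u + \mu)| \leq d(1/u, K)$ then becomes $|u - \lambda_{0}|^{2} \leq |u| \cdot d(u, K)$, which is arranged by choosing $\lambda_{0} \in K$ with $|\lambda_{0}| = |u|$ and $|u - \lambda_{0}|$ sufficiently close to $d(u, K)$. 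The main obstacle I anticipate is the boundary case $d(u, K) = |u|$, where the quadratic approximation just saturates the bound; it should be handled either by noting that equality suffices (the ball in the definition of $\sim$ is closed) or, more directly, by observing that $\lambda = \mu = 0$ already gives $|1/u - 0| = 1/|u| = d(1/u, K)$ in that case.
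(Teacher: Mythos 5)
Your argument is correct and is essentially the paper's own proof: reduce by affine changes of variable (and partial fractions) to showing $1/u \sim u$, then use the identity $\tfrac{1}{u} - \bigl(-\tfrac{1}{\lambda_0^2}u + \tfrac{2}{\lambda_0}\bigr) = \tfrac{(u-\lambda_0)^2}{\lambda_0^2 u}$ together with the computation of $d(1/u,K)$; the paper merely normalizes to $|u|=1$ first, which makes your distance formula $d(1/u,K)=d(u,K)/|u|^2$ read as $d(1/u,K)=d(u,K)$. The boundary case you anticipate is vacuous: since $d(u,K)$ is never attained and $0 \in K$, one always has $d(u,K) < |u|$ strictly, so a $\lambda_0$ with $d(u,K) < |u-\lambda_0| \le \sqrt{|u|\,d(u,K)}$ (hence $|\lambda_0|=|u|$) always exists.
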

\begin{proof}
  By Proposition \ref{proph1}, we may assume $|x| = 1$ and $y = 1/x$. Put $r = d(x,K) < 1$ and choose $\lambda \in K$ such that $|x - \lambda| < r$. Then $|\lambda| = 1$ and we have 
  \begin{align*}
    |-\frac{1}{\lambda^2}x + \frac{2}{\lambda} - y| = |x - \lambda|^2 < |x - \lambda| < r.
  \end{align*}
  On the other hand, we have 
  \begin{align*}
    d(y,K) = \inf_{\lambda \in K} \left|\frac{1- \lambda x}{x} \right| = \inf_{\lambda \in K} |1 - \lambda x| = r
  \end{align*}
  and therefore the proof is complete.
\end{proof}

From the above lemma, we obtain the structure theorem for $2$-dimensional normed spaces.

\begin{thm}[cf. {\cite[Section 2]{open}}] \label{thmh3}
  Let $x,y \in K^{\lor} \setminus K$ and $t_1, t_2 \in \mathbb{R}_{>0}$. Then 
  \begin{align*}
    ([1, x], |\cdot|_{t_1}) \cong ([1, y], |\cdot|_{t_2})
  \end{align*}
  if and only if
  \begin{align*}
    x \sim y \ \mathrm{and} \ \frac{t_1}{t_2} \in V_K.
  \end{align*}
\end{thm}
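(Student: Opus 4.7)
The plan is to prove each direction separately. For the ``if'' direction, I would use $t_1/t_2 \in V_K$ to reduce to the equal-weight case and then apply Proposition \ref{proph9} to transfer the isometry via the equivalence $x \sim y$. For the ``only if'' direction, I would convert an arbitrary isometry into a fractional-linear transformation of $y$, invoke Lemma \ref{lemh2}, and handle the weight ratio by exploiting the fact that $d(y,K)$ is not attained.

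For the $(\Leftarrow)$ direction, pick $\alpha \in K$ with $|\alpha| = t_1/t_2$, so that multiplication by $\alpha$ is a $K$-linear isometry $(K^{\lor},|\cdot|_{t_1}) \to (K^{\lor},|\cdot|_{t_2})$ preserving $[1,x]_K$; this reduces the problem to $t_1 = t_2$. Using $x \sim y$ and the Remark following the definition, I would choose $\lambda, \mu \in K$ with $|y - (\lambda x + \mu)| < d(y,K)$ and set $x' = \lambda x + \mu$, so that $[1,x']_K = [1,x]_K$ and $d(x',K) = d(y,K)$. For any sequence $\lambda_n \in K$ with $|x' - \lambda_n| \to d(x',K)$, ultrametric considerations force $|y - \lambda_n| \to d(y,K)$ as well, so Proposition \ref{proph9} yields an isometry sending $1 \mapsto 1$ and $x' \mapsto y$.

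For the $(\Rightarrow)$ direction, let $\varphi$ be an isometry, put $A = \varphi(1)$ and $B = \varphi(x)$ in $[1,y]_K = K + Ky$, and form $\tilde x := B/A \in K^{\lor}$. The key identity $\alpha A + \beta B = A(\alpha + \beta \tilde x)$ combined with the isometry condition gives $t_1\,|\alpha + \beta x| = t_2\,|A| \cdot |\alpha + \beta \tilde x|$ for all $\alpha, \beta \in K$. Taking $(\alpha, \beta) = (1,0)$ yields $|A| = t_1/t_2$, and the remaining equality $|x - \lambda| = |\tilde x - \lambda|$ for every $\lambda \in K$, together with a sequence realizing $d(\tilde x, K)$, forces $|x - \tilde x| \leq d(\tilde x, K)$, that is $x \sim \tilde x$. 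Writing $\tilde x = (b_0 + b_1 y)/(a_0 + a_1 y)$ and applying Lemma \ref{lemh2} to the invertible matrix built from $\varphi$ gives $\tilde x \sim y$, and transitivity yields $x \sim y$.

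The main obstacle is verifying $t_1/t_2 = |A| \in V_K$ when $A$ genuinely lies in $K^{\lor} \setminus K$. Writing $A = a_1(y - c)$ with $a_1 \in K^{\times}$ and $c \in K$, it suffices to show $|y - c| \in V_K$. Because $K^{\lor}$ is an immediate extension of $K$, the distance $d(y,K)$ is not attained, so $|y - c| > d(y,K)$; hence there exists $c' \in K$ with $|y - c'| < |y - c|$, and the ultrametric inequality then forces $|y - c| = |c - c'| \in V_K$. This non-attainment argument is the only step beyond routine manipulation of the equivalence relation and the isometry condition.
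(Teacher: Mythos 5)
Your proof is correct and follows essentially the same route as the paper: the forward direction reduces to equal weights and applies Proposition \ref{proph9}, and the converse passes to $\tilde{x}=\varphi(x)/\varphi(1)$, uses the isometry identity and a minimizing sequence to get $x\sim\tilde{x}$, then Lemma \ref{lemh2} to get $\tilde{x}\sim y$. The only difference is that you spell out the verification that $t_1/t_2=|\varphi(1)|\in V_K$ via non-attainment of $d(y,K)$, a step the paper dismisses as easy; your argument for it is valid.
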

\begin{proof}
  Suppose $x \sim y$ and $t_1 / t_2 \in V_K$. To prove $([1, x], |\cdot|_{t_1}) \cong ([1, y], |\cdot|_{t_2})$, we may assume $y \in B(x, d(x,K))$. Let $a \in K$ with $|a| = t_1 / t_2$. Then by Proposition \ref{proph9}, the map
  \begin{align*}
    ([1, x], |\cdot|_{t_1}) \to ([1, y], |\cdot|_{t_2}), \quad \lambda + \mu x \mapsto a(\lambda + \mu y)
  \end{align*}
  is an isometry. \par
  Conversely, suppose $([1, x], |\cdot|_{t_1}) \cong ([1, y], |\cdot|_{t_2})$. Then it is easy to see $t_1 / t_2 \in V_K$. Thus, there exists an isometry 
  \begin{align*}
    T : ([1, x], |\cdot|) \to ([1, y], |\cdot|).
  \end{align*}
  Put
  \begin{align*}
    T(1) = a + by, T(x) = c + dy \ \mathrm{and} \ x' = \frac{c + dy}{a + by}.
  \end{align*}
  Then for any $\lambda \in K$, we have the equality
  \begin{align*}
    |\lambda + x| = |T(\lambda + x)| &= |\lambda (a + by) + (c + dy)| \\
    &= |T(1)| \cdot |\lambda + x'| \\
    &= |\lambda + x'|.
  \end{align*}
  Let $\lambda_1, \lambda_2, \cdots \in K$ with $|\lambda_n + x| \to d(x,K)$ as $n \to \infty$. Then it follows from the above equality that we have 
  \begin{align*}
    |x - x'| = |x + \lambda_n - (x' + \lambda_n)| \le |x + \lambda_n| \to d(x,K) \ (n \to \infty).
  \end{align*}
  Therefore, we have $x \sim x' \sim y$ by Lemma \ref{lemh2}.
\end{proof}

\begin{thm}[cf. {\cite[Theorem 1.9]{note}}] \label{thmh10}
  Let $x \in K^{\lor} \setminus K$, $t \in \mathbb{R}_{>0}$ and put $r = d(x,K)$. Then we have $([1, x], |\cdot|_t)' \cong ([1, x], |\cdot|_{1/tr})$. More precisely, let $e_1, e_2$ be the dual basis of $x, 1$. Then the map
  \begin{align*}
    ([1, x], |\cdot|_t)' &\to ([1, x], |\cdot|_{\frac{1}{tr}}) \\
    e_1 &\mapsto 1 \\
    e_2 &\mapsto - x
  \end{align*}
  gives a linear isometry.
\end{thm}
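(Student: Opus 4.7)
The plan is to compute the operator norm on $([1,x], |\cdot|_t)'$ in the basis $\{e_1, e_2\}$ dual to $\{x, 1\}$ and verify it matches $|\cdot|_{1/(tr)}$ pulled back through the proposed map. Writing a general functional as $\alpha e_1 + \beta e_2$ with $\alpha, \beta \in K$ and a general nonzero vector as $\lambda + \mu x$, the pairing is $(\alpha e_1 + \beta e_2)(\lambda + \mu x) = \alpha\mu + \beta\lambda$. Splitting the defining supremum into the cases $\mu = 0$ and $\mu \neq 0$ (and in the latter setting $\nu := \lambda/\mu \in K$) gives
\begin{align*}
\|\alpha e_1 + \beta e_2\| = \max\Bigl(\tfrac{|\beta|}{t},\ \sup_{\nu \in K}\tfrac{|\alpha + \beta\nu|}{t\,|\nu + x|}\Bigr).
\end{align*}
Since the image of $\alpha e_1 + \beta e_2$ under the stated map is $\alpha - \beta x$, whose $|\cdot|_{1/(tr)}$-norm is $|\alpha - \beta x|/(tr)$, I want to show the displayed maximum equals $|\alpha - \beta x|/(tr)$.

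If $\beta = 0$, the inner supremum is $|\alpha|/(t\,d(x,K)) = |\alpha|/(tr)$, as needed. If $\beta \neq 0$, setting $c := -\alpha/\beta \in K$ and substituting $\mu = -\nu$ reduces the inner supremum to $(|\beta|/t)\sup_{\mu\in K}|\mu+c|/|x-\mu|$. The main and only delicate step is to evaluate this last supremum. Because $K^{\lor}$ is an immediate extension of $K$ and $x \notin K$, the distance $d(x,K) = r$ is not attained; consequently $|x - \mu| > r$ for every $\mu \in K$ and in particular $|x + c| > r$, so $|x + c|/r > 1$. For a sequence $\mu_n \in K$ with $|x - \mu_n| \to r$, one has $|x - \mu_n| < |x + c|$ eventually, whence the strong triangle inequality yields $|\mu_n + c| = |(x + c) - (x - \mu_n)| = |x + c|$ and the ratio tends to $|x + c|/r$. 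Conversely, a three-case comparison of $|x - \mu|$ with $|x + c|$ (the only nontrivial case being $|x - \mu| < |x + c|$, where the numerator equals $|x + c|$ and $|x - \mu| > r$) shows every ratio is bounded by $|x + c|/r$.

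Substituting back, $\sup_{\nu \in K}\tfrac{|\alpha + \beta\nu|}{t|\nu + x|} = |\beta(x + c)|/(tr) = |\alpha - \beta x|/(tr)$. Finally, $|\alpha - \beta x| = |\beta|\,|x - \alpha/\beta| \geq |\beta|\,d(x,K) = |\beta|\,r$, so $|\beta|/t \leq |\alpha - \beta x|/(tr)$, and the maximum in the operator norm formula is realized by the second term. Hence $\|\alpha e_1 + \beta e_2\| = |\alpha - \beta x|/(tr)$, and the $K$-linear extension of $e_1 \mapsto 1$, $e_2 \mapsto -x$ is the desired linear isometry.
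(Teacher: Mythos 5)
Your proof is correct and follows essentially the same route as the paper: both compute the operator norm of a general functional $\alpha e_1+\beta e_2$ by splitting the defining supremum according to whether the $x$-coefficient of the test vector vanishes, and both evaluate the resulting supremum $\sup_{\mu}|\,(x+c)-(x-\mu)\,|/|x-\mu|$ using the non-attainment of $d(x,K)$ together with the isosceles (strong triangle) principle to get the value $|x+c|/r$. Your three-case upper-bound argument just spells out a step the paper leaves implicit, so there is nothing further to add.
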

\begin{proof}
  Let $e_1, e_2 \in E'$ be the dual basis of $x, 1$.
  Then we have
  \begin{align*}
    \|e_1\| &= \sup_{\lambda \in K} \frac{1}{|\lambda + x|_t} = \frac{1}{tr}, \\
    \|e_2\| &= \sup_{\lambda \in K} \frac{1}{|1 + \lambda x|_t} = \frac{1}{t \cdot \inf_{\lambda \in K} |1 + \lambda x|} = \frac{|x|}{tr}.
  \end{align*}
  Moreover, for any non-zero $\lambda \in K$, we have
  \begin{align*}
    \|e_1 + \lambda e_2\| &= \sup_{\mu \in K} \frac{|1 + \lambda \mu|}{|x + \mu|_t} \lor \frac{|\lambda|}{|1|_t} \\
    &= \frac{|\lambda|}{t} \cdot \left(\sup_{\mu \in K} \frac{|1/\lambda - x + (x + \mu)|}{|x + \mu|} \lor 1 \right) \\
    &=\frac{|\lambda|}{t} \cdot \frac{|1/\lambda - x|}{r} = \frac{1}{tr} \cdot \left|1 - \lambda x \right|.
  \end{align*}
  Hence, we can conclude 
  \begin{align*}
    ([1, x], |\cdot|_t)' \cong ([1, x], |\cdot|_{1/tr}).
  \end{align*} 
\end{proof}

Next, we have a characterization of the relation $\sim$ from the perspective of the orthogonality, which is important in this paper.

\begin{thm} \label{thmh4}
  Let $x,y \in K^{\lor} \setminus K$, and $\pi : K^{\lor} \to K^{\lor} / K$ be the canonical quotient map. Then $x \nsim y$ if and only if $\{ \pi(x), \pi(y) \}$ is an orthogonal set with respect to the quotient norm of $K^{\lor} / K$.
\end{thm}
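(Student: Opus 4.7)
The plan is to establish both directions of the biconditional separately, with the main difficulty concentrated in the converse (non-orthogonality implies equivalence).

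For the direction $x \sim y \Rightarrow \{\pi(x), \pi(y)\}$ is not orthogonal, I would combine the remark following the definition of $\sim$ with Proposition \ref{prph5} to produce $\alpha \in K^{\times}$ and $\beta \in K$ with $|\alpha| = d(y, K)/d(x, K)$ and $|y - \alpha x - \beta| < d(y, K)$. Then
\begin{align*}
\|\pi(y) - \alpha \pi(x)\| = d(y - \alpha x, K) \le |y - \alpha x - \beta| < d(y, K) = |\alpha| d(x, K),
\end{align*}
which directly contradicts the equality $\|\pi(y) - \alpha \pi(x)\| = \max(|\alpha| \|\pi(x)\|, \|\pi(y)\|)$ required by orthogonality.

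For the converse, I would start from $\lambda, \mu \in K$ with $d(\lambda x + \mu y, K) < \max(|\lambda| d(x, K), |\mu| d(y, K))$. First I observe that both $\lambda$ and $\mu$ must be nonzero, since otherwise the inequality reduces to the impossible $d(cz, K) < |c| d(z, K)$ for some $z \in \{x, y\}$ and $c \in K$. Invoking the symmetry of $\sim$ (Proposition \ref{proph1}) together with the visible symmetry of the non-orthogonality condition, I may assume $|\mu| d(y, K) \le |\lambda| d(x, K)$. Setting $\alpha := -\lambda/\mu$ and dividing the chosen near-$K$-approximation to $\lambda x + \mu y$ by $\mu$, I obtain some $\beta \in K$ with $|y - \alpha x - \beta| < |\alpha| d(x, K)$.

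The main obstacle is to upgrade this inequality to $|y - \alpha x - \beta| < d(y, K)$; equivalently, to show $d(y, K) = |\alpha| d(x, K)$. My plan is the standard ultrametric argument: for every $\mu' \in K$, decompose
\begin{align*}
y - \mu' = (y - \alpha x - \beta) + (\alpha x + \beta - \mu'),
\end{align*}
where the first summand has norm strictly less than $|\alpha| d(x, K)$ while the second has norm at least $d(\alpha x + \beta, K) = |\alpha| d(x, K)$. The ultrametric inequality forces $|y - \mu'| = |\alpha x + \beta - \mu'|$ for every $\mu' \in K$, and taking the infimum over $\mu'$ yields $d(y, K) = d(\alpha x + \beta, K) = |\alpha| d(x, K)$. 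This places $\alpha x + \beta$ in $B(y, d(y, K))$ and hence gives $x \sim y$ by definition, completing the proof.
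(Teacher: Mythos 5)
Your proof is correct. The direction $x \sim y \Rightarrow$ not orthogonal is essentially the paper's argument: both exhibit a $K$-linear combination $\pi(y) - \alpha\pi(x)$ whose quotient norm $d(y - \alpha x, K)$ drops strictly below $\max(|\alpha|\,d(x,K),\, d(y,K))$ (the paper gets the strict drop from non-attainment of $d(\,\cdot\,,K)$ rather than from the Remark, but this is cosmetic). Where you genuinely diverge is in the other direction: the paper proves $x \nsim y \Rightarrow$ orthogonality \emph{directly}, observing that non-equivalence is by definition the statement $|y - \lambda x - \mu| \ge d(y,K)$ for all $\lambda, \mu \in K$, so that $d(\pi(y), [\pi(x)]) = d(y, [1,x]) \ge d(y,K) = \|\pi(y)\|$ in two lines. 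You instead prove the contrapositive, which forces you to do real work: normalize the non-orthogonality witness, reduce by symmetry to $|\mu|d(y,K) \le |\lambda|d(x,K)$, and then run the isoceles-triangle argument to establish $d(y,K) = |\alpha|\,d(x,K)$ before you can conclude $\alpha x + \beta \in B(y, d(y,K))$. Your route is longer but has a payoff the paper's does not: it produces the explicit witness of equivalence together with the norm identity $d(y,K) = |\alpha|\,d(x,K)$ (recovering Proposition \ref{prph5} as a byproduct), whereas the paper's direct argument is shorter precisely because it never needs to name $\alpha$. One small point of hygiene shared with the paper: if $\pi(x)$ and $\pi(y)$ are $K$-linearly dependent the two-element ``set'' degenerates, and like the paper you should dispose of that case separately before speaking of orthogonality of the pair.
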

\begin{proof}
 Suppose $x \nsim y$. Then for any $\lambda, \mu \in K$, we have $\lambda x + \mu \notin B_{K^{\lor}}(y,d(y,K))$. Thus, it follows that
 \begin{align*}
   d(\pi(y), [\pi(x)]) = d(y, [1,x]) \ge d(y,K) = \| \pi(y) \|.
 \end{align*}
  Hence, $\{ \pi(x), \pi(y) \}$ is an orthogonal set. \par
  Conversely, suppose that there exist $\lambda, \mu \in K$ such that $\lambda x + \mu \in B_{K^{\lor}}(y,d(y,K))$. We will prove that $\{ \pi(x), \pi(y) \}$ is not an orthogonal set.  Clearly, we may assume $\pi(x)$ and $\pi(y)$ are linearly independent. Then since $d(y - (\lambda x + \mu), K)$ is not attained, we have 
  \begin{align*}
    d(\pi(y), [\pi(x)]) \le d(y - (\lambda x + \mu), K) < d(y,K) = \|\pi(y)\|,
  \end{align*}
  which completes the proof.
\end{proof}

We shall give some applications of the above theorem. We recall the problem proposed in \cite{fin}. This problem can be interpreted as follows.

\begin{prob*}[{\cite[Problem 2.13]{fin}}]
  Let $n \ge 3$, and let $x_1, \cdots, x_n \in K^{\lor} \setminus K$ satisfy $x_i \nsim x_j$ whenever $i \neq j$. Then does a set $\{\pi(x_1), \cdots, \pi(x_n) \}$ form an orthogonal subset? Here, $\pi : K^{\lor} \to K^{\lor} / K$ is the canonical quotient map.
\end{prob*}

The answer to the problem above is no.
  
\begin{exam}
  Let $K$ be separable as a metric space and $r > 0$. Then by \cite[Corollary 2.14]{fin}, there exist $x,y \in K^{\lor} \setminus K$ such that $x \nsim y$ and $d(x,K) = d(y,K)$. Now, by Theorem \ref{thmh4}, $x \nsim x+y$ and $y \nsim x+y$. Therefore, $x,y$ and $x+y$ give the negative answer to the problem above. 
\end{exam}

\begin{thm} \label{thmh6}
  Let $K$ be an algebraically closed field of characteristic $p > 0$. Then there exists a sequence $x_1, x_2, \cdots \in K^{\lor}$ such that $\{\pi(x_1), \pi(x_2), \cdots \}$ is an orthogonal subset where $\pi : K^{\lor} \to K^{\lor}/K$ is the canonical quotient map.
\end{thm}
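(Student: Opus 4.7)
The plan is to produce the sequence explicitly via iterated Frobenius: fix any $x \in K^{\lor} \setminus K$ and set $x_n := x^{p^{n-1}}$ for $n \geq 1$; I will argue that $\{\pi(x_n)\}_{n \geq 1}$ is orthogonal in $K^{\lor}/K$. Two consequences of the hypotheses enter. Since $K$ is perfect, every element of $K$ is a $p^N$-th power, and the identity $y^{p^N} - \beta^{p^N} = (y - \beta)^{p^N}$ gives $d(y^{p^N}, K) = d(y, K)^{p^N}$ for any $y \in K^{\lor}$. Since $K$ is algebraically closed, every additive polynomial in $K[X]$ splits completely over $K$.

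Given indices $n_1 < \cdots < n_k$ and nonzero scalars $\lambda_i \in K$, my goal is to show that $d\bigl(\sum_i \lambda_i x^{p^{n_i-1}}, K\bigr) = \max_i |\lambda_i|\, r^{p^{n_i - 1}}$ where $r := d(x, K)$. Writing $\lambda_i = a_i^{p^{n_i - 1}}$ by perfectness, each $\lambda_i x^{p^{n_i - 1}} = (a_i x)^{p^{n_i - 1}}$, and the identity $(A + B)^{p^N} = A^{p^N} + B^{p^N}$ applied in reverse lets me factor out a common $p^{n_1-1}$-th power:
\[
\sum_i \lambda_i x^{p^{n_i - 1}} = Q(x)^{p^{n_1-1}}, \quad Q(X) := \sum_i a_i^{p^{n_i - n_1}} X^{p^{n_i - n_1}} \in K[X].
\]
Here $Q$ is additive of degree $p^{n_k - n_1}$ with all coefficients nonzero, and after taking $p^{n_1 - 1}$-th roots of distances the orthogonality claim reduces to showing $d(Q(x), K) = \max_i (|a_i|\, r)^{p^{n_i - n_1}}$.

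The upper bound is immediate: $Q$ is additive and $Q(K) = K$, so $d(Q(x), K) = \inf_{\alpha \in K} |Q(x - \alpha)|$; letting $|x - \alpha| \to r$ and applying ultrametricity term-by-term to $Q(x - \alpha) = \sum_i a_i^{p^{n_i - n_1}} (x - \alpha)^{p^{n_i - n_1}}$ gives the desired bound. For the lower bound, factor $Q(X) = c \prod_{v \in V}(X - v)$ with $V \subseteq K$ the $\mathbb{F}_p$-subspace of roots (of dimension $n_k - n_1$). Since $\alpha + v \in K$ for each $v \in V$, one has $|x - \alpha - v| \geq r$, so $|Q(x - \alpha)| \geq |c|\,r^{|V|} = (|a_k|\,r)^{p^{n_k - n_1}}$. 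This naive bound covers the case in which the maximum is attained at $i = k$.

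The main obstacle is strengthening the lower bound when the maximum is at some $i < k$. The refinement uses the filtration of $V$ by the norms of its elements (governed by the Newton polygon of $Q$) together with the additive-polynomial identity $|Q'(v_0)| = |a_1|$ at any root $v_0 \in V$, since $Q'(X) = a_1$ as a polynomial. The tight case is when $z := x - \alpha$ approaches some $v_0$: the factors of $\prod_{v \in V} |z - v|$ split according to whether $|v - v_0| < r$ (forcing the contribution $|z - v|$ down to $r$) or $|v - v_0| \geq r$ (contributing $|v - v_0|$), and a Newton-polygon identity combining these contributions with $|Q'(v_0)| = |a_1|$ matches the resulting infimum exactly to $\max_i (|a_i|\, r)^{p^{n_i - n_1}}$, completing the proof.
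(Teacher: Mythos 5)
Your reduction is sound as far as it goes: passing to $Q(X)=\sum_i a_i^{p^{n_i-n_1}}X^{p^{n_i-n_1}}$ via perfectness and Frobenius, using $Q(K)=K$ and additivity to write $d(Q(x),K)=\inf_{\alpha\in K}|Q(x-\alpha)|$, and factoring $Q$ over its $\mathbb{F}_p$-space of roots are all legitimate, and the target identity $d(Q(x),K)=\max_i(|a_i|r)^{p^{n_i-n_1}}$ is in fact true. But the proof stops exactly at its crux. Your naive bound $|Q(x-\alpha)|\ge|c|\,r^{p^{n_k-n_1}}$ settles only the case where the maximum sits at the leading index; for the remaining case --- which is the whole difficulty of proving full orthogonality with arbitrary scalars --- you offer one sentence asserting that ``a Newton-polygon identity'' combining the near and far roots with $|Q'(v_0)|=|a_1|$ ``matches the resulting infimum exactly.'' That identity is never stated, and the required case analysis (which roots contribute $|z-v_0|$, which contribute $|v-v_0|$, and how the intermediate slopes of the Newton polygon enter when there are three or more terms) is never carried out. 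As written this is a genuine gap. It can be closed by using additivity once more: if $v_0\in V$ minimizes $|z-v|$ and $w:=z-v_0$, then $Q(z)=Q(w)$, $|w|\ge r$, and $|w-u|=|z-(v_0+u)|\ge|w|$ for every $u\in V$, so
\begin{align*}
|Q(z)|=|c|\prod_{u\in V}|w-u|\ \ge\ |c|\,|w|^{\#\{u\,:\,|u|\le|w|\}}\prod_{|u|>|w|}|u|\ =\ \max_i\bigl(|a_i|\,|w|\bigr)^{p^{n_i-n_1}}\ \ge\ \max_i\bigl(|a_i|\,r\bigr)^{p^{n_i-n_1}},
\end{align*}
the middle equality being the Gauss-norm/Newton-polygon relation $\|Q\|_s=|c|\prod_{u\in V}\max(s,|u|)$. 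Some version of this computation has to appear for the argument to be complete.

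You should also know that the paper avoids this difficulty entirely, and you could too. It does not prove the full orthogonality relation directly; it proves only that $d\bigl(x^{p^m},[1,x,\dots,x^{p^{m-1}}]\bigr)\ge r^{p^m}$, i.e.\ that each $\pi(x^{p^m})$ is orthogonal to the span of its \emph{predecessors}. There the element being estimated is a \emph{monic} polynomial in $x$ of degree $p^m$ with coefficients in $K$, so algebraic closedness splits it into $p^m$ linear factors each of absolute value at least $r$ --- precisely your naive bound, which in this normalization is always the right one. Combined with the Frobenius upper bound $d(x^{p^m},K)\le r^{p^m}$ (which you also have) and the standard fact that a sequence is orthogonal as soon as each term is orthogonal to the span of the preceding ones, the theorem follows with no Newton polygons, no perfectness, and no analysis of the root filtration. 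The arbitrary-scalar estimate you are fighting with is exactly what that standard lemma lets one bypass; your route, once completed, proves a sharper quantitative statement, but at a real cost in effort.
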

\begin{proof}
  Let $x \in K^{\lor} \setminus K$ with $|x| = 1$, and put $r = d(x,K)$. Choose a sequence $\lambda_1, \lambda_2, \cdots \in K$ such that $|x - \lambda_n| \to r$ as $n \to \infty$. Then for all $m \in \mathbb{N}$, we have
  \begin{align*}
   \|\pi(x^{p^m})\| = d(x^{p^m},K) \le |x - \lambda_n|^{p^m} \to r^{p^m} \ (n \to \infty).
  \end{align*}
  On the other hand, since $K$ is algebraically closed, we obtain
  \begin{align*}
   d(\pi(x^{p^m}),[\pi(x), \cdots, \pi(x^{p^{m-1}})]) = d(x^{p^m},[1,x,\cdots,x^{p^{m-1}}]) \ge (\inf_{\lambda \in K} |x - \lambda|)^{p^m} = r^{p^m}.
  \end{align*}
  These imply that $\{\pi(x), \pi(x^p), \cdots \}$ is an orthogonal set.
\end{proof}

\begin{thm} \label{thmh7}
  Let $k$ be the residue class field of $K$. Suppose that $K$ is an algebraically closed field of characteristic $0$ and $k$ is a field of characteristic $p > 0$. Then for each $m \in \mathbb{N}$, there exist $x_1, \cdots, x_m \in K^{\lor}$ such that $\{\pi(x_1), \cdots \pi(x_m)\}$ is an orthogonal subset where $\pi : K^{\lor} \to K^{\lor} / K$ is the canonical quotient map.
\end{thm}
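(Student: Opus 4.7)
The plan is to adapt the proof of Theorem \ref{thmh6} to the mixed-characteristic situation. Since the Frobenius identity $(y-\lambda)^{p^k}=y^{p^k}-\lambda^{p^k}$ is no longer available, I replace it with the factorization
\begin{equation*}
y^{p^k}-\lambda^{p^k}=\prod_{\zeta^{p^k}=1}(y-\zeta\lambda),
\end{equation*}
valid because $K$ is algebraically closed of characteristic $0$ and therefore contains all $p^k$-th roots of unity. For $j\ge 1$ set $\rho_j:=|p|^{1/(p^{j-1}(p-1))}$; a standard cyclotomic computation (evaluating $\Phi_{p^j}(1)=p$ and invoking Galois transitivity) gives $|1-\zeta|=\rho_j$ for each primitive $p^j$-th root of unity, and $\rho_1<\rho_2<\cdots<1$.

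Fix $m\in\mathbb{N}$. The first step is to produce $y\in K^{\lor}\setminus K$ with $|y|=1$ and $r:=d(y,K)>\rho_{m-1}$. Starting from an arbitrary $y_0\in K^{\lor}\setminus K$ and a near-best approximation $\mu_0\in K$, the translate $y_0-\mu_0$ has $d(y_0-\mu_0,K)=d(y_0,K)$ but its own norm can be pushed as close to $d(y_0,K)$ as we wish. Since $K$ is algebraically closed, $V_K$ contains $|p|^{1/n}$ for every $n$ and is thus dense in $\mathbb{R}_{>0}$; combined with $V_{K^{\lor}}=V_K$, this allows a rescaling by a suitable $c\in K$ that normalises $|y|=1$ with $r$ arbitrarily close to $1$. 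Define $x_k:=y^{p^{k-1}}$ for $k=1,\dots,m$; the claim is that $\{\pi(x_1),\dots,\pi(x_m)\}$ is orthogonal.

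For each $k\le m$ I need to show $d(y^{p^{k-1}},[1,y,y^p,\dots,y^{p^{k-2}}])=d(y^{p^{k-1}},K)=r^{p^{k-1}}$. The lower bound on the left-hand distance is exactly the argument of Theorem \ref{thmh6}: the difference of $y^{p^{k-1}}$ and any element of the span is the evaluation at $y$ of a monic polynomial of degree $p^{k-1}$, which factors over $K$ into linear pieces $(y-\alpha_i)$ each of norm at least $r$. For the upper bound, take $\nu_n\in K$ with $|y-\nu_n|\to r$ and apply the factorization above with $\lambda=\nu_n$; every nontrivial $p^{k-1}$-th root of unity $\zeta$ satisfies $|1-\zeta|\le\rho_{k-1}\le\rho_{m-1}<r$, and since $|\nu_n|=1$ for large $n$, the ultrametric inequality forces $|y-\zeta\nu_n|=|y-\nu_n|$, whence $|y^{p^{k-1}}-\nu_n^{p^{k-1}}|=|y-\nu_n|^{p^{k-1}}\to r^{p^{k-1}}$. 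The matching lower bound $d(y^{p^{k-1}},K)\ge r^{p^{k-1}}$ comes from the same factorization together with $|y-\zeta\nu|\ge d(y,K)=r$.

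The main obstacle is precisely the upper bound: if $r$ is too small, the cross-terms $(1-\zeta)\nu_n$ in the factorization dominate $|y-\nu_n|$ and cause $|y^{p^{k-1}}-\nu_n^{p^{k-1}}|$ to collapse well below $|y-\nu_n|^{p^{k-1}}$, destroying the match with the algebraic-closure lower bound. The choice $r>\rho_{m-1}$ is exactly what rules this out for all $k\le m$, and also explains why, in contrast to Theorem \ref{thmh6}, we only obtain finite orthogonal subsets: since $\rho_{m-1}\to 1$ as $m\to\infty$, no single element $y$ can accommodate every $m$.
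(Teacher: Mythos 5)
Your proposal is correct and follows essentially the same route as the paper: normalize an element $y$ with $|y|=1$ and $d(y,K)$ above the cyclotomic threshold, use the factorization $y^{p^k}-\lambda^{p^k}=\prod_{\zeta}(y-\zeta\lambda)$ together with $|1-\zeta|<d(y,K)$ to get the upper bound $\|\pi(y^{p^k})\|\le d(y,K)^{p^k}$, and reuse the algebraic-closure lower bound from Theorem \ref{thmh6}. The only differences are cosmetic (translate-then-rescale versus the paper's rescale-then-translate, and an index shift: the paper takes $s>|p|^{1/(p^m-p^{m-1})}$ and produces the $m+1$ elements $y,\dots,y^{p^m}$).
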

\begin{proof}
  Fix $m \in \mathbb{N}$. Let $x \in K^{\lor} \setminus K$ with $|x| = 1$, and put $r = d(x,K)$. Choose $a \in K$ such that $|p|^{1/(p^m - p^{m-1})} < |a|r < 1$. Then there exists $\lambda \in K$ such that $y := ax - \lambda$ satisfies $|y| = 1$. Notice that $y$ also satisfies $s := d(y,K) = |a|r$. Let $k \in \mathbb{N}$ with $1 \le k \le m$, and $\zeta_{p^k} \in K$ be a primitive $p^k$-th root of unity. Choose $\lambda_1, \lambda_2, \cdots \in K$ such that $|y - \lambda_n| \to s$ as $n \to \infty$ and $|\lambda_n| = 1$ for all $n$. Then it follows from $|p|^{1/(p^m - p^{m-1})} < s$ that 
  \begin{align*}
    |y^{p^k} - \lambda_n^{p^k}| = \prod_{l=0}^{p^k -1} |y - \lambda_n \zeta_{p^k}^l| = |y - \lambda_n|^{p^k} \to s^{p^k} \ (n \to \infty).
  \end{align*}
  Hence, we obtain $\|\pi(y^{p^k})\| \le s^{p^k}$. On the other hand, by the same proof as that of Theorem \ref{thmh6}, we obtain the inequality $d(y^{p^k},[1,y,\cdots,y^{p^{k-1}}]) \ge s^{p^k}$. Therefore, $\{\pi(y), \cdots, \pi(y^{p^m})\}$ is an orthogonal set.
\end{proof}

Now, we recall \cite[Problem in Section 2]{open}.

\begin{prob*}[{\cite[Problem in Section 2]{open}}]
  Is the set of all $\sim$-equivalence classes is infinite for all non-spherically complete valued fields?
\end{prob*}

Finally, we have the following corollary, which is a partial answer to the problem above.

\begin{cor}
  Let $k$ be the residue class field of $K$. Suppose that $K$ is algebraically closed and $k$ is a field of characteristic $p > 0$. Then the set of all $\sim$-equivalence classes is an infinite set.
\end{cor}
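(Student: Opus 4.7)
The plan is to reduce this immediately to Theorem \ref{thmh7}, using Theorem \ref{thmh4} as the bridge between orthogonality in $K^{\vee}/K$ and non-equivalence of elements in $K^{\vee}\setminus K$. The idea is: if for every $m$ we can exhibit $m$ pairwise non-equivalent elements, then there must be infinitely many $\sim$-equivalence classes.

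Concretely, I would argue as follows. Fix an arbitrary $m \in \mathbb{N}$. By Theorem \ref{thmh7} (whose hypotheses match ours exactly: $K$ algebraically closed, residue field of characteristic $p > 0$), there exist $x_1, \dots, x_m \in K^{\vee}\setminus K$ such that the set $\{\pi(x_1), \dots, \pi(x_m)\}$ is orthogonal in $K^{\vee}/K$. By the very definition of an orthogonal set (the condition must hold for \emph{every} finite subset), any two-element subset $\{\pi(x_i),\pi(x_j)\}$ with $i\neq j$ is still orthogonal. Theorem \ref{thmh4} then yields $x_i \nsim x_j$ for all $i \neq j$, so the $x_1,\dots,x_m$ represent $m$ distinct $\sim$-equivalence classes.

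Since $m$ was arbitrary, the set of $\sim$-equivalence classes on $K^{\vee}\setminus K$ contains at least $m$ elements for every $m\in\mathbb{N}$, hence is infinite.

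There is essentially no obstacle here: all of the technical difficulty has already been absorbed into Theorem \ref{thmh7} (the construction of $\pi(y),\pi(y^p),\dots,\pi(y^{p^m})$ as an orthogonal family using the roots of unity argument) and into the equivalence ``non-equivalent $\Leftrightarrow$ orthogonal images'' recorded in Theorem \ref{thmh4}. The only point worth double-checking while writing is that orthogonality of a set passes to all subsets, which is immediate from the definition given in the preliminaries.
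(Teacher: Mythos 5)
Your reduction is the intended one: produce, for each $m$, a set $\{\pi(x_1),\dots,\pi(x_m)\}$ that is orthogonal in $K^{\lor}/K$, note that orthogonality passes to two-element subsets, and apply Theorem \ref{thmh4} to conclude $x_i \nsim x_j$ for $i \neq j$, whence there are at least $m$ equivalence classes for every $m$. That part is correct and is exactly how the paper means the corollary to follow.

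The gap is in your parenthetical claim that the hypotheses of Theorem \ref{thmh7} ``match ours exactly.'' They do not: Theorem \ref{thmh7} assumes in addition that $K$ has characteristic $0$, while the corollary only assumes that $K$ is algebraically closed and that the \emph{residue} field $k$ has characteristic $p>0$. This hypothesis is also satisfied when $K$ itself has characteristic $p$ (equal-characteristic case, e.g.\ an algebraically closed complete field of characteristic $p$), and in that case Theorem \ref{thmh7} is simply not applicable --- its proof uses $|p|^{1/(p^m-p^{m-1})} < |a|r$, which is vacuous or meaningless when $|p|=0$ is not the relevant quantity, and more to the point its statement excludes this case. You need to split into two cases: if $\operatorname{char} K = p$, invoke Theorem \ref{thmh6}, which produces an infinite orthogonal set $\{\pi(x_1),\pi(x_2),\dots\}$ and hence, by the same Theorem \ref{thmh4} argument, infinitely many pairwise non-equivalent elements at once; if $\operatorname{char} K = 0$, proceed as you did via Theorem \ref{thmh7}. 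With that case distinction added, your argument is complete.
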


\section{Strictly epicompact sets} \label{str}

\begin{dfn}[{\cite[Section 7]{equ}}]
  Let $E$ be a finite-dimensional normed space, and let $A \subseteq E$ be a bounded $B_K$-module. Then $A$ is said to be strictly epicompact if for each finite-dimensional vector space $F$ and each linear map $T : E \to F$, we have
  \begin{align*}
    T(A) = \bigcap_{|\lambda|>1} \lambda T(A).
  \end{align*}
\end{dfn}

The following proposition is proved in \cite{equ}, which is fundamental to studying strictly epicompact sets.

\begin{prop}[{\cite[Proposition 7.3]{equ}}]
  Let $(E,\|\cdot\|)$ be a finite-dimensional normed space. Then $B_{E'}$ is strictly epicompact if and only if for each subspace $D \subseteq E$ and each $f \in D'$ with $\|f\| \le 1$, there exists an extension $\tilde{f} \in E'$ of $f$ with $\|\tilde{f}\| \le 1$.
\end{prop}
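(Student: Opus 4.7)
The plan is to translate each direction into a problem about a single linear map and then invoke the canonical isometric identification $E'/D^{\perp}\cong D'$ recorded in the preliminaries. The key structural observation is that for any linear map $T:E'\to F$ with $F$ finite-dimensional, $\mathrm{Ker}\,T\subseteq E'$ must have the form $D^{\perp}$ for a unique subspace $D\subseteq E$, since in the finite-dimensional setting every subspace of $E'$ is an annihilator. Thus $T$ factors through the restriction map $r:E'\to D'$, and the fibres of $T$ are cosets of $D^{\perp}$, on which restriction to $D$ is constant.

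For $(\Leftarrow)$, assume the extension hypothesis. Given $T:E'\to F$ and $y\in\bigcap_{|\lambda|>1}\lambda T(B_{E'})$, for each $\lambda\in K$ with $|\lambda|>1$ I pick $h_\lambda\in E'$ with $T(h_\lambda)=y$ and $\|h_\lambda\|\le|\lambda|$. Any two such $h_\lambda$ differ by an element of $\mathrm{Ker}\,T=D^{\perp}$, so the restriction $f:=h_\lambda|_D\in D'$ is independent of $\lambda$ and satisfies $\|f\|\le\|h_\lambda\|\le|\lambda|$ for every $|\lambda|>1$. Because $V_K$ is dense in $\mathbb{R}_{>0}$ (a consequence of $K$ being complete and non-spherically complete), this forces $\|f\|\le 1$. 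The extension hypothesis then yields $\tilde f\in E'$ with $\tilde f|_D=f$ and $\|\tilde f\|\le 1$, and since $\tilde f-h_\lambda\in D^{\perp}=\mathrm{Ker}\,T$ I obtain $T(\tilde f)=y$, so $y\in T(B_{E'})$.

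For $(\Rightarrow)$, assume $B_{E'}$ is strictly epicompact, and fix $D\subseteq E$ with $f\in D'$ of norm $\le 1$. Pick any $g\in E'$ extending $f$; such a $g$ exists because the canonical isometry $E'/D^{\perp}\cong D'$ makes the restriction map $r:E'\to D'$ surjective. Under this same isometry the coset $g+D^{\perp}$ corresponds to $f$, so $d(g,D^{\perp})=\|f\|\le 1$. Hence for each $\lambda\in K$ with $|\lambda|>1$ I may select $k_\lambda\in D^{\perp}$ with $\|g+k_\lambda\|\le|\lambda|$, which shows $f=(g+k_\lambda)|_D\in r(\lambda B_{E'})=\lambda\, r(B_{E'})$. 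Strict epicompactness applied to $r$ now places $f$ in $\bigcap_{|\lambda|>1}\lambda\, r(B_{E'})=r(B_{E'})$, producing the desired norm-$\le 1$ extension of $f$.

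I do not foresee a serious obstacle. The only delicate input is the density of $V_K$ near $1$, used once in $(\Leftarrow)$ to pass from $\|f\|\le|\lambda|$ for every $|\lambda|>1$ to $\|f\|\le 1$; this is automatic here since a complete discretely valued field would already be spherically complete. Everything else is bookkeeping around $E'/D^{\perp}\cong D'$ and the definition of the quotient norm.
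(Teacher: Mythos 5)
The paper itself gives no proof of this proposition --- it is quoted verbatim from Schikhof's report \cite{equ} --- so there is nothing internal to compare against; judged on its own, your argument is correct and is surely the intended one. Both directions reduce cleanly to the single restriction map $r : E' \to D'$ via the observation that every subspace of $E'$ is of the form $D^{\perp}$, together with the canonical isometry $E'/D^{\perp} \cong D'$ recorded in the preliminaries: in $(\Leftarrow)$ an arbitrary $T$ factors through $r$ for $D$ determined by $\ker T = D^{\perp}$, and in $(\Rightarrow)$ strict epicompactness is applied to $r$ itself. The one point worth flagging explicitly is your use of the density of $V_K$ in $\mathbb{R}_{>0}$ to pass from $\|f\| \le |\lambda|$ for all $|\lambda| > 1$ to $\|f\| \le 1$; this is legitimate here because the paper's standing assumption that $K$ is not spherically complete rules out a discrete valuation (a complete discretely valued field is spherically complete), but it does mean your proof of that implication, unlike the converse direction, would not survive verbatim in the discretely valued setting. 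Everything else --- the $\lambda$-independence of $h_\lambda|_D$ because any two preimages of $y$ differ by an element of $\ker T = D^{\perp}$, and the identification $d(g, D^{\perp}) = \|g|_D\|$ --- checks out.
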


Since $E$ is finite-dimensional, we can improve the above proposition.

\begin{prop} \label{propst1}
   Let $(E,\|\cdot\|)$ be a finite-dimensional normed space. Then $B_{E'}$ is strictly epicompact if and only if
  \begin{itemize}
    \item[$(\mathrm{SE})$] For each subspace $D \subseteq E$ and each $f \in D'$ with $\|f\| = 1$, there exists an extension $\tilde{f} \in E'$ of $f$ with $\|\tilde{f}\| = 1$.
  \end{itemize}
\end{prop}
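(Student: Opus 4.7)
My plan is to invoke the previous proposition, which identifies strict epicompactness of $B_{E'}$ with the statement that every $f\in D'$ of norm $\le 1$ admits an extension $\tilde{f}\in E'$ of norm $\le 1$. The task then reduces to proving that this $\le 1$-extension condition is equivalent to (SE).

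The direction from the $\le 1$-condition to (SE) is immediate: if $\|f\|=1$, any extension with $\|\tilde{f}\|\le 1$ must in fact satisfy $\|\tilde{f}\|=1$, since restriction cannot increase the norm, giving $\|\tilde{f}\|_{E'}\ge \|\tilde{f}|_D\|_{D'}=\|f\|=1$.

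For the converse, I would assume (SE) and take $D\subseteq E$ and $f\in D'$ with $c:=\|f\|\le 1$. The case $c=0$ is handled by $\tilde{f}=0$, and the case $c=1$ is directly supplied by (SE). The only nontrivial case is $0<c<1$, where I would exploit the canonical isometric isomorphism $D'\cong E'/D^\perp$ from the preliminaries (which is where finite-dimensionality enters the argument). Fixing any extension $\tilde{f}_0\in E'$ of $f$, the quotient-norm formula yields
\begin{align*}
c \;=\; \|f\|_{D'} \;=\; \|\overline{\tilde{f}_0}\|_{E'/D^\perp} \;=\; \inf_{h\in D^\perp} \|\tilde{f}_0+h\|_{E'},
\end{align*}
so the infimum of $\|\tilde{f}\|_{E'}$ taken over all extensions $\tilde{f}$ of $f$ equals $c$. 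Since $c<1$, applying the definition of infimum with $\varepsilon:=1-c>0$ produces an extension $\tilde{f}$ with $\|\tilde{f}\|<1$, as required.

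The main conceptual step is recognizing that for $\|f\|<1$ the existence of a $\le 1$-extension is automatic from the finite-dimensional quotient-norm description of $D'$, so (SE) is really only needed to handle the boundary case $\|f\|=1$. I do not anticipate any substantial obstacle beyond this observation.
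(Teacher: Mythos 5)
Your argument is correct and is exactly the natural way to fill in the proof that the paper omits: reduce to Schikhof's $\le 1$-extension criterion, note that a norm-$\le 1$ extension of a norm-$1$ functional automatically has norm $1$, and handle $\|f\|<1$ via the isometric identification $D'\cong E'/D^{\perp}$ (the point where finite-dimensionality is used), which makes the infimum of extension norms equal to $\|f\|<1$. No gaps.
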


Thus, to study strictly epicompact sets, it suffices to study whether $E$ satisfies (SE) or not. \par

\begin{prob*}[{\cite[Section 7]{equ}}]
  Which finite-dimensional normed spaces satisfy (SE)?
\end{prob*}

One goal of this paper is to study a necessary and sufficient condition for certain finite-dimensional normed spaces to satisfy (SE).

A simple observation leads to the following proposition, which is very useful.

\begin{prop} \label{seprop1}
  Let $(E,\|\cdot\|)$ be a finite-dimensional normed space, and let $D \subseteq E$ be a one-dimensional subspace. Then for each $f \in D'$ with $\|f\| = 1$, there exists an extension $\tilde{f} \in E'$ of $f$ with $\|\tilde{f}\| = 1$ if and only if
  \begin{align*}
    \|D\| \cap V_K = \emptyset \ \mathrm{or} \ E \cong D \oplus F \  \mathrm{for} \ \mathrm{some} \ F \subseteq E.
  \end{align*}
\end{prop}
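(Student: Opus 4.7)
The plan is to split the biconditional according to whether $\|D\|$ meets $V_K$, and to exploit the fact that $D$ is one-dimensional so everything reduces to tracking the value of a functional on a single generator.

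Fix a generator $d$ of $D$ and put $t := \|d\|$. For any $f \in D'$, $\|f\| = |f(d)|/t$, so requiring $\|f\| = 1$ is equivalent to $|f(d)| = t$; since $f(d) \in K$, this is possible only when $t \in V_K$, i.e.\ when $\|D\| \cap V_K = t V_K \cap V_K \neq \emptyset$. This already disposes of one half of the $(\Leftarrow)$ direction: if $\|D\| \cap V_K = \emptyset$ then no $f \in D'$ has $\|f\| = 1$ and the extension property is vacuously true. The other half of $(\Leftarrow)$ is also easy: if $E \cong D \oplus F$ isometrically, then given $f \in D'$ of norm one, the functional $\tilde{f} := f \circ p_D$ (where $p_D$ is the projection onto $D$ along $F$) extends $f$, and the ultrametric identity $\|\lambda d + x\| = |\lambda| \vee \|x\|$ gives $|\tilde{f}(\lambda d + x)| = |\lambda| \le \|\lambda d + x\|$, so $\|\tilde{f}\| = 1$.

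For $(\Rightarrow)$, assume the extension property and assume (otherwise we are done) that $\|D\| \cap V_K \neq \emptyset$; I will produce a complement $F$ with $E \cong D \oplus F$. By rescaling $d$ inside $K^* \cdot d$ I may arrange $t = \|d\| = 1$. Take the specific $f \in D'$ determined by $f(d) = 1$, so $\|f\| = 1$. By hypothesis there is $\tilde{f} \in E'$ with $\tilde{f}|_D = f$ and $\|\tilde{f}\| = 1$. Set $F := \ker \tilde{f}$; since $\tilde{f}(d) = 1 \neq 0$, we have a vector space decomposition $E = D \oplus F$, and it remains to prove it is isometric, i.e.\ $\|\lambda d + x\| = |\lambda| \vee \|x\|$ for all $\lambda \in K$ and $x \in F$.

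The upper bound is the strong triangle inequality. For the lower bound, applying $\tilde{f}$ to $\lambda d + x$ yields $|\lambda| \le \|\tilde{f}\| \cdot \|\lambda d + x\| = \|\lambda d + x\|$. To get $\|x\| \le \|\lambda d + x\|$, write $x = (\lambda d + x) - \lambda d$ and use the ultrametric inequality: $\|x\| \le \|\lambda d + x\| \vee |\lambda|$; combined with $|\lambda| \le \|\lambda d + x\|$ this gives $\|x\| \le \|\lambda d + x\|$. Hence $\|\lambda d + x\| \ge |\lambda| \vee \|x\|$, completing the isometric identification $E \cong D \oplus F$.

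The argument is almost forced once one notices the $|f(d)| \in V_K$ obstruction, so I do not expect a serious obstacle; the only point requiring care is handling the vacuous case in $(\Leftarrow)$ explicitly, since otherwise one might wrongly try to build an extension in a situation where the hypothesis $\|f\| = 1$ cannot be realized at all.
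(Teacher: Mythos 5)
Your proof is correct and takes essentially the same route as the paper: the case $\|D\| \cap V_K = \emptyset$ is dismissed as vacuous, and otherwise the kernel of a norm-one extension $\tilde{f}$ of a norm-one functional on $D$ is shown to be an orthogonal complement of $D$. The paper states $E \cong D \oplus \mathrm{Ker}\tilde{f}$ without further comment, whereas you spell out the two-sided estimate $\|\lambda d + x\| = |\lambda| \vee \|x\|$ explicitly; this is the only difference.
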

\begin{proof}
  Indeed, if $\|D\| \cap V_K = \emptyset$, then there exists no $f \in D'$ such that $\|f\| = 1$. Thus, to prove the proposition, we may assume $\|D\| = |K|$. We will prove the sufficiency. Since $\|D\| = |K|$, there exists $f \in D'$ and $u \in D$ for which $\|f\| = \|u\| = |f(u)| = 1$. By assumption, there is an extension $\tilde{f} \in E'$ of $f$ with $\|\tilde{f}\| = 1$. Thus, we obtain
  \begin{align*}
    E \cong D \oplus \mathrm{Ker}f.
  \end{align*}
  The converse is obvious, and the proof is complete.
\end{proof}

\begin{cor}
  Let $(E, \|\cdot\|)$ be a finite-dimensional normed space such that $\|E\| = |K|$. Then $E$ satisfies $(\mathrm{SE})$ if and only if $E$ has an orthogonal base, that is, $\dim_{K^{\lor}} E^{\lor} = \dim_K E$.
\end{cor}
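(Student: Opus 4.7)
The plan is to prove both implications by induction on $n = \dim_K E$, using Proposition \ref{seprop1} as the key one-dimensional input; the base case $n \le 1$ is immediate.

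For the forward direction, I would fix $E$ satisfying (SE) with $n \ge 2$, pick a nonzero $v \in E$, and consider the one-dimensional subspace $D = Kv$. Since $\|v\| \in \|E\| = |K|$, we have $\|D\| = |K|$ and in particular $\|D\| \cap V_K \neq \emptyset$. Applying (SE) at $D$ and invoking Proposition \ref{seprop1} forces $E \cong D \oplus F$ for some subspace $F$. My next step would be to verify that (SE) is inherited by $F$: given $g \in G'$ with $\|g\| = 1$ and $G \subseteq F$, view $G$ as a subspace of $E$, extend $g$ via (SE) on $E$ to some $h \in E'$ with $\|h\| = 1$, and restrict $h$ to $F$; the restriction norm is pinched between $\|g\| = 1$ and $\|h\| = 1$. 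Since $F$ is nonzero and sits inside $E$, one easily sees $\|F\| = |K|$, so the induction hypothesis produces an orthogonal basis of $F$, and adjoining $v$ gives one for $E$.

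For the backward direction, I would exploit the orthogonal basis $e_1, \dots, e_n$: since each $\|e_i\| \in V_K$, scaling by suitable elements of $K$ identifies $E$ isometrically with $(K^n, \|\cdot\|_\infty)$, reducing the problem to showing $(K^n, \|\cdot\|_\infty)$ satisfies (SE). The core step is a lemma: every $K$-subspace $D \subseteq K^n$ admits an orthogonal complement $F$ with $K^n = D \oplus F$ as an isometric direct sum. Granted this, any $f \in D'$ with $\|f\| = 1$ extends to $(K^n)'$ by setting $\tilde{f}|_F = 0$, and orthogonality forces $\|\tilde{f}\| = \|f\| = 1$. To prove the lemma, I would induct on $n$: pick a nonzero $v \in D$, choose an index $i$ with $|v_i| = \|v\|_\infty$, and put $F_0 = \{x \in K^n : x_i = 0\} \cong K^{n-1}$; the maximality of $|v_i|$ makes $K^n = Kv \oplus F_0$ an orthogonal direct sum, and applying the induction hypothesis to $D \cap F_0 \subseteq F_0$ yields a complement $F_1$ of $D \cap F_0$ in $F_0$, so $K^n = Kv \oplus (D \cap F_0) \oplus F_1 = D \oplus F_1$ orthogonally.

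The only delicate point is the orthogonality of the one-dimensional splitting $K^n = Kv \oplus F_0$, which comes down to the identity $\|\lambda v + w\|_\infty = \max(|\lambda|\|v\|_\infty, \|w\|_\infty)$ for $w \in F_0$; this in turn rests on the maximality $|v_i| = \|v\|_\infty$ to control $|\lambda v_j|$ by $|\lambda v_i|$ for each coordinate $j$.
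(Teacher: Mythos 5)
Your argument is correct and follows the route the paper intends: the paper states this corollary without proof as an immediate consequence of Proposition \ref{seprop1} (applied to every one-dimensional subspace, plus induction) together with the standard fact that a space with an orthogonal base satisfies (SE), and your write-up simply supplies both halves of that argument in full detail.
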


Due to the corollary above, it is not reasonable to restrict ourselves to studying only normed spaces $(E, \|\cdot\|)$ satisfying $\|E\| = |K|$. By Proposition \ref{seprop1}, we immediately get the following.

\begin{thm}
Let $x \in K^{\lor} \setminus K$ and $t \in \mathbb{R}_{>0}$. Then a $2$-dimensional normed space
\begin{align*}
  ([1, x], |\cdot|_{t})
\end{align*}
satisfies $(\mathrm{SE})$ if and only if $t \notin V_K$.
\end{thm}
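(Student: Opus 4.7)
The plan is to invoke Proposition~\ref{propst1} and then Proposition~\ref{seprop1}, reducing $(\mathrm{SE})$ to a numerical condition on the norms of one-dimensional subspaces of $E := ([1,x],|\cdot|_t)$. The cases $D = \{0\}$ and $D = E$ in Proposition~\ref{propst1} are vacuous (no nonzero functionals to extend, or the identity extension), so only the one-dimensional subspaces contribute. For each such $D$, Proposition~\ref{seprop1} gives two alternatives: $\|D\| \cap V_K = \emptyset$, or $E \cong D \oplus F$ for some complement $F \subseteq E$.

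Next I would establish that the splitting alternative never occurs. Since $x \in K^{\lor}$ and $(K^{\lor},|\cdot|_t)$ is spherically complete, the inclusion $E \hookrightarrow (K^{\lor},|\cdot|_t)$ is an isometric $K$-embedding into a one-dimensional $K^{\lor}$-Banach space, so $\dim_{K^{\lor}} E^{\lor} = 1$ and $\{1\}$ is a maximal orthogonal subset of $E$. A nontrivial direct sum decomposition of $E$ would produce an orthogonal basis of cardinality two, contradicting this count. Hence $E$ is indecomposable, and $(\mathrm{SE})$ reduces to the single requirement $\|D\| \cap V_K = \emptyset$ for every one-dimensional $D \subseteq E$.

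Finally I would perform the numerical calculation. Every one-dimensional subspace has the form $D = [a+bx]$ with $(a,b) \in K^2 \setminus \{0\}$, so $\|D\| \setminus \{0\} = t|a+bx|\,V_K$. The key input is the preliminary fact $V_K = V_{K^{\lor}}$: since $a+bx \in K^{\lor} \setminus \{0\}$, its value $|a+bx|$ lies in $V_K$, hence $t|a+bx|\,V_K$ equals $V_K$ if $t \in V_K$ and is disjoint from $V_K$ otherwise. Thus $\|D\| \cap V_K = \emptyset$ holds for all such $D$ simultaneously iff $t \notin V_K$; in the converse direction, when $t \in V_K$ the choice $D = [x]$ already witnesses failure of $(\mathrm{SE})$. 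There is no serious obstacle here---the substantive moves are recognizing indecomposability via the orthogonality count and exploiting $V_K = V_{K^{\lor}}$ to treat all one-dimensional subspaces uniformly.
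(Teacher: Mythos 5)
Your proof is correct and takes essentially the same route as the paper, which states this theorem as an immediate consequence of Proposition \ref{seprop1}. Your write-up just makes explicit the two facts the paper leaves implicit: that $\dim_{K^{\lor}}E^{\lor}=1$ rules out the splitting alternative $E\cong D\oplus F$, and that $V_K=V_{K^{\lor}}$ reduces $\|D\|\cap V_K=\emptyset$ for every line $D$ to the single condition $t\notin V_K$.
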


\begin{cor} \label{secor1}
  Let $(E, \|\cdot\|)$ be a finite-dimensional normed space. Then $E$ satisfies $(\mathrm{SE})$ if and only if $E$ satisfies the following two conditions: \\
  $(1)$ $E / [u]$ satisfies $(\mathrm{SE})$ for each non-zero $u \in E$. \\
  $(2)$ each $2$-dimensional subspace $F$ of $E$ with $\dim_{K^{\lor}} F^{\lor} = 1$ satisfies $\| F \| \cap V_K = \emptyset$.
\end{cor}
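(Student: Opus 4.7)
The plan is to handle the two implications separately. The forward direction is largely bookkeeping; the backward direction splits on $\dim D$, with the case $\dim D = 1$ being the main obstacle.

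For the forward direction, suppose $E$ satisfies $(\mathrm{SE})$. To show $(1)$, fix nonzero $u \in E$ and an arbitrary $\bar{f} \in \bar{D}'$ with $\|\bar{f}\|=1$ on a subspace $\bar{D} \subseteq E/[u]$. Put $D:=\pi^{-1}(\bar{D})$ and $g:=\bar{f} \circ \pi|_D \in D'$. Because $[u] \subseteq D$, a short ultrametric computation gives $\|g\|=\|\bar{f}\|=1$, and $(\mathrm{SE})$ on $E$ produces $h \in E'$ extending $g$ with $\|h\|=1$. Since $g(u)=0$ we have $h(u)=0$, so $h$ descends to $\bar{h} \in (E/[u])'$ extending $\bar{f}$ with $\|\bar{h}\|=\|h\|=1$. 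To show $(2)$, note first that $(\mathrm{SE})$ is inherited by subspaces (extend to $E$, then restrict). Hence any $2$-dimensional $F \subseteq E$ with $\dim_{K^{\lor}} F^{\lor} = 1$ satisfies $(\mathrm{SE})$; by Theorem \ref{pthm1}, $F \cong ([1,x],|\cdot|_t)$ for some $x \in K^{\lor} \setminus K$ and $t > 0$, and the theorem immediately preceding this corollary forces $t \notin V_K$, whence $\|F\| \subseteq tV_K \cup \{0\}$ is disjoint from $V_K$.

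For the backward direction, assume $(1)$ and $(2)$ and let $f \in D'$ have $\|f\|=1$. If $\dim D \geq 2$, then $\ker f$ contains a nonzero $u$, and $f$ factors through $\bar{f}:D/[u] \to K$ with $\|\bar{f}\|=\|f\|=1$ (again using $[u] \subseteq D$). Viewing $D/[u] \subseteq E/[u]$ and applying $(\mathrm{SE})$ on $E/[u]$ supplied by $(1)$, we get $\bar{h} \in (E/[u])'$ extending $\bar{f}$ with $\|\bar{h}\|=1$; lifting to $h := \bar{h} \circ \pi \in E'$ yields a norm-$1$ extension of $f$ that vanishes on $[u]$.

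The case $\dim D = 1$ is the crux, since no nonzero vector in $\ker f$ is available for the quotient trick. Write $D=[w]$; since $\|f\|=1$ forces $\|w\| \in V_K$, rescale so that $\|w\|=1$. The strategy is to reuse the quotient trick after first finding some $u \in E \setminus [w]$ orthogonal to $w$: then $d(w,[u])=1$, so setting $\bar{f}(\bar{w}):=f(w)$ yields $\|\bar{f}\|=1$ on $[\bar{w}] \subseteq E/[u]$, and $(1)$ completes the extension exactly as in the previous paragraph. To produce $u$, pick any $v \in E \setminus [w]$, form $F:=[w,v]$, and invoke $(2)$: since $w \in F$ has norm $1 \in V_K$, $(2)$ forbids $\dim_{K^{\lor}} F^{\lor} = 1$, hence $F$ admits an orthogonal basis $\{a_1,a_2\}$ by Theorem \ref{pthm1}. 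Writing $w = \lambda_1 a_1 + \lambda_2 a_2$ with $|\lambda_1|\|a_1\| \geq |\lambda_2|\|a_2\|$, a routine ultrametric check shows $\{w,a_2\}$ is again orthogonal, so $u := a_2$ works. The main obstacle is precisely this last step: condition $(2)$ is exactly what upgrades a generic $2$-dimensional slice through $w$ to one carrying an orthogonal basis, and the basis-swap verification for $\{w, a_2\}$ then supplies the orthogonal partner needed to reduce to the quotient setting handled by $(1)$.
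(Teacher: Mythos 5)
Your proof is correct and takes essentially the same route as the paper: condition $(2)$ disposes of one-dimensional domains, and condition $(1)$ handles $\dim D \ge 2$ by passing to the quotient by a vector of $\ker f$. You merely spell out the two steps the paper leaves terse, namely that $(\mathrm{SE})$ passes to quotients and subspaces in the forward direction, and the explicit construction of an orthogonal partner for $w$ in the case $\dim D = 1$.
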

\begin{proof}
  Suppose that the conditions $(1)$ and $(2)$ hold. We shall prove that $E$ satisfies (SE). Let $D$ be a subspace of $E$ and $f \in D'$ with $\|f\| = 1$. Then by the condition $(2)$ and the preceding theorem, we may assume $\dim_K D \ge 2$. Therefore, we have $\ker f \neq \{0\}$. Let $\pi : E \to E / \ker f$ be the canonical quotient map, and let $g \in (\pi(D))'$ satisfy $f = g \circ \pi$. Obviously, we have $\|g\| = 1$ and therefore by the condition $(1)$, there exists an extension $\tilde{g} \in (E / \ker f)'$ of $g$ with $\|\tilde{g}\| = 1.$ Now, $\tilde{g} \circ \pi \in E'$ is an extension of $f$ with $\|\tilde{g} \circ \pi\|$. The converse is easy to prove. Thus, the proof is complete.
\end{proof}

We give another necessary and sufficient condition for a normed space to satisfy (SE).

\begin{lem} \label{selem1}
  Let $(E, \|\cdot\|)$ be an $n$-dimensional normed space. Then $E$ satisfies $(\mathrm{SE})$ if and only if $E$ satisfies the following two conditions: \\
  $(1)$ each $(n -1)$-dimensional subspace of $E$ satisfies $(\mathrm{SE})$. \\
  $(2)$ each $2$-dimensional subspace $F$ of $E'$ satisfies either $\dim_{K^{\lor}} F^{\lor} = 2$ or $F \cong ([1, x], |\cdot|_t)$ with $x \in K^{\lor} \setminus K$ and $t \cdot d(x, K) \notin V_K$.
\end{lem}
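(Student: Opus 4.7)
The key observation is that condition $(2)$ is equivalent to asserting that $F'$ satisfies $(\mathrm{SE})$ for every $2$-dimensional subspace $F \subseteq E'$. If $\dim_{K^{\lor}} F^{\lor} = 2$, then $F$ is an orthogonal direct sum of two $1$-dimensional spaces, and so is $F'$, which trivially satisfies $(\mathrm{SE})$. Otherwise $F \cong ([1,x],|\cdot|_t)$, and by Theorem \ref{thmh10} $F' \cong ([1,x],|\cdot|_{1/(td(x,K))})$; the $2$-dimensional $(\mathrm{SE})$ theorem (immediately before Corollary \ref{secor1}) then says $F'$ satisfies $(\mathrm{SE})$ iff $1/(td(x,K)) \notin V_K$, equivalently $td(x,K) \notin V_K$. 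Via the canonical isometric identification $F' \cong E''/F^{\perp} \cong E/G$ (where $G \subseteq E$ corresponds to $F^{\perp} \subseteq E''$ under $E \cong E''$), this reformulation reads: every $2$-dimensional quotient of $E$ satisfies $(\mathrm{SE})$.

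\textbf{Necessity.} Assume $E$ satisfies $(\mathrm{SE})$. For $(1)$, let $E_0 \subseteq E$ be $(n-1)$-dimensional, $D \subseteq E_0$, and $f \in D'$ with $\|f\|=1$. Apply $(\mathrm{SE})$ in $E$ to get $\tilde f \in E'$ with $\|\tilde f\|=1$; the restriction $\tilde f|_{E_0}$ extends $f$ and has norm squeezed between $\|f\|=1$ and $\|\tilde f\|=1$. For $(2)$, Corollary \ref{secor1} gives $(\mathrm{SE})$ for $E/[u]$ for every nonzero $u$; iterating, $(\mathrm{SE})$ holds for every quotient $E/G$. Applied to the $2$-dimensional quotients $E/G \cong F'$ (with $F = G^{\perp} \subseteq E'$), we conclude $F'$ satisfies $(\mathrm{SE})$, which by the reformulation is $(2)$.

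\textbf{Sufficiency.} Assume $(1)$ and $(2)$. Let $D \subseteq E$ and $f \in D'$ with $\|f\|=1$. Pick any $(n-1)$-dimensional $E_0 \supseteq D$; by $(1)$, $f$ extends to $f_0 \in E_0'$ with $\|f_0\|=1$. Choose any $g \in E'$ with $g|_{E_0} = f_0$ and a generator $h_0$ of $E_0^{\perp} \subseteq E'$, and set $F := [g, h_0] \subseteq E'$. Since $g|_{E_0}=f_0 \neq 0$ while $h_0|_{E_0}=0$, the elements $g$ and $h_0$ are linearly independent, so $F$ is $2$-dimensional. By $(2)$ and the reformulation, $F'$ satisfies $(\mathrm{SE})$. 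Apply $(\mathrm{SE})$ for $F'$ to the $1$-dimensional subspace $[h_0]^{\perp} \subseteq F'$: under the canonical isometry $([h_0]^{\perp})' \cong F/[h_0]$ the image $\bar g$ has norm $\|\bar g\| = \|f_0\|=1$, so it lifts to some $\tilde g \in F$ with $\|\tilde g\|=1$ and $\tilde g - g \in [h_0]$. Then $\tilde g = g + \lambda h_0 \in E'$ extends $f_0$, hence $f$, with norm $1$.

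\textbf{Main obstacle.} The central technical ingredient is the duality bookkeeping underlying the reformulation: identifying $F'$ via Theorem \ref{thmh10}, establishing the isometry $F' \cong E/G$ (so that $(\mathrm{SE})$ for $E$ propagates to $F'$ through Corollary \ref{secor1}), and translating norm-preserving lifts $F/[h_0] \to F$ into an application of $(\mathrm{SE})$ on $F'$ with respect to the subspace $[h_0]^{\perp}$. Once these identifications are in place, the lemma reduces to a routine two-step extension, first via $(1)$ to reach $E_0'$, then via the lifting supplied by $(\mathrm{SE})$ for $F'$ to reach $E'$.
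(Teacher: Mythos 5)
Your proof is correct and follows essentially the same route as the paper's: both arguments hinge on the two-dimensional subspace $E_0^{\perp}+[g]\subseteq E'$ together with the identification of the restriction map $E'\to E_0'$ with the quotient by $E_0^{\perp}$, and on the dichotomy that condition $(2)$ imposes on such a subspace (orthogonal base versus quotient norms avoiding $V_K$). The only differences are packaging: you rephrase $(2)$ as ``every $2$-dimensional quotient of $E$ satisfies $(\mathrm{SE})$'' and obtain the norm-preserving lift via $(\mathrm{SE})$ for $F'$ and reflexivity, where the paper concludes directly that $\dim_{K^{\lor}}F^{\lor}=2$ forces $d(g,E_0^{\perp})$ to be attained, and you derive the necessity of $(2)$ by iterating Corollary \ref{secor1} rather than by restricting $g$ to $\ker f$.
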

\begin{proof}
  It suffices to prove that the condition $(2)$ is equivalent to the following condition $(2)'$:
  \begin{itemize}
    \item[$(2)'$] For each $(n-1)$-dimensional subspace $D$ of $E$ and $f \in D'$ with $\|f\| = 1$, there is an extension $\tilde{f} \in E'$ of $f$ with $\|\tilde{f}\| = 1$
  \end{itemize}
  First, suppose $(2)$. Let $D$ be an $(n-1)$-dimensional subspace of $E$. We note that the restriction map $E' \to D'$ can be identified with the quotient map $E' \to E'/D^{\perp}$. Now, let $f \in D'$ with $\|f\| = 1$. We can choose $g \in E'$ such that the restriction of $g$ to $D$ is equal to $f$. Consider a $2$-dimensional subspace $F = D^{\perp} + [g]$ of $E'$. Then by $(2)$ and $\|f\| = 1$, we must have $\dim_{K^{\lor}} F^{\lor} = 2$. Therefore, $F$ has an orthogonal base and there exists an extension $\tilde{f} \in F$ of $f$ satisfying $\|\tilde{f}\| = 1$. \par
  Conversely, let us suppose $(2)'$. Let $F$ be a $2$-dimensional subspace of $E'$ with $\dim_{K^{\lor}} F^{\lor} = 1$ and let $f, g \in F$ be such that $[f, g] = F$. Obviously, $\ker f$ is an $(n-1)$-dimensional subspace of $E$. Let $g_0 \in (\ker f)'$ be the restriction of g to $\ker f$. Since $\|g_0\| = d(g, [f])$ and $d(g, [f])$ is not attained, we must have $d(g, [f]) \notin V_K$ by $(2)'$. Thus, the proof is complete. 
\end{proof}

\section{Systematic approaches to finite-dimensional normed spaces} \label{sys}

In this section, we will give a general method, independent of dimension, of studying finite-dimensional normed spaces. Particularly important results are Corollary \ref{syscor1} and Theorem \ref{systhm2}. Corollary \ref{syscor1} says that if we classify $n$-dimensional normed spaces, then we can classify $(n + 1)$-dimensional normed spaces $E$ with $\dim_{K^{\lor}} E^{\lor} = 1$. Theorem \ref{systhm2} says that there exists exactly one "type" of finite-dimensional indecomposable normed space $E$ with $\dim_{K^{\lor}} E^{\lor} = \dim_K E - 1$.

\subsection{$\dim_{K^{\lor}} E^{\lor} = \dim_K E$} 

If an $n$-dimensional normed space $E$ satisfies $\dim_K E = \dim_{K^{\lor}} E^{\lor}$, then there exist $t_1, \cdots, t_n \in \mathbb{R}_{> 0}$ for which
\begin{align*}
  E \cong (K^n, |\cdot|_{t_1} \times \cdots \times |\cdot|_{t_n}).
\end{align*}

The following theorem is a structure theorem for finite-dimensional normed spaces $E$ with $\dim_{K^{\lor}} E^{\lor} = \dim_K E$.

\begin{thm} \label{.1thm1}
Let $t_1, \cdots, t_n, s_1, \cdots, s_n \in \mathbb{R}_{> 0}$. Then
  \begin{align*}
    F := (K^n, |\cdot|_{t_1} \times \cdots \times |\cdot|_{t_n}) \cong (K^n, |\cdot|_{s_1} \times \cdots \times |\cdot|_{s_n}) := G
  \end{align*}
  if and only if by a suitable permutation of $s_1, \cdots, s_n$, we have $t_i/s_i \in V_K$ for each $i=1, \cdots, n$.
\end{thm}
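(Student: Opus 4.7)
The plan is to split into the straightforward \emph{if} direction, handled by a direct construction, and the substantive \emph{only if} direction, for which I would extract a family of numerical isometry invariants indexed by cosets in $\mathbb{R}_{>0}/V_K$. For the \emph{if} direction, after relabeling so that $t_i/s_i\in V_K$ for every $i$, pick $a_i\in K$ with $|a_i|=t_i/s_i$ and check that the coordinate-wise scaling $(x_1,\ldots,x_n)\mapsto(a_1x_1,\ldots,a_nx_n)$ is an isometry $F\to G$: the equality $s_i|a_ix_i|=t_i|x_i|$ matches weights slot-by-slot, so the max-norms agree.

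For the converse, for each $\alpha\in\mathbb{R}_{>0}$ and each finite-dimensional normed $K$-space $H$, form the $B_K$-submodules
\begin{align*}
H_{\le\alpha}:=\{x\in H:\|x\|\le\alpha\},\qquad H_{<\alpha}:=\{x\in H:\|x\|<\alpha\},
\end{align*}
and set $\delta_\alpha(H):=\dim_k H_{\le\alpha}/H_{<\alpha}$ over the residue field $k=B_K/\mathfrak{m}_K$; the maximal ideal annihilates the quotient because $|\lambda|<1$ together with $\|x\|\le\alpha$ forces $\|\lambda x\|<\alpha$. Since any $K$-linear isometry $H\to H'$ sends $H_{\le\alpha}$ onto $H'_{\le\alpha}$ and $H_{<\alpha}$ onto $H'_{<\alpha}$, it induces a $k$-linear isomorphism of the quotients, so $\delta_\alpha$ is an isometric invariant. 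In particular $F\cong G$ gives $\delta_\alpha(F)=\delta_\alpha(G)$ for every $\alpha$.

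Computing $\delta_\alpha(F)$ uses the orthogonal decomposition: $F_{\le\alpha}=\prod_i\{x\in K:|x|\le\alpha/t_i\}$ and similarly with strict inequalities for $F_{<\alpha}$, whence $F_{\le\alpha}/F_{<\alpha}\cong\bigoplus_i A_i/B_i$. A case analysis on whether $\alpha/t_i$ is attained as an absolute value in $K$ gives $A_i/B_i\cong k$ when $\alpha/t_i\in V_K$ and $A_i=B_i$ otherwise, so $\delta_\alpha(F)=\#\{i:t_iV_K=\alpha V_K\}$, with the same formula for $G$. Running $\alpha$ over all cosets then shows the multisets $\{t_iV_K\}_{i=1}^n$ and $\{s_jV_K\}_{j=1}^n$ are equal in $\mathbb{R}_{>0}/V_K$, which furnishes the required permutation.

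The main obstacle I anticipate is precisely this case split: when $\alpha/t_i\notin V_K$ the strict and non-strict radius-$\alpha/t_i$ balls in $K$ coincide (the value is not attained) and one must argue that the slot contributes nothing, while when $\alpha/t_i\in V_K$ they differ by a unit ball up to scaling and produce exactly one copy of $k$. Once this dichotomy is separated cleanly, the verification that an isometry respects the filtration and the remaining bookkeeping are routine.
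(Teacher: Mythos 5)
Your proof is correct, and the \emph{only if} direction proceeds by a genuinely different mechanism than the paper's, even though both ultimately compute the same invariant, namely the number of weights $t_i$ lying in a fixed coset $\alpha V_K$. The paper argues inside the spaces themselves: after reordering so that the indices with $t_i/t_1\in V_K$ come first (say $m$ of them) and likewise for the $s_i$ (say $l$ of them), it observes that any nonzero vector supported on the first $m$ coordinates of $F$ has norm in $t_1V_K$, hence its image under an isometry $T$ must have a nonzero coordinate among the first $l$ slots of $G$; composing $T$ with the projection onto those slots is then injective on $[f_1,\dots,f_m]$, giving $m\le l$, and symmetry finishes the count. You instead package the count as the residue invariant $\delta_\alpha(H)=\dim_k H_{\le\alpha}/H_{<\alpha}$, which is manifestly preserved by any isometric isomorphism and is computed slot-by-slot via the dichotomy $\alpha/t_i\in V_K$ versus $\alpha/t_i\notin V_K$; this yields equality of the coset multisets in one stroke, with no reordering, no projection, and no appeal to $T^{-1}$. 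Your route is more canonical (the invariant is defined basis-freely and the functoriality under isometries is immediate), at the cost of introducing the associated-graded machinery; the paper's route is more elementary but requires the small bookkeeping of choosing a coset, permuting indices, and running the injectivity argument in both directions. One cosmetic caution: you define $\delta_\alpha$ for arbitrary finite-dimensional $H$, but finiteness of $\dim_k H_{\le\alpha}/H_{<\alpha}$ is only evident for spaces with an orthogonal basis; since you apply it only to $F$ and $G$, where the explicit computation gives a finite answer, this does not affect the proof.
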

\begin{proof}
  We only prove the sufficiency. Suppose that there exists a linear isometry $T : F \to G$. It suffices to prove
  \begin{align*}
   m := \# \{1\le i \le n : t_i/t_1 \in K\} \le l:= \# \{1\le i \le n : s_i/t_1 \in K\}.
  \end{align*}
  To prove the above inequality, we may assume $\{1\le i \le n : t_i/t_1 \in K\} = \{t_1, \cdots, t_m\}$ and $\{1\le i \le n : s_i/t_1 \in K\} = \{s_1, \cdots, s_l\}$. Let $f_1, \cdots, f_n \in F$ and $g_1, \cdots, g_n \in G$ be the canonical unit vectors of $F$ and $G$. Then we have 
  \begin{align*}
    T([f_1, \cdots, f_m] \setminus \{0\}) \subseteq \{a_1g_1 + \cdots + a_n g_n : a_i \neq 0 \ \text{for some} \ 1 \le i \le l\}.
  \end{align*}
  Thus, for the canonical projection $P$ of $G$ onto $[g_1, \cdots, g_l]$, the restriction of $P \circ T$ to $[f_1, \cdots, f_m]$ is injective, and therefore, we have $m \le l$. 
\end{proof}

\begin{rem} \label{rem1}
  The above proof works even if $K$ is spherically complete.
\end{rem}

The following theorem is easy, and thus we omit the proof.

\begin{thm}
  If $\dim_{K^{\lor}} E^{\lor} = \dim_K E$, then $E$ satisfies $(\mathrm{SE})$.
\end{thm}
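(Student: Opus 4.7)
The natural strategy is induction on $n = \dim_K E$, verifying the two conditions of Corollary \ref{secor1} at each step. The base case $n = 1$ is trivial since the only subspaces of a one-dimensional $E$ are $\{0\}$ and $E$, and any functional on them extends to itself. For the inductive step, assume the result for all strictly smaller dimensions (for spaces with matching $K^{\lor}$-dimension) and fix an orthogonal basis $\{e_1, \ldots, e_n\}$ of $E$ supplied by the hypothesis.

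For condition $(1)$, given a non-zero $u = \sum_i \lambda_i e_i \in E$, I choose $j$ with $|\lambda_j|\|e_j\| = \|u\|$ maximal and check that $\{u\} \cup \{e_i : i \neq j\}$ is again an orthogonal basis of $E$; this is a routine ultrametric computation exploiting the inequality $|\lambda_i|\|e_i\| \le \|u\|$ for every $i$ to show that both inclusions in the usual max-norm identity hold. It then follows that $E$ splits as an orthogonal direct sum $[u] \oplus [e_i : i \neq j]$, so $E/[u]$ is isometric to $[e_i : i \neq j]$, which carries an orthogonal basis of size $n-1$. The inductive hypothesis now gives that $E/[u]$ satisfies $(\mathrm{SE})$.

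For condition $(2)$, I will show that no $2$-dimensional subspace $F \subseteq E$ satisfies $\dim_{K^{\lor}} F^{\lor} = 1$, so the condition is vacuously fulfilled. Identifying $E$ with $K^n$ inside $(K^{\lor})^n = E^{\lor}$ via the fixed orthogonal basis, the spherical completion $F^{\lor}$ coincides with the $K^{\lor}$-linear span of $F$. If a $K$-basis $\{v_1, v_2\}$ of $F$ were $K^{\lor}$-linearly dependent, then $v_2 = cv_1$ for some $c \in K^{\lor}$; picking any coordinate $i$ with $(v_1)_i \neq 0$ forces $c = (v_2)_i/(v_1)_i \in K$, contradicting the $K$-linear independence of $v_1, v_2$. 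The only mildly delicate point is the orthogonal-basis swap used for condition $(1)$; the rest is bookkeeping, and I do not anticipate a serious obstacle.
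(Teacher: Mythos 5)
The paper itself omits the proof of this theorem (``easy''), so the only question is whether your argument stands on its own. Your handling of condition $(1)$ of Corollary \ref{secor1} is correct: the swap $\{e_1,\dots,e_n\}\mapsto\{u\}\cup\{e_i : i\neq j\}$ with $j$ chosen so that $|\lambda_j|\|e_j\|=\|u\|$ is a valid ultrametric computation, it gives $E=[u]\oplus[e_i:i\neq j]$ orthogonally, and the induction hypothesis applies to $E/[u]$.

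The gap is in condition $(2)$, in the sentence ``the spherical completion $F^{\lor}$ coincides with the $K^{\lor}$-linear span of $F$.'' This is asserted without proof, and it is essentially the whole content of what you are trying to show. In the paper's framework $\dim_{K^{\lor}}F^{\lor}$ is the cardinality of a maximal orthogonal subset of $F$, so identifying $F^{\lor}$ with $[F]_{K^{\lor}}\subseteq(K^{\lor})^n$ requires $[F]_{K^{\lor}}$ to be an \emph{immediate} extension of $F$ --- and the natural proof of that immediacy already uses an orthogonal basis of $F$, which is exactly what is in question. The identification is not a formal property of spherical completions: for a $K$-subspace of $(K^{\lor})^n$ not contained in $K^n$ it can fail outright. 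For instance, $F=[(1,0),(x,y)]\subseteq((K^{\lor})^2,|\cdot|\times|\cdot|)$ with $|x|=1$ and $|y|<d(x,K)$ is isometric to $[1,x]\subseteq K^{\lor}$ via the first coordinate, so $\dim_{K^{\lor}}F^{\lor}=1$, even though $(1,0)$ and $(x,y)$ are $K^{\lor}$-linearly independent. Hence the hypothesis $F\subseteq K^n$ must enter analytically, not merely through $K^{\lor}$-linear independence of coordinate vectors. The statement you actually need is that every $2$-dimensional subspace $[v,w]$ of $(K^n,|\cdot|_{t_1}\times\cdots\times|\cdot|_{t_n})$ has an orthogonal basis, equivalently that
\begin{align*}
  d(v,[w])=\inf_{\lambda\in K}\ \max_{i} t_i|v_i-\lambda w_i|
\end{align*}
is attained. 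This is true and elementary: discarding the indices with $w_i=0$ (they contribute a constant), the function is $\max_j s_j|\lambda-a_j|$ with $s_j=t_j|w_j|>0$ and $a_j=v_j/w_j\in K$, and its infimum $c$ is attained at $a_{j_0}$ for any $j_0$ maximizing $s_j$, since $s_j|a_{j_0}-a_j|\le s_j\max(|\lambda-a_{j_0}|,|\lambda-a_j|)\le \max_k s_k|\lambda-a_k|$ for every $\lambda$, whence $g(a_{j_0})\le c$. With this lemma in place condition $(2)$ is indeed vacuous and the rest of your induction goes through; the same attainment argument also reproves your basis swap in condition $(1)$.
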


\subsection{$\dim_{K^{\lor}} E^{\lor} = 1$}

The following theorem is fundamental to studying a normed space $E$ with $\dim_{K^{\lor}} E^{\lor} = 1$.

\begin{thm} \label{systhm1}
  Let $(E, \|\cdot\|)$ be an $n$-dimensional normed space such that for each subspace $F \subseteq E$ with $\dim E/F = 2$, it satisfies $\dim_{K^{\lor}} (E/F)^{\lor} = 1$. Then all $(n-1)$-dimensional subspaces of $E$ are isometrically isomorphic to each other.
\end{thm}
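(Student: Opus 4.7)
The plan is to take two distinct $(n-1)$-dimensional subspaces $D_1, D_2 \subseteq E$, set $F := D_1 \cap D_2$, and construct an explicit isometry $\phi : D_1 \to D_2$ by an ultrametric cancellation trick. If $D_1 = D_2$ there is nothing to prove; otherwise $D_1 + D_2 = E$ forces $\dim F = n-2$. By hypothesis $E/F$ is $2$-dimensional with $\dim_{K^{\lor}} (E/F)^{\lor} = 1$, so $E/F$ admits no orthogonal $K$-basis. Fix $a \in D_1 \setminus F$ and $b \in D_2 \setminus F$, write $\pi : E \to E/F$, and set $t_1 := \|\pi(a)\| = d_E(a,F)$ and $t_2 := \|\pi(b)\| = d_E(b,F)$.

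The first key step is to extract a scalar $\mu \in K^{*}$ with $|\mu| = t_1/t_2$ such that $\|\pi(a - \mu b)\| < t_1$. Since $\{\pi(a), \pi(b)\}$ is a non-orthogonal $K$-basis of $E/F$, there exist $\alpha, \beta \in K$ with $\|\alpha \pi(a) + \beta \pi(b)\| < \max(|\alpha| t_1, |\beta| t_2)$. The strong triangle inequality is automatically an equality whenever the two summand norms differ, so $|\alpha| t_1 = |\beta| t_2$ with $\alpha, \beta \neq 0$; this simultaneously shows $t_1/t_2 \in V_K$ and delivers the scalar $\mu := -\beta/\alpha$.

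Next I would lift the inequality from $E/F$ back to $E$. Because $\|\pi(a - \mu b)\| = d_E(a - \mu b, F) < t_1$, I can pick $f' \in F$ with $\|a - \mu b - f'\|_E < t_1$ and set $b' := \mu b + f' \in D_2$. Then $\|a - b'\|_E < t_1$ while $d_E(b', F) = |\mu| t_2 = t_1$. Let $\phi : D_1 \to D_2$ be the $K$-linear map fixing $F$ pointwise and sending $a \mapsto b'$; bijectivity is immediate from $\pi(b') = \mu \pi(b) \neq 0$, which gives $D_2 = F + K b'$.

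For the isometry check, $\|f + \lambda a\|_E = \|f + \lambda b'\|_E$ is trivial at $\lambda = 0$ and, for $\lambda \neq 0$, reduces via $g := f/\lambda$ to $\|g + a\|_E = \|g + b'\|_E$ for all $g \in F$. But $\|g + a\|_E \geq d_E(a, F) = t_1$ and $\|g + b'\|_E \geq d_E(b', F) = t_1$, while $\|a - b'\|_E < t_1$, so $\max(\|g + a\|_E, \|g + b'\|_E) > \|a - b'\|_E$; the standard ultrametric cancellation (if $\max(\|u\|, \|v\|) > \|u - v\|$ then $\|u\| = \|v\|$) forces the desired equality. I expect the most delicate point to be isolating $\mu$ from non-orthogonality: this is the only place where the hypothesis $\dim_{K^{\lor}}(E/F)^{\lor} = 1$ enters, through the observation that strict ultrametric inequality can only occur at the balance level $|\alpha| t_1 = |\beta| t_2$.
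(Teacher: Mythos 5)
Your proposal is correct and follows essentially the same route as the paper: intersect the two hyperplanes to get $F$ with $\dim E/F=2$, use the hypothesis on $E/F$ to produce a scalar $\mu\in K$ with $\|\pi(a)-\mu\pi(b)\|<\|\pi(a)\|$, lift the defect into $F$, and conclude by ultrametric cancellation that $v\mapsto v$, $a\mapsto \mu b+f'$ is an isometry. Your extraction of $\mu$ from the balance condition $|\alpha|t_1=|\beta|t_2$ is just a more explicit justification of the step the paper states as ``by assumption, there exists $\lambda\in K$ such that $\|\pi(x)-\pi(\lambda y)\|<\|\pi(x)\|$.''
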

\begin{proof}
  Let $G_1$ and $G_2$ be two distinct $(n-1)$-dimensional subspaces of $E$, and put $F := G_1 \cap G_2$. Obviously, we have $\dim E/F = 2$. Choose $x,y \in E$ such that $G_1 = F + K x$ and $G_2 = F + K y$, and let $\pi : E \to E/F$ be the canonical quotient map. Then by assumption, there exists $\lambda \in K$ such that $\|\pi(x) - \pi(\lambda y)\| < \|\pi(x)\|$. Hence, we can choose $u \in F$ such that
  \begin{align*}
    \|x - \lambda y - u\| < \|\pi(x)\|.
  \end{align*}
  Finally, we obtain
  \begin{align*}
    \|x + v\| = \|\lambda y + u + v\| \ \text{for all $v \in F$,}
  \end{align*} 
  which completes the proof.
\end{proof}

By the dual theory, if a finite-dimensional normed space $E$ satisfies $\dim_{K^{\lor}} E^{\lor} = 1$, then the dual space $E'$ of $E$ satisfies the assumption in the theorem above. Hence, we have the following corollary.

\begin{cor} \label{syscor1}
  Let $E$ be an $n$-dimensional normed space such that $\dim_{K^{\lor}} E^{\lor} = 1$. Then all $(n-1)$-dimensional normed spaces of the form $E/[u]$, $u \neq 0$, are isometrically isomorphic to each other.
\end{cor}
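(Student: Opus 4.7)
The plan is to apply Theorem \ref{systhm1} to the dual space $E'$ and then translate the conclusion back to $E$ through the standard duality between quotients and annihilators. The $(n-1)$-dimensional subspaces of $E'$ are precisely the annihilators $[u]^{\perp}$ for non-zero $u \in E$, and in finite dimensions they satisfy $[u]^{\perp} \cong (E/[u])'$ isometrically (via $f \mapsto f \circ \pi$, dual to $D' \cong E'/D^{\perp}$).

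First I would verify the hypothesis of Theorem \ref{systhm1} for $E'$: every codimension-$2$ subspace $F' \subseteq E'$ should satisfy $\dim_{K^{\lor}}(E'/F')^{\lor} = 1$. Using the canonical identification $(E')' \cong E$, such an $F'$ corresponds to a two-dimensional subspace $D := (F')^{\perp} \subseteq E$, and $E'/F' \cong D'$. So it suffices to show $\dim_{K^{\lor}}(D')^{\lor} = 1$ for every two-dimensional subspace $D \subseteq E$. Since a maximal orthogonal subset of $D$ is automatically an orthogonal subset of $E$, we have $\dim_{K^{\lor}} D^{\lor} \le \dim_{K^{\lor}} E^{\lor} = 1$, and $D$ being non-zero forces $\dim_{K^{\lor}} D^{\lor} = 1$. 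Then by Theorem \ref{pthm1} we may write $D \cong ([1, x], |\cdot|_t)$ with $x \in K^{\lor} \setminus K$ and $t > 0$, and Theorem \ref{thmh10} gives $D' \cong ([1, x], |\cdot|_{1/tr})$, which again has $\dim_{K^{\lor}}$-invariant equal to $1$.

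Having confirmed the hypothesis, Theorem \ref{systhm1} applied to $E'$ yields that all $(n-1)$-dimensional subspaces of $E'$ are mutually isometrically isomorphic. Translating via $[u]^{\perp} \cong (E/[u])'$ gives $(E/[u_1])' \cong (E/[u_2])'$ for all non-zero $u_1, u_2 \in E$; and since in finite dimensions $F \cong G$ if and only if $F' \cong G'$ (because the natural map into the bidual is an isometric isomorphism), the desired conclusion $E/[u_1] \cong E/[u_2]$ follows. The step I expect to be the most delicate is the passage $\dim_{K^{\lor}} D^{\lor} = 1 \Rightarrow \dim_{K^{\lor}}(D')^{\lor} = 1$ for two-dimensional $D$; I would bypass any general duality principle for this invariant by invoking the explicit structure Theorems \ref{pthm1} and \ref{thmh10}, which keeps the argument entirely within results already established in the paper.
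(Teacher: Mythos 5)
Your argument is correct and follows exactly the route the paper takes: the paper's entire justification is the remark that ``by the dual theory'' $E'$ satisfies the hypothesis of Theorem \ref{systhm1}, and you have simply filled in the details of that verification (identifying $E'/F'$ with $D'$ for a $2$-dimensional $D\subseteq E$ and using Theorems \ref{pthm1} and \ref{thmh10} to see $\dim_{K^{\lor}}(D')^{\lor}=1$) before dualizing back via $[u]^{\perp}\cong(E/[u])'$ and reflexivity. No discrepancy to report.
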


By the corollary above, we can refine Corollary \ref{secor1} and Lemma \ref{selem1} in the case $\dim_{K^{\lor}} E^{\lor} = 1$.

\begin{cor} \label{syscor3}
  Let $(E, \|\cdot\|)$ satisfy $\dim_{K^{\lor}} E^{\lor} = 1$ and $u \in E \setminus \{0\}$. \\
  $(1)$ $E$ satisfies $(\mathrm{SE})$ if and only if $\|E\| \cap V_K = \emptyset$ and $E / [u]$ satisfies $(\mathrm{SE})$. \\
  $(2)$ $E'$ satisfies $(\mathrm{SE})$ if and only if $\|E / [u] \| \cap V_K = \emptyset$ and $(E/[u])'$ satisfies $(\mathrm{SE})$.
\end{cor}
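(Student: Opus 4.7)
The plan is to derive both parts from the general criteria Corollary \ref{secor1} and Lemma \ref{selem1}, simplified via Corollary \ref{syscor1}, which says that all quotients $E/[v]$ with $v \neq 0$ are isometrically isomorphic when $\dim_{K^{\lor}} E^{\lor} = 1$. A preliminary observation I will need is that every $2$-dimensional subspace $F \subseteq E$ automatically has $\dim_{K^{\lor}} F^{\lor} = 1$: otherwise $F$ would admit an orthogonal basis of cardinality $2$, producing an orthogonal subset of $E$ of size greater than $\dim_{K^{\lor}} E^{\lor} = 1$, a contradiction.

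For part $(1)$, I apply Corollary \ref{secor1}: $E$ satisfies $(\mathrm{SE})$ if and only if each $E/[v]$ satisfies $(\mathrm{SE})$ and each $2$-dimensional subspace $F$ with $\dim_{K^{\lor}} F^{\lor} = 1$ satisfies $\|F\| \cap V_K = \emptyset$. By Corollary \ref{syscor1}, the first clause collapses to the single statement that $E/[u]$ satisfies $(\mathrm{SE})$. By the preliminary observation, the second clause becomes: every $2$-dimensional subspace $F$ satisfies $\|F\| \cap V_K = \emptyset$. Since any non-zero vector of $E$ lies inside some $2$-dimensional subspace, this is equivalent to $\|E\| \cap V_K = \emptyset$.

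For part $(2)$, I apply Lemma \ref{selem1} to $E'$: $E'$ satisfies $(\mathrm{SE})$ if and only if each $(n-1)$-dimensional subspace of $E'$ satisfies $(\mathrm{SE})$, and each $2$-dimensional subspace $F$ of $E'' \cong E$ either has $\dim_{K^{\lor}} F^{\lor} = 2$, or else $F \cong ([1,x], |\cdot|_t)$ with $t \cdot d(x,K) \notin V_K$. The $(n-1)$-dimensional subspaces of $E'$ are the annihilators $[v]^{\perp} \cong (E/[v])'$, so Corollary \ref{syscor1} again reduces the first condition to $(E/[u])'$ satisfying $(\mathrm{SE})$. The preliminary observation forces every $2$-dimensional $F \subseteq E$ into the form $F \cong ([1,x], |\cdot|_t)$; a direct computation of the quotient norm in $F/[1]$ shows that $t \cdot d(x,K) \notin V_K$ if and only if $\|F/[v]\| \cap V_K = \emptyset$ for any non-zero $v \in F$ (independence of $v$ here comes from Corollary \ref{syscor1} applied to $F$). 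Finally, since every non-zero element of $E/[u]$ lies in some $1$-dimensional subspace of the form $F/[u]$ with $F \ni u$ two-dimensional, and conversely every $F/[v]$ embeds isometrically into $E/[v] \cong E/[u]$, the resulting condition on all $F$ is equivalent to $\|E/[u]\| \cap V_K = \emptyset$.

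The main technical step I anticipate is the translation in part $(2)$ from the intrinsic subspace condition $t \cdot d(x,K) \notin V_K$ to the quotient-norm condition $\|E/[u]\| \cap V_K = \emptyset$; the rest follows formally from the previously established results.
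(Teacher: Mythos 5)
Your proof is correct and follows exactly the route the paper intends: the paper states this corollary without proof as a refinement of Corollary \ref{secor1} and Lemma \ref{selem1} via Corollary \ref{syscor1}, and you carry out precisely that reduction, including the two details the paper leaves implicit (that $\dim_{K^{\lor}}E^{\lor}=1$ forces $\dim_{K^{\lor}}F^{\lor}=1$ for every $2$-dimensional subspace $F$, and the isometric identifications $F/[v]\hookrightarrow E/[v]$ and $[v]^{\perp}\cong(E/[v])'$ needed to pass between the subspace-wise conditions and the single conditions on $\|E\|$ and $\|E/[u]\|$). No gaps.
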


If an $(n + 1)$-dimensional normed space $E$ satisfies $\dim_{K^{\lor}} E^{\lor} = 1$, then there exist $x_1, \cdots, x_n \in K^{\lor} \setminus K$ and $t \in \mathbb{R}_{> 0}$ for which 
\begin{align*}
  E \cong ([1,x_1, \cdots, x_n], |\cdot|_t) \subseteq (K^{\lor}, |\cdot|_t).
\end{align*}
Therefore, we shall study a normed space of the form $([1,x_1, \cdots, x_n], |\cdot|_t)$.

\begin{lem} \label{syslem1}
  Let $n \in \mathbb{N}$ and let $x_1, \cdots, x_n, y_1, \cdots, y_n \in K^{\lor} \setminus K$. Let $\pi : K^{\lor} \to K^{\lor}/K$ be the canonical quotient map. Suppose that $\{\pi(x_1), \cdots, \pi(x_n)\}$ is an orthogonal set and $x_i \sim y_i$ for each $i = 1, \cdots, n$. Then we have
  \begin{align*}
    ([1, x_1, \cdots, x_n], |\cdot|) \cong ([1, y_1, \cdots, y_n], |\cdot|)
  \end{align*}
  and that $\{\pi(y_1), \cdots, \pi(y_n)\}$ is an orthogonal set. More precisely, if $|x_i - y_i| \le d(x_i,K)$ for each $i = 1, \cdots, n$, then the map
  \begin{align*}
    ([1, x_1, \cdots, x_n], |\cdot|) &\to ([1, y_1, \cdots, y_n], |\cdot|) \\
    1 &\mapsto 1 \\
    x_i &\mapsto y_i \ (1 \le i \le n)
  \end{align*}
  gives a linear isometry.
\end{lem}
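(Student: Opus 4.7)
The plan is to prove the explicit ``more precisely'' clause first, and then reduce the general statement to it by exploiting the freedom inside each equivalence class $x_i \sim y_i$ to replace $y_i$ by a suitable $K$-affine combination of $1$ and $x_i$.

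For the explicit case, assume $|x_i - y_i| \le d(x_i, K)$ for every $i$. Since $\{\pi(x_1), \ldots, \pi(x_n)\}$ is orthogonal and in particular $K$-linearly independent, the set $\{1, x_1, \ldots, x_n\}$ is a $K$-basis of $[1, x_1, \ldots, x_n]$, so the rule $1 \mapsto 1$, $x_i \mapsto y_i$ extends to a well-defined $K$-linear map $T$ into $K^{\lor}$. I would verify isometry on a general element $z = \lambda_0 + \sum_i \lambda_i x_i$. The easy estimate is
\begin{align*}
  |z - T(z)| = \Bigl|\sum_i \lambda_i (x_i - y_i)\Bigr| \le \max_i |\lambda_i| \cdot d(x_i, K),
\end{align*}
while orthogonality of $\{\pi(x_i)\}$ in $K^{\lor}/K$ yields the matching lower bound $d(z, K) = d(\sum_i \lambda_i x_i, K) = \max_i |\lambda_i| \cdot d(x_i, K)$. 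If some $\lambda_i \neq 0$ ($i \ge 1$), then $z \notin K$, so $d(z, K)$ is not attained; taking $\mu = 0$ gives $|z| > d(z, K) \ge |z - T(z)|$, and the ultrametric law forces $|T(z)| = |z|$. The case where all $\lambda_i = 0$ for $i \ge 1$ is trivial. Once $T$ is an isometry, using $d(x_i, K) = d(y_i, K)$ (again by non-attainment and $|x_i - y_i| \le d(x_i, K)$) I would read off orthogonality of $\{\pi(y_i)\}$ from
\begin{align*}
  d\Bigl(\sum_i \lambda_i y_i, K\Bigr) = d\Bigl(\sum_i \lambda_i x_i, K\Bigr) = \max_i |\lambda_i| \cdot d(y_i, K).
\end{align*}

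For the general case I would use the remark following the definition of $\sim$ to choose, for each $i$, elements $\lambda_i, \mu_i \in K$ with $\lambda_i \neq 0$ and $|\lambda_i x_i + \mu_i - y_i| < d(y_i, K)$. Set $z_i := \lambda_i x_i + \mu_i$. Proposition \ref{prph5} gives $d(z_i, K) = d(y_i, K)$, so $|z_i - y_i| \le d(z_i, K)$, and $\pi(z_i) = \lambda_i \pi(x_i)$ makes $\{\pi(z_i)\}$ orthogonal. Since $[1, z_1, \ldots, z_n] = [1, x_1, \ldots, x_n]$ as normed $K$-subspaces of $(K^{\lor}, |\cdot|)$, applying the explicit case to the pair $(z_i, y_i)$ produces the isometry $[1, x_1, \ldots, x_n] \to [1, y_1, \ldots, y_n]$ and transfers orthogonality to $\{\pi(y_i)\}$.

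The only delicate point is the isometry estimate in the explicit case: the upper bound on $|z - T(z)|$ and the lower bound on $d(z, K)$ happen to coincide, and it is the non-attainment of $d(z, K)$ that promotes the resulting inequality to a strict one, which is precisely what the ultrametric law needs to conclude $|T(z)| = |z|$. Everything else is affine bookkeeping made possible by Proposition \ref{prph5} and the remark after the definition of $\sim$.
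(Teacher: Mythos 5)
Your proof is correct and follows essentially the same route as the paper: reduce the general case to the case $|x_i-y_i|\le d(x_i,K)$ via affine substitutions inside each $\sim$-class (justified by the remark after the definition of $\sim$ and Proposition \ref{prph5}), then verify the isometry by combining orthogonality of $\{\pi(x_i)\}$ with non-attainment of $d(z,K)$ and the ultrametric inequality. The only cosmetic difference is that you replace all coordinates simultaneously where the paper swaps one $x_i$ for $y_i$ at a time.
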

\begin{proof}
  We may assume $x_2 = y_2, \cdots, x_n = y_n$ and $|x_1 - y_1| \le d(x_1, K)$. By assumption, we have $d(x_1,[1, x_2, \cdots, x_n]) = d(x_1,K)$. Since $d(x_1, [1,x_2, \cdots, x_n])$ is not attained, for each $\lambda, \lambda_2, \cdots, \lambda_n \in K$, we have
  \begin{align*}
    |\lambda + x_1 + \lambda_2 x_2 + \cdots + \lambda_n x_n| = |\lambda + y_1 + \lambda_2 x_2 + \cdots + \lambda_n x_n|.
  \end{align*}
  Thus, the map
  \begin{align*}
    ([1, x_1, \cdots, x_n], |\cdot|) &\to ([1, y_1, x_2, \cdots, x_n], |\cdot|) \\
    1 &\mapsto 1 \\
    x_1 &\mapsto y_1 \\
    x_i &\mapsto x_i \ (2 \le i \le n)
  \end{align*}
  gives a linear isometry. Now, it is easy to see that $\{\pi(y_1),\pi(x_2), \cdots, \pi(x_n)\}$ is an orthogonal set, which completes the proof.
\end{proof}

\begin{thm}
  Let $n \in \mathbb{N}$ and let $x_1, \cdots, x_n \in K^{\lor} \setminus K$. Let $\pi : K^{\lor} \to K^{\lor}/K$ be the canonical quotient map. Suppose that $\{\pi(x_1), \cdots, \pi(x_n)\}$ is an orthogonal set. Then for each $n$-dimensional subspace $F$ of $([1, x_1, \cdots, x_n], |\cdot|)$, there exist $y_1, \cdots, y_{n-1} \in [1, x_1, \cdots, x_n]$ such that 
  \begin{align*}
    F \cong ([1, y_1, \cdots, y_{n-1}], |\cdot|).
  \end{align*}
\end{thm}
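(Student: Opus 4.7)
The plan is to split into two cases according to whether $1 \in F$. If $1 \in F$, I extend $\{1\}$ to a $K$-basis $\{1, y_1, \ldots, y_{n-1}\}$ of $F$; since $y_i \in F \subseteq [1, x_1, \ldots, x_n]$ and $F = [1, y_1, \ldots, y_{n-1}]$ as $K$-subspaces of $K^\lor$ with the same inherited norm, the conclusion follows directly.

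If $1 \notin F$, then $F$ is a hyperplane of $E := [1, x_1, \ldots, x_n]$ transverse to $K \cdot 1$, so I can write $F = \ker \phi$ for some $\phi \in E'$ with $\phi(1) = 1$. Setting $a_i := \phi(x_i) \in K$, I obtain $F = [x_1 - a_1, \ldots, x_n - a_n]$. Since $\pi(x_i - a_i) = \pi(x_i)$, the orthogonality of $\{\pi(x_i - a_i)\}$ in $K^\lor/K$ is preserved, and $\{1, x_1 - a_1, \ldots, x_n - a_n\}$ spans the same $E$. After relabeling, I may therefore assume $F = [x_1, \ldots, x_n]$ with $\{\pi(x_i)\}$ orthogonal. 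Because $V_K = V_{K^\lor}$, every nonzero element of $F$ has norm in $V_K$; rescaling $x_1$, I assume $|x_1| = 1$. Multiplication by $1/x_1$ is then a $K$-linear isometry of $F$ onto $[1, x_2/x_1, \ldots, x_n/x_1] \subseteq K^\lor$, and it remains to replace the $K^\lor$-elements $x_{i+1}/x_1$ by elements $y_i \in E$ in the same $\sim$-equivalence class (which is permitted by Lemma~\ref{syslem1}, provided $\{\pi(y_1), \ldots, \pi(y_{n-1})\}$ remains orthogonal in $K^\lor/K$).

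The hardest step will be this final construction: exhibiting suitable $y_i \in E$ which are $\sim$-equivalent to the $K^\lor$-elements $x_{i+1}/x_1$, and whose images under $\pi$ are jointly orthogonal. The characterization of non-orthogonality via $\sim$ supplied by Theorem~\ref{thmh4}, combined with the orthogonality assumption on the original $\{\pi(x_1), \ldots, \pi(x_n)\}$, should provide the structure needed to carry this out.
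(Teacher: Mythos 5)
Your reduction is sound as far as it goes, and it mirrors the paper's own opening move: dispose of the case $1 \in F$, write $F = [x_1 - a_1, \cdots, x_n - a_n]$ in the remaining case, and divide by one element so as to land in a space of the form $[1, z_2, \cdots, z_n]$ with $z_i \in K^{\lor}$. However, the proposal stops exactly where the content of the theorem begins, and there are two genuine gaps. The first concerns orthogonality: to invoke Lemma \ref{syslem1} you need $\{\pi(x_2/x_1), \cdots, \pi(x_n/x_1)\}$ to be orthogonal, and this does \emph{not} follow from the orthogonality of $\{\pi(x_1), \cdots, \pi(x_n)\}$. One computes $d(x_i/x_1, K) = |x_1|^{-1} d(x_i, [x_1])$ with $d(x_i, [x_1]) = \max\{d(x_i, K),\ |x_i|\, d(x_1, K)/|x_1|\}$, which can strictly exceed $d(x_i, K)$; taking, say, $x_3 = x_2 - w$ with $|w| < |x_2| = |x_3|$, $\{\pi(x_1), \pi(x_2), \pi(w)\}$ orthogonal, $d(x_1,K)/|x_1|$ close to $1$ and the other relative distances small, one gets $d(x_2 - x_3, [x_1]) < \max\{d(x_2, [x_1]), d(x_3, [x_1])\}$, so the images of $x_2/x_1$ and $x_3/x_1$ are not orthogonal. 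Since Lemma \ref{syslem1} forces orthogonality to be preserved under $\sim$, in such a case \emph{no} choice of $y_i \sim x_{i+1}/x_1$ with $\{\pi(y_i)\}$ orthogonal exists, so your plan cannot be completed without first changing the spanning set of $F$. This is exactly why the paper does not divide by an arbitrary basis element: it first uses Corollary \ref{syscor1} to select $y_1', \cdots, y_{n-1}' \in F$ whose images in $F/[y']$ are orthogonal, and only then divides by $y'$.

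The second gap is the one you defer, and it is the heart of the matter: producing $y_i \in [1, x_1, \cdots, x_n]$ with $y_i \sim x_{i+1}/x_1$ is not a routine consequence of Theorem \ref{thmh4}. The paper devotes the first and larger half of its proof to establishing precisely this as a standalone claim: if $[\pi(x), \pi(y)]$ has an orthogonal base, then some $z \in [1, x, y]$ satisfies $z \sim y/x$. The argument there is not formal; after replacing $y$ by $y' = y + ax$ with $y' \nsim x$, it applies Corollary \ref{syscor1} to $[1, 1/x, y'/x]$ to find $\lambda \in K$ with $1/x \nsim (y'+\lambda)/x$, and then proves the key estimate
\begin{align*}
  d\Bigl(\frac{y' + \lambda}{x}, [1, \tfrac{1}{x}, y']\Bigr) \le d(\tfrac{1}{x},K)\, d(y',K) < d\Bigl(\frac{y' + \lambda}{x}, K\Bigr),
\end{align*}
which is what forces an element of $[1, 1/x, y']$ into the equivalence class of $(y'+\lambda)/x$; two further applications of Theorem \ref{thmh3} and Lemma \ref{syslem1} transport this back to $y'/x$. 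The orthogonality of $[\pi(x), \pi(y)]$ is used essentially in this construction, so any completion of your argument must exploit it rather than merely cite Theorem \ref{thmh4}. As written, the proposal is a correct outline of the easy reduction together with an acknowledgment that the main lemma is missing; it does not yet constitute a proof.
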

\begin{proof}
  First, we shall prove that for each $x, y \in K^{\lor} \setminus K$ such that $[\pi(x), \pi(y)]$ is a $2$-dimensional normed space with an orthogonal base, there exists $z \in [1,x,y]$ such that $z \sim y/x$. Indeed, let $a \in K$ be such that $y + a x \nsim x$, and put $y' = y + a x$. It follows from $([1, x, y'], |\cdot|) \cong ([1, \tfrac{1}{x}, \tfrac{y'}{x}], |\cdot|)$ and Corollary \ref{syscor1} that $[\pi(1/x),\pi(y'/x)]$ has an orthogonal base. Hence, there exists $\lambda \in K$ such that 
  \begin{align*}
    \frac{1}{x} \nsim \frac{y' + \lambda}{x}.
  \end{align*}
  In particular, we have
  \begin{align*}
    d(\frac{y' + \lambda}{x}, K) = d(\frac{y' + \lambda}{x},[1, \frac{1}{x}]) = \frac{d(y',[1,x])}{|x|} = \frac{d(y',K)}{|x|}.
  \end{align*} 
  From this equality, we obtain
  \begin{align*}
    d(\frac{y' + \lambda}{x},[1,\frac{1}{x},y']) \le d(\frac{1}{x},K)d(y',K) = \frac{d(x,K)}{|x|} \frac{d(y',K)}{|x|} < d(\frac{y' + \lambda}{x}, K).
  \end{align*} 
  Therefore, there exists $w \in [1,1/x,y']$ such that 
  \begin{align*}
    w \sim \frac{y' + \lambda}{x}.
  \end{align*}
Then by Theorem \ref{thmh3} and Lemma \ref{syslem1}, we have $z' \sim y'/x$ for some $z' \in [1,1/x,w]$. Since $[1,1/x,w] \subseteq [1,1/x, y']$, we have $z \sim y'/x$ for some $z \in [1,x,y'] = [1,x,y]$, again by Theorem \ref{thmh3} and Lemma \ref{syslem1}. \par
Now, let $F$ be an $n$-dimensional subspace of $([1, x_1, \cdots, x_n], |\cdot|)$. To prove the theorem, we may assume $K \cap F = \{0\}$. Let $y' \in F$ be a non-zero element. Then by Corollary \ref{syscor1}, there exist $y_1', \cdots, y_{n-1}' \in F$ such that $\{\pi'(y_1'), \cdots, \pi'(y_{n-1}')\}$ is an orthogonal set where $\pi' : F \to F/[y']$ is the canonical quotient map. Obviously, $\{\pi(y_1'/y'), \cdots, \pi(y_{n-1}'/y')\}$ is also an orthogonal set. From what has been proved, for each $i = 1, \cdots, n-1$, there exists an element $y_i$ of $[y', y_i']$, a subspace of $[1, x_1, \cdots, x_n]$, such that $y_i \sim y_i'/y'$. Finally, by Theorem \ref{systhm1}, we have
\begin{align*}
  F \cong ([1, \frac{y_1'}{y'}, \cdots, \frac{y_{n-1}'}{y'}], |\cdot|) \cong ([1, y_1, \cdots, y_{n-1}], |\cdot|).
\end{align*}
\end{proof}

\begin{cor} \label{syscor2}
  Let $n \in \mathbb{N}$ and let $x_1, \cdots, x_n, y_1, \cdots, y_n \in K^{\lor} \setminus K$. Let $\pi : K^{\lor} \to K^{\lor}/K$ be the canonical quotient map. Suppose that $\{\pi(x_1), \cdots, \pi(x_n)\}$ is an orthogonal set and $x_i \sim y_i$ for each $i = 1, \cdots, n$. Then for each $x \in [1, x_1, \cdots, x_n] \setminus K$, there exists $y \in [1, y_1, \cdots, y_n]$ such that $x \sim y$.
\end{cor}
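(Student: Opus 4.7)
The plan is to construct a single linear isometry $\Phi \colon [1, x_1, \ldots, x_n] \to [1, y_1, \ldots, y_n]$ via Lemma \ref{syslem1} and then show that $x \sim \Phi(x)$ for every $x \in [1, x_1, \ldots, x_n] \setminus K$; taking $y := \Phi(x)$ will be the required witness.

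First I would build $\Phi$. Using $x_i \sim y_i$ and the remark following Definition 2.1, pick $\lambda_i, \mu_i \in K$ with $\lambda_i \neq 0$ and $|y_i - (\lambda_i x_i + \mu_i)| < d(y_i, K)$. Setting $x_i' := \lambda_i x_i + \mu_i$, we have $[1, x_1', \ldots, x_n'] = [1, x_1, \ldots, x_n]$, and $\{\pi(x_1'), \ldots, \pi(x_n')\} = \{\lambda_1 \pi(x_1), \ldots, \lambda_n \pi(x_n)\}$ is still an orthogonal set in $K^{\lor}/K$. Proposition \ref{prph5} gives $|\lambda_i| = d(y_i, K)/d(x_i, K)$, so
\begin{equation*}
  |x_i' - y_i| < d(y_i, K) = |\lambda_i|\, d(x_i, K) = d(x_i', K).
\end{equation*}
Lemma \ref{syslem1} then produces a linear isometry $\Phi \colon [1, x_1, \ldots, x_n] \to [1, y_1, \ldots, y_n]$ with $\Phi(1) = 1$ and $\Phi(x_i') = y_i$.

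Next, fix $x \in [1, x_1, \ldots, x_n] \setminus K$ and write $x = \alpha + \sum_{i=1}^n \beta_i x_i'$ with $\alpha, \beta_i \in K$. Define $y := \Phi(x) = \alpha + \sum_i \beta_i y_i \in [1, y_1, \ldots, y_n]$. Since $x \notin K$, at least one $\beta_i$ is nonzero, and orthogonality of $\{\pi(x_1'), \ldots, \pi(x_n')\}$ yields
\begin{equation*}
  d(x, K) = \|\pi(x)\| = \max_{i}\, |\beta_i|\, d(x_i', K).
\end{equation*}
From $y - x = \sum_i \beta_i (y_i - x_i')$, every index with $\beta_i \neq 0$ contributes $|\beta_i (y_i - x_i')| < |\beta_i|\, d(x_i', K) \le d(x, K)$, while indices with $\beta_i = 0$ contribute $0 < d(x, K)$. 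Taking the ultrametric max gives $|y - x| < d(x, K)$; moreover $d(y, K) = d(x, K)$ since $\Phi$ is a $K$-linear isometry fixing $K$. Thus $x \in B(y, d(y, K))$, i.e., $x \sim y$.

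The one delicate point is preserving the strict inequality $|y - x| < d(x, K)$ after passing to the max; this is exactly where the orthogonality hypothesis on $\{\pi(x_1), \ldots, \pi(x_n)\}$ is essential, since it pins down $d(x, K)$ as $\max_i |\beta_i|\, d(x_i', K)$, ensuring each summand is strictly dominated and so is the maximum.
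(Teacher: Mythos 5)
Your proposal is correct and follows the route the paper intends: the corollary is stated without proof immediately after Lemma \ref{syslem1}, whose isometry $\Phi$ with $\Phi(1)=1$ is exactly the tool you build (after the standard normalization $x_i' = \lambda_i x_i + \mu_i$ justified by Proposition \ref{prph5}). Your direct verification that $|\Phi(x) - x| < d(x,K) = d(\Phi(x),K)$ is a clean substitute for the alternative of restricting $\Phi$ to $[1,x]$ and citing Theorem \ref{thmh3}, and all the strict-inequality bookkeeping checks out.
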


\subsection{$\dim_{K^{\lor}} E^{\lor} = \dim_K E - 1$}

Let $E$ be an $(n + 1)$-dimensional normed space that satisfies $\dim_{K^{\lor}} E^{\lor} = n$. Then there exist $x_1, \cdots, x_n \in K^{\lor} \setminus K$ and $t_1, \cdots, t_n \in \mathbb{R}_{>0}$ for which
\begin{align*}
    E \cong [(1,0,\cdots,0), (0,1,\cdots, 0), \cdots, (0,0,\cdots,1), (x_1, \cdots, x_n)]
\end{align*}
  where the last normed space is an $(n+1)$-dimensional subspace of $((K^{\lor})^n, |\cdot|_{t_1} \times \cdots \times |\cdot|_{t_n})$.

\begin{thm} \label{systhm2}
  Let $n \in \mathbb{N}$, $x_1, \cdots, x_n \in K^{\lor} \setminus K$, and $t_1, \cdots, t_n \in \mathbb{R}_{>0}$. For each $i = 1, \cdots, n$, put $r_i = d(x_i, K)$. Let $\pi : K^{\lor} \to K^{\lor} / K$ be the canonical quotient map. We set 
  \begin{align*}
    (E, \|\cdot\|) := [(1,0,\cdots,0), (0,1,\cdots, 0), \cdots, (0,0,\cdots,1), (x_1, \cdots, x_n)],  
  \end{align*}
  an $(n+1)$-dimensional subspace of $((K^{\lor})^n, |\cdot|_{t_1} \times \cdots \times |\cdot|_{t_n})$. Then we have the following two statements. \\
  $(1)$ If $E$ is indecomposable, then we have $r_1 t_1 = \cdots = r_n t_n$ and that $\{\pi(x_1), \cdots, \pi(x_n)\}$ is an orthogonal set. \\
  $(2)$ Suppose $r_1 t_1 = \cdots = r_n t_n =: t$ and that $\{\pi(x_1), \cdots, \pi(x_n)\}$ is an orthogonal set. Let $e_1, \cdots, e_n, e_{n+1} \in E'$ be the dual basis of $(1,0,\cdots,0), \cdots, (0,0,\cdots,1), (x_1, \cdots, x_n)$. Then the map
  \begin{align*}
    E' &\to ([1, x_1, \cdots, x_n], |\cdot|_{1/t}) \\
    e_i &\mapsto \left\{ 
      \begin{aligned}
      - x_i \ &; \ 1 \le i \le n \\
      1 \ &; \ i = n+1 
      \end{aligned}
    \right.
  \end{align*} 
  gives a linear isometry. In particular, $E$ is indecomposable and all $n$-dimensional subspaces of $E$ are isometrically isomorphic to an $n$-dimensional normed space 
  \begin{align*}
  (K^n, |\cdot|_{t_1} \times \cdots \times |\cdot|_{t_n}).
  \end{align*}
\end{thm}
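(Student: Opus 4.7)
The statement splits naturally into part~$(2)$, the central duality computation, and part~$(1)$, which I will deduce by contrapositive. Under the hypotheses of $(2)$, I compute the dual norm directly; the indecomposability and the classification of $n$-dimensional subspaces then fall out from Corollary~\ref{syscor1} applied to the target. For $(1)$, I exhibit an orthogonal direct-sum decomposition of $E$ whenever either the $r_jt_j$ are not all equal or $\{\pi(x_1),\ldots,\pi(x_n)\}$ fails to be orthogonal.

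For $(2)$, the map $\Phi:E'\to([1,x_1,\ldots,x_n],|\cdot|_{1/t})$ sending $e_i\mapsto-x_i$ (for $i\le n$) and $e_{n+1}\mapsto1$ is clearly $K$-linear, and the target has dimension $n+1$ because the orthogonality of $\{\pi(x_j)\}$ forces $1,x_1,\ldots,x_n$ to be $K$-linearly independent. To verify that $\Phi$ is an isometry I evaluate $\|\sum c_ie_i\|_{E'}$ as the supremum of
\begin{align*}
\frac{|c_{n+1}a_{n+1}+\sum_{j\le n}c_ja_j|}{\max_j|a_j+a_{n+1}x_j|t_j}
\end{align*}
over all $v=\sum_{j\le n}a_j(\mathrm{unit}_j)+a_{n+1}(x_1,\ldots,x_n)\in E$. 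The subcase $a_{n+1}=0$ reduces to the dual of $(K^n,|\cdot|_{t_1}\times\cdots\times|\cdot|_{t_n})$ and contributes $\max_j|c_j|/t_j$. For $a_{n+1}\ne0$, rescaling to $a_{n+1}=1$ and setting $b_j:=a_j+x_j$, the numerator rewrites as $|y+\sum c_jb_j|$ with $y:=c_{n+1}-\sum c_jx_j=\Phi(\sum c_ie_i)$; the ultrametric inequality together with $|b_j|\ge r_j$ and $r_jt_j=t$ bounds the ratio by $\max(|y|/t,\max_j|c_j|/t_j)$. The orthogonality hypothesis yields $|y|\ge d(\sum c_jx_j,K)=\max_j|c_j|r_j$, hence $|y|/t\ge\max_j|c_j|/t_j$, and the overall bound collapses to $|y|/t$. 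Equality is attained in the limit by choosing $a_j$ with $|a_j+x_j|\to r_j$: when $|y|>\max_j|c_j|r_j$ the ultrametric forces $|y+\sum c_jb_j|\to|y|$, and when $|y|=\max_j|c_j|r_j$ the value $|y|/t=\max_j|c_j|/t_j$ is already attained in the $a_{n+1}=0$ regime.

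The remaining assertions of $(2)$ now follow formally. The target $([1,x_1,\ldots,x_n],|\cdot|_{1/t})\subseteq(K^{\lor},|\cdot|_{1/t})$ has $\dim_{K^{\lor}}=1$; a nontrivial direct-sum decomposition would force $\dim_{K^{\lor}}\ge2$, so the target is indecomposable and hence so is $E\cong(E')'$. For any $n$-dimensional subspace $D\subseteq E$, duality gives $D'\cong E'/D^{\perp}$ with $D^{\perp}$ one-dimensional in $E'$; by Corollary~\ref{syscor1} applied to $E'$, all such quotients of $E'$ are pairwise isometric, hence by double duality so are all $n$-dimensional subspaces of $E$. Since $[e_1,\ldots,e_n]\subseteq E$ is visibly isometric to $(K^n,|\cdot|_{t_1}\times\cdots\times|\cdot|_{t_n})$, this identifies the common isometry type.

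For $(1)$, I proceed by contrapositive. In the first case, after reindexing, $r_nt_n<s:=\max_{j<n}r_jt_j$. Since $r_n$ is not attained, I choose $\mu\in K$ with $|x_n-\mu|t_n<s$ and replace $e_{n+1}$ by $e_{n+1}-\mu e_n$; the orthogonal direct-sum decomposition $E=[e_1,\ldots,e_{n-1},e_{n+1}]\oplus_{\perp}[e_n]$ follows from the ultrametric inequality, with the critical subcase $|a_n|=|a_{n+1}||x_n-\mu|$ handled by the estimate $|a_j+a_{n+1}x_j|t_j\ge|a_{n+1}|r_jt_j=|a_{n+1}|s>|a_{n+1}||x_n-\mu|t_n$ for some index $j<n$ realizing the maximum. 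In the second case, all $r_jt_j=t$ but $\{\pi(x_j)\}$ fails to be orthogonal, producing constants $c_1,\ldots,c_n\in K$ (not all zero) with $d(\sum c_ix_i,K)<\max_j|c_j|r_j$; the decomposition is built from the $K$-linear functional $\phi:E\to K^{\lor}$, $\phi(v):=\sum_{i\le n}c_iv_i$, whose kernel and a suitable isometric section provide the two summands. The main obstacle is this second case: unlike the first, the failure of orthogonality cannot be absorbed into a $K$-linear change of basis of $E$ within its presentation (the offending coefficients would have to multiply non-$K$ elements), so the decomposition must be constructed externally, and the isometric nature of the splitting must be verified by directly comparing norms on the two summands and exploiting the smallness of $d(\sum c_ix_i,K)$ to absorb the error terms in the relevant estimates.
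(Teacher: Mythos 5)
Your part $(2)$ is correct and follows a genuinely different, more elementary route than the paper's. You compute $\|\sum_i c_ie_i\|_{E'}$ directly as a supremum over $E$, splitting on $a_{n+1}=0$ versus $a_{n+1}\neq 0$ and using the orthogonality of $\{\pi(x_1),\ldots,\pi(x_n)\}$ to get $|y|\ge\max_j|c_j|r_j$ (indeed $>$, since $d(\sum_jc_jx_j,K)$ is never attained when some $c_j\neq 0$), which collapses the two regimes to the single value $|y|/t$. The paper instead embeds $E'$ isometrically into $(K^{\lor},|\cdot|_{1/t})\oplus H$ with $H$ spherically complete, controls the second components $w_i$ of the images of the $e_i$ through the two-dimensional subspaces $[e_{n+1},e_i]$ and Theorem \ref{thmh10}, and then invokes Lemma \ref{syslem1}; your computation bypasses all of that machinery. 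Your deduction of indecomposability and of the common isometry class of the $n$-dimensional subspaces (via $\dim_{K^{\lor}}=1$ for the target, Corollary \ref{syscor1} applied to $E'$, and double duality) is also sound and agrees in substance with the paper, as does your first case of part $(1)$ (some $r_jt_j$ strictly below the maximum).

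The genuine gap is the second case of part $(1)$: all $r_jt_j$ equal but $\{\pi(x_1),\ldots,\pi(x_n)\}$ not orthogonal. You only say that the decomposition is ``built from'' the kernel of $\phi(v)=\sum_ic_iv_i$ ``and a suitable isometric section'' and that the isometric nature of the splitting ``must be verified''; no splitting is produced and no verification is given, and you flag this yourself as the main obstacle. This is precisely where the real work lies, and the missing idea is the paper's minimality device: choose $m$ minimal so that (after relabelling) $\{\pi(x_1),\ldots,\pi(x_m)\}$ is non-orthogonal while every $(m-1)$-element subset of $\{\pi(x_1),\ldots,\pi(x_n)\}$ is orthogonal. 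Minimality forces, for the dependency coefficients $b_j$ (normalized so that $b_1=1$), the equalities $\|b_i\pi(x_i)\|=\max_{j\neq i}\|b_j\pi(x_j)\|$ for every $i$, hence $|b_i|r_i=r_1$ and $|b_i|t_1=t_i$. Only with these size relations is $\{f_1,\,f_2-b_2f_1,\ldots,f_m-b_mf_1,\,f_{m+1},\ldots,f_n\}$ an orthogonal set, and only then does the norm of $(x_1+b,x_2,\ldots,x_n)+\sum_j\lambda_j(f_j-b_jf_1)+\cdots$ reduce to $\max_{2\le j\le n}|x_j+\lambda_j|_{t_j}$, yielding $E=[f_1]\oplus[\cdots]$. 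An arbitrary non-orthogonality witness $\sum_ic_ix_i$ with $d(\sum_ic_ix_i,K)<\max_i|c_i|r_i$ carries no such control on the relative sizes $|c_i|t_1$ versus $t_i$, so the norm comparison you gesture at cannot be closed as stated; moreover the kernel of your $\phi$ on $E$ has codimension $2$ whenever $\sum_ic_ix_i\notin K$, so it is not even the right shape for one of the two summands. As written, this half of $(1)$ remains unproved.
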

\begin{proof}
  Put $f_1 := (1,0,\cdots,0), \cdots, f_n := (0,0,\cdots,1) \in E$. First, suppose that there exist $i, j$, $i \neq j$, such that $r_i t_i \neq r_j t_j$. Set 
  \begin{align*}
   I := \{1\le k \le n : r_k t_k = \max_{1 \le l \le n} r_l t_l\}.
  \end{align*}
  and let
  \begin{align*}
    I = \{i_1, \cdots, i_m\}, \ \{1,\cdots,n\} \setminus I = \{j_1, \cdots, j_{n-m}\}
  \end{align*}
  with $1 \le m < n$. Then for each $k = j_1, \cdots, j_{n-m}$, we can choose $a_{k} \in K$ such that $|x_k - a_k|_{t_k} < \max_{1 \le l \le n} r_l t_l$. Now, we put
  \begin{align*}
    y_k := \left\{ 
      \begin{aligned}
        x_k \ &; \ k \in I \\
        x_k - a_k \ &; \ k \notin I
      \end{aligned}
    \right.
  \end{align*}
  and
  \begin{align*}
    D_1 := [f_{i_1}, \cdots, f_{i_m}, (y_1, \cdots, y_n)], D_2 := [f_{j_1}, \cdots, f_{j_{m-n}}].
  \end{align*}
  Then it is easily seen that
  \begin{align*}
    E = D_1 \oplus D_2
  \end{align*}
  and therefore $E$ is decomposable. \par
  Secondly, suppose $r_1 t_1 = \cdots = r_n t_n$ and that $\{\pi(x_1), \cdots, \pi(x_n)\}$ is not an orthogonal set. We shall prove that $E$ is decomposable. Without loss of generality, we may assume that there exists $m$, $2 \le m \le n$, such that $\{\pi(x_1), \cdots, \pi(x_m)\}$ is not an orthogonal set and every subset of $\{\pi(x_1), \cdots, \pi(x_n)\}$ consisting of $m-1$ elements is an orthogonal set. Then there exist $b_1, \cdots, b_m \in K$ with $b_1 = 1$ such that
  \begin{align*}
    \|\sum_{j = 1}^{m} b_j \pi(x_j)\| < \|\pi(x_1)\|.
  \end{align*}
  Since $\{\pi(x_2), \cdots, \pi(x_m)\}$ is an orthogonal set, we obtain the equality
  \begin{align*}
    \|\pi(x_1)\| = \|\sum_{j = 2}^{m} b_j \pi(x_j)\| = \max_{2 \le j \le m} \|b_j \pi(x_j)\|.
  \end{align*}
  Let $i = 2, \cdots, m$. Then by the choice of $m$, we have 
  \begin{align*}
    \|\sum_{\substack{1 \le j \le m \\ j \neq i}} b_j \pi(x_j)\| = \max_{\substack{1 \le j \le m \\ j \neq i}} \|b_j \pi(x_j)\| \ge \|\pi(x_1)\| > \|\sum_{j = 1}^{m} b_j \pi(x_j)\|,
  \end{align*} 
  and therefore,
  \begin{align*}
    \|b_i \pi(x_i)\| = \|\sum_{\substack{1 \le j \le m \\ j \neq i}} b_j \pi(x_j)\| = \max_{\substack{1 \le j \le m \\ j \neq i}} \|b_j \pi(x_j)\|.
  \end{align*}
  Consequently, for each $i = 2, \cdots, m$, we have
  \begin{align*}
    |b_i| \cdot r_i = |b_1| \cdot r_1 = r_1,
  \end{align*}
  and hence
  \begin{align*}
    |b_i| \cdot t_1 = t_i.
  \end{align*}
  Now, let $b \in K$ be such that
  \begin{align*}
    |x_1 + \sum_{j = 2}^{m} b_j x_j + b| < r_1
  \end{align*}
  and set
  \begin{align*}
    d_j = f_j - b_j f_1 \ \text{for $j = 2, \cdots, m$}.
  \end{align*}
  We note that $\{f_1, d_2, \cdots, d_m, f_{m + 1}, \cdots, f_n\}$ is an orthogonal set from what has been proved. Moreover, for each $\lambda_1, \cdots, \lambda_n \in K$, we have
  \begin{align*}
    &\|(x_1 + b, x_2, \cdots, x_n) + \sum_{j = 2}^{m} \lambda_j d_j + \sum_{j = m + 1}^{n} \lambda_j f_j\| \\
    = \ &\|(-\sum_{j = 2}^{m} b_j x_j, x_2, \cdots, x_n) + \sum_{j = 2}^{m} \lambda_j d_j + \sum_{j = m + 1}^{n} \lambda_j f_j\| \\
    = \ &|\sum_{j = 2}^{m} b_j(x_j + \lambda_j)|_{t_1} \lor \max_{2 \le j \le n} |x_j + \lambda_j|_{t_j} \\ 
    = \ &\max_{2 \le j \le n} |x_j + \lambda_j|_{t_j}.
  \end{align*}
  Therefore, for each $\lambda \in K$, we have
  \begin{align*}
    &\|(x_1 + b, x_2, \cdots, x_n) + \sum_{j = 2}^{m} \lambda_j d_j + \sum_{j = m + 1}^{n} \lambda_j f_j + \lambda f_1\| \\
    \ge \ &\|(x_1 + b, x_2, \cdots, x_n) + \sum_{j = 2}^{m} \lambda_j d_j + \sum_{j = m + 1}^{n} \lambda_j f_j\|.
  \end{align*}
  Finally, we obtain
  \begin{align*}
    E = [f_1] \oplus [d_2, \cdots, d_m, f_{m + 1}, \cdots, f_{n}, (x_1 + b, x_2, \cdots, x_n)].
  \end{align*}
  Thus, $(1)$ is proved. \par
  Next, we shall prove $(2)$. Let us suppose $r_1 t_1 = \cdots = r_n t_n =: t$ and that $\{\pi(x_1), \cdots, \pi(x_n)\}$ is an orthogonal set. For each $i = 1, \cdots, n$, let
  \begin{align*}
    \pi_i : E \to E /[f_1, \cdots, f_{i-1},f_{i+1}, \cdots, f_n]
  \end{align*}
  be the canonical quotient map. Then for each $\lambda \in K$, we have
  \begin{align*}
    \|\lambda \pi_i(f_i) + \pi_i((x_1, \cdots, x_n))\| = |\lambda + x_i|_{t_i} \lor t = |\lambda + x_i|_{t_i}.
  \end{align*}
  Thus, the map
  \begin{align*}
    [e_{n+1},e_i] &\to ([1, x_i], |\cdot|_{1/t}) \\
    e_{n+1} &\mapsto 1 \\
    e_{i} &\mapsto - x_i
  \end{align*}
  gives a linear isometry by Theorem \ref{thmh10}. On the other hand, there exists a spherically complete normed space $(H,\|\cdot\|)$ and an isometry 
  \begin{align*}
    T : E' \to (K^{\lor},|\cdot|_{1/t}) \oplus (H,\|\cdot\|)
  \end{align*}
  such that $T(e_{n+1}) = (1,0)$. For each $i = 1, \cdots, n$, we put 
  \begin{align*}
    T(e_i) = (z_i, w_i) \ \text{with} \ z_i \in K^{\lor} \ \text{and} \ w_i \in H.
  \end{align*}
  Since $[e_{n+1},e_i]$ has no orthogonal base, we have
  \begin{align*}
    \|w_i\| \le \frac{d(z_i,K)}{t} \ \text{for each $i = 1, \cdots, n$}.
  \end{align*}
  Therefore, we have 
  \begin{align*}
  ([1, x_i], |\cdot|_{1/t}) \cong [e_{n+1},e_i] \cong ([1, z_i],|\cdot|_{1/t}) \ \text{and} \  |x_i + z_i| \le d(x_i, K)
  \end{align*}
  for each $i = 1, \cdots, n$. Hence by Lemma \ref{syslem1}, $\{\pi(z_1), \cdots, \pi(z_n)\}$ is an orthogonal set and the map
  \begin{align*}
    ([1, z_1, \cdots, z_n], |\cdot|_{1/t}) &\to ([1, x_1, \cdots, x_n], |\cdot|_{1/t}) \\
    1 &\mapsto 1 \\
    z_i &\mapsto - x_i \ (1 \le i \le n)
  \end{align*}
  gives a linear isometry. \par
  Now, we will prove
  \begin{align*}
    \|\lambda e_{n+1} + \lambda_1 e_1 + \cdots + \lambda_n e_n\| = |\lambda + \lambda_1 z_1 + \cdots + \lambda_n z_n|_{1/t}
  \end{align*}
  for each $\lambda, \lambda_1, \cdots, \lambda_n \in K$. Indeed, we have 
  \begin{align*}
    \|\lambda e_{n+1} + \lambda_1 e_1 + \cdots + \lambda_n e_n\| = |\lambda + \lambda_1 z_1 + \cdots + \lambda_n z_n|_{1/t} \lor \|\lambda_1 w_1 + \cdots + \lambda_n w_n\|.
  \end{align*}
  Moreover, for each $i = 1, \cdots, n$, we obtain
  \begin{align*}
    |\lambda + \lambda_1 z_1 + \cdots + \lambda_n z_n|_{1/t} &\ge \frac{|\lambda_i|}{t} d(\pi(z_i), [\pi(z_1), \cdots, \pi(z_{i-1}), \pi(z_{i+1}), \cdots, \pi(z_n)]) \\
    &= \frac{|\lambda_i|}{t} d(z_i,K) \\
    &\ge |\lambda_i| \|w_i\| = \|\lambda_i w_i\|.
  \end{align*}
  Finally, we have
  \begin{align*}
    \|\lambda_1 w_1 + \cdots + \lambda_n w_n\| \le \max_{1 \le i \le n} \|\lambda_i w_i\| \le |\lambda + \lambda_1 z_1 + \cdots + \lambda_n z_n|_{1/t}
  \end{align*}
  and 
  \begin{align*}
    \|\lambda e_{n+1} + \lambda_1 e_1 + \cdots + \lambda_n e_n\| = |\lambda + \lambda_1 z_1 + \cdots + \lambda_n z_n|_{1/t}.
  \end{align*}
  Therefore, the map
  \begin{align*}
    E' &\to ([1, x_1, \cdots, x_n], |\cdot|_{1/t}) \\
    e_i &\mapsto \left\{ 
      \begin{aligned}
      - x_i \ &; \ 1 \le i \le n \\
      1 \ &; \ i = n+1 
      \end{aligned}
    \right.
  \end{align*}
  gives a linear isometry. In particular, $E'$ is indecomposable, and hence $E$ is indecomposable. Moreover, by Theorem \ref{systhm1}, all $n$-dimensional subspaces of $E$ are isometrically isomorphic to the $n$-dimensional subspace $[f_1, \cdots, f_n]$, isometrically isomorphic to $(K^n, |\cdot|_{t_1} \times \cdots \times |\cdot|_{t_n})$. Thus, the proof is complete.
\end{proof}

\begin{cor}
  Let $E$ be an indecomposable $(n + 1)$-dimensional normed space such that $\dim_{K^{\lor}} E^{\lor} = n$. Then there exist $x_1, \cdots, x_n \in K^{\lor} \setminus K$ and $t_1, \cdots, t_n \in \mathbb{R}_{>0}$ such that $\{\pi(x_1), \cdots, \pi(x_n)\}$ is an orthogonal set and
  \begin{align*}
    E \cong [(1,0,\cdots, 0), \cdots, (0, \cdots, 0, 1), (x_1, \cdots, x_n)],
  \end{align*}
  a subspace of $((K^{\lor})^{n}, |\cdot|_{t_1} \times \cdots \times |\cdot|_{t_n})$.
\end{cor}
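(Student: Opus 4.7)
The plan is to read this corollary as a direct repackaging of Theorem \ref{systhm2}(1) together with the normal form discussed in the paragraph preceding that theorem.

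First I would invoke the embedding already recorded just before Theorem \ref{systhm2}: since $E$ is an $(n+1)$-dimensional normed space with $\dim_{K^\lor}E^\lor = n$, Theorem \ref{pthm1} applied to a maximal orthogonal subset of $E$ produces $x_1,\dots,x_n \in K^\lor\setminus K$ and $t_1,\dots,t_n \in \mathbb{R}_{>0}$ together with an isometric isomorphism
\begin{align*}
E \cong [(1,0,\dots,0),\dots,(0,\dots,0,1),(x_1,\dots,x_n)] \subseteq ((K^\lor)^n, |\cdot|_{t_1}\times\cdots\times|\cdot|_{t_n}).
\end{align*}
Here the coordinates $x_i$ must lie in $K^\lor\setminus K$ because otherwise one could subtract a $K$-multiple of the corresponding canonical unit vector and split off a direct summand, contradicting the assumption $\dim_{K^\lor}E^\lor = n$ (equivalently, the canonical unit vectors together with $(x_1,\dots,x_n)$ form a maximal orthogonal set, which forces each $x_i \notin K$).

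Next I would simply apply Theorem \ref{systhm2}(1) to this concrete model: the indecomposability of $E$ forces $\{\pi(x_1),\dots,\pi(x_n)\}$ to be orthogonal (and moreover $r_1 t_1 = \cdots = r_n t_n$, although the corollary does not record this last equality). This gives precisely the claimed conclusion.

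There is no real obstacle: the content of the corollary has already been absorbed into the statement and proof of Theorem \ref{systhm2}, so the argument amounts to combining the standard normal form with part (1) of that theorem. The only point that requires a sentence of care is justifying why each $x_i$ may be taken in $K^\lor\setminus K$ rather than in $K^\lor$; this is handled by the maximality of the chosen orthogonal set, since any $x_i \in K$ would allow a change of basis exhibiting $E$ as a nontrivial direct sum.
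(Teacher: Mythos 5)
Your proposal is correct and follows exactly the route the paper intends: the paper gives no explicit proof of this corollary, treating it as immediate from the normal form stated just before Theorem \ref{systhm2} together with part $(1)$ of that theorem, which is precisely your argument. One small correction: your justification that each $x_i \in K^{\lor} \setminus K$ misattributes the contradiction --- if some $x_i$ could be replaced by $0$, the resulting splitting $E \cong [e_i] \oplus [\,e_1, \cdots, \hat{e_i}, \cdots, e_n, (x_1, \cdots, 0, \cdots, x_n)\,]$ contradicts the \emph{indecomposability} hypothesis, not the condition $\dim_{K^{\lor}} E^{\lor} = n$ (which a decomposable space can perfectly well satisfy), and the parenthetical claim that the unit vectors together with $(x_1, \cdots, x_n)$ form an orthogonal set is false, since a maximal orthogonal set here has only $n$ elements. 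With that one sentence repaired, the argument is complete.
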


\begin{cor}
  Let $(E, \|\cdot\|)$ be an indecomposable normed space that satisfies $\dim_{K^{\lor}} E^{\lor} = \dim_K E - 1$. Then $E$ satisfies $(\mathrm{SE})$ if and only if $\| E \| \cap V_K = \emptyset$.
\end{cor}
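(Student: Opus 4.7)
The plan is to exploit the canonical model for $E$ given by the preceding corollary, then combine it with the duality of Theorem \ref{systhm2}(2) and the inductive criterion of Corollary \ref{syscor3}(2) applied to $E'$, reducing both conditions in the statement to the same numerical condition on the $t_i$.

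By the preceding corollary I may assume
\begin{align*}
E = [f_1, \dots, f_n, (x_1,\dots,x_n)] \subseteq ((K^\vee)^n, |\cdot|_{t_1}\times\cdots\times|\cdot|_{t_n}),
\end{align*}
with $f_i$ the canonical unit vectors and $\{\pi(x_i)\}$ orthogonal in $K^\vee/K$; set $r_i := d(x_i,K)$. Since $E$ is indecomposable, Theorem \ref{systhm2}(1) gives $t := r_1 t_1 = \cdots = r_n t_n$. The first step is to observe that $\|E\| \cap V_K = \emptyset$ if and only if $t_i \notin V_K$ for every $i$: if some $t_j \in V_K$ then $\|f_j\| = t_j \in V_K$; conversely, every nonzero norm in $E$ is of the form $\max_i t_i |y_i|$ with $y_i \in K^\vee$, hence equals $t_j |y_j|$ for some $j$ with $y_j \neq 0$, and lies in $t_j V_K$ since $V_{K^\vee} = V_K$, which avoids $V_K$ whenever $t_j \notin V_K$.

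Next, Theorem \ref{systhm2}(2) gives $E' \cong ([1,x_1,\dots,x_n], |\cdot|_{1/t})$, so in particular $\dim_{K^\vee}(E')^\vee = 1$. I then apply Corollary \ref{syscor3}(2) to $E'$ and identify $E''$ with $E$, obtaining that $E$ satisfies $(\mathrm{SE})$ if and only if $\|E'/[u]\| \cap V_K = \emptyset$ and $(E'/[u])'$ satisfies $(\mathrm{SE})$ for some (equivalently any) nonzero $u \in E'$. The second condition is automatic: by finite-dimensional duality $(E'/[u])'$ is isometric to $\ker u \subseteq E$, an $n$-dimensional subspace, and Theorem \ref{systhm2}(2) says every such subspace is isometric to $(K^n,|\cdot|_{t_1}\times\cdots\times|\cdot|_{t_n})$, which has an orthogonal base and so satisfies $(\mathrm{SE})$.

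To finish I choose $u$ corresponding to $1 \in [1,x_1,\dots,x_n]$ and compute: every nonzero element of $E'/[u]$ has norm $(1/t)\, d(\sum_i \lambda_i x_i, K)$, and orthogonality of $\{\pi(x_i)\}$ combined with $r_i = t/t_i$ collapses this to $\max_i |\lambda_i|/t_i$. Exactly as in the first step, this value lies in $V_K$ precisely when the index attaining the maximum satisfies $t_j \in V_K$, so $\|E'/[u]\| \cap V_K = \emptyset$ is once more equivalent to every $t_i \notin V_K$, matching the first equivalence and completing the proof. The only mildly delicate point is aligning the two norm computations; orthogonality of $\{\pi(x_i)\}$ makes both collapse to the same maximum over $i$, so no real obstacle arises.
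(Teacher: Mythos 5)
Your argument is correct and is exactly the route the paper intends: the paper's one-line proof cites Corollary \ref{syscor3} and Theorem \ref{systhm2}, and you instantiate precisely that, applying Corollary \ref{syscor3}(2) to $E'\cong([1,x_1,\dots,x_n],|\cdot|_{1/t})$ and using Theorem \ref{systhm2}(2) to dispose of the subspace condition. The explicit computation of $\|E'/[u]\|$ via orthogonality of $\{\pi(x_i)\}$ and $r_i=t/t_i$ is a valid filling-in of the details the paper leaves implicit.
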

\begin{proof}
  We can use Corollary \ref{syscor3} and Theorem \ref{systhm2}.
\end{proof}

By the reflexivity, we have the following.

\begin{cor} \label{syscor5}
  Let $x_1, \cdots, x_n \in K^{\lor} \setminus K$ and $t \in \mathbb{R}_{> 0}$, and let $\pi : K^{\lor} \to K^{\lor} / K$ be the canonical quotient map. Put $r_i = d(x_i, K)$ for each $i = 1, \cdots, n$. Set 
  \begin{align*}
     E := ([1, x_1, \cdots, x_n], |\cdot|_t)
  \end{align*}
  and 
  \begin{align*}
    F := [(1,0,\cdots,0), \cdots, (0, \cdots, 0, 1), (x_1, \cdots, x_n)] \subseteq ((K^{\lor})^n, |\cdot|_{\frac{1}{t r_1}} \times \cdots \times |\cdot|_{\frac{1}{t r_n}}).
  \end{align*}
  Let $e_1, \cdots, e_n, e_{n+1} \in E'$ be the dual basis of $x_1, \cdots, x_n, 1$. If $\{\pi(x_1), \cdots, \pi(x_n)\}$ is an orthogonal set, then the map $T : E' \to F$ defined by
  \begin{align*}
    T(e_i) = \left\{ 
      \begin{aligned}
      - f_i \ &; \ 1 \le i \le n \\
      (x_1, \cdots, x_n) \ &; \ i = n+1 
      \end{aligned}
    \right.
  \end{align*}
  is an isometry where $f_1, \cdots, f_n \in F$ are the canonical unit vectors.
\end{cor}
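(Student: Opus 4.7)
The plan is to apply Theorem \ref{systhm2}(2) to the space $F$ itself, and then dualize using the reflexivity of finite-dimensional normed spaces over $K$ (noted in the preliminaries, where $D' \cong E'/D^{\perp}$ is used implicitly).

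First, I would verify that $F$ satisfies the hypotheses of Theorem \ref{systhm2}(2). With the component norms $t_i := 1/(tr_i)$, the common-value condition $r_1 t_1 = \cdots = r_n t_n$ holds since each product equals $1/t$; the orthogonality of $\{\pi(x_1), \ldots, \pi(x_n)\}$ is already assumed. Hence Theorem \ref{systhm2}(2) yields an explicit linear isometry
\begin{align*}
S : F' \to ([1, x_1, \ldots, x_n], |\cdot|_t) = E
\end{align*}
sending the basis $\tilde{e}_1, \ldots, \tilde{e}_{n+1}$ of $F'$ dual to $f_1, \ldots, f_n, (x_1,\ldots,x_n)$ to $-x_i$ for $1 \le i \le n$ and to $1$ for $i = n+1$.

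Next, I would take the transpose $S^{\ast} : E' \to F''$, defined by $S^{\ast}(\phi)(\psi) := \phi(S(\psi))$. Since $S$ is a bijective linear isometry between finite-dimensional spaces, $S^{\ast}$ is also a bijective linear isometry. Composing with the canonical isometric isomorphism $F'' \cong F$ produces an isometric isomorphism $\widetilde{T} : E' \to F$. The corollary will be proved once I check that $\widetilde{T}$ agrees with the explicit map $T$ in the statement.

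This last verification is a direct computation on the dual bases. Writing $\widetilde{T}(e_i) = \sum_k a_k f_k + b\,(x_1,\ldots,x_n)$ for unknown $a_k, b \in K$, I would read off $a_k = \tilde{e}_k(\widetilde{T}(e_i))$ and $b = \tilde{e}_{n+1}(\widetilde{T}(e_i))$, and then use the defining identity $\tilde{e}_k(\widetilde{T}(e_i)) = e_i(S(\tilde{e}_k))$. A four-case analysis on whether $i, k$ equal $n+1$ collapses every pairing to $\pm \delta_{ik}$ or $0$ via the explicit formula for $S$, yielding $\widetilde{T}(e_i) = -f_i$ for $1 \le i \le n$ and $\widetilde{T}(e_{n+1}) = (x_1, \ldots, x_n)$, as required. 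The only mild obstacle is keeping the signs and indices of the duality pairing straight; there is no real new content beyond Theorem \ref{systhm2}(2) and finite-dimensional reflexivity.
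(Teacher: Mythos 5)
Your proposal is correct and is exactly the paper's intended argument: the paper offers no proof beyond the phrase ``By the reflexivity'' preceding the corollary, meaning precisely the application of Theorem \ref{systhm2}(2) to $F$ (where $r_i \cdot \tfrac{1}{t r_i} = \tfrac{1}{t}$ gives the common-value hypothesis) followed by transposition and the canonical identification $F'' \cong F$. Your basis computation confirming $\widetilde{T} = T$ is the routine verification the paper leaves implicit.
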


\begin{cor} \label{syscor4} 
  Let $x_1, \cdots, x_n \in K^{\lor} \setminus K$ and $t \in \mathbb{R}_{> 0}$, and let $\pi : K^{\lor} \to K^{\lor} / K$ be the canonical quotient map. Suppose that $\{\pi(x_1), \cdots, \pi(x_n)\}$ is an orthogonal set. Then the normed space
  \begin{align*}
     E := ([1, x_1, \cdots, x_n], |\cdot|_t)
  \end{align*}
  satisfies $(\mathrm{SE})$ if and only if $t \notin V_K$.
\end{cor}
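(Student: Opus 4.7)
The plan is to apply Corollary \ref{syscor3}(1) with the choice $u = 1 \in E$. Since $E = [1, x_1, \dots, x_n]$ is a $K$-subspace of $(K^{\lor}, |\cdot|_t)$, its spherical completion satisfies $E^{\lor} = K^{\lor}\cdot 1$, so $\dim_{K^{\lor}} E^{\lor} = 1$ and the hypothesis of that corollary is met. The corollary then reduces the problem to verifying (a) $\|E\| \cap V_K = \emptyset$ and (b) $E/[1]$ satisfies $(\mathrm{SE})$.

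For (a), every nonzero $z \in E$ has $\|z\|_t = t|z| \in tV_K$, using $V_{K^{\lor}} = V_K$. If $t \in V_K$, then $\|1\|_t = t \in \|E\| \cap V_K$; if $t \notin V_K$, the coset $tV_K$ is disjoint from the group $V_K$, so $\|E\| \cap V_K = \emptyset$. Hence (a) is equivalent to $t \notin V_K$.

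For (b), I would show that $E/[1]$ always satisfies $(\mathrm{SE})$ under our orthogonality hypothesis. Let $\pi_E : E \to E/[1]$ denote the canonical quotient. For any $\lambda_1,\dots,\lambda_n \in K$,
\begin{align*}
  \left\|\sum_i \lambda_i \pi_E(x_i)\right\| &= t\cdot d\!\left(\sum_i \lambda_i x_i,\, K\right) = t \cdot \left\|\sum_i \lambda_i \pi(x_i)\right\|_{K^{\lor}/K} \\
  &= \max_{1 \le i \le n} t\,|\lambda_i|\, d(x_i,K) = \max_{1 \le i \le n} \|\lambda_i \pi_E(x_i)\|,
\end{align*}
where the penultimate equality uses the orthogonality of $\{\pi(x_1),\dots,\pi(x_n)\}$ in $K^{\lor}/K$. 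Thus $\{\pi_E(x_1),\dots,\pi_E(x_n)\}$ is an orthogonal basis of $E/[1]$, giving $\dim_{K^{\lor}}(E/[1])^{\lor} = n = \dim_K(E/[1])$, and by the easy theorem of Section \ref{sys} that any finite-dimensional normed space with an orthogonal basis satisfies $(\mathrm{SE})$, condition (b) holds unconditionally. Combining (a) and (b) yields the stated equivalence. The argument is essentially bookkeeping atop Corollaries \ref{syscor1} and \ref{syscor3}, so no real obstacle appears; the only subtlety is recording that $\dim_{K^{\lor}} E^{\lor} = 1$ so that Corollary \ref{syscor3} is applicable.
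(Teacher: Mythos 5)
Your proof is correct and follows essentially the same route as the paper: the paper's own (one-line) proof is precisely the reduction via Corollary \ref{syscor3}(1), with the observations that $\|E\|\cap V_K=\emptyset$ holds exactly when $t\notin V_K$ and that $E/[1]$ inherits the orthogonal base $\{\pi(x_1),\dots,\pi(x_n)\}$ and hence satisfies $(\mathrm{SE})$. Your write-up just makes these two verifications explicit.
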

\begin{proof}
  We can use the preceding corollary and Corollary \ref{syscor3}.
\end{proof}

As an application of Theorem \ref{systhm2}, we derive the following theorem which refines \cite[Proposition 2.12]{fin}.

\begin{thm}
  Let $x_1, \cdots, x_n \in K^{\lor} \setminus K$ and $t_1, \cdots, t_n \in \mathbb{R}_{> 0}$. For each $i = 1, \cdots, n$, put $r_i = d(x_i, K)$ and suppose $t_1 r_1 = \cdots = t_n r_n := t$. For each $i = 1, \cdots, n$, let $(c_{i,k})_{k \in \mathbb{N}}$ be a sequence in $K$ such that 
  \begin{align*}
    r_{i,k} := |c_{i,k} - x_i| \to r_i \  \text{as} \ k \to \infty \ \text{and $r_{i,k} \ge r_{i, k + 1}$ for all $k \in \mathbb{N}$}.  
  \end{align*}
  Set $(E, \|\cdot\|) := (K^n, |\cdot|_{t_1} \times \cdots \times |\cdot|_{t_n})$,
  \begin{align*}
    c_k = (c_{k,1}, \cdots, c_{k,n}) \in E \ \text{and} \ s_{k} := t_1 r_{1,k} \lor \cdots \lor t_n r_{n,k} \ (k \in \mathbb{N}).
  \end{align*}
  Then the following are equivalent: \\
  $(1)$ $\{\pi(x_1), \cdots, \pi(x_n)\}$ is an orthogonal set where $\pi : K^{\lor} \to K^{\lor} / K$ is the canonical quotient map. \\
  $(2)$ For each $a \in E$ and each proper linear subspace $L$ of $E$, there exists $k \in \mathbb{N}$ such that $(a + L) \cap B_E (c_k, s_k) = \emptyset.$ \\
  $(3)$ For each $i = 1, \cdots, n$ and $\lambda, \lambda_1, \cdots, \lambda_{i-1}, \lambda_{i + 1}, \cdots, \lambda_n \in K$, there exists $k_0 \in \mathbb{N}$ such that
  \begin{align*}
    |c_{i,k} - \sum_{\substack{1 \le j \le n \\ j \neq i}} \lambda_j c_{j,k} - \lambda| > r_{i,k_0} \ \text{for all $k > k_0$}.
  \end{align*}
\end{thm}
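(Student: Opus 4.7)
The plan is to prove $(1)\Leftrightarrow(3)$ and $(1)\Leftrightarrow(2)$ in parallel, by reducing each to control of the scalar
\begin{align*}
A_k := c_{i,k} - \sum_{j\ne i}\lambda_j c_{j,k} - \lambda,
\end{align*}
which I decompose as $A_k = y + \delta_k$ with $y := x_i - \sum_{j\ne i}\lambda_j x_j - \lambda \in K^{\lor}$ and $\delta_k := (c_{i,k}-x_i) - \sum_{j\ne i}\lambda_j(c_{j,k}-x_j)$. The only property of $\delta_k$ I will use is the bound $|\delta_k| \le \max(r_{i,k}, \max_{j\ne i}|\lambda_j|r_{j,k})$, which tends to $\max(r_i,\max_{j\ne i}|\lambda_j|r_j)$ as $k\to\infty$.

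To bridge $(2)$ to $A_k$, I first convert the intersection condition into a scalar one. Given a proper subspace $L\subseteq E$, choose a nonzero linear functional $\phi$ with $\ker\phi=L$, pick an index $i$ achieving $\max_l |\phi_l|r_l$, rescale so $\phi_i = 1$, and set $\lambda_j := -\phi_j$ for $j\ne i$ and $\lambda := \phi(a)$. The maximality of $i$ forces $|\lambda_j|r_j \le r_i$ for every $j\ne i$. A routine non-Archimedean \emph{image of a box} calculation then yields
\begin{align*}
(a+L)\cap B_E(c_k, s_k) \ne \emptyset \ \Longleftrightarrow \ |A_k| \le s_k \cdot \max\!\left(\tfrac{1}{t_i},\, \max_{j\ne i}\tfrac{|\lambda_j|}{t_j}\right),
\end{align*}
and using the common value $t_l r_l = t$ together with the normalization $|\lambda_j|r_j \le r_i$, the right-hand side collapses to $s_k/t_i$. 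Moreover, since $c_{k'} \in B_E(c_k,s_k)$ and $s_{k'} \le s_k$ whenever $k' \ge k$, the balls $B_E(c_k,s_k)$ are nested, so the emptiness of the intersection is monotone in $k$.

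Assume $(1)$. For any $(i,\lambda_j,\lambda)$, orthogonality gives $\|\pi(y)\| = \max(r_i,\max_{j\ne i}|\lambda_j|r_j) =: R$, and because the distance from $K^{\lor}\setminus K$ to $K$ is never attained, $|y| > R \ge r_i$. Eventually $|\delta_k| < |y|$, so $|A_k| = |y|$ for large $k$; choosing $k_0$ large enough that $r_{i,k_0} < |y|$ yields $(3)$. For $(2)$, apply this to the normalized data arising from any proper $L$ and $a$: then $R = r_i$ and $s_k/t_i \to r_i < |y|$, so $|A_k| > s_k/t_i$ eventually, giving emptiness. Conversely, if $(1)$ fails, pick $\mu_1,\ldots,\mu_n \in K$ with $\|\sum_j\mu_j\pi(x_j)\| < \max_j|\mu_j|r_j$, let $i$ attain the outer maximum, rescale so $\mu_i = 1$, and choose $\mu_0 \in K$ with $|x_i + \sum_{j\ne i}\mu_j x_j + \mu_0| < r_i$. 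Setting $\lambda_j := -\mu_j$ and $\lambda := -\mu_0$ produces data with the normalization $|\lambda_j|r_j \le r_i$ but now with $|y| < r_i$. Then $|A_k| \le \max(|y|,|\delta_k|) \le s_k/t_i$ for every $k$, so the corresponding $(a+L)\cap B_E(c_k,s_k) \ne \emptyset$ for every $k$, refuting $(2)$; and for any fixed $k_0$ both $|y|$ and $|\delta_k|$ lie below $r_{i,k_0}$ for large $k$, so $|A_k| < r_{i,k_0}$ eventually, refuting $(3)$.

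The main obstacle is the \emph{image of a box} calculation in the second paragraph. One must verify that the image of $\prod_{j\ne i} B_K(c_{j,k}, s_k/t_j)$ under the map $(\mu_j) \mapsto \sum_{j\ne i}\lambda_j\mu_j + \lambda$ is a single closed $K$-ball, then fuse it with the $i$-th slot ball $B_K(c_{i,k}, s_k/t_i)$ to obtain the scalar criterion, and finally check that the normalization forces $\max(1/t_i,\max_{j\ne i}|\lambda_j|/t_j) = 1/t_i$ and that the balls $B_E(c_k,s_k)$ are nested. These are routine non-Archimedean bookkeeping, but they form the technical crux; once they are in place, each equivalence reduces to the clean dichotomy ``$|y|>r_i$'' versus ``$|y|<r_i$'', which is exactly the orthogonality of $\{\pi(x_1),\ldots,\pi(x_n)\}$.
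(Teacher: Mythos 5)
Your argument is correct, and for the main implication it takes a genuinely different route from the paper. The paper proves the cycle $(1)\Rightarrow(2)\Rightarrow(3)\Rightarrow(1)$ and derives $(1)\Rightarrow(2)$ from Theorem \ref{systhm2}: since the $(n+1)$-dimensional subspace $[b_1,\cdots,b_{n-1},(x_1-a_1,\cdots,x_n-a_n)]$ of $((K^{\lor})^n,|\cdot|_{t_1}\times\cdots\times|\cdot|_{t_n})$ has an orthogonal base, the distance to $[b_1,\cdots,b_{n-1}]$ is attained and hence strictly exceeds $t$, after which one picks $k$ with $s_k$ below that distance. You instead dualize: you replace $L$ by a hyperplane $\ker\phi$, compute the image of the polydisc $B_E(c_k,s_k)$ under $\phi$ as a single closed ball of $K$ (the ``image of a box'' step, which is indeed routine and correct), and reduce every statement to the single scalar $A_k=y+\delta_k$ with $y=x_i-\sum_{j\ne i}\lambda_jx_j-\lambda$, so that all three conditions hinge on the dichotomy $|y|>r_i$ versus $|y|<r_i$, driven only by the non-attainment of $d(y,K)$. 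This avoids Theorem \ref{systhm2} entirely and makes the proof self-contained and uniform (you prove $(1)\Leftrightarrow(2)$ and $(1)\Leftrightarrow(3)$ rather than a cycle), at the cost of the bookkeeping you flag; the paper's route is shorter because it reuses the structural theorem already established. Your refutation of $(2)$ when $(1)$ fails does need the estimate $|\lambda_j|\le t_j/t_i$ (equivalently $|\lambda_j|r_j\le r_i$ combined with $t_jr_j=t_ir_i$) to get $|\delta_k|\le s_k/t_i$ for \emph{every} $k$, and it needs $r_{i,k_0}>r_i$ strictly (which holds because $d(x_i,K)$ is not attained); both are available in your setup but worth stating explicitly. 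The only imprecision is the phrase ``choose a nonzero linear functional $\phi$ with $\ker\phi=L$'': a proper subspace $L$ need not be a hyperplane, so one should first enlarge $L$ to a hyperplane containing it, which only makes the emptiness assertion stronger. None of this affects correctness.
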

\begin{proof}
  First, we prove $(1) \Rightarrow (2)$.  Suppose $(1)$. Let $a = (a_1, \cdots, a_n), b_1, \cdots, b_{n-1} \in E$. By Theorem \ref{systhm2}, the subspace
  \begin{align*}
    [b_1, \cdots, b_{n-1}, (x_1 - a_1, \cdots, x_n - a_n)] \ \text{of} \ ((K^{\lor})^n, |\cdot|_{t_1} \times \cdots \times |\cdot|_{t_n})
  \end{align*}
  has an orthogonal base. Therefore, $d ((x_1 - a_1, \cdots, x_n - a_n), [b_1, \cdots, b_{n-1}])$ is attained. Hence, there exist $\lambda_1, \cdots, \lambda_{n-1}  \in K$ such that
  \begin{align*}
    \|(x_1 - a_1, \cdots, x_n - a_n) - \sum_{i = 1}^{n-1} \lambda_i b_i\| = d ((x_1 - a_1, \cdots, x_n - a_n), [b_1, \cdots, b_{n-1}]).
  \end{align*}
  In particular, we have $d ((x_1 - a_1, \cdots, x_n - a_n), [b_1, \cdots, b_{n-1}]) > t$. Now, let $k \in \mathbb{N}$ be such that
  \begin{align*}
    |c_{i,k} - x_i|_{t_i} < d ((x_1 - a_1, \cdots, x_n - a_n), [b_1, \cdots, b_{n-1}]) \ \text{for all $i = 1, \cdots, n$}.
  \end{align*}
  Then for each $\mu_1, \cdots, \mu_n \in K$, we have
  \begin{align*}
    \|c_k - a - \sum_{i = 1}^n \mu_i b_i\| &= \|(c_k - (x_1, \cdots, x_n)) + ((x_1 - a_1, \cdots, x_n - a_n) - \sum_{i = 1}^n \mu_i b_i)\| \\
    &= \|(x_1 - a_1, \cdots, x_n - a_n) - \sum_{i = 1}^n \mu_i b_i\| \\
    &\ge d ((x_1 - a_1, \cdots, x_n - a_n), [b_1, \cdots, b_{n-1}]) \\
    &> \max_{1 \le i \le n} |c_{i,k} - x_i|_{t_i} = s_k.
  \end{align*}
  Thus, $(a + [b_1, \cdots, b_{n-1}]) \cap B_E (c_k, s_k) = \emptyset$ and we obtain $(2)$. \par
  Secondly, we prove $(2) \Rightarrow (3)$. Suppose that $(3)$ is not satisfied. Then there exist $i, 1 \le i \le n$, and $\lambda, \lambda_1, \cdots, \lambda_{i-1}, \lambda_{i + 1}, \cdots \lambda_n \in K$ such that for all $k \in \mathbb{N}$, we have
  \begin{align*}
    |c_{i,n_k} - \sum_{\substack{1 \le j \le n \\ j \neq i}} \lambda_j c_{j,n_k} - \lambda| \le r_{i,k} \ \text{for some $n_k > k$}.
  \end{align*}
  Let $k \in \mathbb{N}$. Let $e_1, \cdots, e_n \in E$ be the canonical unit vectors, and let
  \begin{align*}
    L := [\{e_j + \lambda_j e_i : 1 \le j \le n, j \neq i\}] \ \text{and} \ x := \lambda e_i + \sum_{\substack{1 \le j \le n \\ j \neq i}} c_{j, n_k} (e_j + \lambda_j e_i) \in \lambda e_i + L.
  \end{align*}
  Then we have
  \begin{align*}
    \|x - c_{n_k}\|_E = |c_{i,n_k} - \sum_{\substack{1 \le j \le n \\ j \neq i}} \lambda_j c_{j, n_k} - \lambda|_{t_i} \le s_k,
  \end{align*}
  hence $\|x - c_k\| \le \|x - c_{n_k}\| \lor \|c_{n_k} - c_k\| \le s_k$. Therefore, we have 
  \begin{align*}
    (\lambda e_i + L) \cap B_E (c_k, s_k) \neq \emptyset \ \text{for all $k \in \mathbb{N}$},
  \end{align*}
  and $(2)$ is not satisfied. \par
  Finally, we prove $(3) \Rightarrow (1)$. Let us suppose that $(3)$ holds and that there exist $i$, $1 \le i \le n$, and $\lambda, \lambda_1, \cdots, \lambda_{i-1}, \lambda_{i + 1}, \cdots, \lambda_n \in K$ such that
  \begin{align*}
    |x_i - \sum_{\substack{1 \le j \le n \\ j \neq i}} \lambda_j x_j - \lambda| < r_i.
  \end{align*}
  Then we will derive a contradiction. By the above inequality, we have 
  \begin{align*}
    |c_{i,k} - \sum_{\substack{1 \le j \le n \\ j \neq i}} \lambda_j c_{j,k} - \lambda| &\le |c_{i,k} -  x_i - \sum_{\substack{1 \le j \le n \\ j \neq i}} \lambda_j \cdot (c_{j,k} - x_j) + (x_i - \sum_{\substack{1 \le j \le n \\ j \neq i}} \lambda_j x_j - \lambda)| \\ 
    &\le \max_{1 \le j \le n} |\lambda_j| \cdot r_{j,k} \lor r_i \ (\text{where we put $\lambda_i = 1$}) \\
    &= \max_{1 \le j \le n} |\lambda_j| \cdot r_{j,k}
  \end{align*}
  for all $k \in \mathbb{N}$. On the other hand, by $(3)$, there exists $k_0 \in \mathbb{N}$ such that 
  \begin{align*}
    |c_{i,k} - \sum_{\substack{1 \le j \le n \\ j \neq i}} \lambda_j c_{j,k} - \lambda| > \max_{1 \le j \le n} |\lambda_j| \cdot r_{j,k_0} \ \text{for all $k > k_0$},
  \end{align*}
  which is a contradiction.
\end{proof}

The following theorem is proved in \cite[Theorem 2.10]{fin}.

\begin{thm}[{\cite[Theorem 2.10]{fin}}] \label{systhm5}
  Suppose that $K$ is separable as a metric space. Then for each $r \in \mathbb{R}_{> 0}$ and each $n \in \mathbb{N}$, there exist $x_1, \cdots, x_n \in K^{\lor} \setminus K$ such that
  \begin{itemize}
    \item $d(x_i,K) = r$ for each $i = 1, \cdots, n$.
    \item $\{\pi(x_1), \cdots, \pi(x_n)\}$ is an orthogonal set where $\pi: K^{\lor} \to K^{\lor} / K$ is the canonical quotient map.
  \end{itemize}
\end{thm}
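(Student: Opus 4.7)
I would proceed by induction on $n$. In both $n = 1$ and the inductive step, the task reduces to finding $x_n \in K^{\lor}$ with $d(x_n, V) = r$, where $V := [1, x_1, \ldots, x_{n-1}]$ is a finite-dimensional $K$-subspace of $K^{\lor}$ (taking $V = K$ when $n = 1$). Granted such an $x_n$, one recovers $d(x_n, K) = r$ from $K \subseteq V$ together with the upper bound coming out of the construction, and the orthogonality of $\{\pi(x_1), \ldots, \pi(x_n)\}$—namely, $d(\sum_i \lambda_i x_i, K) = r \max_i |\lambda_i|$ for all $\lambda_i \in K$—follows by a short ultrametric case split combining the inductive hypothesis with $d(\sum_i \lambda_i x_i, V) = |\lambda_n| \, d(x_n, V) = |\lambda_n| r$ (the relevant case is $|\lambda_n| \ge \max_{i<n} |\lambda_i|$; the opposite case is handled by the strict ultrametric for distances modulo $K$, using that $d(\lambda_n x_n, K) = |\lambda_n| r$ is then strictly dominated by $d(\sum_{i<n} \lambda_i x_i, K)$).

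\textbf{Construction of $x_n$.} Because $K$ is complete and not spherically complete, $V_K$ is dense in $\mathbb{R}_{>0}$ (a complete valued field with discrete value group is automatically spherically complete). Because $K$ is separable and $V$ is finite-dimensional over $K$, $V$ has a countable dense subset $\{z_j\}_{j \ge 1}$. Fix a strictly decreasing sequence $r_k \in V_K$ with $r_k \downarrow r$. I inductively build a sequence $(y_k)$ in $K$ starting at $y_0 = 0$ so that the closed balls $B(y_k, r_k)$ in $K^{\lor}$ are nested, and so that at the $k$-th stage the element $z_k$ is excluded in the sense that $|z_k - y_k| > r_k$: if already $|z_k - y_{k-1}| > r_k$, set $y_k = y_{k-1}$; otherwise pick $\mu \in K$ with $|\mu| = r_{k-1}$ (possible by density of $V_K$) and set $y_k = y_{k-1} + \mu$, so that the strict ultrametric applied to $|z_k - y_{k-1}| \le r_k < r_{k-1} = |\mu|$ forces $|z_k - y_k| = r_{k-1} > r_k$. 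Once effected, this exclusion survives all later stages, again by the strict ultrametric.

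\textbf{Extraction and main obstacle.} By spherical completeness of $K^{\lor}$ pick any $x_n \in \bigcap_k B(y_k, r_k)$. Then $d(x_n, V) \le |x_n - y_k| \le r_k \to r$. Conversely, for each $z_j$ the exclusion gives $|x_n - z_j| = |z_j - y_k| > r_k > r$, and for an arbitrary $z \in V$ one chooses $z_j$ with $|z - z_j| < r$ and concludes $|x_n - z| = |x_n - z_j| > r$ by the strict ultrametric. Hence $d(x_n, V) = r$, and since $y_k \in K$, also $d(x_n, K) = r$, which closes the induction. The main obstacle is that one must control the distance of $x_n$ to the whole of $V$ (not merely to $K$) in order to obtain the full orthogonality, while simultaneously keeping the centers $y_k$ inside $K$ so as to preserve the upper bound on $d(x_n, K)$. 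The choice $|\mu| = r_{k-1}$ exploits the gap between consecutive nested radii so that a single $K$-perturbation can simultaneously push $y_k$ far from the stubborn $z_k \in V$ and keep $y_k$ inside $B(y_{k-1}, r_{k-1})$.
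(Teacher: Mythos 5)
The paper does not prove this statement at all: it is imported verbatim as \cite[Theorem 2.10]{fin} (Kubzdela), so there is no in-paper argument to compare yours against. Your blind proof is, as far as I can check, correct and self-contained, and it is worth recording why. The reduction is sound: producing $x_n$ with $d(x_n,V)=r$ for $V=[1,x_1,\dots,x_{n-1}]$ gives the lower bound $d(\sum_i\lambda_i x_i,K)\ge d(\lambda_n x_n,V)=|\lambda_n|r$ in the dominant case $|\lambda_n|\ge\max_{i<n}|\lambda_i|$, the matching upper bound comes from the inductive hypothesis, and the remaining case is the strict ultrametric inequality for the quotient norm on $K^{\lor}/K$. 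The construction itself is a standard exclusion/diagonal argument, and the two auxiliary facts you invoke are both valid here: $V_K$ is dense in $\mathbb{R}_{>0}$ because a complete field with discrete value group is spherically complete (and $K$ is assumed non-spherically complete), and $V$ is separable because it is finite-dimensional over the separable complete field $K$. The key quantitative point --- choosing the perturbation $\mu$ with $|\mu|=r_{k-1}$ so that a single step both keeps $B(y_k,r_k)\subseteq B(y_{k-1},r_{k-1})$ and forces $|z_k-y_k|=r_{k-1}>r_k$, an exclusion that then persists by the strict ultrametric --- is exactly what makes the nested balls avoid all of $V$ while their centers stay in $K$, yielding $d(x_n,K)=d(x_n,V)=r$ simultaneously. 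The only cosmetic gaps are the trivial subcase $\lambda_n=0$ and the indexing of $r_0$, neither of which affects correctness.
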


\subsection{Hyper-symmetric spaces} \label{hyper}

\begin{dfn}
  A hyper-symmetric space is recursively defined as follows: \\
  $(1)$ All zero-dimensional normed spaces are hyper-symmetric. \\
  $(2)$ Let $n \ge 1$. An $n$-dimensional normed space $(E,\|\cdot\|)$ is hyper-symmetric if $\dim_{K^{\lor}} E^{\lor} = 1$ and the $(n - 1)$-dimensional normed space $E / [u]$, $u \neq 0$, is hyper-symmetric. \par
  Notice that this definition is well-defined by Corollary \ref{syscor1}.
\end{dfn}

First, we show the heredity of a hyper-symmetric normed space.

\begin{thm} \label{systhm8}
  Let $E$ be a hyper-symmetric normed space. \\
  $(1)$ All quotient spaces of $E$ are hyper-symmetric. \\
  $(2)$ All subspaces of $E$ are hyper-symmetric. \\
  $(3)$ The dual space $E'$ of $E$ is hyper-symmetric.
\end{thm}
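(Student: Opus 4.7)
The plan is to prove all three statements simultaneously by induction on $n := \dim_K E$, with the base case $n = 0$ vacuous. In the inductive step I will fix a hyper-symmetric $E$ of dimension $n \ge 1$, assume (1), (2), (3) for all hyper-symmetric spaces of dimension less than $n$, and then establish (1), (2), (3) for $E$ in that order, so that the earlier parts at dimension $n$ are available when proving the later ones.

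For (1), I would take a nonzero subspace $D \subseteq E$, pick $0 \neq u \in D$, and use the definition of hyper-symmetry to conclude that $E/[u]$ is hyper-symmetric of dimension $n-1$; the isometric identification $E/D \cong (E/[u])/(D/[u])$ together with the inductive hypothesis (1) then finishes the job. For (2), I would take a proper nonzero subspace $F$, observe that any orthogonal subset of $F$ remains orthogonal in $E$ so that $\dim_{K^\lor} F^\lor = 1$, and for each nonzero $u \in F$ use the isometric embedding $F/[u] \hookrightarrow E/[u]$ (both quotient norms agree with $\inf_\lambda \|x - \lambda u\|$ for $x \in F$) to reduce to the fact, from the inductive hypothesis (2), that a subspace of the hyper-symmetric space $E/[u]$ is hyper-symmetric.

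The expected main obstacle is (3), which I would split into its two defining conditions. The easier one, that $E'/[f]$ is hyper-symmetric for each nonzero $f \in E'$, follows from the canonical isometry $E'/[f] \cong (\ker f)'$ obtained from $(\ker f)^\perp = [f]$: since $\ker f$ is an $(n-1)$-dimensional subspace, it is hyper-symmetric by the version of (2) just proved, and the inductive hypothesis (3) then makes its dual hyper-symmetric. The harder condition $\dim_{K^\lor}(E')^\lor = 1$ does not follow from any obvious duality, because the invariant $\dim_{K^\lor}$ is not automatically preserved under taking duals, and here I plan to argue by contradiction. Assuming two elements $f_1, f_2 \in E'$ formed an orthogonal pair, the two-dimensional subspace $F := [f_1, f_2]$ would satisfy $\dim_{K^\lor} F^\lor = 2$; but the annihilator $F^\perp \subseteq E$ has dimension $n - 2$, so $F \cong (E/F^\perp)'$ is the dual of a two-dimensional quotient of $E$, which by the version of (1) already established is hyper-symmetric. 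A two-dimensional hyper-symmetric space is of the form $([1,x], |\cdot|_t)$ with $x \in K^{\lor} \setminus K$ and $t \in \mathbb{R}_{>0}$, and Theorem \ref{thmh10} then forces its dual to satisfy $\dim_{K^\lor} = 1$, contradicting $\dim_{K^\lor} F^\lor = 2$ and completing the induction.
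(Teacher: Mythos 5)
Your proposal is correct and follows essentially the same route as the paper: induction on dimension, with (2) reduced to $F/[u]\subseteq E/[u]$, and (3) obtained from the identifications $F\cong (E/F_{\perp})'$ for $2$-dimensional $F\subseteq E'$ and $E'/[f]\cong(\ker f)'$. The only cosmetic differences are that you phrase the orthogonality step as a contradiction and invoke Theorem \ref{thmh10} explicitly where the paper simply applies the induction hypothesis to the $2$-dimensional quotient $E/F_{\perp}$.
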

\begin{proof}
  By definition, $(1)$ follows. First, we shall prove $(2)$ by induction on $n = \dim_K E$. Let $F$ be a subspace of $E$ with $F \neq 0$ and $F \neq E$. Obviously, we have $\dim_{K^{\lor}} F^{\lor} = 1$. Let $u \in F$ be a non-zero element. Since $E/ [u]$ is an $(n-1)$-dimensional hyper-symmetric normed space, a subspace $F / [u]$ is hyper-symmetric by the induction hypothesis. Therefore, $F$ is hyper-symmetric as desired. \par
  Finally, we prove $(3)$ again by induction on $n = \dim_K E$. If $n = 1, 2$, we clearly have $(3)$. Suppose $n \ge 3$. For a subspace $F$ of $E'$, we set
  \begin{align*}
    F_{\perp} = \{x \in E : f(x) = 0 \ \text{for all $f \in F$}\}.
  \end{align*}
  Let $F$ be a $2$-dimensional subspace of $E'$. Then $F \cong (E / F_{\perp})'$ and therefore $F$ is hyper-symmetric by the induction hypothesis. In particular, $\dim_{K^{\lor}} F^{\lor} = 1$ for each $2$-dimensional subspace $F$. Thus, we obtain $\dim_{K^{\lor}} (E')^{\lor} = 1$. Now, let $f \in E'$ be a non-zero element. Then $E' / [f] \cong ([f]_{\perp})'$, and hence $E' / [f]$ is hyper-symmetric by the induction hypothesis.
\end{proof}

The following theorem says that a hyper-symmetric space surely has symmetry.

\begin{thm} \label{systhm7}
  Let $E$ be an $n$-dimensional hyper-symmetric space. Let $m \in \mathbb{N}$ with $1 \le m \le n-1$. \\
  $(1)$ All $m$-dimensional quotient spaces of $E$ are isometrically isomorphic to each other. \\
  $(2)$ All $m$-dimensional subspaces of $E$ are isometrically isomorphic to each other.
\end{thm}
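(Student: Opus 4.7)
The plan is to establish both parts simultaneously by induction on $n = \dim_K E$, proving (1) first and deducing (2) via duality. The case $n = 1$ is vacuous (no admissible $m$), so I fix $n \ge 2$ and assume the statement for all hyper-symmetric spaces of dimension less than $n$.

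To prove (1), take two $m$-dimensional quotients $E/D_1$ and $E/D_2$, so $\dim D_i = n - m$. When $m = n - 1$, each $D_i$ is one-dimensional and the conclusion is exactly Corollary \ref{syscor1} (which applies because $\dim_{K^{\lor}} E^{\lor} = 1$ by the definition of hyper-symmetry). When $m \le n - 2$, I choose nonzero $u_i \in D_i$; Theorem \ref{systhm8}(1) guarantees that each $E/[u_i]$ is hyper-symmetric of dimension $n-1$, and Corollary \ref{syscor1} supplies a linear isometry $\phi : E/[u_1] \to E/[u_2]$. By the third isomorphism theorem, which is isometric for the quotient norms (one checks directly that $\inf_{d \in D_i, v \in [u_i]} \|x + d + v\| = \inf_{d \in D_i} \|x + d\|$ since $[u_i] \subseteq D_i$), one has $E/D_i \cong (E/[u_i])/(D_i/[u_i])$. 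The induction hypothesis on part (1), applied to the $(n-1)$-dimensional hyper-symmetric space $E/[u_2]$ with parameter $m$ (valid since $1 \le m \le (n-1) - 1$), gives $(E/[u_2])/\phi(D_1/[u_1]) \cong (E/[u_2])/(D_2/[u_2])$. Chaining these isometries yields $E/D_1 \cong E/D_2$.

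For (2), I invoke duality. By Theorem \ref{systhm8}(3), $E'$ is an $n$-dimensional hyper-symmetric space, so the version of (1) just proved applies to $E'$ at the same induction level. Given $m$-dimensional subspaces $F_1, F_2 \subseteq E$, the canonical isometric isomorphism $F_i' \cong E'/F_i^{\perp}$ recalled in the preliminaries realizes $F_1'$ and $F_2'$ as $m$-dimensional quotients of $E'$; hence they are isometric by (1). Finite-dimensional reflexivity then yields $F_1 \cong F_1'' \cong F_2'' \cong F_2$.

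The main obstacle I expect is simply the bookkeeping: one must use the isometry $\phi$ to transport $D_1/[u_1]$ into $E/[u_2]$ so that the induction hypothesis, which concerns different quotients of a \emph{single} hyper-symmetric space, can be applied to both $\phi(D_1/[u_1])$ and $D_2/[u_2]$ inside $E/[u_2]$. Without this transport step one would need a stronger statement comparing quotients of two a priori different spaces, which is not directly available; the isomorphism from Corollary \ref{syscor1} is precisely what removes this difficulty.
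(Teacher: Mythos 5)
Your proof is correct and takes essentially the same route as the paper: the paper's own argument simply cites Theorem \ref{systhm1} (equivalently Corollary \ref{syscor1}) for part $(1)$ and the hyper-symmetry of $E'$ from Theorem \ref{systhm8} for part $(2)$, leaving the induction over $m$ implicit. Your write-up supplies exactly the missing bookkeeping (the heredity of hyper-symmetry for quotients, the isometric third isomorphism theorem, the transport of $D_1/[u_1]$ via $\phi$, and reflexivity), all of which is sound.
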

\begin{proof}
  By Theorem \ref{systhm1}, it is easy to see that $(1)$ holds. On the other hand, $E'$ is hyper-symmetric by the preceding theorem. Hence, $(2)$ also follows.
\end{proof}

Now, by Corollary \ref{syscor3}, we obtain a necessary and sufficient condition for a hyper-symmetric space to satisfy (SE). 

\begin{thm} \label{hypthm1}
  Let $(E, \|\cdot\|)$ be an $n$-dimensional hyper-symmetric space and let $u_1, \cdots, u_{n-2}$ be linearly independent elements of $E$. Then $E$ satisfies $(\mathrm{SE})$ if and only if
  \begin{align*}
    \|E\| \cap V_K = \emptyset \ \text{and} \ \|E / [u_1, \cdots, u_k]\| \cap V_K = \emptyset \ \text{for all $k = 1, \cdots, n-2$}.
  \end{align*}
\end{thm}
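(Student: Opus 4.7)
The plan is to prove the theorem by induction on $n = \dim_K E$, using Corollary \ref{syscor3}(1) as the key inductive device and Theorem \ref{systhm8}(1) to keep the induction hypothesis applicable. Note that the statement is vacuous of $u_i$'s when $n = 2$, so the condition there reduces to $\|E\| \cap V_K = \emptyset$, and one cannot have $n = 1$ under the hypothesis.

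For the base case $n = 2$, pick any nonzero $u \in E$; then $E/[u]$ is one-dimensional and trivially satisfies $(\mathrm{SE})$. Since $\dim_{K^{\lor}} E^{\lor} = 1$ by hyper-symmetry, Corollary \ref{syscor3}(1) yields that $E$ satisfies $(\mathrm{SE})$ if and only if $\|E\| \cap V_K = \emptyset$, matching the statement. For the inductive step $n \ge 3$, I would first apply Corollary \ref{syscor3}(1) with $u = u_1$ to obtain
\begin{align*}
E \text{ satisfies } (\mathrm{SE}) \iff \|E\| \cap V_K = \emptyset \text{ and } E/[u_1] \text{ satisfies } (\mathrm{SE}).
\end{align*}
By Theorem \ref{systhm8}(1), $E/[u_1]$ is hyper-symmetric of dimension $n - 1$; moreover, the images $\pi(u_2), \cdots, \pi(u_{n-2})$ in $E/[u_1]$ are still linearly independent and number exactly $(n - 1) - 2$. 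Applying the induction hypothesis to $E/[u_1]$ together with the canonical isometric isomorphism $(E/[u_1])/[\pi(u_2), \cdots, \pi(u_k)] \cong E/[u_1, \cdots, u_k]$ translates the condition that $E/[u_1]$ satisfies $(\mathrm{SE})$ into
\begin{align*}
\|E/[u_1]\| \cap V_K = \emptyset \text{ and } \|E/[u_1, \cdots, u_k]\| \cap V_K = \emptyset \text{ for } k = 2, \cdots, n - 2,
\end{align*}
which together with $\|E\| \cap V_K = \emptyset$ is exactly the desired condition.

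There is no deep obstacle: the argument unfolds Corollary \ref{syscor3}(1) one dimension at a time, with hyper-symmetry supplying both that $\dim_{K^{\lor}}(E/[u_1, \cdots, u_k])^{\lor} = 1$ at each stage (so that Corollary \ref{syscor3}(1) is applicable at every step) and, via Theorem \ref{systhm7}, that the listed conditions are independent of the choices of the $u_i$'s. The only bookkeeping points are the index shift in applying the induction hypothesis and the standard identification $(E/[u_1])/[\pi(u_2), \cdots, \pi(u_k)] \cong E/[u_1, \cdots, u_k]$ as normed spaces.
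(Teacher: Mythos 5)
Your proposal is correct and follows essentially the same route as the paper, which states Theorem \ref{hypthm1} as a direct consequence of Corollary \ref{syscor3} without writing out the induction: the intended argument is exactly your repeated application of Corollary \ref{syscor3}(1), with Theorem \ref{systhm8}(1) guaranteeing that each quotient remains hyper-symmetric and Theorem \ref{systhm7}(1) making the conditions independent of the choice of $u_1, \cdots, u_{n-2}$. The bookkeeping (index shift, the identification $(E/[u_1])/[\pi(u_2), \cdots, \pi(u_k)] \cong E/[u_1, \cdots, u_k]$, and the vacuity of the quotient conditions when $n = 2$) is handled correctly.
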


\subsection{$E = (K,|\cdot|_t) \oplus (F, \|\cdot\|_F)$} \label{decomp} We shall study a decomposable normed space of the form $(K,|\cdot|_t) \oplus (F, \|\cdot\|_F)$. First, we will identify all subspaces. 

\begin{thm} \label{systhm3}
  Let $(F, \|\cdot\|_F)$ be an $n$-dimensional normed space and $t > 0$. Set $E := (K,|\cdot|_t) \oplus (F, \|\cdot\|_F)$. Then each $m$-dimensional subspace $E_1$ of $E$, $1 \le m \le n$,  satisfies either \\
  $(1)$ the restriction of the second projection to $E_1$ is an isometry \\
  or \\
  $(2)$ $E_1 \cong (K,|\cdot|_t) \oplus F_1$ for some $(m - 1)$-dimensional subspace $F_1$ of $F$.
\end{thm}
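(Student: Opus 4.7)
The plan is a clean dichotomy driven by the first projection. Write $p_1 \colon E \to K$ and $p_2 \colon E \to F$ for the canonical projections, so that $\|u\|_E = |p_1(u)|_t \vee \|p_2(u)\|_F$ for every $u \in E$. Hence $(1)$ holds precisely when $|p_1(u)|_t \le \|p_2(u)\|_F$ for every $u \in E_1$, in which case we are done.

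Otherwise I fix some $e \in E_1$ with $|p_1(e)|_t > \|p_2(e)\|_F$; then $p_1(e) \neq 0$, so $p_1|_{E_1}$ surjects onto $K$ with kernel $N := E_1 \cap (\{0\} \oplus F)$ of dimension $m - 1$. The restriction $p_2|_N$ is a linear isomorphism onto its image $F_1 := p_2(N) \subseteq F$, yielding an $(m-1)$-dimensional subspace of $F$, and $Ke \cap N = \{0\}$ gives the vector space decomposition $E_1 = Ke \oplus N$.

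The candidate isometric isomorphism is
\[\phi \colon (K, |\cdot|_t) \oplus F_1 \longrightarrow E_1, \qquad (\mu, w) \longmapsto \frac{\mu}{p_1(e)}\, e + (0, w) = \Bigl(\mu,\; \frac{\mu}{p_1(e)} p_2(e) + w\Bigr),\]
which is a $K$-linear bijection by the decomposition above. Its norm equals $|\mu|_t \vee \bigl\|\tfrac{\mu}{p_1(e)} p_2(e) + w\bigr\|_F$, and the defining inequality for $e$ rewrites as $\bigl\|\tfrac{\mu}{p_1(e)} p_2(e)\bigr\|_F < |\mu|_t$. Thus the ultrametric inequality bounds the second summand above by $|\mu|_t \vee \|w\|_F$, and in the regime $\|w\|_F > |\mu|_t$ the strict ultrametric principle forces it to equal $\|w\|_F$. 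Combining the two regimes yields $\|\phi(\mu, w)\|_E = |\mu|_t \vee \|w\|_F$, which is case $(2)$.

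I anticipate no genuine obstacle; the only bookkeeping required is the ultrametric case split inside the norm computation for $\phi$, which is entirely driven by the strict gap $|p_1(e)|_t > \|p_2(e)\|_F$.
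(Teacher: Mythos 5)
Your proof is correct and takes essentially the same route as the paper: both detect whether some element of $E_1$ has its first coordinate strictly dominating (equivalently, in the paper's notation, whether $t > d_F(u,[u_1,\dots,u_{m-1}])$), and in that case normalize such an element to split off an orthogonal copy of $(K,|\cdot|_t)$. Your version spells out the ultrametric norm computation that the paper leaves implicit, but the underlying decomposition is identical.
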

\begin{proof}
  We may assume $E_1 \nsubseteq \{0\} \oplus F$. Then we can choose a basis $x_1, \cdots, x_m$ of $E_1$ such that
  \begin{align*}
    x_1 = (1, u), x_2 = (0, u_1), \cdots, x_m = (0, u_{m-1}) \ \text{with} \ u, u_1, \cdots, u_{m-1} \in F.
  \end{align*}
  If $|1|_t \le d_F (u, [u_1, \cdots, u_{m-1}])$, then the restriction of the second projection to $E_1$ is an isometry. If $|1|_t > d_F (u, [u_1, \cdots, u_{m-1}])$, then we can choose $u$ such that $\|u\|_F < |1|_t$. Hence, we have $E_1 \cong (K,|\cdot|_t) \oplus [u_1, \cdots, u_{m-1}]$.
\end{proof}

Notice that the following theorem is not trivial because it is not known that a direct sum of normed spaces satisfying (SE) also satisfies (SE).

\begin{thm} \label{systhm4}
  Let $(F, \|\cdot\|_F)$ be a normed space and $t > 0$. Set $E := (K,|\cdot|_t) \oplus (F, \|\cdot\|_F)$. Then $E$ satisfies $(\mathrm{SE})$ if and only if $F$ satisfies $(\mathrm{SE})$.
\end{thm}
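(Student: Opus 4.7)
\emph{Forward direction.} The second-summand inclusion $\iota \colon F \hookrightarrow E$, $v \mapsto (0, v)$, is an isometric embedding. Given any subspace $D_0 \subseteq F$ and any $f_0 \in D_0'$ with $\|f_0\| = 1$, the transported functional $(0, v) \mapsto f_0(v)$ on $\iota(D_0) \subseteq E$ still has norm $1$; applying (SE) on $E$ produces an extension $g \in E'$ with $\|g\| = 1$, whose restriction $g \circ \iota \in F'$ is then a norm-one extension of $f_0$.

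\emph{Reverse direction.} Suppose $F$ satisfies (SE). Via Proposition \ref{propst1} and \cite[Proposition 7.3]{equ}, this is equivalent to: every norm-$\le 1$ functional on a subspace of $F$ admits an extension to a norm-$\le 1$ functional on $F$. Let $D \subseteq E$ and $f \in D'$ with $\|f\| = 1$. If $D \subseteq \{0\} \oplus F$, apply (SE) on $F$ directly and extend by zero on the $K$-summand. Otherwise, Theorem \ref{systhm3} gives two exclusive cases. In Case (1), where $\pi_2|_D$ is an isometry onto $\pi_2(D) \subseteq F$, transport $f$ to a norm-one $g \in \pi_2(D)'$ through $(\pi_2|_D)^{-1}$, extend to $\tilde g \in F'$ of norm one by (SE) on $F$, and set $\tilde f(k, v) := \tilde g(v)$ to obtain a norm-one extension of $f$ on $E$. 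In Case (2), following the proof of Theorem \ref{systhm3} we may choose a representative $u \in F$ for the first-summand generator of $D$ with $\|u\|_F < t$; a short ultrametric check then shows that the formula $(k, w) \mapsto (k, ku + w)$ defines an isometric isomorphism $\Phi_{\mathrm{ext}} \colon (K, |\cdot|_t) \oplus F \to E$ whose restriction to $(K, |\cdot|_t) \oplus F_1$ recovers the identification $D \cong (K, |\cdot|_t) \oplus F_1$ from Theorem \ref{systhm3}. The pulled-back functional $\hat f := f \circ \Phi_{\mathrm{ext}}|_{(K, |\cdot|_t) \oplus F_1}$ then decomposes as $\hat f(k, v) = kc + f_1(v)$ with $c := f(1, u)$ and $f_1 \in F_1'$ given by $f_1(v) := f(0, v)$; the identity $\|\hat f\| = \max(|c|/t, \|f_1\|) = 1$ forces $|c| \le t$ and $\|f_1\| \le 1$. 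Now extend $f_1$ to $\tilde f_1 \in F'$ with $\|\tilde f_1\| \le 1$ using (SE) on $F$, define $\tilde{\hat f}(k, w) := kc + \tilde f_1(w)$ on $(K, |\cdot|_t) \oplus F$, and push forward along $\Phi_{\mathrm{ext}}$ to obtain $\tilde f \in E'$ extending $f$ with $\|\tilde f\| \le 1$; since $\tilde f$ extends $f$ and $\|f\|=1$, we have $\|\tilde f\|=1$.

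\emph{Main obstacle.} The delicate step is Case (2) when $\|f_1\| < 1$, which forces $|c| = t$ and hence $t \in V_K$: in that sub-case we need to extend $f_1$ to $F$ with norm merely $\le 1$, not exactly $1$, so the norm-one formulation of (SE) from Proposition \ref{propst1} does not apply directly. The resolution is to invoke the equivalent $\le 1$-formulation of (SE) supplied by \cite[Proposition 7.3]{equ}; apart from this point, both cases and the forward direction are routine once Theorem \ref{systhm3} provides the structural dichotomy for $D$.
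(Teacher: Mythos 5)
Your proof is correct, but it takes a genuinely different route from the paper's. The paper proves the nontrivial direction by induction on $\dim_K E$: Theorem \ref{systhm3} shows that every hyperplane of $E$ is either isometric to $F$ itself or of the form $(K,|\cdot|_t)\oplus F_1$ with $F_1\subseteq F$, so the induction hypothesis gives (SE) for all hyperplanes; applying Theorem \ref{systhm3} to $E'=(K,|\cdot|_{1/t})\oplus F'$ yields condition $(2)$ of Lemma \ref{selem1}, and that lemma closes the induction. You instead extend a norm-one functional from an \emph{arbitrary} subspace $D$ in one step, straightening $D$ by the explicit isometric automorphism $(k,w)\mapsto(k,ku+w)$ of $E$ and splitting the pulled-back functional into its $K$-component and its $F_1$-component via the dual-norm identity $\|\hat f\|=\max(|c|/t,\|f_1\|)$. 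The price, which you correctly isolate, is that $\|f_1\|$ may be strictly less than $1$ (and need not lie in $V_K$), so you must pass to the ``$\le 1$'' formulation of (SE); this is legitimate because Proposition \ref{propst1} combined with \cite[Proposition 7.3]{equ} identifies the two formulations through strict epicompactness of $B_{F'}$, but it is a genuine extra input that the paper's route through Lemma \ref{selem1} never needs, since that lemma works entirely in the exact-norm setting. What each approach buys: yours is self-contained modulo Theorem \ref{systhm3}, needs neither induction nor Lemma \ref{selem1}, and produces the extension explicitly; the paper's is shorter on the page because all the bookkeeping is absorbed into Lemma \ref{selem1}. (Two cosmetic points: the two cases of Theorem \ref{systhm3} need not be mutually exclusive, only exhaustive, which is all you use; and the degenerate cases $D=E$ and $D=\{0\}$ are trivially fine though unmentioned.)
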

\begin{proof}
  Obviously, if $E$ satisfies $(\mathrm{SE})$, then $F$ satisfies $(\mathrm{SE})$. We prove the converse by induction on $n = \dim_K E$. If $n = 2$, the statement is clear. Suppose $n \ge 3$. Let $E_1$ be an $(n-1)$-dimensional subspace of $E$. Then by the preceding theorem and the induction hypothesis, $E_1$ satisfies $(\mathrm{SE})$. Moreover, by the preceding theorem, $E$ satisfies the condition $(2)$ in Lemma \ref{selem1}. Thus, $E$ satisfies (SE).
\end{proof}

By Theorem \ref{systhm3}, we obtain a structure theorem for finite-dimensional normed spaces of the form $(K,|\cdot|_t) \oplus (F, \|\cdot\|_F)$.

\begin{thm} \label{systhm6}
  Let $(F, \|\cdot\|_{F})$ and $(G, \|\cdot\|_{G})$ be two indecomposable normed spaces with $\dim_K F \ge 2$, and let $t_1, \cdots, t_m, s_1, \cdots, s_n \in \mathbb{R}_{> 0}$. Then
  \begin{align*}
    (K^m,|\cdot|_{t_1} \times \cdots \times |\cdot|_{t_m}) \oplus (F, \|\cdot\|_{F}) \cong (K^n,|\cdot|_{s_1} \times \cdots \times |\cdot|_{s_n}) \oplus (G, \|\cdot\|_{G})
  \end{align*}
  if and only if
  \begin{align*}
    (K^m,|\cdot|_{t_1} \times \cdots \times |\cdot|_{t_m}) \cong (K^n,|\cdot|_{s_1} \times \cdots \times |\cdot|_{s_n}) \ \text{and} \ (F, \|\cdot\|_{F}) \cong (G, \|\cdot\|_{G}).
  \end{align*}
\end{thm}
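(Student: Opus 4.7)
My plan is to argue directly by first showing $F \cong G$, then deducing the isometry of the orthogonal parts via the spherical completion. The converse implication is immediate. For the first step, I would apply Theorem~\ref{systhm3} iteratively to the image subspace $T(F) \subseteq E'$, where $T : E \to E'$ is the given isometric isomorphism. The idea is to peel off the orthogonal factors of $E'$ one at a time: write $E' = (K,|\cdot|_{s_1}) \oplus H_1$, $H_1 = (K,|\cdot|_{s_2}) \oplus H_2$, and so on, down to $H_n = G$; at each stage the current subspace either projects isometrically into $H_k$ (Case~(1) of Theorem~\ref{systhm3}) or splits off a copy of $(K,|\cdot|_{s_k})$ as an orthogonal summand (Case~(2)). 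Composing all $n$ steps yields an isometric isomorphism
\begin{align*}
F \cong T(F) \cong (K^c, |\cdot|_{s_{i_1}} \times \cdots \times |\cdot|_{s_{i_c}}) \oplus F_n',
\end{align*}
where $c$ is the number of Case~(2) steps and $F_n'$ is a subspace of $G$ of dimension $\dim F - c$.

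Since $F$ is indecomposable with $\dim F \ge 2$, a short case analysis rules out all possibilities except $c = 0$ with $F_n' \ne 0$: if $c \ge 1$ and $F_n' \ne 0$ the decomposition directly contradicts indecomposability; if $c \ge 2$ and $F_n' = 0$ then $F$ is orthogonally decomposable; and $c \le 1$ with $F_n' = 0$ forces $\dim F \le 1$. Thus $F$ embeds isometrically into $G$ as the subspace $F_n'$, so $\dim F \le \dim G$. To run the symmetric argument on $G \subseteq E'$ through $T^{-1}$, I first note that comparing $\dim_K$ and $\dim_{K^{\lor}}$ across the isometry $E \cong E'$ forces $\dim G - \dim_{K^{\lor}} G^{\lor} = \dim F - \dim_{K^{\lor}} F^{\lor} \ge 1$, and in particular $\dim G \ge 2$, so the same reasoning applies with the roles of $F$ and $G$ swapped. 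This gives $\dim G \le \dim F$, hence $\dim F = \dim G$; since $F_n' \subseteq G$ now has full dimension, $F_n' = G$ and therefore $F \cong G$.

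For the second step, I would pass to the spherical completions $E^{\lor}$ and $(E')^{\lor}$ over $K^{\lor}$. Since $K^{\lor}$ is spherically complete, both admit orthogonal bases over $K^{\lor}$; the orthogonal basis of $E^{\lor} = (K^{\lor})^m \oplus F^{\lor}$ consists of the standard basis of $(K^{\lor})^m$ (with norms $t_1,\ldots,t_m$) together with an orthogonal basis of $F^{\lor}$ (with some norms $r_1,\ldots,r_p$), and analogously for $(E')^{\lor}$ with norms $s_1,\ldots,s_n$ and $r_1',\ldots,r_q'$ coming from $G^{\lor}$. Since $F \cong G$ implies $F^{\lor} \cong G^{\lor}$, Theorem~\ref{.1thm1} applied over $K^{\lor}$ (using $V_{K^{\lor}} = V_K$) gives $\{r_1,\ldots,r_p\} = \{r_1',\ldots,r_q'\}$ as multisets modulo $V_K$; applied to $E^{\lor} \cong (E')^{\lor}$ the same theorem gives $\{t_i\} \sqcup \{r_i\} = \{s_j\} \sqcup \{r_j'\}$ modulo $V_K$. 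Multiset cancellation then yields $\{t_1,\ldots,t_m\} = \{s_1,\ldots,s_n\}$ modulo $V_K$, and one final application of Theorem~\ref{.1thm1} over $K$ concludes the isometry of the orthogonal parts. The main subtlety throughout is the iterated application of Theorem~\ref{systhm3} and the case analysis that forces $c = 0$; the hypothesis $\dim F \ge 2$ enters essentially here, since without it the degenerate possibility $F \cong (K,|\cdot|_{s_{i_1}})$ would be permitted and the argument would collapse.
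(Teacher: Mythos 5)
Your proof of the first half ($F \cong G$) is essentially the paper's: the paper also applies Theorem~\ref{systhm3} to $T(\{0\} \oplus F)$, peeling off the one-dimensional orthogonal factors of $E'$, and uses indecomposability of $F$ together with $\dim_K F \ge 2$ to force case $(1)$ at every step, so that the projection onto $G$ is an isometric embedding of $F$; the symmetric argument via $T^{-1}$ then gives $\dim_K F = \dim_K G$ and $F \cong G$. (Your detour through $\dim_K G - \dim_{K^{\lor}} G^{\lor} \ge 1$ to get $\dim_K G \ge 2$ is redundant, since $\dim_K G \ge \dim_K F \ge 2$ already follows from the embedding.) For the second half you genuinely diverge. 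The paper stays inside $E$ and $E'$: from $\|u\| \le \|v\|_G$ for all $(u,v) \in T(\{0\} \oplus F)$ it deduces that $(K^n,|\cdot|_{s_1} \times \cdots \times |\cdot|_{s_n}) \oplus \{0\}$ and $T(\{0\} \oplus F)$ are orthogonal, pulls this back to get $\|w\| \ge \|z\|_F$ on $T^{-1}((K^n, \cdots) \oplus \{0\})$, and so obtains an isometric embedding of $(K^n, \cdots)$ into $(K^m, \cdots)$ via the first projection, which is an isomorphism since $m = n$. Your route through the spherical completions and multiset cancellation is heavier but works and is arguably more conceptual (it isolates the multiset of weights mod $V_K$ as the complete invariant).

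One point in your spherical-completion step needs care. The isometries $F^{\lor} \cong G^{\lor}$ and $E^{\lor} \cong (E')^{\lor}$ that you get by extending $F \cong G$ and $E \cong E'$ are only $K$-linear, whereas Theorem~\ref{.1thm1} ``over $K^{\lor}$'' is a statement about $K^{\lor}$-linear isometries; a naive dimension count in its proof breaks down for $K$-linear maps between $K^{\lor}$-spaces because all the relevant $K$-dimensions are infinite. The gap is fixable: a $K$-linear isometry carries a maximal orthogonal subset of $E^{\lor}$ (over $K$) to one of $(E')^{\lor}$ with the same norms, these are $K^{\lor}$-orthogonal bases by the lemma and proposition of Section~1, and comparing the two resulting $K^{\lor}$-orthogonal bases of $(E')^{\lor}$ \emph{is} an application of Theorem~\ref{.1thm1} to a $K^{\lor}$-linear change of basis. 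With that justification inserted, your multiset cancellation goes through.
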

\begin{proof}
  Suppose that there exists an isometry $T$ from $(K^m,|\cdot|_{t_1} \times \cdots \times |\cdot|_{t_m}) \oplus (F, \|\cdot\|_{F})$ to $(K^n,|\cdot|_{s_1} \times \cdots \times |\cdot|_{s_n}) \oplus (G, \|\cdot\|_{G})$. Since $T(\{0\} \oplus F)$ is indecomposable with $\dim_K T(\{0\} \oplus F) \ge 2$, by Theorem \ref{systhm3} we have
  \begin{align*}
    \|u\|_{(K^n,|\cdot|_{s_1} \times \cdots \times |\cdot|_{s_n})} \le \|v\|_G \ \text{for all $(u,v) \in T(\{0\} \oplus F)$}.
  \end{align*}
  In particular, we have $\dim_K G \ge \dim_K F \ge 2$. Thus, considering $T^{-1}$, we obtain $\dim_K F = \dim_K G$ and $F \cong G$. Moreover, for each $x \in (K^n,|\cdot|_{s_1} \times \cdots \times |\cdot|_{s_n}) \oplus \{0\}$ and $y \in T(\{0\} \oplus F)$, $\{x,y\}$ is an orthogonal set. Therefore, since $T$ is an isometry, 
  \begin{align*}
    \|w\|_{(K^m,|\cdot|_{t_1} \times \cdots \times |\cdot|_{t_m})} \ge \|z\|_F \ \text{for all $(w,z) \in T^{-1}((K^n,|\cdot|_{s_1} \times \cdots \times |\cdot|_{s_n}) \oplus \{0\})$}.
  \end{align*}
  Thus, there exists an isometry from $(K^n,|\cdot|_{s_1} \times \cdots \times |\cdot|_{s_n})$ to $(K^m,|\cdot|_{t_1} \times \cdots \times |\cdot|_{t_m})$. Since $m = n$, this is an isometric isomorphism.
  \end{proof}

  Obviously, by combining Theorem \ref{.1thm1}, we can refine the theorem above.

\section{$3$-dimensional normed spaces} \label{3d}

We will apply the systematic approaches obtained in section \ref{sys}. \par
We recall that our strategy when studying finite-dimensional normed spaces is the following: \\
$(1)$ embed each finite-dimensional normed space into $((K^{\lor})^n, |\cdot|_{t_1} \times \cdots \times |\cdot|_{t_n})$. \\
$(2)$ study properties according to an embedding into $((K^{\lor})^n, |\cdot|_{t_1} \times \cdots \times |\cdot|_{t_n})$. \par
In this section, we study a $3$-dimensional normed space $E$. Based on $\dim_{K^{\lor}}E^{\lor}$, the cardinality of a maximal orthogonal subset, we will classify $3$-dimensional normed spaces into five types, type $\mathrm{\Rnum{1}}_3 \sim \mathrm{\Rnum{5}}_3$. 

\begin{dfn}
  A $3$-dimensional normed space $E$ is
  \begin{itemize}
  \setlength{\leftskip}{-20pt}
    \item of type $\mathrm{\Rnum{1}}_3$ if $\dim_{K^{\lor}}E^{\lor} = 3$.
    \item of type $\mathrm{\Rnum{2}}_3$ if $\dim_{K^{\lor}}E^{\lor} = 2$ and $E$ is decomposable.
    \item of type $\mathrm{\Rnum{3}}_3$ if $\dim_{K^{\lor}}E^{\lor} = 2$ and $E$ is indecomposable.
    \item of type $\mathrm{\Rnum{4}}_3$ if $\dim_{K^{\lor}}E^{\lor} = 1$ and $E/[u]$ has an orthogonal base for each non-zero element $u \in E$.
    \item of type $\mathrm{\Rnum{5}}_3$ if $\dim_{K^{\lor}}E^{\lor} = 1$ and $E/[u]$ has no orthogonal base for each non-zero element $u \in E$.
  \end{itemize}
\end{dfn}

By Corollary \ref{syscor1}, we have the following.

\begin{thm}
  Every $3$-dimensional normed space $E$ is exactly of one type of type $\mathrm{\Rnum{1}}_3 \sim \mathrm{\Rnum{5}}_3$.
\end{thm}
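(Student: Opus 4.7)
The plan is to show that the five types partition the $3$-dimensional normed spaces according to the value of $\dim_{K^{\lor}} E^{\lor}$ (which must lie in $\{1,2,3\}$ since it is the cardinality of a maximal orthogonal subset of $E$) together with a secondary invariant in the cases $\dim_{K^{\lor}} E^{\lor} = 2$ and $\dim_{K^{\lor}} E^{\lor} = 1$. So first I would split into three cases based on $\dim_{K^{\lor}} E^{\lor}$.

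The case $\dim_{K^{\lor}} E^{\lor} = 3$ is immediate: $E$ then has an orthogonal basis and is by definition of type $\mathrm{\Rnum{1}}_3$, and none of the remaining four types can apply because their defining conditions force $\dim_{K^{\lor}} E^{\lor} \le 2$. The case $\dim_{K^{\lor}} E^{\lor} = 2$ likewise gives a clean dichotomy: $E$ is either decomposable, putting it in type $\mathrm{\Rnum{2}}_3$, or indecomposable, putting it in type $\mathrm{\Rnum{3}}_3$, and these two conditions are mutually exclusive by definition.

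The only step that requires justification is the case $\dim_{K^{\lor}} E^{\lor} = 1$, where I must show that the property ``$E/[u]$ has an orthogonal base'' does not depend on the choice of the non-zero element $u \in E$, so that types $\mathrm{\Rnum{4}}_3$ and $\mathrm{\Rnum{5}}_3$ are well-defined and jointly exhaustive. This is exactly the content of Corollary \ref{syscor1}: since $\dim_{K^{\lor}} E^{\lor} = 1$, all $2$-dimensional quotients of the form $E/[u]$ with $u \neq 0$ are isometrically isomorphic, so the condition of possessing an orthogonal basis either holds for all such $u$ or for none.

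The five types are thus determined by mutually exclusive, exhaustive conditions, which proves the theorem. I expect no real obstacle here; the content of the statement is essentially bookkeeping, and the only non-tautological ingredient is Corollary \ref{syscor1}, which has already been established.
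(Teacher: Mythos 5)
Your proof is correct and follows the same route as the paper, which simply invokes Corollary \ref{syscor1} to see that for $\dim_{K^{\lor}}E^{\lor}=1$ the presence of an orthogonal base in $E/[u]$ is independent of $u$, the remaining cases being immediate from the definitions. No issues.
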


\subsection{Type $\mathrm{\protect \Rnum{1}}_3$}

\begin{thm}
  Let $E$ be a $3$-dimensional normed space of type $\mathrm{\Rnum{1}}_3$. \\
  $(1)$ $E$ satisfies $\mathrm{(SE)}$. \\
  $(2)$ There exist $t_1, t_2, t_3 \in \mathbb{R}_{>0}$ for which 
\begin{align*}
  E \cong (K^3, |\cdot|_{t_1} \times |\cdot|_{t_2} \times |\cdot|_{t_3}).
\end{align*}
Moreover, it follows that
\begin{align*}
  E' \cong (K^3, |\cdot|_{\frac{1}{t_1}} \times |\cdot|_{\frac{1}{t_2}} \times |\cdot|_{\frac{1}{t_3}})
\end{align*}
and $E'$ is of type $\mathrm{\Rnum{1}}_3$.
\end{thm}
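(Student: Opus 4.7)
The plan is to work directly from the definition of type $\mathrm{\Rnum{1}}_3$. Since $\dim_{K^{\lor}}E^{\lor} = 3$ by definition means a maximal orthogonal subset of $E$ has exactly three elements, and any orthogonal set is automatically $K$-linearly independent, such a subset $\{a_1, a_2, a_3\}$ is in fact a $K$-basis of $E$ (as $\dim_K E = 3$). Setting $t_i := \|a_i\|$, the orthogonality condition gives
\begin{align*}
  \|\lambda_1 a_1 + \lambda_2 a_2 + \lambda_3 a_3\| = \max_{1 \le i \le 3} |\lambda_i|_{t_i} \quad (\lambda_1, \lambda_2, \lambda_3 \in K),
\end{align*}
so the linear map sending $a_i \mapsto e_i$ gives the first isometric isomorphism in $(2)$. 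Claim $(1)$ is then immediate from the earlier theorem that $\dim_{K^{\lor}}E^{\lor} = \dim_K E$ implies $(\mathrm{SE})$.

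For the dual statement, I would exhibit an orthogonal basis of $E'$ directly. Let $e_1^*, e_2^*, e_3^* \in E'$ be the dual basis of $e_1, e_2, e_3$. A short supremum computation, using the non-archimedean estimate $|\sum_i \mu_i \lambda_i| \le \max_i |\mu_i| \cdot |\lambda_i|$ for the upper bound and coordinate-vector test functions for the lower bound, yields
\begin{align*}
  \|\mu_1 e_1^* + \mu_2 e_2^* + \mu_3 e_3^*\| = \max_{1 \le i \le 3} |\mu_i|/t_i
\end{align*}
for all $\mu_1, \mu_2, \mu_3 \in K$. In particular $\|e_i^*\| = 1/t_i$ and $\{e_1^*, e_2^*, e_3^*\}$ is an orthogonal $K$-basis of $E'$, which gives the second isometric isomorphism
\begin{align*}
  E' \cong (K^3, |\cdot|_{1/t_1} \times |\cdot|_{1/t_2} \times |\cdot|_{1/t_3}).
\end{align*}
Consequently $\dim_{K^{\lor}}(E')^{\lor} = 3$, so $E'$ is itself of type $\mathrm{\Rnum{1}}_3$.

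No step is a genuine obstacle: everything reduces to the defining property of orthogonality and a routine dual-basis computation. The only subtlety worth flagging is the first reduction --- that a maximal orthogonal subset of $E$ with cardinality equal to $\dim_K E$ automatically spans $E$ as a $K$-vector space, so that the embedding into $((K^{\lor})^3, |\cdot|_{t_1} \times |\cdot|_{t_2} \times |\cdot|_{t_3})$ produced by Theorem \ref{pthm1} is already surjective onto the $K$-subspace $K^3$.
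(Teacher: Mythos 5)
Your proposal is correct and follows exactly the route the paper intends: the paper states this theorem without proof, as an immediate consequence of the subsection on $\dim_{K^{\lor}}E^{\lor} = \dim_K E$ (a maximal orthogonal set of cardinality $3$ is a basis, giving $E \cong (K^3, |\cdot|_{t_1} \times |\cdot|_{t_2} \times |\cdot|_{t_3})$) together with the theorem that such spaces satisfy $(\mathrm{SE})$. Your dual-basis computation $\|\mu_1 e_1^* + \mu_2 e_2^* + \mu_3 e_3^*\| = \max_i |\mu_i|/t_i$ correctly supplies the one detail the paper leaves implicit, and the linear-independence of orthogonal sets justifies the spanning claim you flag.
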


A structure theorem for $3$-dimensional normed spaces of type $\mathrm{\Rnum{1}}_3$ is immediately obtained by Theorem \ref{.1thm1}. 

\begin{thm} 
Let $t_1, t_2, t_3, s_1, s_2, s_3 \in \mathbb{R}_{> 0}$. Then
  \begin{align*}
    (K^3, |\cdot|_{t_1} \times |\cdot|_{t_2} \times |\cdot|_{t_3}) \cong (K^3, |\cdot|_{s_1} \times |\cdot|_{s_2} \times |\cdot|_{s_3}) 
  \end{align*}
  if and only if by a suitable permutation of $s_1, s_2, s_3$, we have $t_i/s_i \in V_K$ for each $i=1, 2, 3$.
\end{thm}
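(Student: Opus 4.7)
The plan is to observe that this theorem is precisely the $n = 3$ instance of Theorem \ref{.1thm1}, which has already been proved for arbitrary $n$. So no additional work will be required; I simply specialize the general result.

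For the reader's convenience I would still indicate how the two directions unfold in dimension three. For the \emph{if} direction, given a permutation $\sigma$ of $\{1, 2, 3\}$ with $t_i/s_{\sigma(i)} \in V_K$ for each $i$, I plan to pick scalars $a_i \in K$ with $|a_i| = t_i/s_{\sigma(i)}$ and check that the $K$-linear map sending the $\sigma(i)$-th canonical unit vector of the right-hand space to $a_i$ times the $i$-th canonical unit vector of the left-hand space is a $K$-linear isometry; this reduces to a direct coordinate-by-coordinate calculation.

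For the \emph{only if} direction, I plan to invoke the projection-and-injectivity argument from the proof of Theorem \ref{.1thm1}: an isometric isomorphism $T$ forces the equality $\#\{i : t_i/r \in V_K\} = \#\{j : s_j/r \in V_K\}$ for every $r > 0$. Over the finitely many $V_K$-classes appearing among $\{t_1, t_2, t_3, s_1, s_2, s_3\}$, this cardinality equality produces the required pairing of indices.

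The only nontrivial ingredient is the injectivity of the composition of $T$ with the canonical projection onto a matched-class subspace, but that is exactly what the proof of Theorem \ref{.1thm1} already establishes; hence no genuine obstacle remains in the present statement, and the theorem follows immediately from the cited general result.
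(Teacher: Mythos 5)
Your proposal is correct and matches the paper exactly: the paper states this theorem as an immediate consequence of Theorem \ref{.1thm1} with $n=3$ and gives no separate proof. Your sketch of the two directions faithfully reproduces the general argument (the scalar-rescaling isometry for sufficiency and the projection-injectivity counting argument for necessity), so nothing further is needed.
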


\subsection{Type $\mathrm{\protect \Rnum{2}}_3$}

By definition, we have the following.

\begin{thm} \label{.4thm7}
  Let $E$ be a $3$-dimensional normed space of type $\mathrm{\Rnum{2}}_3$. Then there exist $x \in K^{\lor} \setminus K$ and $t_1,t_2 \in \mathbb{R}_{>0}$ for which
\begin{align*}
  E \cong (K,|\cdot|_{t_1}) \oplus ([1, x], |\cdot|_{t_2}). 
\end{align*}
Moreover, it follows that
\begin{align*}
  E' \cong (K,|\cdot|_{\frac{1}{t_1}}) \oplus ([1, x], |\cdot|_{\frac{1}{t_2 r}}) \ \text{where} \  r = d(x,K),
\end{align*}
and $E'$ is of type $\mathrm{\Rnum{2}}_3$.
\end{thm}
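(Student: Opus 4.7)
My plan is to exploit the definition of type $\mathrm{\Rnum{2}}_3$ directly: write $E$ as a direct sum, identify each summand via earlier results, and then dualize summand-by-summand. Since $E$ is decomposable of dimension $3$, I can write $E \cong E_1 \oplus E_2$ with $E_1, E_2$ non-zero; after possibly swapping, $\dim_K E_1 = 1$ and $\dim_K E_2 = 2$, so $E_1 \cong (K, |\cdot|_{t_1})$ where $t_1$ is the norm of any generator of $E_1$. To pin down $E_2$, I need to know $\dim_{K^\lor} E_2^\lor$. Here I would use additivity of $\dim_{K^\lor}$ under direct sums, which is immediate from Proposition~1.3 together with the observation that the union of maximal orthogonal subsets of $E_1$ and $E_2$ is a maximal orthogonal subset of $E_1 \oplus E_2$. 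From $\dim_{K^\lor} E^\lor = 2$ I obtain $\dim_{K^\lor} E_2^\lor = 1$, and the observation made just before Proposition~\ref{proph9} then yields $E_2 \cong ([1,x], |\cdot|_{t_2})$ for some $x \in K^\lor \setminus K$ and some $t_2 > 0$.

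For the second assertion, I will use the standard fact that taking the continuous dual commutes with finite direct sums of non-archimedean normed spaces equipped with the max-norm (this is an easy direct calculation of operator norms on each factor). Thus $E' \cong E_1' \oplus E_2'$. The one-dimensional factor satisfies $E_1' \cong (K, |\cdot|_{1/t_1})$ by the immediate computation $\|\mathrm{id}\|_{E_1'} = \sup_{a \neq 0} |a|/(t_1|a|) = 1/t_1$. For the two-dimensional factor, Theorem~\ref{thmh10} gives $E_2' \cong ([1,x], |\cdot|_{1/(t_2 r)})$ with $r = d(x,K)$. Combining these produces the claimed isometric isomorphism for $E'$.

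Finally, $E'$ visibly decomposes as the direct sum just described, and applying the same additivity of $\dim_{K^\lor}$ to this decomposition gives $\dim_{K^\lor}(E')^\lor = 1 + 1 = 2$, so $E'$ is of type $\mathrm{\Rnum{2}}_3$. There is no real obstacle here; the only delicate point is the bookkeeping of $\dim_{K^\lor}$ under direct sums, and all the genuine analytic content has been absorbed into Theorem~\ref{thmh10} and the structure statement for $2$-dimensional spaces with $\dim_{K^\lor} = 1$ preceding Proposition~\ref{proph9}.
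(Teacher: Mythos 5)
Your proposal is correct and follows essentially the same route as the paper, whose proof consists of the single line ``The theorem follows from Theorem \ref{thmh10}'': the first assertion is read off from the definition of type $\mathrm{\Rnum{2}}_3$ exactly as you do, and the dual is computed summand-by-summand via Theorem \ref{thmh10}. You have merely made explicit the routine facts the paper leaves implicit (additivity of $\dim_{K^{\lor}}$ under direct sums and the compatibility of duality with max-norm direct sums), both of which check out.
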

\begin{proof}
  The theorem follows from Theorem \ref{thmh10}.
\end{proof}

Thus, we can apply the results of section \ref{decomp} to a normed space of type $\mathrm{\Rnum{2}}_3$.

\begin{thm} \label{.4thm6} 
  Let $x \in K^{\lor} \setminus K$ and $t_1,t_2 \in \mathbb{R}_{>0}$, and put 
  \begin{align*}
    E := (K,|\cdot|_{t_1}) \oplus ([1, x], |\cdot|_{t_2}).
  \end{align*}
  Then for each $2$-dimensional subspace $F \subseteq E$, $F$ is isometrically isomorphic to either
  \begin{align*}
    ([1, x], |\cdot|_{t_2}) \ \mathrm{or}\ (K^2, |\cdot|_{t_1} \times |\cdot|_{t_2}).
  \end{align*}
\end{thm}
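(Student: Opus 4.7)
The plan is to derive this theorem as a direct application of Theorem \ref{systhm3}, specialized to the case where the second summand of $E$ is already $2$-dimensional. To avoid a notational clash with the ``$F$'' appearing in Theorem \ref{systhm3}, let $G \subseteq E$ denote the arbitrary $2$-dimensional subspace named ``$F$'' in the present statement. I will invoke Theorem \ref{systhm3} with the factor $([1, x], |\cdot|_{t_2})$ playing the role of its generic second summand (so $n = 2$) and taking $m = 2$. This produces a dichotomy: either (a) the second projection $\pi_2 : E \to [1, x]$ restricted to $G$ is an isometry, or (b) there exists a $1$-dimensional subspace $F_1 \subseteq ([1, x], |\cdot|_{t_2})$ such that $G \cong (K, |\cdot|_{t_1}) \oplus F_1$.

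In alternative (a), $\pi_2|_G$ is an injective isometry between two $2$-dimensional $K$-normed spaces; it is therefore automatically surjective and yields $G \cong ([1, x], |\cdot|_{t_2})$ at once.

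In alternative (b), pick any non-zero $v \in F_1$. Since $F_1 \subseteq [1, x] \subseteq K^{\lor}$ carries the restricted norm, $F_1 = Kv$ with $\|\lambda v\| = t_2 |\lambda| |v|$, so as an abstract $1$-dimensional $K$-normed space $F_1 \cong (K, |\cdot|_{t_2 |v|})$. The single point that goes beyond Theorem \ref{systhm3} is the observation that $v \in K^{\lor} \setminus \{0\}$ forces $|v| \in V_{K^{\lor}} = V_K$, using the equality of value groups recorded in the preliminaries. By Theorem \ref{.1thm1} (equivalently, by rescaling $v$ by a suitable element of $K$), this gives $F_1 \cong (K, |\cdot|_{t_2})$, and combining with the other summand yields $G \cong (K, |\cdot|_{t_1}) \oplus (K, |\cdot|_{t_2}) = (K^2, |\cdot|_{t_1} \times |\cdot|_{t_2})$.

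I do not expect any genuine obstacle: the dichotomy is already packaged into Theorem \ref{systhm3}; alternative (a) collapses by a dimension count; and alternative (b) is resolved entirely by the equality $V_K = V_{K^{\lor}}$, which prevents the $1$-dimensional factor $F_1$ from having a norm that sits in a new $V_K$-coset and thereby keeps the direct sum inside the claimed isometry class $(K^2, |\cdot|_{t_1} \times |\cdot|_{t_2})$.
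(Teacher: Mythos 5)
Your proposal is correct and follows exactly the paper's route: the paper's entire proof is an appeal to Theorem \ref{systhm3}, and your two alternatives (surjectivity of the second projection by dimension count, and the use of $V_K = V_{K^{\lor}}$ to identify the one-dimensional factor $F_1$ with $(K,|\cdot|_{t_2})$) are precisely the details the paper leaves implicit.
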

\begin{proof}
  We can use Theorem \ref{systhm3}.
\end{proof}

\begin{thm}
  Let $x,y \in K^{\lor} \setminus K$ and $t_1,t_2,s_1,s_2 \in \mathbb{R}_{>0}$. Then
  \begin{align*}
    (K,|\cdot|_{t_1}) \oplus ([1, x], |\cdot|_{t_2}) \cong (K,|\cdot|_{s_1}) \oplus ([1, y], |\cdot|_{s_2}) 
  \end{align*}
  if and only if
  \begin{align*}
    t_1/s_1, t_2/s_2 \in V_K \ \mathrm{and} \ x \sim y.
  \end{align*}
\end{thm}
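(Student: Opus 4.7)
The plan is to reduce this to results already established for indecomposable pieces and for one-dimensional normed spaces. First observe that $([1,x], |\cdot|_{t_2})$ and $([1,y], |\cdot|_{s_2})$ are both $2$-dimensional and indecomposable: since $x \in K^{\lor} \setminus K$ we have $\dim_{K^{\lor}}([1,x], |\cdot|_{t_2})^{\lor} = 1$, which forbids an orthogonal basis and hence any direct-sum decomposition into one-dimensional factors; the same reasoning applies to $([1,y], |\cdot|_{s_2})$.

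For sufficiency, I would just build the two isometries separately and take their direct sum. If $t_1/s_1 \in V_K$, pick $a \in K$ with $|a| = t_1/s_1$; then $\lambda \mapsto a\lambda$ gives an isometry $(K, |\cdot|_{t_1}) \to (K, |\cdot|_{s_1})$. If moreover $x \sim y$ and $t_2/s_2 \in V_K$, then Theorem \ref{thmh3} furnishes an isometry $([1,x], |\cdot|_{t_2}) \to ([1,y], |\cdot|_{s_2})$. The direct sum of these two maps is the desired isometric isomorphism.

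For necessity, I would invoke the uniqueness-of-decomposition statement Theorem \ref{systhm6} with $m = n = 1$, $F := ([1,x], |\cdot|_{t_2})$, and $G := ([1,y], |\cdot|_{s_2})$; the hypothesis $\dim_K F \ge 2$ is satisfied and both summands are indecomposable by the paragraph above, so Theorem \ref{systhm6} yields $(K, |\cdot|_{t_1}) \cong (K, |\cdot|_{s_1})$ and $F \cong G$. Theorem \ref{.1thm1} (in the one-dimensional case) converts the first into $t_1/s_1 \in V_K$, while Theorem \ref{thmh3} converts the second into $x \sim y$ and $t_2/s_2 \in V_K$.

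There is essentially no obstacle here: once one recognizes that both sides are written as $(K,|\cdot|_{\,\cdot\,}) \oplus (\text{indecomposable $2$-dimensional})$, the theorem is a direct application of Theorem \ref{systhm6} combined with the two previously-proved classification theorems. The only small verification worth spelling out is the indecomposability of $([1,x], |\cdot|_{t_2})$, which I would record as a one-line remark before citing Theorem \ref{systhm6}.
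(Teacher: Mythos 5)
Your proposal is correct and follows exactly the route the paper takes: the paper's proof is simply ``By Theorem \ref{systhm6}, the conclusion follows,'' and your argument fleshes out that same reduction, including the (correct) observation that $([1,x],|\cdot|_{t_2})$ is indecomposable because $\dim_{K^{\lor}}([1,x])^{\lor}=1$, together with the translations via Theorem \ref{.1thm1} and Theorem \ref{thmh3}.
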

\begin{proof}
  By Theorem \ref{systhm6}, the conclusion follows.
\end{proof}

Finally, by Theorem \ref{systhm4}, we have the following theorem.

\begin{thm} \label{.4thm4}
  Let $x \in K^{\lor} \setminus K$ and $t_1,t_2 \in \mathbb{R}_{>0}$, and put 
  \begin{align*}
    E := (K,|\cdot|_{t_1}) \oplus ([1, x], |\cdot|_{t_2}).
  \end{align*}
  Then $E$ satisfies $(\mathrm{SE})$ if and only if $t_2 \notin V_K$.
\end{thm}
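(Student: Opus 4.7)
The plan is to read this statement as an immediate two-step corollary of results already in hand, and the proposal is to present it that way rather than redo any computation. The decisive ingredients are Theorem \ref{systhm4}, which reduces (SE) for a direct sum $(K,|\cdot|_{t_1}) \oplus (F,\|\cdot\|_F)$ to (SE) for $F$ alone, and the theorem stated just after Proposition \ref{seprop1} (the one-line consequence of Proposition \ref{seprop1}), which characterizes (SE) for the 2-dimensional space $([1,x],|\cdot|_t)$ with $x \in K^{\lor} \setminus K$ by $t \notin V_K$.

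Concretely, the first step is to set $F := ([1,x],|\cdot|_{t_2})$ and invoke Theorem \ref{systhm4} to get
\begin{align*}
 E = (K,|\cdot|_{t_1}) \oplus F \text{ satisfies } (\mathrm{SE}) \iff F \text{ satisfies } (\mathrm{SE}).
\end{align*}
The hypotheses are satisfied at once since $F$ is a 2-dimensional normed space. A notable byproduct of this step is that $t_1$ disappears from the condition entirely, which matches the asymmetric form of the statement.

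The second step is to apply the 2-dimensional (SE) theorem: $F = ([1,x],|\cdot|_{t_2})$ with $x \in K^{\lor} \setminus K$ satisfies (SE) if and only if $t_2 \notin V_K$. Chaining the two equivalences yields the claim.

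There is no genuine obstacle here; the proof is essentially bookkeeping, and the only thing worth double-checking is that Theorem \ref{systhm4} truly applies to an arbitrary normed space $F$ (it does — the statement is formulated for general finite-dimensional $F$, and the induction in its proof does not require $F$ to carry an orthogonal base). Accordingly, I would simply write: ``By Theorem \ref{systhm4}, $E$ satisfies (SE) if and only if $([1,x],|\cdot|_{t_2})$ does, and by the 2-dimensional (SE) criterion stated above, the latter is equivalent to $t_2 \notin V_K$.''
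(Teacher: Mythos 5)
Your proposal is correct and matches the paper exactly: the paper introduces Theorem \ref{.4thm4} with the phrase ``Finally, by Theorem \ref{systhm4}, we have the following theorem,'' i.e.\ it also reduces to the factor $([1,x],|\cdot|_{t_2})$ via Theorem \ref{systhm4} and then invokes the $2$-dimensional (SE) criterion $t_2 \notin V_K$. Nothing is missing.
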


At the end of this section, we prove some lemmas needed later.

\begin{lem} \label{lemh3}
  Let $x \in K^{\lor} \setminus K$ and $t_1,t_2 \in \mathbb{R}_{>0}$, and put 
  \begin{align*}
    E := (K,|\cdot|_{t_1}) \oplus ([1, x], |\cdot|_{t_2}).
  \end{align*}
  Then for any $\lambda \in K$, $[(1, \lambda), (0, x)] \cong ([1, x], |\cdot|_{t_2})$ if and only if $t_1 \le \tfrac{|\lambda|t_2 d(x,K)}{|x|}$.
\end{lem}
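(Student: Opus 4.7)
The plan is to invoke Theorem \ref{.4thm6} to narrow $F := [(1,\lambda), (0, x)]$ down to one of only two isometry types---$([1, x], |\cdot|_{t_2})$ or $(K^2, |\cdot|_{t_1} \times |\cdot|_{t_2})$---distinguished by whether $F$ possesses an orthogonal basis (only the latter does, since $x \notin K$). It then suffices to decide which case occurs in terms of $\lambda, t_1, t_2, |x|, d(x,K)$. First, if $\lambda = 0$, then $F = K \cdot (1,0) \oplus K \cdot (0,x)$ is visibly orthogonal and the inequality $t_1 \le 0$ also fails, so both sides of the biconditional are false. I henceforth assume $\lambda \ne 0$.

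Next I would exploit that $x \notin K$ to see that the second-coordinate projection restricts to a $K$-linear bijection $Q : F \to [1, x]$ sending $a(1,\lambda) + b(0,x) \mapsto a\lambda + bx$. Since
\[
\|a(1,\lambda) + b(0,x)\|_E = |a|\,t_1 \lor |a\lambda + bx|\,t_2,
\]
the map $Q$ is an isometry onto $([1, x], |\cdot|_{t_2})$ precisely when $|a|\,t_1 \le |a\lambda + bx|\,t_2$ for all $a, b \in K$, and for $a \ne 0$ this reduces to $t_1 \le t_2 \cdot d(\lambda, Kx)$. The key ultrametric computation is the identity
\[
d(\lambda, Kx) \;=\; \frac{|\lambda|\, d(x,K)}{|x|},
\]
which I would obtain by writing $|\lambda - cx| = |c|\,|\lambda/c - x|$ for $c \ne 0$, substituting $d := \lambda/c$, and case-splitting on the three possibilities $|d| >, =, < |x|$; the same argument shows that the infimum is not attained (it inherits non-attainment from $d(x, K)$). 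This delivers the ``if'' direction and puts the inequality into the stated form.

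For ``only if'', the plan is to produce an orthogonal basis of $F$ whenever the inequality fails. If $t_1 > |\lambda|\, t_2\, d(x,K)/|x|$, non-attainment of $d(\lambda, Kx)$ lets me choose $b_0 \in K$ with $|\lambda + b_0 x|\,t_2 < t_1$; setting $u := (1,\lambda) + b_0 (0,x) = (1, \lambda + b_0 x)$, I would verify that $\{u, (0,x)\}$ is orthogonal by using the strict bound $|a(\lambda+b_0x)|\,t_2 < |a|\,t_1$ to conclude, via the ultrametric inequality,
\[
\|au + c(0,x)\|_E \;=\; |a|\,t_1 \lor |cx|\,t_2
\]
for all $a, c \in K$. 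This yields $F \cong (K^2, |\cdot|_{t_1} \times |\cdot|_{t_2}) \not\cong ([1, x], |\cdot|_{t_2})$, completing the argument.

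The hard part will be the identity $d(\lambda, Kx) = |\lambda|\,d(x,K)/|x|$ together with the non-attainment of this infimum; everything else is routine ultrametric bookkeeping combined with Theorem \ref{.4thm6}.
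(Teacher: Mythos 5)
Your proposal is correct and follows essentially the same route as the paper: the paper cites the dichotomy from (the proof of) Theorem \ref{systhm3} (of which Theorem \ref{.4thm6} is the specialization you invoke) to reduce the question to whether $|1|_{t_1}\le d_{(K^{\lor},|\cdot|_{t_2})}(\lambda,[x])$, and then asserts the identity $d(\lambda,[x])=|\lambda|\,d(x,K)/|x|$ as ``easy to see''. You simply fill in that computation (and the trivial $\lambda=0$ case) explicitly; no substantive difference.
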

\begin{proof}
  By the proof of Theorem \ref{systhm3}, $[(1, \lambda), (0, x)] \cong ([1, x], |\cdot|_{t_2})$ if and only if
    \begin{align*}
    |1|_{t_1} \le d_{(K^{\lor}, |\cdot|_{t_2})} (\lambda, [x]).
  \end{align*}
  It is easy to see that the last condition is equivalent to $t_1 \le \tfrac{|\lambda|t_2 d(x,K)}{|x|}$.
\end{proof}

\begin{lem} \label{lemh4}
   Let $x \in K^{\lor} \setminus K$ and $t_1,t_2 \in \mathbb{R}_{>0}$, and put 
  \begin{align*}
    E := (K,|\cdot|_{t_1}) \oplus ([1, x], |\cdot|_{t_2}).
  \end{align*}
  Then for any $\lambda, \mu \in K$ with $|\mu|_{t_2} \le |\lambda|_{t_1}$, the map
  \begin{align*}
    E / [(\lambda, \mu)] &\to ([1, x], |\cdot|_{t_2}) \\
    \pi((0,1)) &\mapsto 1 \\
    \pi((0,x)) &\mapsto x
  \end{align*}
  gives a linear isometry where $\pi : E \to E / [(\lambda, \mu)]$ is the canonical quotient map.
\end{lem}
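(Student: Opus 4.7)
The plan is to verify directly that the proposed map is a well-defined $K$-linear isometric isomorphism, by unfolding the quotient norm and applying the strong triangle inequality. First I would observe that the hypothesis $|\mu|_{t_2}\le|\lambda|_{t_1}$, combined with the implicit requirement that the quotient be two-dimensional, forces $\lambda\neq 0$: if $\lambda=0$ then $|\mu|_{t_2}\le 0$ and so $\mu=0$, making $E/[(\lambda,\mu)]=E$ three-dimensional and inconsistent with the target. Granting $\lambda\neq 0$, the three vectors $(\lambda,\mu),(0,1),(0,x)$ are $K$-linearly independent (since $1,x$ are $K$-linearly independent in $K^{\lor}$, and $(\lambda,\mu)$ has non-zero first coordinate). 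Therefore $\pi((0,1))$ and $\pi((0,x))$ form a $K$-basis of $E/[(\lambda,\mu)]$, and the assignment determines a well-defined $K$-linear isomorphism onto $([1,x],|\cdot|_{t_2})$.

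To show this isomorphism is an isometry, I would compute both sides for an arbitrary element $a\pi((0,1))+b\pi((0,x))$ with $a,b\in K$. Unfolding the quotient norm gives
\[
\|a\pi((0,1))+b\pi((0,x))\|_{E/[(\lambda,\mu)]}=\inf_{c\in K}\bigl(|c\lambda|_{t_1}\lor|a+bx-c\mu|_{t_2}\bigr),
\]
while the target norm is $|a+bx|_{t_2}$. The upper bound $\le$ follows by choosing $c=0$. For the matching lower bound, I would fix $c\in K$ and split into two cases. If $|a+bx-c\mu|_{t_2}\ge|a+bx|_{t_2}$, the maximum already dominates. Otherwise, applying the strong triangle inequality to the decomposition $a+bx=(a+bx-c\mu)+c\mu$ forces $|c\mu|_{t_2}=|a+bx|_{t_2}$, and then the hypothesis $|\mu|_{t_2}\le|\lambda|_{t_1}$ yields $|c\lambda|_{t_1}\ge|c\mu|_{t_2}=|a+bx|_{t_2}$, so the maximum again dominates.

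There is no real obstacle here: the argument is the standard ``dominant term'' case analysis of the ultrametric inequality together with a single invocation of the hypothesis relating $|\lambda|_{t_1}$ and $|\mu|_{t_2}$. The only point requiring a touch of care is extracting the ultrametric equality $|c\mu|_{t_2}=|a+bx|_{t_2}$ in the second case \emph{before} applying the hypothesis on $\lambda$ and $\mu$; once that is in hand the proof closes in a single line.
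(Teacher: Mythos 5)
Your argument is correct and is essentially the paper's proof written out in full: the paper simply observes that $|\mu|_{t_2}\le|\lambda|_{t_1}$ makes $\{(0,x'),(\lambda,\mu)\}$ an orthogonal set for every non-zero $x'\in[1,x]$, and your case analysis with the ultrametric inequality is exactly the verification of that orthogonality, from which the identification of the quotient norm with $|a+bx|_{t_2}$ follows.
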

\begin{proof}
  Since $|\mu|_{t_2} \le |\lambda|_{t_1}$, for each non-zero $x' \in ([1, x], |\cdot|_{t_2})$, $\{(0, x'), (\lambda, \mu)\}$ is an orthogonal set. Now, the lemma easily follows. 
\end{proof}

\subsection{Type $\mathrm{\protect \Rnum{3}}_3$, $\mathrm{\protect \Rnum{4}}_3$} \label{type3,4}

If $E$ is a $3$-dimensional normed space of type $\mathrm{\Rnum{3}}_3$, then there exist $x,y \in K^{\lor} \setminus K$ and $t_1,t_2 \in \mathbb{R}_{>0}$ for which
\begin{align*}
  E \cong ([(1,0),(0,1),(x,y)]) \subseteq ((K^{\lor})^2,|\cdot|_{t_1} \times |\cdot|_{t_2}).
\end{align*}
Conversely, for each $x,y \in K^{\lor} \setminus K$ and $t_1,t_2 \in \mathbb{R}_{>0}$, a $3$-dimensional subspace
\begin{align*}
  ([(1,0),(0,1),(x,y)]) \subseteq ((K^{\lor})^2,|\cdot|_{t_1} \times |\cdot|_{t_2})
\end{align*}
is either of type $\mathrm{\Rnum{2}}_3$ or $\mathrm{\Rnum{3}}_3$. Then by Theorem \ref{systhm2}, we have the following.

\begin{thm} \label{.4thm2}
  Let $x,y \in K^{\lor} \setminus K$ and $t_1,t_2 \in \mathbb{R}_{>0}$, and put $r = d(x,K), s=d(y,K)$. We set
  \begin{align*}
    (E,\|\cdot\|) := [(1,0),(0,1),(x,y)],
  \end{align*}
  a subspace of $((K^{\lor})^2,|\cdot|_{t_1} \times |\cdot|_{t_2})$. \\
  $(1)$ $E$ is of type $\mathrm{\Rnum{3}}_3$ if and only if $ t_1 r = t_2 s$ and $x \nsim y$. \\
  $(2)$ Suppose that $E$ is of type $\mathrm{\Rnum{3}}_3$. Then $E$ satisfies $(\mathrm{SE})$ if and only if $t_1,t_2 \notin V_K$. \\
  $(3)$ Suppose that $E$ is of type $\mathrm{\Rnum{3}}_3$. Then each $2$-dimensional subspace of $E$ is isometrically isomorphic to
\begin{align*}
  (K^2,|\cdot|_{t_1} \times |\cdot|_{t_2}).
\end{align*} 
Moreover, let $e_1, e_2, e_3 \in E'$ be the dual basis of $(x,y),(1,0),(0,1)$. Then the map
  \begin{align*}
    E' &\to ([1, x, y], |\cdot|_{\frac{1}{t}}) \\
    e_1 &\mapsto 1 \\
    e_2 &\mapsto -x \\
    e_3 &\mapsto -y
  \end{align*}
  gives a linear isometry.
\end{thm}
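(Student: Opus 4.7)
The plan is to reduce all three parts to Theorem \ref{systhm2} with $n = 2$, together with Theorem \ref{thmh4} (which identifies non-equivalence of $x, y$ with orthogonality of $\{\pi(x), \pi(y)\}$) and the corollary immediately following Theorem \ref{systhm2} that characterises $(\mathrm{SE})$ for indecomposable $E$ satisfying $\dim_{K^{\lor}} E^{\lor} = \dim_K E - 1$.

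For $(1)$, I first observe that the two canonical unit vectors $(1,0)$ and $(0,1)$ already form a maximal orthogonal subset of $E$ as a $K^{\lor}$-submodule, because $x,y \in K^{\lor}\setminus K$ forces $(x,y) \in [(1,0),(0,1)]_{K^{\lor}}$. Hence $\dim_{K^{\lor}} E^{\lor} = 2$, so $E$ belongs to exactly one of types $\mathrm{\Rnum{2}}_3$ or $\mathrm{\Rnum{3}}_3$, and the distinction is indecomposability. Applying Theorem \ref{systhm2}(1)(2) with $n = 2$, $x_1 = x$, $x_2 = y$, indecomposability is equivalent to $t_1 r = t_2 s$ together with orthogonality of $\{\pi(x), \pi(y)\}$; by Theorem \ref{thmh4}, the latter is equivalent to $x \nsim y$.

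Part $(3)$ is a direct specialisation of Theorem \ref{systhm2}(2) to $n = 2$, with common value $t := t_1 r = t_2 s$. The statement that every $2$-dimensional subspace is isometrically isomorphic to $(K^2, |\cdot|_{t_1} \times |\cdot|_{t_2})$ is the final assertion of that theorem. The explicit isometry $E' \to ([1,x,y],|\cdot|_{1/t})$ follows by matching labellings: in our statement $e_1$ is dual to $(x,y)$ and thus plays the role of $e_{n+1}$ from Theorem \ref{systhm2}, while $e_2, e_3$ (dual to $(1,0)$ and $(0,1)$) play the roles of $e_1$ and $e_2$ there. The prescribed image values agree.

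For $(2)$, I invoke the indecomposable $(\mathrm{SE})$ corollary: $E$ satisfies $(\mathrm{SE})$ iff $\|E\| \cap V_K = \emptyset$. If $t_1 \in V_K$ then $(1,0) \in E$ has norm $t_1 \in V_K$, and similarly for $t_2$. For the converse, every nonzero element of $E$ has the form $(a + cx, b + cy)$ with $a,b,c \in K$; since $x, y \notin K$, any nonvanishing entry lies in $K^{\lor} \setminus \{0\}$ and therefore has absolute value in $V_{K^{\lor}} = V_K$. Consequently the norm of such an element lies in $t_1 V_K \cup t_2 V_K$, and this set is disjoint from $V_K$ exactly when $t_1, t_2 \notin V_K$. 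There is no real obstacle in this proof — the only bookkeeping is the dual-basis identification in $(3)$ and the routine norm argument in $(2)$; the substance is entirely carried by Theorem \ref{systhm2} together with Theorem \ref{thmh4} and the $(\mathrm{SE})$ corollary.
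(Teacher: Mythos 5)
Your proposal is correct and follows exactly the route the paper intends: the paper gives no separate proof of this theorem, stating only that it follows from Theorem \ref{systhm2} (specialised to $n=2$), with Theorem \ref{thmh4} translating orthogonality of $\{\pi(x),\pi(y)\}$ into $x \nsim y$ and the corollary on indecomposable spaces with $\dim_{K^{\lor}}E^{\lor} = \dim_K E - 1$ handling $(\mathrm{SE})$. Your bookkeeping (the relabelling of the dual basis in $(3)$ and the observation that $V_K = V_{K^{\lor}}$ gives $\|E\| \subseteq t_1 V_K \cup t_2 V_K$ in $(2)$) is exactly the filling-in the paper leaves to the reader.
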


For the following remark, see the proof of Theorem \ref{systhm2}.

\begin{rem} \label{sysrem1}
  In Theorem \ref{.4thm2}, if $t_1 r > t_2 s$ (resp. $t_1 r < t_2 s$), then $E$ is isometrically isomorphic to
  \begin{align*}
    ([1, x], |\cdot|_{t_1}) \oplus (K, |\cdot|_{t_2}) \ (\text{resp}. \ (K, |\cdot|_{t_1}) \oplus ([1, y], |\cdot|_{t_2})).
  \end{align*}
  If $x \sim y$ and $t_1 r = t_2 s$, then $E$ is isometrically isomorphic to
  \begin{align*}
    ([1, x], |\cdot|_{t_1}) \oplus (K, |\cdot|_{t_2}) \ (\cong (K, |\cdot|_{t_1}) \oplus ([1, y], |\cdot|_{t_2})).
  \end{align*}
\end{rem}

\begin{cor} \label{.4cor3}
   Let $x,y \in K^{\lor} \setminus K$ and $t_1,t_2 \in \mathbb{R}_{>0}$. We set
  \begin{align*}
    (E,\|\cdot\|) := ([(1,0),(0,1),(x,y)], |\cdot|_{t_1} \times |\cdot|_{t_2}),
  \end{align*}
  and suppose that $E$ is of type $\mathrm{\Rnum{3}}_3$. Then for each $\lambda, \mu \in K$, $\mu \neq 0$, the map
  \begin{align*}
    E / [(\lambda, \mu)] &\to ([1, x - \tfrac{\lambda}{\mu} y], |\cdot|_{\tfrac{t}{\beta}}) \\
    \pi((1,0)) &\mapsto 1 \\
    \pi((x,y)) &\mapsto x - \tfrac{\lambda}{\mu} y
  \end{align*}
  gives a linear isometry where $t := t_1 d(x, K) = t_2 d(y,K)$, $\beta = d(x - \tfrac{\lambda}{\mu}y, K)$ and $\pi : E \to E / [(\lambda, \mu)]$ is the canonical quotient map.
\end{cor}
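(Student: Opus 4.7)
The plan is to reduce this corollary to Theorem~\ref{thmh10} by duality, rather than computing the quotient norm $\|\pi(\cdot)\|$ directly. The key input is Theorem~\ref{.4thm2}$(3)$, which identifies $E'$ isometrically with $([1,x,y],|\cdot|_{1/t})$ via $e_1\mapsto 1$, $e_2\mapsto -x$, $e_3\mapsto -y$, where $e_1,e_2,e_3$ is the dual basis of $(x,y),(1,0),(0,1)$.

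First I would identify the annihilator $D^{\perp}\subseteq E'$ for $D:=[(\lambda,\mu)]$. A functional $\alpha e_1+\beta e_2+\gamma e_3\in E'$ annihilates $(\lambda,\mu)=\lambda(1,0)+\mu(0,1)$ iff $\lambda\beta+\mu\gamma=0$, so $D^{\perp}=[e_1,\,e_2-(\lambda/\mu)e_3]$. Under the Theorem~\ref{.4thm2}$(3)$ isomorphism, this is carried onto $[1,\,-x+(\lambda/\mu)y]=[1,z]$ with $z:=x-(\lambda/\mu)y$. Via the canonical identification $(E/D)'\cong D^{\perp}$, I then obtain $(E/D)'\cong([1,z],|\cdot|_{1/t})$. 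Since $x\nsim y$ (as $E$ is of type $\mathrm{\Rnum{3}}_3$), Theorem~\ref{thmh4} gives that $\{\pi(x),\pi(y)\}$ is orthogonal in $K^{\lor}/K$; in particular $z\notin K$ and $d(z,K)=\max(r,\,|\lambda|s/|\mu|)=\beta$, using $t_1 r=t_2 s=t$.

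Next I would apply Theorem~\ref{thmh10} to the $2$-dimensional normed space $([1,z],|\cdot|_{1/t})$: with the substitution $x\rightsquigarrow z$, $t\rightsquigarrow 1/t$, $r\rightsquigarrow\beta$, its dual is isometric to $([1,z],|\cdot|_{1/((1/t)\beta)})=([1,z],|\cdot|_{t/\beta})$. Combined with the reflexivity of the finite-dimensional space $E/D$, this yields $E/D\cong(E/D)''\cong([1,z],|\cdot|_{t/\beta})$.

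The last step is to trace the basis vectors $\pi((1,0))$ and $\pi((x,y))$ through the chain of dualities to recover the explicit formula in the statement. Direct evaluation shows that $\{e_1,\,e_2-(\lambda/\mu)e_3\}$ is the dual basis of $\{\pi((x,y)),\pi((1,0))\}$ inside $D^{\perp}\cong(E/D)'$. Unwinding Theorem~\ref{thmh10}'s explicit formula through the double dual, I expect $\pi((x,y))$ and $\pi((1,0))$ to map to $-z$ and $-1$ respectively; composing with the isometry $-\mathrm{id}$ on $([1,z],|\cdot|_{t/\beta})$ then produces the desired map. The only delicate part of the argument is this sign bookkeeping; everything else is a clean concatenation of Theorems~\ref{.4thm2}, \ref{thmh4}, and \ref{thmh10}.
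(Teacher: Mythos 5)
Your proposal is correct and follows essentially the same route as the paper's own proof: identify $D^{\perp}=[e_1,\,e_2-\tfrac{\lambda}{\mu}e_3]$ as the dual basis of $\pi((x,y)),\pi((1,0))$, transport it via Theorem~\ref{.4thm2}$(3)$ onto $([1,x-\tfrac{\lambda}{\mu}y],|\cdot|_{1/t})$, and then conclude by Theorem~\ref{thmh10} together with the canonical isomorphism $E/D\cong(E/D)''$. Your sign bookkeeping ($\pi((x,y))\mapsto -z$, $\pi((1,0))\mapsto -1$, fixed by composing with $-\mathrm{id}$) also checks out.
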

\begin{proof}
  Let $e_1, e_2, e_3 \in E'$ be the dual basis of $(x,y), (1, 0), (0, 1)$. It is easy to see that $e_1, e_2 - \tfrac{\lambda}{\mu} e_3$ is the dual basis of $\pi((x, y)), \pi((1,0))$ via the natural isomorphism $[(\lambda, \mu)]^{\perp} \to (E / [(\lambda, \mu)])'$. On the other hand, by Theorem \ref{.4thm2}, the map
  \begin{align*}
    [(\lambda, \mu)]^{\perp} &\to ([1, x - \tfrac{\lambda}{\mu} y], |\cdot|_{\tfrac{1}{t}}) \\
    e_1 &\mapsto 1 \\
    e_2 - \tfrac{\lambda}{\mu} e_3 &\mapsto -x + \tfrac{\lambda}{\mu} y
  \end{align*}
  gives a linear isometry. Now, by Theorem \ref{thmh10} and the natural isomorphism from $E / [(\lambda, \mu)]$ to $(E / [(\lambda, \mu)])''$, the corollary holds.
\end{proof}

The following corollary immediately follows from Theorem \ref{.4thm2}.

\begin{cor}[cf. {\cite[Theorem 1.14]{note}}]  \label{.4cor2}
  Let $E$ be a $3$-dimensional normed space of type $\mathrm{\Rnum{3}}_3$. Then for each $u \in E$, $E/[u]$ has no orthogonal base.
\end{cor}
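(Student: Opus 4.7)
The plan is to work through the dual space $E'$. By Theorem~\ref{.4thm2}(3), since $E$ is of type $\mathrm{\Rnum{3}}_3$, there exist $x, y \in K^{\lor} \setminus K$ and $t > 0$ such that $E' \cong ([1, x, y], |\cdot|_{1/t}) \subseteq (K^{\lor}, |\cdot|_{1/t})$. For any nonzero $u \in E$, the canonical isometric isomorphism $(E/[u])' \cong [u]^{\perp}$ identifies $(E/[u])'$ with a two-dimensional $K$-subspace $W$ of $(K^{\lor}, |\cdot|_{1/t})$.

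The heart of the argument is the claim that any two-dimensional $K$-subspace $W$ of $(K^{\lor}, |\cdot|_s)$ has no orthogonal base. To prove it, I would pick an arbitrary $K$-basis $\{a, b\}$ of $W$ and set $c := b/a \in K^{\lor}$; the $K$-linear independence of $\{a, b\}$ forces $c \in K^{\lor} \setminus K$. Since $K^{\lor}$ is a field, a direct computation gives $|\lambda a + \mu b|_s = s|a| \cdot |\lambda + \mu c|$ and $\max(|\lambda a|_s, |\mu b|_s) = s|a| \cdot \max(|\lambda|, |\mu||c|)$, so $\{a, b\}$ is orthogonal iff $|\lambda + \mu c| = \max(|\lambda|, |\mu||c|)$ for all $\lambda, \mu \in K$, which (on taking $\mu \neq 0$) is equivalent to $d(c, K) = |c|$, i.e., to the infimum $d(c, K) = \inf_{\lambda \in K}|c - \lambda|$ being attained at $\lambda = 0$. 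However, because $K^{\lor}$ is an immediate extension of $K$ and $c \notin K$, $d(c, K)$ is not attained on $K$: if it were attained at some $k_0 \in K$, then the nonzero element $c - k_0 \in K^{\lor}$ would admit no $k \in K$ with $|c - k_0 - k| < |c - k_0|$, contradicting the immediate-extension property. Hence $d(c, K) < |c|$, so $\{a, b\}$ is not orthogonal; since the basis was arbitrary, $W$ has no orthogonal base.

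Applying the claim to $W \cong (E/[u])'$ shows that $(E/[u])'$ has no orthogonal base. Finally, if $E/[u]$ had an orthogonal base $\{v_1, v_2\}$ of norms $s_1, s_2$, then the dual basis would form an orthogonal base of $(E/[u])'$ of norms $1/s_1, 1/s_2$, contradicting the previous sentence. Hence $E/[u]$ has no orthogonal base. The main substance is the key claim; once one observes that $E'$ embeds into $(K^{\lor}, |\cdot|_{1/t})$ via Theorem~\ref{.4thm2}(3), the essential input is the non-attainment of $d(c, K)$ for $c \in K^{\lor} \setminus K$, which is precisely the immediate-extension property of the spherical completion.
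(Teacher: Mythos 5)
Your proposal is correct and follows essentially the route the paper intends: the paper derives the corollary "immediately" from Theorem~\ref{.4thm2}(3) by noting that $(E/[u])'\cong [u]^{\perp}$ is a $2$-dimensional subspace of $E'\cong([1,x,y],|\cdot|_{1/t})\subseteq(K^{\lor},|\cdot|_{1/t})$, hence has no orthogonal base, hence neither does $E/[u]$. Your only addition is an explicit, correct verification (via the immediate-extension property, i.e.\ non-attainment of $d(c,K)$ for $c\in K^{\lor}\setminus K$) of the fact that a $2$-dimensional $K$-subspace of $(K^{\lor},|\cdot|_s)$ has no orthogonal base, which the paper treats as known from its Section~2 setup.
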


Next, we will study $3$-dimensional normed spaces $E$ with $\dim_{K^{\lor}} E^{\lor} = 1$. If a $3$-dimensional normed space $E$ satisfies $\dim_{K^{\lor}} E^{\lor} = 1$, then there exist $x,y \in K^{\lor} \setminus K$ and $t \in \mathbb{R}_{>0}$ such that 
\begin{align*}
  (E,\|\cdot\|) \cong ([1, x, y], |\cdot|_{t}).
\end{align*}
By Corollaries \ref{syscor2} and \ref{syscor4}, we have the following theorem.

\begin{thm} \label{.4thm3}
  Let $x,y \in K^{\lor} \setminus K$ and $t \in \mathbb{R}_{>0}$. We set $E := ([1, x, y], |\cdot|_{t})$. \\
  $(1)$ $E$ is of type $\mathrm{\Rnum{4}}_3$ if and only if there exist $z, w \in [1, x, y]$ such that $z \nsim w$. \\
  $(2)$ Suppose that $E$ is of type $\mathrm{\Rnum{4}}_3$. Then $E$ satisfies $\mathrm{(SE)}$ if and only if $t \notin V_K$. \\
  $(3)$ Suppose that $E$ is of type $\mathrm{\Rnum{4}}_3$. Then for each $2$-dimensional subspace F of $E$, there exists $z \in [1, x, y]$ such that
  \begin{align*}
    F \cong ([1,z], |\cdot|_t)
  \end{align*}
  Moreover, if $x \nsim y$, then we have
  \begin{align*}
    E' \cong [(1,0),(0,1),(x,y)] \subseteq ((K^{\lor})^2, |\cdot|_{\frac{1}{tr}} \times |\cdot|_{\frac{1}{ts}})
  \end{align*}
  where $r = d(x,K), s = d(y,K)$.
\end{thm}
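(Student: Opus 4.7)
My strategy is to reduce each part to results already established in Section \ref{sys} by exhibiting $E$ in the form $([1, z, w], |\cdot|_t)$ for some $z, w \in [1, x, y] \setminus K$ with $\{\pi(z), \pi(w)\}$ orthogonal in $K^{\lor}/K$, where $\pi : K^{\lor} \to K^{\lor}/K$ is the canonical quotient map. For Part $(1)$, note that $\dim_{K^{\lor}} E^{\lor} = 1$ is automatic, since $E$ is a $K$-subspace of the spherically complete space $(K^{\lor}, |\cdot|_t)$. By Corollary \ref{syscor1}, the remaining condition defining type $\mathrm{\Rnum{4}}_3$ reduces to the single requirement that $E/[1]$ have an orthogonal base. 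Via a quotient norm computation, $E/[1]$ identifies with the $K$-span of $\pi(x), \pi(y)$ in $(K^{\lor}/K, t \cdot \|\cdot\|)$, so an orthogonal base exists iff there are $z, w \in [1, x, y] \setminus K$ with $\pi(z), \pi(w)$ linearly independent and $\{\pi(z), \pi(w)\}$ orthogonal. By Theorem \ref{thmh4}, the orthogonality amounts to $z \nsim w$; moreover, a $K$-linear relation $\lambda z + \mu w \in K$ with $(\lambda, \mu) \neq (0,0)$ easily forces $z \sim w$, so $z \nsim w$ already guarantees the required independence.

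Having established $(1)$, I fix witnesses $z, w \in [1, x, y] \setminus K$ with $z \nsim w$ and verify that $1, z, w$ are $K$-linearly independent (again using the contrapositive argument above), so that $E = [1, z, w]$ as a set and $E = ([1, z, w], |\cdot|_t)$ as a normed space, now with $\{\pi(z), \pi(w)\}$ orthogonal. Part $(2)$ is then immediate from Corollary \ref{syscor4} applied with $n = 2$, $x_1 = z$, $x_2 = w$. For the first assertion of Part $(3)$, I apply the structure theorem for $n$-dimensional subspaces stated just before Corollary \ref{syscor2} with $n = 2$: for each $2$-dimensional subspace $F$ of $E$, there exists $y_1 \in [1, z, w] = [1, x, y]$ such that $F \cong ([1, y_1], |\cdot|_t)$. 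For the second assertion, the extra hypothesis $x \nsim y$ means $\{\pi(x), \pi(y)\}$ itself is orthogonal by Theorem \ref{thmh4}, so Corollary \ref{syscor5} with $n = 2$, $x_1 = x$, $x_2 = y$ immediately delivers the description of $E'$ as a subspace of $((K^{\lor})^2, |\cdot|_{1/tr} \times |\cdot|_{1/ts})$.

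The main technical care, though not especially deep, sits in Part $(1)$: one must carefully identify the quotient norm on $E/[1]$ with $t$ times the quotient norm on $K^{\lor}/K$, and then argue that $z \nsim w$ genuinely yields an orthogonal \emph{basis} (not merely an orthogonal pair) of the 2-dimensional space $E/[1]$. Once this is done and the alternative presentation $E = ([1, z, w], |\cdot|_t)$ with orthogonal $\{\pi(z), \pi(w)\}$ is in hand, Parts $(2)$ and $(3)$ reduce mechanically to Corollaries \ref{syscor4}, \ref{syscor5}, and the unnumbered structure theorem for subspaces.
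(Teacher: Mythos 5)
Your proposal is correct and follows essentially the route the paper intends: the paper gives no written proof, merely citing the Section \ref{sys} corollaries, and your argument is exactly that derivation spelled out (reduction of the type-$\mathrm{\Rnum{4}}_3$ condition to a single quotient $E/[1]$ via Corollary \ref{syscor1}, translation to orthogonality in $K^{\lor}/K$ via Theorem \ref{thmh4}, and then Corollaries \ref{syscor4} and \ref{syscor5} together with the subspace theorem preceding Corollary \ref{syscor2}). The points you flag as needing care --- the scaling of the quotient norm by $t$ and the fact that $z \nsim w$ forces $\pi(z),\pi(w)$ to be linearly independent --- are handled correctly.
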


\begin{cor} \label{.4cor1}
  If $E$ is a $3$-dimensional normed space of type $\mathrm{\Rnum{3}}_3$ (resp. $\mathrm{\Rnum{4}}_3$), then $E'$ is of type $\mathrm{\Rnum{4}}_3$ (resp. $\mathrm{\Rnum{3}}_3$). 
\end{cor}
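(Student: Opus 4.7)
The plan is to derive both implications directly from the explicit dual descriptions in Theorems \ref{.4thm2} and \ref{.4thm3}. These two theorems already set up a near-perfect duality between the embedded realizations of types $\mathrm{\Rnum{3}}_3$ and $\mathrm{\Rnum{4}}_3$; the work of this corollary is simply to recognize the output of one theorem as legitimate input for the other.

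For the first implication, I would start from a $3$-dimensional normed space $E$ of type $\mathrm{\Rnum{3}}_3$. By Theorem \ref{.4thm2} we may assume $E = [(1,0),(0,1),(x,y)] \subseteq ((K^{\lor})^2, |\cdot|_{t_1} \times |\cdot|_{t_2})$ with $x \nsim y$ and $t_1 r = t_2 s =: t$, where $r = d(x,K)$ and $s = d(y,K)$. The last assertion of Theorem \ref{.4thm2} identifies $E'$ isometrically with $([1,x,y], |\cdot|_{1/t})$. Since the pair $x,y \in [1,x,y]$ already satisfies $x \nsim y$, the criterion in Theorem \ref{.4thm3}(1) applies, so $E'$ is of type $\mathrm{\Rnum{4}}_3$.

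For the second implication, let $E$ be of type $\mathrm{\Rnum{4}}_3$, so $\dim_{K^\lor} E^\lor = 1$ and by Theorem \ref{.4thm3}(1) we may write $E \cong ([1,x,y], |\cdot|_t)$ with some pair of non-equivalent elements $z,w \in [1,x,y]$. Choosing $\{1, z, w\}$ as a new basis (it remains a basis of $[1,x,y]$ because $z, w$ are linearly independent modulo $K$), we may assume from the outset that the defining generators themselves satisfy $x \nsim y$. The ``Moreover'' part of Theorem \ref{.4thm3}(3) then realizes $E'$ as $[(1,0),(0,1),(x,y)] \subseteq ((K^{\lor})^2, |\cdot|_{1/(tr)} \times |\cdot|_{1/(ts)})$. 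In this realization the scales match, $(1/(tr)) \cdot r = (1/(ts)) \cdot s = 1/t$, and we still have $x \nsim y$, so Theorem \ref{.4thm2}(1) certifies that $E'$ is of type $\mathrm{\Rnum{3}}_3$.

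The only subtle step is the basis change in the second implication, where one must pass from an arbitrary representation $([1,x,y], |\cdot|_t)$ to one in which the two non-constant generators are non-equivalent; this is routine because any $K$-basis of the ambient subspace $[1,x,y] \subseteq K^{\lor}$ containing $1$ produces an isometric representation of $E$ of the same shape. With that observation in hand, the rest is mechanical bookkeeping of the data $(x,y,t_1,t_2)$ and the formula $t = t_i \cdot d(x_i, K)$ through the dual correspondences already established.
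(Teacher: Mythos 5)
Your proposal is correct and is essentially the argument the paper intends: the corollary is positioned as a direct consequence of the dual descriptions in Theorems \ref{.4thm2}(3) and \ref{.4thm3}(3), and you correctly supply the one genuinely needed observation, namely that a non-equivalent pair $z \nsim w$ in $[1,x,y]$ yields a basis $\{1,z,w\}$ (since $z \nsim w$ forces $w \notin [1,z]$), so the representation can be normalized before invoking the ``Moreover'' part of Theorem \ref{.4thm3}(3). The bookkeeping of the scales $t_i\,d(x_i,K)$ and the application of Theorem \ref{.4thm2}(1) in the reverse direction are exactly as the paper's setup requires.
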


Now, we obtain a structure theorem for normed spaces of type $\mathrm{\Rnum{4}}_3$.

\begin{thm}
  Let $x, y, z, w \in K^{\lor} \setminus K$ and $t, s \in \mathbb{R}_{>0}$. Suppose $x \nsim y$ and $z \nsim w$. Then
  \begin{align*}
    ([1, x, y], |\cdot|_t) \cong ([1, z, w], |\cdot|_s)
  \end{align*}
  if and only if $t / s \in V_K$ and there exist $x', y' \in [1, x, y]$ satisfying $x' \sim z$ and $y' \sim w$.
\end{thm}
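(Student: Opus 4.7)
My plan is to use the structure theorem for type $\mathrm{\Rnum{4}}_3$ spaces (Theorem \ref{.4thm3}) together with the 2-dimensional classification (Theorem \ref{thmh3}) and the pasting lemma (Lemma \ref{syslem1}). Note first that $x \nsim y$ and $z \nsim w$ ensure that both $([1,x,y],|\cdot|_t)$ and $([1,z,w],|\cdot|_s)$ are 3-dimensional of type $\mathrm{\Rnum{4}}_3$, so Theorem \ref{.4thm3} applies on both sides.

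For the sufficiency, I would proceed as follows. Assume $t/s \in V_K$ and pick $x',y' \in [1,x,y]$ with $x' \sim z$ and $y' \sim w$. Since $z \nsim w$, Theorem \ref{thmh4} says $\{\pi(z),\pi(w)\}$ is orthogonal in $K^{\lor}/K$. Applying Lemma \ref{syslem1} with $(x_1,x_2) = (z,w)$ and $(y_1,y_2) = (x',y')$ yields an isometry $([1,z,w],|\cdot|) \to ([1,x',y'],|\cdot|)$ sending $1\mapsto 1,\ z\mapsto x',\ w\mapsto y'$, together with the fact that $\{\pi(x'),\pi(y')\}$ is orthogonal. The latter forces $1,x',y'$ to be $K$-linearly independent, so $[1,x',y']=[1,x,y]$ by dimension. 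Finally, rescaling by any $a\in K$ with $|a|=t/s$ gives an isometry $([1,z,w],|\cdot|_s)\to ([1,z,w],|\cdot|_t)$, and concatenating with the above produces the desired $([1,x,y],|\cdot|_t)\cong ([1,z,w],|\cdot|_s)$.

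For the necessity, let $T : ([1,x,y],|\cdot|_t)\to ([1,z,w],|\cdot|_s)$ be an isometric isomorphism. For any nonzero element, its norm on the left lies in $tV_K$ and on the right in $sV_K$; comparing via $T$ shows $t/s\in V_K$. For the equivalence relations, consider the 2-dimensional subspace $T^{-1}([1,z]) \subseteq [1,x,y]$. By Theorem \ref{.4thm3}(3) applied to $E := ([1,x,y],|\cdot|_t)$ (which is of type $\mathrm{\Rnum{4}}_3$), there exists $x' \in [1,x,y]$ with $T^{-1}([1,z])\cong ([1,x'],|\cdot|_t)$. Since $T$ is an isometry, also $T^{-1}([1,z])\cong ([1,z],|\cdot|_s)$, hence $([1,x'],|\cdot|_t)\cong ([1,z],|\cdot|_s)$; Theorem \ref{thmh3} then gives $x'\sim z$. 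Running the same argument with $T^{-1}([1,w])$ produces $y'\in [1,x,y]$ with $y'\sim w$.

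The main technical point (rather than a true obstacle) is the careful use of Theorem \ref{.4thm3}(3), which is exactly the statement that any 2-dimensional subspace of a type $\mathrm{\Rnum{4}}_3$ space can be represented in the form $([1,z'],|\cdot|_t)$ with $z'$ living \emph{inside} $[1,x,y]$; this is what lets us extract representatives $x',y'$ of the $\sim$-equivalence classes of $z$ and $w$ that actually lie in $[1,x,y]$, as required by the statement. The remainder is bookkeeping of the scalar factor $t/s \in V_K$.
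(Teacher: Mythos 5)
Your proposal is correct and follows essentially the same route as the paper: necessity via Theorem \ref{.4thm3}(3) to represent $T^{-1}([1,z])$ as $([1,x'],|\cdot|_t)$ with $x'\in[1,x,y]$ and then Theorem \ref{thmh3}, and sufficiency via Lemma \ref{syslem1}. Your write-up just makes explicit a few details the paper leaves implicit (the orthogonality transfer forcing $[1,x',y']=[1,x,y]$, and the rescaling — where the scalar should satisfy $|a|=s/t$ rather than $t/s$, a harmless slip).
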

\begin{proof}
  Suppose that there exists an isometry $T$ from $([1, x, y], |\cdot|_t)$ to $([1, z, w], |\cdot|_s)$. Obviously, we have $t / s \in V_K$. By Theorem \ref{.4thm3}, there exists $x' \in  [1, x, y]$ such that 
  \begin{align*}
    ([1, x'], |\cdot|_t) \cong T^{-1}([1, z]) \cong ([1, z], |\cdot|_t).
  \end{align*}
  Hence, we have $x' \sim z$. Similarly, there exists $y' \in [1, x, y]$ for which $y' \sim w$. The converse follows from Lemma \ref{syslem1}.
\end{proof}

\begin{rem} \label{rem3}
  By Theorem \ref{.4thm2}, we also obtain a structure theorem for normed spaces of type $\mathrm{\Rnum{3}}_3$.
\end{rem}

\subsection{Type $\mathrm{\protect \Rnum{5}}_3$} \label{type5}

Finally, we study $3$-dimensional normed spaces of type $\mathrm{\Rnum{5}}_3$. By definition, we have the following theorem.

\begin{thm}
  A $3$-dimensional normed space $E$ is of type $\mathrm{\Rnum{5}}_3$ if and only if it is hyper-symmetric.
\end{thm}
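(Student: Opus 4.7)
The plan is essentially to unwind the recursive definition of hyper-symmetric against the definition of type $\mathrm{\Rnum{5}}_3$ and observe they coincide for dimension three. Both conditions already build in $\dim_{K^{\lor}} E^{\lor} = 1$, so what needs checking is that the secondary condition on the quotients $E/[u]$ matches.

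First I would analyze what hyper-symmetric means at low dimensions. A $0$-dimensional space is hyper-symmetric by the base case, and any $1$-dimensional space $L$ satisfies $\dim_{K^{\lor}} L^{\lor} = 1$ trivially and $L/[u] = 0$ for $u \neq 0$, hence is hyper-symmetric. Then for a $2$-dimensional normed space $F$, the recursive definition says $F$ is hyper-symmetric iff $\dim_{K^{\lor}} F^{\lor} = 1$ and $F/[v]$ is hyper-symmetric for some (equivalently, any) non-zero $v$; since $F/[v]$ is one-dimensional and automatically hyper-symmetric, this condition reduces to $\dim_{K^{\lor}} F^{\lor} = 1$, i.e.\ $F$ has no orthogonal base.

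With that reduction the two directions are immediate. If $E$ is of type $\mathrm{\Rnum{5}}_3$, then $\dim_{K^{\lor}} E^{\lor} = 1$ and for every non-zero $u \in E$ the $2$-dimensional quotient $E/[u]$ has no orthogonal base, so by the preceding paragraph $E/[u]$ is hyper-symmetric; applying the definition in dimension three, $E$ itself is hyper-symmetric. Conversely, if $E$ is hyper-symmetric, then $\dim_{K^{\lor}} E^{\lor} = 1$ and each $E/[u]$ (for $u \neq 0$) is a hyper-symmetric $2$-dimensional space, hence has no orthogonal base; this is precisely the condition for $E$ to be of type $\mathrm{\Rnum{5}}_3$ (as opposed to type $\mathrm{\Rnum{4}}_3$, which would require some quotient to have an orthogonal base).

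There is no real obstacle: the whole statement is a bookkeeping verification. The one subtlety worth mentioning is the use of Corollary \ref{syscor1}, which ensures that the quantifier "for each non-zero $u$" in the type $\mathrm{\Rnum{5}}_3$ definition and the "for some $u$" reading of the recursive definition of hyper-symmetry are interchangeable under $\dim_{K^{\lor}} E^{\lor} = 1$, so the two conditions align without ambiguity.
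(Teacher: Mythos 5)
Your proposal is correct and matches the paper's treatment: the paper states this theorem as following immediately "by definition," and your argument is exactly the careful unwinding of the recursive definition of hyper-symmetry down to the observation that a $2$-dimensional space is hyper-symmetric iff it has no orthogonal base. The remark about Corollary \ref{syscor1} reconciling the "for each $u$" and "for some $u$" quantifiers is the same point the paper makes when noting the definition of hyper-symmetry is well-defined.
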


By the theorem above, we can apply the results of section \ref{hyper} to a normed space of type $\mathrm{\Rnum{5}}_3$.

\begin{prop} \label{.4prop3}
  If $E$ is a $3$-dimensional normed space of type $\mathrm{\Rnum{5}}_3$, then $E'$ is of type $\mathrm{\Rnum{5}}_3$. 
\end{prop}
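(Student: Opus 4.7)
The plan is to simply combine the theorem immediately preceding this proposition with the heredity result established in Section \ref{sys}. The preceding theorem asserts that a $3$-dimensional normed space is of type $\mathrm{\Rnum{5}}_3$ if and only if it is hyper-symmetric, so the statement reduces to showing that the dual of a $3$-dimensional hyper-symmetric space is again hyper-symmetric and has dimension $3$.

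More concretely, assuming $E$ is of type $\mathrm{\Rnum{5}}_3$, I would first apply the preceding theorem to conclude that $E$ is hyper-symmetric. Then Theorem \ref{systhm8}(3), which says that the dual of a hyper-symmetric normed space is hyper-symmetric, immediately yields that $E'$ is hyper-symmetric. Since $E$ is finite-dimensional we have $\dim_K E' = \dim_K E = 3$, so applying the preceding theorem in the reverse direction gives that $E'$ is of type $\mathrm{\Rnum{5}}_3$.

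There is essentially no obstacle here: all the work has been done in Theorem \ref{systhm8}, whose proof of (3) already treated the dual case by induction on dimension, relying on the identification $F \cong (E/F_{\perp})'$ for subspaces $F \subseteq E'$ to reduce to smaller-dimensional hyper-symmetric quotients. The present proposition is simply the specialization of that general fact to dimension $3$, packaged in the language of the classification $\mathrm{\Rnum{1}}_3 \sim \mathrm{\Rnum{5}}_3$.
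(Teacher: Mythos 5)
Your proposal is correct and is exactly the paper's argument: the paper's proof is simply ``We can use Theorem \ref{systhm8}'', relying on the preceding theorem identifying type $\mathrm{\Rnum{5}}_3$ with hyper-symmetry and on Theorem \ref{systhm8}(3) for the dual. You have merely spelled out the same two-step reduction in more detail.
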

\begin{proof}
  We can use Theorem \ref{systhm8}.
\end{proof}

\begin{thm} \label{.4thm1}
   Let $E$ be a $3$-dimensional normed space of type $\mathrm{\Rnum{5}}_3$. Then all $2$-dimensional subspaces of $E$ are isometrically isomorphic to each other. Moreover, all $2$-dimensional quotient spaces of $E$ are isometrically isomorphic to each other.
\end{thm}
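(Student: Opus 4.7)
The plan is to recognize that Theorem \ref{.4thm1} is essentially a direct corollary of the machinery already established for hyper-symmetric spaces. The immediately preceding theorem asserts that a $3$-dimensional normed space is of type $\mathrm{\Rnum{5}}_3$ if and only if it is hyper-symmetric. Therefore, under the hypothesis that $E$ is of type $\mathrm{\Rnum{5}}_3$, I may invoke Theorem \ref{systhm7} with $n = 3$ and $m = 2$: part $(1)$ of that theorem yields the statement about $2$-dimensional quotient spaces, and part $(2)$ yields the statement about $2$-dimensional subspaces. The entire proof is thus a one-line appeal.

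For the reader who wants to see why Theorem \ref{systhm7} applies, I would briefly recall the underlying mechanism. Since $E$ is hyper-symmetric, by definition $\dim_{K^{\lor}} E^{\lor} = 1$, so in particular every codimension-$2$ subspace $F \subseteq E$ satisfies $\dim_{K^{\lor}}(E/F)^{\lor} = 1$; Theorem \ref{systhm1} then immediately forces all $2$-dimensional quotients of $E$ to be pairwise isometric. The subspace half is then obtained by duality: Theorem \ref{systhm8}$(3)$ ensures $E'$ is hyper-symmetric, so the same quotient argument applied to $E'$ gives pairwise-isometric $2$-dimensional quotients of $E'$, which via the canonical correspondence $F \cong (E/F^{\perp})'$ translate back to pairwise-isometric $2$-dimensional subspaces of $E$.

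Since all of this has been carried out in full generality in Section \ref{sys}, there is no real obstacle; the only point to verify is that the equivalence between type $\mathrm{\Rnum{5}}_3$ and hyper-symmetry in dimension $3$ has indeed been established (which is the preceding theorem). Consequently the proof will simply read: \emph{This follows from the preceding theorem and Theorem \ref{systhm7}.}
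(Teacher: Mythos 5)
Your proposal is correct and matches the paper's own proof, which is exactly the one-line appeal to Theorem \ref{systhm7} (via the preceding theorem identifying type $\mathrm{\Rnum{5}}_3$ with hyper-symmetry); your explanatory paragraph also mirrors how the paper proves Theorem \ref{systhm7} from Theorems \ref{systhm1} and \ref{systhm8}. The only nitpick is the notational slip $F \cong (E/F^{\perp})'$, which should read $F \cong (E'/F^{\perp})'$ for a subspace $F$ of $E$, but this occurs only in your optional elaboration and does not affect the argument.
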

\begin{proof}
  The theorem follows from Theorem \ref{systhm7}
\end{proof}

\begin{prop}\label{.4prop6}
  Let $x,y,z \in K^{\lor} \setminus K$ and $t_1,t_2 \in \mathbb{R}_{>0}$. Suppose that 
  \begin{align*}
    (E,\|\cdot\|) := ([1, x, y],|\cdot|_{t_1})
  \end{align*}
  is of type $\mathrm{\Rnum{5}}_3$ and
  \begin{align*}
    E/K \cong ([1, z],|\cdot|_{t_2}).
  \end{align*}
  Then we have $t_1 r / t_2 \in V_K$ where $r = d(x,K)$.
\end{prop}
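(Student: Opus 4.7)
The plan is to show that $t_1 r$ lies in $t_2 V_K$ by computing the norm of $\pi(x)$ in two ways, using only the fundamental fact that $V_K = V_{K^{\lor}}$ (so every nonzero element of $K^{\lor}$ has value in $V_K$).

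First, I would compute the norm of $\pi(x)$ directly in $E/K$. Since $E = ([1,x,y], |\cdot|_{t_1}) \subseteq (K^{\lor}, |\cdot|_{t_1})$ and $K \subseteq E$, the quotient norm is
\begin{align*}
  \|\pi(x)\|_{E/K} \;=\; d_E(x, K) \;=\; \inf_{\lambda \in K} t_1 |x - \lambda| \;=\; t_1 \, d(x,K) \;=\; t_1 r,
\end{align*}
where $r = d(x,K)$ as given.

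Second, I would transport $\pi(x)$ through the isometric isomorphism $T : E/K \to ([1,z],|\cdot|_{t_2})$. Since $\pi(x) \neq 0$ (as $x \notin K$), its image $w := T(\pi(x))$ is a nonzero element of $[1,z] \subseteq K^{\lor}$. Now the key observation: because $V_K = V_{K^{\lor}}$, every nonzero element of $K^{\lor}$ has valuation in $V_K$, so $|w| \in V_K$. Therefore
\begin{align*}
  t_1 r \;=\; \|\pi(x)\|_{E/K} \;=\; \|w\|_{|\cdot|_{t_2}} \;=\; t_2 |w| \;\in\; t_2 V_K,
\end{align*}
which gives $t_1 r / t_2 \in V_K$ as desired.

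I do not anticipate a real obstacle in this argument — it reduces to two identifications of $\|\pi(x)\|$, combined with the fact $V_K = V_{K^{\lor}}$ recorded in the preliminaries. The hypothesis that $E$ is of type $\mathrm{\Rnum{5}}_3$ is only used implicitly, to guarantee that $E/K$ is genuinely of the form $([1,z],|\cdot|_{t_2})$ with $z \in K^{\lor} \setminus K$, so that the isomorphism in the statement even makes sense; once that isomorphism is given, no further structural information about $E$ is needed.
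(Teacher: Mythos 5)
Your argument is correct and is essentially the paper's proof, just written out in more detail: the paper's one-line proof also rests on computing $\|\pi(x)\| = t_1 r$ and comparing with the value set of $([1,z],|\cdot|_{t_2})$, which is $t_2 V_K \cup \{0\}$ because $V_{K^{\lor}} = V_K$. No gaps.
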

\begin{proof}
  Since $r = d(x,K)$, we have $\|E/K\| = t_1 r |K|$. 
\end{proof}

By Theorem \ref{.4thm1} and Proposition \ref{.4prop6}, the following definition is well-defined.

\begin{dfn}
  Let $E$ be a $3$-dimensional normed space of type $\mathrm{\Rnum{5}}_3$. Let $x,y \in K^{\lor} \setminus K$ and $t \in \mathbb{R}_{>0}$. Then $E$ is said to be a $3$-dimensional normed space of subtype $(x,y,t)$ if $E$ satisfies
  \begin{itemize}
    \item $F \cong ([1, x], |\cdot|_{t})$ for all $2$-dimensional subspaces $F \subseteq E$,
    \item $E/[u] \cong ([1, y], |\cdot|_{tr})$ for all non-zero elements $u \in E$
  \end{itemize}
  where $r = d(x,K)$.
\end{dfn}

By definition and Theorem \ref{hypthm1}, we have the following.

\begin{thm} \label{.4thm5}
  Let $E$ be of subtype $(x,y,t)$. Then $E$ satisfies $(\mathrm{SE})$ if and only if
  \begin{align*}
    t \notin V_K \ \mathrm{and} \ tr \notin V_K \ \mathrm{where} \ r = d(x,K).
  \end{align*}
\end{thm}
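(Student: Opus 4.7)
The plan is to recognize this as a direct application of Theorem \ref{hypthm1} (the (SE) criterion for hyper-symmetric spaces), using the subtype data to translate the two norm-set conditions into the stated statements about $t$ and $tr$.

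First, I would invoke the fact proved earlier in this section that a $3$-dimensional normed space of type $\mathrm{\Rnum{5}}_3$ is hyper-symmetric, so Theorem \ref{hypthm1} applies with $n = 3$. Picking any nonzero $u_1 \in E$, the theorem reduces (SE) to the two conditions $\|E\| \cap V_K = \emptyset$ and $\|E/[u_1]\| \cap V_K = \emptyset$. By the definition of subtype $(x,y,t)$, any $2$-dimensional subspace $F \subseteq E$ is isometric to $([1,x], |\cdot|_t)$, and the quotient $E/[u_1]$ is isometric to $([1,y], |\cdot|_{tr})$. So it suffices to compute $\|F\| \cap V_K$ and $\|([1,y], |\cdot|_{tr})\| \cap V_K$.

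The key computation is that for $([1,x], |\cdot|_t)$ with $x \in K^{\lor} \setminus K$, every nonzero norm has the form $|\lambda + \mu x|_t = t \cdot |\lambda + \mu x|$ with $\lambda + \mu x \in K^{\lor} \setminus \{0\}$; since $V_{K^{\lor}} = V_K$, the set of nonzero norms is contained in $t \cdot V_K$. Conversely, $t = \|1\|_t$ is itself a norm value. Hence $\|F\| \cap V_K$ is empty iff $t V_K \cap V_K = \emptyset$ iff $t \notin V_K$. The same argument applied to $([1,y], |\cdot|_{tr})$ shows $\|E/[u_1]\| \cap V_K = \emptyset$ iff $tr \notin V_K$.

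Finally, since every nonzero vector of $E$ lies in some $2$-dimensional subspace, $\|E\| \cap V_K = \emptyset$ is equivalent to the same condition on every $2$-dim subspace, hence to $t \notin V_K$. Conjoining with the quotient condition gives exactly the desired criterion. There is no genuine obstacle here: once Theorem \ref{hypthm1} and the subtype definition are in place, the proof reduces to the elementary observation that the image of the norm $|\cdot|_t$ on $K^{\lor} \setminus \{0\}$ is the coset $t V_K$.
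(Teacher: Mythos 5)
Your proposal is correct and follows the same route as the paper, whose proof is exactly the one-line appeal to the definition of subtype together with Theorem \ref{hypthm1}; your computation that the nonzero norm values of $([1,x],|\cdot|_t)$ form a subset of $tV_K$ containing $t$ (so that $\|F\|\cap V_K=\emptyset$ iff $t\notin V_K$) is the intended, elementary filling-in of that reference. Nothing further is needed.
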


\begin{prop} \label{.4prop1}
  Let $x,y \in K^{\lor} \setminus K$ and $ t \in \mathbb{R}_{>0}$. Then there exists a $3$-dimensional normed space of type $\mathrm{\Rnum{5}}_3$ such that $E$ is of subtype $(x,y,t)$.
\end{prop}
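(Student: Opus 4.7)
The plan is to realize $E$ as a $3$-dimensional subspace of $(K^{\lor}, |\cdot|_t)$ of the form $E := [1, x, b]$, where $b \in K^{\lor}$ is to be constructed. Since any type~$\mathrm{\Rnum{5}}_3$ space has $\dim_{K^{\lor}} E^{\lor} = 1$, Theorem~\ref{pthm1} tells us this shape is no loss of generality. The $2$-dimensional subspace $[1,x]$ is automatically isometric to $([1,x], |\cdot|_t)$, so by Theorem~\ref{.4thm1} the $(x,t)$-part of the subtype will be automatic once $E$ is shown to be of type~$\mathrm{\Rnum{5}}_3$. The whole problem therefore reduces to arranging $E/[1] \cong ([1, y], |\cdot|_{tr})$; since the latter has no orthogonal base, this will simultaneously force $E$ to be of type~$\mathrm{\Rnum{5}}_3$.

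To achieve the quotient isomorphism, we aim to apply Proposition~\ref{proph9} to $E/[1] = [\pi(x), \pi(b)]$ with the intended map $\pi(x) \mapsto 1$, $\pi(b) \mapsto y$: this requires a sequence $\{\lambda_n\} \subseteq K$ satisfying both $d(b - \lambda_n x, K) \to d(b, [1, x])$ and $|y - \lambda_n| \to d(y, K) = s$. Fix $\{\lambda_n\} \subseteq K$ with $|y - \lambda_n| = \sigma_n$ strictly decreasing to $s$; then $|\lambda_m - \lambda_n| = \sigma_{\min(m,n)}$ and $d((\lambda_m - \lambda_n)x, K) = \sigma_{\min(m,n)} \cdot r$, where $r = d(x, K)$. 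Inductively pick $\nu_n \in K$ so that the elements $\tilde b_n := \nu_n + \lambda_n x \in K^{\lor}$ satisfy $|\tilde b_n - \tilde b_{n-1}| = \rho_{n-1}$ for a strictly decreasing sequence $\rho_n > \sigma_n r$ converging to $sr$ (possible because the infimum $d((\lambda_n - \lambda_{n-1})x, K) = \sigma_{n-1}\,r$ can be approached arbitrarily well by elements of $K$). The decreasing family of closed balls $B_{K^{\lor}}(\tilde b_n, \rho_n)$ has non-empty intersection by spherical completeness of $K^{\lor}$, and we take $b$ to be any element of the intersection, so that $|b - \tilde b_n| \leq \rho_n$ for every $n$.

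The main obstacle is to verify the distance identity $d(b - \alpha x, K) = r|y - \alpha|$ for every $\alpha \in K$. Once this is granted, $d(b, [1, x]) = rs$, the sequence $\{\lambda_n\}$ witnesses the approximation required by Proposition~\ref{proph9} (giving $d(b - \lambda_n x, K) = r\sigma_n \to rs$), and the desired isometry $E/[1] \cong ([1, y], |\cdot|_{tr})$ follows. The identity is proved by decomposing, for arbitrary $\mu \in K$ and large $n$,
\begin{equation*}
  b - \alpha x - \mu = (b - \tilde b_n) + \bigl[(\lambda_n - \alpha)x + (\nu_n - \mu)\bigr],
\end{equation*}
and applying the ultrametric inequality: $|b - \tilde b_n| \leq \rho_n \to sr$, while the infimum over $\mu$ of $|(\lambda_n - \alpha)x + (\nu_n - \mu)|$ equals $|\lambda_n - \alpha|\,r$ and tends to $|y - \alpha|\,r$ (since $|\lambda_n - \alpha| \to |y - \alpha|$ for large $n$, using $|y - \alpha| > s$ always). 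For large $n$ the magnitude $|y - \alpha|\,r$ dominates $\rho_n$, forcing $d(b - \alpha x, K) = |y - \alpha|\,r$. As a by-product, $b \notin K + Kx$ (else this distance function would equal $|c - \alpha|\,r$ for some $c \in K$, incompatible with $y \notin K$), so $E$ is genuinely $3$-dimensional.
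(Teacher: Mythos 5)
Your proof is correct, but it takes a genuinely different route from the paper's. The paper's argument is three lines: since $K^{\lor}/K$ is spherically complete and $(K^{\lor},|\cdot|_r)$ is an immediate extension of $(K,|\cdot|_r)$, the isometry $1 \mapsto \pi(x)$ extends to a linear isometry $T : (K^{\lor},|\cdot|_r) \to K^{\lor}/K$, and any lift $z$ of $T(y)$ gives $E = ([1,x,z],|\cdot|_t)$ with $E/K = T([1,y]) \cong ([1,y],|\cdot|_{tr})$. You instead build the third basis vector $b$ by hand: a nested-ball argument inside $K^{\lor}$ itself (rather than in the quotient) produces $b$ with $|b - (\nu_n + \lambda_n x)| \le \rho_n \downarrow sr$, and you then verify the distance identity $d(b - \alpha x, K) = r|y-\alpha|$ directly via the ultrametric inequality. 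In effect you re-prove, in this special case, the isometry-extension lemma that the paper quotes abstractly. What your version buys is self-containedness and an explicit formula for all the relevant distances — indeed, with $d(b-\alpha x,K)=r|y-\alpha|$ in hand the isometry $E/[1]\cong([1,y],|\cdot|_{tr})$ follows by direct computation, so the detour through Proposition \ref{proph9} is not even needed (and if you do use it, you should note explicitly that $\dim_{K^{\lor}}(E/[1])^{\lor}=1$, which follows because $d(\pi(b),[\pi(x)])=trs$ is not attained). The paper's version buys brevity and reuses machinery already set up in the preliminaries. The remaining steps — genuine $3$-dimensionality of $[1,x,b]$, the automatic $(x,t)$-part of the subtype via Theorem \ref{.4thm1}, and type $\mathrm{\Rnum{5}}_3$ via Corollary \ref{syscor1} — are handled correctly in both.
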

\begin{proof}
  Let $r = d(x,K)$ and $\pi : (K^{\lor},|\cdot|) \to K^{\lor}/K$ be the canonical quotient map. Then since $K^{\lor}/K$ is spherically complete, there exists a linear isometry 
  \begin{align*}
    T : (K^{\lor},|\cdot|_{r}) \to K^{\lor}/K
  \end{align*} 
  such that $T(1) = \pi(x)$. Let $z \in K^{\lor}$ with $\pi(z) = T(y)$. Then it is easy to see that $([1, x, z], |\cdot|_{t})$ is of subtype $(x,y,t)$.
\end{proof}

\begin{prop}
   Let $(E, \|\cdot\|)$ be a $3$-dimensional normed space of subtype $(x,y,t)$. Then $E'$ is of subtype $(y,x,1/trs)$ where $r = d(x,K), s= d(y,K)$. 
\end{prop}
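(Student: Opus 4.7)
The plan is to verify the two defining conditions of subtype $(y, x, 1/trs)$ for $E'$ by transferring the corresponding conditions from $E$ via the canonical duality identifications $(E/D)' \cong D^\perp$ and $D' \cong E'/D^\perp$ (both isometric in the finite-dimensional setting), combined with Theorem \ref{thmh10}. By Proposition \ref{.4prop3}, $E'$ is of type $\mathrm{\Rnum{5}}_3$, so its subtype is indeed well-defined.

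For the first condition (2-dimensional subspaces of $E'$), I would first observe that every $2$-dimensional subspace $G \subseteq E'$ has the form $G = [u]^\perp$ for some non-zero $u \in E$: the annihilator $G^\perp$ is a $1$-dimensional subspace of $E''\cong E$, so $G^\perp = [u]$, and then $G = (G^\perp)^\perp = [u]^\perp$. Using the canonical isometry $(E/[u])' \cong [u]^\perp$, the subtype hypothesis on $E$ (which gives $E/[u] \cong ([1, y], |\cdot|_{tr})$), and Theorem \ref{thmh10} (noting $d(y, K) = s$), I would obtain
\begin{align*}
  G \cong (E/[u])' \cong ([1, y], |\cdot|_{tr})' \cong ([1, y], |\cdot|_{1/trs}).
\end{align*}

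For the second condition (2-dimensional quotients of $E'$), the argument is dual. For any non-zero $f \in E'$, I would first verify that $(\ker f)^\perp = [f]$ inside $E'$: the inclusion $[f] \subseteq (\ker f)^\perp$ is immediate, and both subspaces are $1$-dimensional in the $3$-dimensional space $E'$. The canonical isometry $(\ker f)' \cong E'/(\ker f)^\perp$ then gives $E'/[f] \cong (\ker f)'$. Since $\ker f$ is a $2$-dimensional subspace of $E$, the subtype hypothesis gives $\ker f \cong ([1, x], |\cdot|_t)$, and a second application of Theorem \ref{thmh10} yields $E'/[f] \cong ([1, x], |\cdot|_{1/tr})$. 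Finally, since $(1/trs) \cdot d(y, K) = (1/trs) \cdot s = 1/tr$, this matches the quotient norm demanded by the definition of subtype $(y, x, 1/trs)$, completing the verification. There is no genuine obstacle in this proof; the only care required is in keeping the two duality identifications straight and in verifying that every $2$-dimensional subspace of $E'$ really does arise as the annihilator of a line in $E$.
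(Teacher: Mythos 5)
Your proof is correct and follows essentially the same route as the paper: identify the $2$-dimensional subspaces of $E'$ with $(E/[u])' \cong ([1,y],|\cdot|_{tr})'$ and the $2$-dimensional quotients with $(\ker f)' \cong ([1,x],|\cdot|_t)'$, then apply Theorem \ref{thmh10}. The paper's own proof is far terser (it only displays the norm-set computation $\|E'\| = \|([1,y],|\cdot|_{tr})'\| = \tfrac{1}{trs}|K|$ and leaves the rest implicit), so your explicit verification of both subtype conditions is if anything more complete.
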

\begin{proof}
Let $u \in E$ be a non-zero element. Then we have 
\begin{align*}
  \|E'\| = \|(E/[u])'\| = \|([1, y],|\cdot|_{tr})'\| = \frac{1}{trs} |K|
\end{align*} 
by Theorem \ref{thmh10}. Therefore, we see that $E'$ is of subtype $(y,x,1/trs)$.
\end{proof}

 Let $E_1$ be of subtype $(x,y,t)$ and $E_2$ be of subtype $(x',y',t')$. If $E_1 \cong E_2$, then it is clear that we have $x \sim x', y \sim y'$ and $t/t' \in V_K$. However, the converse is unknown.

\begin{prob*} \label{prob1}
  Let $E_1$ and $E_2$ be two $3$-dimensional normed spaces of type $\mathrm{\Rnum{5}}_3$. Suppose that $E_1$ and $E_2$ are of the same subtype. Then $E_1 \cong E_2$ ?
\end{prob*}

Here, we shall give some observations to attack the above problem.

Notice that if $E$ is of subtype $(x,y,t)$, then there exists $z \in K^{\lor} \setminus K$ such that
\begin{align*}
  E \cong ([1, x, z], |\cdot|_{t}).
\end{align*}

\begin{prop} \label{.4prop7}
  Let $x,y,y',z,w \in K^{\lor}$, and suppose that $([1, x, z], |\cdot|_t)$ is of subtype $(x,y,t)$ and $([1, x, w], |\cdot|_t)$ is of subtype $(x,y',t)$. \par Suppose $y \sim y'$. If $[\pi(x),\pi(z),\pi(w)]$ is an immediate extension of $[\pi(x),\pi(z)]$ where $\pi: (K^{\lor},|\cdot|) \to K^{\lor}/K$ is the canonical quotient map, then we have
  \begin{align*}
    ([1, x, z], |\cdot|_t) \cong ([1, x, w], |\cdot|_t).
  \end{align*}
\end{prop}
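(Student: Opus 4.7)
The plan is to construct an explicit $K$-linear isometry $T : E \to F$, where $E := ([1,x,z], |\cdot|_t)$ and $F := ([1,x,w], |\cdot|_t)$, of the form $T(1) = 1$, $T(x) = x$, $T(z) = v$ for a carefully chosen $v \in F$. First I would dispose of the degenerate case: if $\pi(w) \in [\pi(x), \pi(z)]$ then $w \in [1,x,z] = E$, so $F \subseteq E$ and equality of dimensions forces $F = E$. Hence we may assume $\pi(w) \notin [\pi(x), \pi(z)]$, making $[\pi(x), \pi(z), \pi(w)]$ a proper $3$-dimensional immediate extension of the $2$-dimensional space $E/K = [\pi(x), \pi(z)]$.

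Next I would exploit the subtype hypotheses: $E/K \cong ([1,y], |\cdot|_{tr})$ and $F/K \cong ([1,y'], |\cdot|_{tr})$ with $r = d(x,K)$. Since $y \sim y'$, Theorem \ref{thmh3} together with Proposition \ref{proph9} produces an isometry $\sigma : E/K \to F/K$ that fixes $\pi(x)$. The plan is then to lift $\sigma$ to the desired $T$: any $v \in F$ with $\pi(v) = \sigma(\pi(z))$ gives a candidate, and the isometry condition $|\alpha + \beta x + z|_t = |\alpha + \beta x + v|_t$ for all $\alpha, \beta \in K$ holds, by the ultrametric inequality, as soon as $|v - z| < d_{K^{\lor}}(z, [1,x])$.

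Among the lifts $v = v_0 + c$ (with $c \in K$) of $\sigma(\pi(z))$, one has $\inf_{c \in K} |v_0 + c - z| = \|\sigma(\pi(z)) - \pi(z)\|_{K^{\lor}/K}$, and this infimum is not attained. The crux of the proof, and the main obstacle, is to ensure that this quantity is strictly less than $d(\pi(z), [\pi(x)])_{K^{\lor}/K} = d_{K^{\lor}}(z, [1,x])$. This is where the immediate extension hypothesis enters decisively: applying Lemma 1.1 (namely \cite[Lemma 4.42]{van}) to the immediate extension $[\pi(x), \pi(z), \pi(w)] \supseteq E/K$, the identity on $E/K$ extends to an isometry $\Psi : [\pi(x), \pi(z), \pi(w)] \to (E/K)^{\lor}$, inside which $F/K$ appears as a $2$-dimensional subspace sharing the same quotient geometry as $E/K$ thanks to $y \sim y'$. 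Exploiting the freedom in the choice of $\sigma$ (parametrized by the image of $\pi(z)$ within a family of valid targets in $F/K$), one selects $\sigma$ so that $\sigma(\pi(z))$ approximates $\pi(z)$ closely enough inside $(E/K)^{\lor}$, yielding the desired strict inequality. The corresponding $v$ then makes $T$ an isometry, establishing $E \cong F$.
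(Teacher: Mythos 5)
Your architecture is the same as the paper's: use the immediate-extension hypothesis to place $[\pi(x),\pi(z),\pi(w)]$ inside a one-dimensional $K^{\lor}$-space, locate an element of $[1,x,w]$ at distance strictly less than $d(z,[1,x])$ from $z$, and lift it to the isometry $1\mapsto 1$, $x\mapsto x$, $z\mapsto v$. The reduction of the isometry condition to $|v-z|<d(z,[1,x])$ and the bookkeeping between quotient norms and lifts are correct. The gap sits exactly where you flag the crux: you assert that ``one selects $\sigma$ so that $\sigma(\pi(z))$ approximates $\pi(z)$ closely enough inside $(E/K)^{\lor}$,'' but give no reason why any admissible target comes within $d(\pi(z),[\pi(x)])$ of $\pi(z)$, let alone strictly within. ``Exploiting the freedom in the choice of $\sigma$'' is not an argument; a priori the set of admissible images of $\pi(z)$ could lie at distance exactly $d(\pi(z),[\pi(x)])$ from $\pi(z)$ (or farther), in which case no lift satisfies the strict inequality and the construction fails.

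The missing ingredient is the converse (``only if'') direction of Theorem \ref{thmh3}, applied inside the embedding. Since $\dim_{K^{\lor}}([\pi(x),\pi(z),\pi(w)])^{\lor}=1$, normalize the embedding so that $\pi(x)\mapsto 1$, $\pi(z)\mapsto\tilde z$, $\pi(w)\mapsto\tilde w$ with $\tilde z,\tilde w\in K^{\lor}\setminus K$. The subtype data give $[1,\tilde z]\cong([1,y],|\cdot|)$ and $[1,\tilde w]\cong([1,y'],|\cdot|)$ as subspaces of $(K^{\lor},|\cdot|)$, so Theorem \ref{thmh3} forces $\tilde z\sim y$ and $\tilde w\sim y'$; combined with $y\sim y'$ this yields $\tilde z\sim\tilde w$, and the remark following the definition of $\sim$ produces $\lambda,\mu\in K$, $\lambda\neq 0$, with $|\lambda\tilde w+\mu-\tilde z|<d(\tilde z,K)$. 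Pulling back, $\|\lambda\pi(w)+\mu\pi(x)-\pi(z)\|<d(\pi(z),[\pi(x)])$, which is precisely the strict inequality your lift requires; this is how the paper closes the argument. (Once this is in place, the computation in the proof of Theorem \ref{thmh3} shows that \emph{every} isometry $E/K\to F/K$ fixing $\pi(x)$ already satisfies the non-strict bound; the only role of ``freedom in the choice'' is the upgrade from $\le$ to $<$ via that remark, not a search for a good $\sigma$.) With this step supplied, your proof is complete and coincides with the paper's.
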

\begin{proof}
  Since $([1, x, z], |\cdot|_t)$ is of type $\mathrm{\Rnum{5}}$, we have $\dim_{K^{\lor}} ([\pi(x),\pi(z)])^{\lor} = 1$. Therefore, by assumption, we also get $\dim_{K^{\lor}} ([\pi(x),\pi(z),\pi(w)])^{\lor} = 1$. Thus, $[\pi(x),\pi(z),\pi(w)]$ is isometrically isomorphic to a subspace of $(K^{\lor},|\cdot|_r)$. Hence, it follows from
  \begin{align*}
    [\pi(x),\pi(z)] \cong ([1, y],|\cdot|_r) \cong ([1, y'],|\cdot|_r) \cong [\pi(x),\pi(w)]
  \end{align*}
  that there exist $\lambda, \mu \in K$, $\lambda \neq 0$, such that
  \begin{align*}
    \|\lambda \pi(w) + \mu \pi(x) - \pi(z)\| < d(\pi(z),[\pi(x)]).
  \end{align*}
  Then we can choose $\eta \in K$ such that
  \begin{align*}
    |\lambda w + \mu x + \eta - z| < d(\pi(z),[\pi(x)]) = d(z,[1,x])
  \end{align*}
  Let us put $w' = \lambda w + \mu x + \eta$. Then for each $\eta_1, \eta_2 \in K$, we obtain the equality
  \begin{align*}
    |\eta_1 +\eta_2 x + w'| = |\eta_1 +\eta_2 x + z + (w' - z)| = |\eta_1 +\eta_2 x + z|.
  \end{align*}
  Therefore, the map
  \begin{align*}
    ([1, x, z], |\cdot|_t) &\to ([1, x, w], |\cdot|_t) \\
    1 &\mapsto 1 \\
    x &\mapsto x \\
    z & \mapsto w'
  \end{align*}
  satisfies the requirement.
\end{proof}

\begin{cor}
  Suppose that the set of all equivalence classes of the relation $\sim$ is a singleton, that is, we have $x \sim y$ for each $x,y \in K^{\lor} \setminus K$. Then Problem \ref{prob1} is true.
\end{cor}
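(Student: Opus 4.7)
The plan is to reduce the problem to an application of Proposition \ref{.4prop7}. Write $E_1 \cong ([1,x,z],|\cdot|_{t})$ of subtype $(x,y,t)$ and $E_2 \cong ([1,x',z'],|\cdot|_{t'})$ of subtype $(x',y',t')$. The assumption that $E_1$ and $E_2$ have the same subtype gives $t/t' \in V_K$, and the corollary's hypothesis (every pair in $K^{\lor}\setminus K$ is equivalent) makes $x\sim x'$ and $y\sim y'$ automatic. Choosing $a\in K$ with $|a|=t'/t$, multiplication by $a$ provides an isometry $([1,x',z'],|\cdot|_{t'}) \to ([1,x',z'],|\cdot|_{t})$ (since $a\cdot[1,x',z']=[1,x',z']$ as $K$-subspaces of $K^{\lor}$), so we may assume $t=t'$.

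Next I would replace $x'$ by $x$ in the basis of $E_2$. By Theorem \ref{thmh3}, the assignment $1\mapsto 1$, $x'\mapsto x$ gives a $K$-linear isometry $[1,x']\to [1,x]\subseteq (K^{\lor},|\cdot|_{t})$. Both $[1,x']$ and $E_2$ are hyper-symmetric (the former being a $2$-dimensional subspace of the latter), so $\dim_{K^{\lor}}[1,x']^{\lor}=1=\dim_{K^{\lor}}E_2^{\lor}$; the canonical embedding $[1,x']^{\lor}\hookrightarrow E_2^{\lor}$ induced by the universal property of spherical completion is therefore a $K^{\lor}$-isometric isomorphism, whence $E_2^{\lor}$ realizes $[1,x']^{\lor}$ and $E_2$ is an immediate extension of $[1,x']$. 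The fundamental extension lemma of the preliminaries then extends our $2$-dimensional isometry to a linear isometry $\tilde\Phi\colon E_2 \to (K^{\lor},|\cdot|_{t})$; setting $w := \tilde\Phi(z')$ gives $E_2\cong ([1,x,w],|\cdot|_{t})$, which is of subtype $(x,y',t)$.

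Finally I would invoke Proposition \ref{.4prop7} on $([1,x,z],|\cdot|_{t})$ and $([1,x,w],|\cdot|_{t})$. The relation $y\sim y'$ is part of the standing hypothesis, so the only nontrivial condition to check is that $[\pi(x),\pi(z),\pi(w)]$ is an immediate extension of $[\pi(x),\pi(z)]$. Here the corollary's hypothesis enters decisively: by Theorem \ref{thmh4}, no two non-zero elements of $K^{\lor}/K$ form an orthogonal pair, so the maximal orthogonal subset of $K^{\lor}/K$ has cardinality one, giving $\dim_{K^{\lor}}(K^{\lor}/K)^{\lor}=1$. Consequently every non-zero $K$-subspace of $K^{\lor}/K$ has one-dimensional $K^{\lor}$-spherical completion, and the inclusion $[\pi(x),\pi(z)]\subseteq [\pi(x),\pi(z),\pi(w)]$ forces $[\pi(x),\pi(z)]^{\lor}=[\pi(x),\pi(z),\pi(w)]^{\lor}$, from which the required immediate-extension condition follows exactly as in the previous paragraph. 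Proposition \ref{.4prop7} then yields $([1,x,z],|\cdot|_{t})\cong([1,x,w],|\cdot|_{t})$, and chaining the isometries shows $E_1\cong E_2$.

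The main obstacle is the interplay between the two levels of spherical completion: hyper-symmetry of $E_2$ supplies the immediate-extension condition inside $K^{\lor}$ (needed to extend the $2$-dimensional isometry to $E_2$), while the corollary's hypothesis supplies it inside $K^{\lor}/K$ (needed to apply Proposition \ref{.4prop7}), both by collapsing the relevant $K^{\lor}$-spherical dimension to one.
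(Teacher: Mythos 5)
Your proposal is correct and follows the paper's own (very terse) proof: reduce both spaces to the normal forms $([1,x,z],|\cdot|_t)$ and $([1,x,w],|\cdot|_t)$ and then feed Proposition \ref{.4prop7} the required immediate-extension hypothesis, which holds because Theorem \ref{thmh4} together with the singleton-equivalence-class assumption forces every maximal orthogonal subset of $K^{\lor}/K$ to be a singleton. The only slip is cosmetic: Theorem \ref{thmh3} does not assert that the particular assignment $1\mapsto 1$, $x'\mapsto x$ is an isometry (it fails already when $|x'|\neq|x|$), only that some isometric isomorphism $[1,x']\to[1,x]$ exists --- but since your argument only uses the existence of a surjective isometry onto $[1,x]$, nothing is affected.
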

\begin{proof}
  We can use the preceding theorem and Theorem \ref{thmh4}.
\end{proof}

The example below shows that Problem \ref{prob1} is non-trivial.

\begin{exam}
  Suppose that there exist $x, y, \alpha \in K^{\lor} \setminus K$ such that $x \nsim y$ and $d(y,K) = d(x,K) \cdot d(\alpha,K)$. Then there exist $z, w \in K^{\lor} \setminus K$ such that 
  \begin{align*}
    ([1, x, z], |\cdot|) \ \mathrm{and} \ ([1, x, w], |\cdot|)
  \end{align*}
  are of the same subtype $(x, \alpha, 1)$, and there exists no linear isometry
  \begin{align*}
    T : ([1, x, z], |\cdot|) \to ([1, x, w], |\cdot|)
  \end{align*}
  for which $T(1) = 1$ and $T(x) = x$.
\end{exam}
\begin{proof}
  Let $\pi : (K^{\lor},|\cdot|) \to K^{\lor}/K$ be the canonical quotient map. By assumption, it follows from Theorem \ref{thmh4} that $[\pi(x),\pi(y)] \cong (K^2, |\cdot|_s \times |\cdot|_r)$ where $r = d(x,K)$ and $s = d(y,K)$. Moreover, since $K^{\lor}/K$ is spherically complete, there exists a linear isometry $R : ((K^{\lor})^2, |\cdot|_s \times |\cdot|_r) \to K^{\lor}/K$ such that $R((1,0)) = \pi(y)$ and $R((0,1)) = \pi(x)$. \par 
  Let us choose $z, w \in K^{\lor} \setminus K$ such that 
  \begin{align*}
    \pi(z) = R((1,\alpha)) \ \mathrm{and} \  \pi(w) = R((0,\alpha)).
  \end{align*}
  We shall prove $z$ and $w$ satisfy the requirement. Suppose that there exists a linear isometry
  \begin{align*}
    T : ([1, x, z], |\cdot|) \to ([1, x, w], |\cdot|)
  \end{align*}
  for which $T(1) = 1$ and $T(x) = x$. Put $w' = T(z)$. Then we have 
  \begin{align*}
    |w' - z| \le |z - (\lambda x + \mu)| \ \text{for each} \  \lambda, \mu \in K.
  \end{align*}
    Hence, we have
  \begin{align*}
    |w' - z| \le d(z,[1,x]) = d(\pi(z),[\pi(x)]) = d_{(K^{\lor},|\cdot|_r)}(\alpha,K) = r \cdot d(\alpha, K).
  \end{align*}
  Therefore, since $w' - z \notin K$, we have 
  \begin{align*}
   s \le \|\pi(w') - \pi(z) \| < |w' - z| \le r \cdot d(\alpha, K),
  \end{align*}
  which is a contradiction.
\end{proof}

\section{$4$-dimensional normed spaces} \label{4d}

In this section, we study $4$-dimensional normed spaces. In comparison with $3$-dimensional normed spaces, studying $4$-dimensional normed spaces is much more difficult. Indeed, in studying $3$-dimensional normed spaces, almost all results follow systematically. On the other hand, in studying $4$-dimensional normed spaces, it is even difficult to determine whether a normed space is decomposable or not. According to the types of $3$-dimensional normed spaces, type $\mathrm{\Rnum{1}}_3 \sim \mathrm{\Rnum{5}}_3$, we will classify $4$-dimensional normed spaces into seventeen types, type $\mathrm{\Rnum{1}}_4 \sim \mathrm{\Rnum{17}}_4$. 

\begin{dfn}
  A $4$-dimensional normed space $(E, \|\cdot\|)$ is
  \begin{itemize}
  \setlength{\leftskip}{-20pt}
    \item of type $\mathrm{\Rnum{1}}_4$ if $\dim_{K^{\lor}}E^{\lor} = 4$.
    \item of type $\mathrm{\Rnum{2}}_4$ if $\dim_{K^{\lor}}E^{\lor} = 3$ and $E$ is decomposable, and if $E$ contains a $3$-dimensional subspace of type $\mathrm{\Rnum{2}}_3$.
    \item of type $\mathrm{\Rnum{3}}_4$ if $\dim_{K^{\lor}}E^{\lor} = 3$ and $E$ is decomposable, and if $E$ contains a $3$-dimensional subspace of type $\mathrm{\Rnum{3}}_3$.
    \item of type $\mathrm{\Rnum{4}}_4$ if $\dim_{K^{\lor}}E^{\lor} = 2$ and $E$ is decomposable, and if $E$ contains a $3$-dimensional subspace of type $\mathrm{\Rnum{4}}_3$.
    \item of type $\mathrm{\Rnum{5}}_4$ if $\dim_{K^{\lor}}E^{\lor} = 2$ and $E$ is decomposable, and if $E$ contains a $3$-dimensional subspace of type $\mathrm{\Rnum{5}}_3$.
    \item of type $\mathrm{\Rnum{6}}_4$ if $\dim_{K^{\lor}}E^{\lor} = 2$ and $E$ is decomposable, and if $E$ contains a $3$-dimensional subspace of type $\mathrm{\Rnum{3}}_3$.
    \item of type $\mathrm{\Rnum{7}}_4$ if $\dim_{K^{\lor}}E^{\lor} = 2$ and $E$ is decomposable, and if all $3$-dimensional subspaces of $E$ are of type $\mathrm{\Rnum{2}}_3$.
    \item of type $\mathrm{\Rnum{8}}_4$ if $\dim_{K^{\lor}}E^{\lor} = 1$ and $E / [u]$ is a $3$-dimensional normed space of type $\mathrm{\Rnum{1}}_3$ for each non-zero element $u \in E$.
    \item of type $\mathrm{\Rnum{9}}_4$ if $\dim_{K^{\lor}}E^{\lor} = 1$ and $E / [u]$ is a $3$-dimensional normed space of type $\mathrm{\Rnum{2}}_3$ for each non-zero element $u \in E$.
    \item of type $\mathrm{\Rnum{10}}_4$ if $\dim_{K^{\lor}}E^{\lor} = 1$ and $E / [u]$ is a $3$-dimensional normed space of type $\mathrm{\Rnum{3}}_3$ for each non-zero element $u \in E$.
    \item of type $\mathrm{\Rnum{11}}_4$ if $\dim_{K^{\lor}}E^{\lor} = 1$ and $E / [u]$ is a $3$-dimensional normed space of type $\mathrm{\Rnum{4}}_3$ for each non-zero element $u \in E$.
    \item of type $\mathrm{\Rnum{12}}_4$ if $\dim_{K^{\lor}}E^{\lor} = 1$ and $E / [u]$ is a $3$-dimensional normed space of type $\mathrm{\Rnum{5}}_3$ for each non-zero element $u \in E$.
    \item of type $\mathrm{\Rnum{13}}_4$ if $\dim_{K^{\lor}}E^{\lor} = 3$ and $E$ is indecomposable.
    \item of type $\mathrm{\Rnum{14}}_4$ if $\dim_{K^{\lor}}E^{\lor} = 2$ and $E$ is indecomposable, and if all $3$-dimensional subspaces of $E$ are of type $\mathrm{\Rnum{2}}_3$.
    \item of type $\mathrm{\Rnum{15}}_4$ if $\dim_{K^{\lor}}E^{\lor} = 2$ and $E$ is indecomposable, and if $E$ contains both $3$-dimensional normed spaces of type $\mathrm{\Rnum{2}}_3$ and $\mathrm{\Rnum{3}}_3$.
    \item of type $\mathrm{\Rnum{16}}_4$ if $\dim_{K^{\lor}}E^{\lor} = 2$ and $E$ is indecomposable, and if all $3$-dimensional subspaces of $E$ are of type $\mathrm{\Rnum{3}}_3$ and $\dim_{K^{\lor}} (E / F)^{\lor} = 1$ for each $2$-dimensional subspace of $E$.
    \item of type $\mathrm{\Rnum{17}}_4$ if $\dim_{K^{\lor}}E^{\lor} = 2$ and $E$ is indecomposable, and if all $3$-dimensional subspaces of $E$ are of type $\mathrm{\Rnum{3}}_3$ and $\dim_{K^{\lor}} (E / F)^{\lor} = 2$ for each $2$-dimensional subspace of $E$.
  \end{itemize}
\end{dfn}

In Theorem \ref{thm19}, we prove that these types classify $4$-dimensional normed spaces. Moreover, the type of the dual space is determined by the type of the original normed space (see Theorem \ref{thm17}). Finally, in Proposition \ref{prop2} and Theorem \ref{thm21}, we find that the types are not redundant.

\subsection{Type $\mathrm{\protect \Rnum{1}}_4$}

\begin{thm}
  Let $(E,\|\cdot\|)$ be a $4$-dimensional normed space of type $\mathrm{\Rnum{1}}_4$. \\
  $(1)$ $E$ satisfies $\mathrm{(SE)}$. \\
  $(2)$ There exist $t_1, t_2, t_3, t_4 \in \mathbb{R}_{>0}$ for which 
\begin{align*}
  (E,\|\cdot\|) \cong (K^4, |\cdot|_{t_1} \times |\cdot|_{t_2} \times |\cdot|_{t_3} \times |\cdot|_{t_4}).
\end{align*}
Moreover, it follows that
\begin{align*}
  E' \cong (K^4, |\cdot|_{\frac{1}{t_1}} \times |\cdot|_{\frac{1}{t_2}} \times |\cdot|_{\frac{1}{t_3}} \times |\cdot|_{\frac{1}{t_4}})
\end{align*}
and $E'$ is of type $\mathrm{\Rnum{1}}_4$.
\end{thm}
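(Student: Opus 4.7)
The plan is to show that both parts are immediate specializations of results already established in Section \ref{sys}, together with the standard behavior of duals under direct sums.

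For part (1), since $\dim_{K^{\lor}} E^{\lor} = 4 = \dim_K E$, the theorem in the subsection ``$\dim_{K^{\lor}} E^{\lor} = \dim_K E$'' (the unlabeled theorem asserting that such spaces satisfy (SE)) applies directly and yields that $E$ satisfies $(\mathrm{SE})$. This is a one-line citation, so no work is required here.

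For part (2), I first invoke the observation preceding Theorem \ref{.1thm1}: any $n$-dimensional space with $\dim_{K^{\lor}} E^{\lor} = \dim_K E$ admits an orthogonal basis $a_1,\dots,a_n$, and setting $t_i := \|a_i\|$ gives an isometric isomorphism with $(K^n, |\cdot|_{t_1}\times\cdots\times|\cdot|_{t_n})$. Applying this with $n=4$ yields the first displayed isometry. Next, for the dual, I use the fact that the dual of a finite direct sum of normed spaces is canonically the direct sum of duals with the $\max$-norm, together with the elementary identification $(K,|\cdot|_t)' \cong (K,|\cdot|_{1/t})$ (the dual basis element $e$ has $\|e\| = \sup_{\lambda\in K}|\lambda|/|\lambda|_t = 1/t$). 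Iterating this componentwise gives
\begin{align*}
  E' \cong (K^4, |\cdot|_{1/t_1} \times |\cdot|_{1/t_2} \times |\cdot|_{1/t_3} \times |\cdot|_{1/t_4}).
\end{align*}
Finally, since this explicit description of $E'$ exhibits a maximal orthogonal subset of cardinality $4$ (the canonical unit vectors), we have $\dim_{K^{\lor}}(E')^{\lor} = 4 = \dim_K E'$, so $E'$ is of type $\mathrm{\Rnum{1}}_4$ by definition.

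Since every ingredient is a direct appeal to an earlier established result, there is no real obstacle; the only mild care needed is in the duality calculation, which is standard and already used implicitly in Theorem \ref{thmh10}. I would simply assemble these observations into a two- or three-sentence proof citing the relevant prior statements.
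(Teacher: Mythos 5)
Your proposal is correct and follows exactly the route the paper intends: the paper states this theorem without proof precisely because part (1) is the unlabeled theorem in the subsection on $\dim_{K^{\lor}}E^{\lor}=\dim_K E$, and part (2) is the embedding observation preceding Theorem \ref{.1thm1} combined with the standard componentwise duality $(K,|\cdot|_t)'\cong(K,|\cdot|_{1/t})$ for max-norm direct sums. Nothing is missing; your duality computation and the identification of the orthogonal basis of $E'$ are both accurate.
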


\subsection{Type $\mathrm{\protect \Rnum{2}}_4$, $\mathrm{\protect \Rnum{3}}_4$} \label{type2,3}

First, we will prepare the lemmas needed later.

\begin{lem} \label{lem1}
  Let $E$ be a decomposable $4$-dimensional normed space with $\dim_{K^{\lor}}E^{\lor} = 3$. Then $E$ is isometrically isomorphic to either
  \begin{align*}
    (K^2, |\cdot|_{t_1} \times |\cdot|_{t_2}) \oplus ([1, x],|\cdot|_{t_3}) \ \text{with $x \in K^{\lor} \setminus K$ and $t_1, t_2, t_3 \in \mathbb{R}_{> 0}$}
  \end{align*}
  or
  \begin{align*}
    (K,|\cdot|_{t_1}) \oplus ([(1,0),(0,1),(x,y)], |\cdot|_{t_2} \times |\cdot|_{t_3}) 
  \end{align*}
  where $x, y \in K^{\lor} \setminus K$, $x \nsim y$, and $t_2 d(x,K) = t_3 d(y,K)$.
\end{lem}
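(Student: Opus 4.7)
The plan is to reduce the statement to the classification of $2$- and $3$-dimensional normed spaces from Section \ref{3d}. Since $E$ is decomposable, fix a decomposition $E \cong E_1 \oplus E_2$ into non-zero summands, so $\dim_K E_1 + \dim_K E_2 = 4$. The crucial (mild) observation is that $\dim_{K^{\lor}} E^{\lor}$ is additive under direct sums: if $X_i \subseteq E_i$ is a maximal orthogonal subset, then $X_1 \cup X_2 \subseteq E$ is orthogonal, and Theorem \ref{pthm1} applied to each $E_i$ produces an isometric embedding $E \hookrightarrow ((K^{\lor})^{|X_1| + |X_2|}, |\cdot|_{t_1} \times \cdots \times |\cdot|_{t_{|X_1|+|X_2|}})$, forcing $\dim_{K^{\lor}} E^{\lor} = \dim_{K^{\lor}} E_1^{\lor} + \dim_{K^{\lor}} E_2^{\lor} = 3$.

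With both summands non-zero, up to reordering there are only two admissible shapes of $(\dim_K E_i, \dim_{K^{\lor}} E_i^{\lor})$: either $(\dim_K E_1, \dim_K E_2) = (2, 2)$ with spherical dimensions $(2, 1)$, or $(\dim_K E_1, \dim_K E_2) = (1, 3)$ with spherical dimensions $(1, 2)$.

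In the $(2,2)/(2,1)$ shape, Theorem \ref{.1thm1} gives $E_1 \cong (K^2, |\cdot|_{t_1} \times |\cdot|_{t_2})$, while $E_2$ is a $2$-dimensional normed space with $\dim_{K^{\lor}} E_2^{\lor} = 1$; Theorem \ref{pthm1} forces $E_2 \cong ([1, x], |\cdot|_{t_3})$ for some $x \in K^{\lor} \setminus K$ and $t_3 > 0$, yielding the first form of the lemma. In the $(1,3)/(1,2)$ shape, $E_1 \cong (K, |\cdot|_{t_1})$ and $E_2$ is a $3$-dimensional normed space with $\dim_{K^{\lor}} E_2^{\lor} = 2$, i.e., by definition of type either $\mathrm{II}_3$ or $\mathrm{III}_3$. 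If $E_2$ is of type $\mathrm{II}_3$, Theorem \ref{.4thm7} further decomposes it as $(K, |\cdot|_{s_1}) \oplus ([1, x], |\cdot|_{s_2})$, so $E$ again falls into the first form of the lemma (after relabeling). If $E_2$ is of type $\mathrm{III}_3$, Theorem \ref{.4thm2}(1) gives $E_2 \cong [(1,0), (0,1), (x,y)] \subseteq ((K^{\lor})^2, |\cdot|_{t_2} \times |\cdot|_{t_3})$ with $x \nsim y$ and $t_2 \, d(x, K) = t_3 \, d(y, K)$, which is precisely the second form.

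The only subtle step is the additivity of $\dim_{K^{\lor}} E^{\lor}$ under direct sums; everything else is a routine case analysis backed by the classifications already obtained for dimensions $2$ and $3$. No genuine obstacle is expected.
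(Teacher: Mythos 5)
Your proof is correct and follows essentially the same route as the paper's: decompose $E$ into two non-zero summands, use additivity of $\dim_{K^{\lor}}(\cdot)^{\lor}$ to reduce to the shapes $(2,2)$ and $(1,3)$, and then invoke the classifications of $2$- and $3$-dimensional spaces (orthogonal bases, type $\mathrm{\Rnum{2}}_3$ versus type $\mathrm{\Rnum{3}}_3$). The only difference is that you explicitly justify the additivity of $\dim_{K^{\lor}}(\cdot)^{\lor}$ under direct sums, which the paper uses without comment.
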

\begin{proof}
  Let $E_1, E_2$ be two subspaces of $E$ such that $E \cong E_1 \oplus E_2$. Without loss of generality, we may assume $\dim_{K^{\lor}}E_1^{\lor} = 1$ and $\dim_{K^{\lor}}E_2^{\lor} = 2$. If $\dim_K E_1 = 2$, then $\dim_K E_2 = 2$. Hence, $E_2$ has an orthogonal base. Therefore, 
  \begin{align*}
    E \cong (K^2, |\cdot|_{t_1} \times |\cdot|_{t_2}) \oplus ([1, x],|\cdot|_{t_3}) \ \text{with $x \in K^{\lor} \setminus K$ and $t_1, t_2, t_3 \in \mathbb{R}_{> 0}$}.
  \end{align*}
  If $\dim_K E_1 = 1$, then $E_2$ is either of type $\mathrm{\Rnum{2}}_3$ or $\mathrm{\Rnum{3}}_3$. If $E_2$ is of type $\mathrm{\Rnum{2}}_3$, then $E$ is also isometrically isomorphic to a normed space of the form as above. If $E_2$ is of type $\mathrm{\Rnum{3}}_3$, then
    \begin{align*}
    E \cong (K,|\cdot|_{t_1}) \oplus ([(1,0),(0,1),(x,y)], |\cdot|_{t_2} \times |\cdot|_{t_3}) 
  \end{align*}
  where $x, y \in K^{\lor} \setminus K$, $x \nsim y$, and $t_2 d(x,K) = t_3 d(y,K)$. Thus, the proof is complete.
\end{proof}

\begin{lem} \label{lem2}
  Let $x \in K^{\lor} \setminus K$ and $t_1, t_2, t_3 \in \mathbb{R}_{> 0}$. Then each $3$-dimensional subspace of $(K^2, |\cdot|_{t_1} \times |\cdot|_{t_2}) \oplus ([1, x],|\cdot|_{t_3})$ is isometrically isomorphic to one of the following normed spaces:
  \begin{itemize}
    \item $(K^3,|\cdot|_{t_1} \times |\cdot|_{t_2} \times |\cdot|_{t_3})$.
    \item $(K,|\cdot|_{t_1}) \oplus  ([1, x],|\cdot|_{t_3})$.
    \item $(K,|\cdot|_{t_2}) \oplus  ([1, x],|\cdot|_{t_3})$.
  \end{itemize}
  In particular, it contains no subspace of type $\mathrm{\Rnum{3}}_3$.
\end{lem}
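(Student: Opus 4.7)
The plan is to iterate Theorem \ref{systhm3} twice on the decomposition
\[
E = (K, |\cdot|_{t_1}) \oplus \bigl[(K, |\cdot|_{t_2}) \oplus ([1, x], |\cdot|_{t_3})\bigr].
\]
Let $F$ be any $3$-dimensional subspace of $E$, and set $G := (K, |\cdot|_{t_2}) \oplus ([1, x], |\cdot|_{t_3})$. Taking the first summand as $(K, |\cdot|_{t_1})$, Theorem \ref{systhm3} yields two alternatives: either the second projection $E \to G$ restricts to an isometry on $F$, in which case $\dim F = \dim G = 3$ forces $F \cong G$, matching the second listed type; or $F \cong (K, |\cdot|_{t_1}) \oplus F_1$ for some $2$-dimensional subspace $F_1 \subseteq G$.

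In the latter case I apply Theorem \ref{systhm3} again, this time to $F_1 \subseteq G = (K, |\cdot|_{t_2}) \oplus ([1, x], |\cdot|_{t_3})$. Either $F_1 \cong ([1, x], |\cdot|_{t_3})$, so $F \cong (K, |\cdot|_{t_1}) \oplus ([1, x], |\cdot|_{t_3})$ (the third listed type); or $F_1 \cong (K, |\cdot|_{t_2}) \oplus [y]$ for some non-zero $y \in [1, x]$. In this final sub-case the line $[y]$, regarded as a $K$-normed space, equals $(K, |\cdot|_{t_3|y|})$. The key observation is that $V_K = V_{K^{\lor}}$ was recorded at the start of the paper, so $|y| \in V_K$; hence by Theorem \ref{.1thm1} one has $(K, |\cdot|_{t_3|y|}) \cong (K, |\cdot|_{t_3})$, and therefore
\[
F \cong (K^3, |\cdot|_{t_1} \times |\cdot|_{t_2} \times |\cdot|_{t_3}),
\]
which is the first listed type. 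Collecting the three cases completes the classification.

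The final assertion then follows immediately: each of the three listed spaces is visibly decomposable (either an orthogonal direct sum of three lines, or a direct sum of a line with the $2$-dimensional space $([1, x], |\cdot|_{t_3})$), whereas type $\mathrm{\Rnum{3}}_3$ spaces are indecomposable by definition. There is no real obstacle here; the only point deserving care is recognising that $V_K = V_{K^{\lor}}$ collapses every $1$-dimensional subspace of $([1, x], |\cdot|_{t_3})$ into the single isometry class of $(K, |\cdot|_{t_3})$, which is what prevents extra types from appearing in the list.
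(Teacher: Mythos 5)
Your argument is correct and is exactly the route the paper intends: its proof of this lemma is simply an appeal to Theorem \ref{systhm3}, which you have carried out in detail by iterating that theorem over the nested decomposition $(K,|\cdot|_{t_1}) \oplus \bigl[(K,|\cdot|_{t_2}) \oplus ([1,x],|\cdot|_{t_3})\bigr]$, including the correct observation that $V_K = V_{K^{\lor}}$ identifies every line in $([1,x],|\cdot|_{t_3})$ with $(K,|\cdot|_{t_3})$. The only slip is cosmetic: the space you obtain in your first case, $(K,|\cdot|_{t_2}) \oplus ([1,x],|\cdot|_{t_3})$, is the \emph{third} item in the lemma's list, and the one in your second case is the \emph{second}, so your references to ``second'' and ``third'' are swapped.
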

\begin{proof}
  We can use Theorem \ref{systhm3}.
\end{proof}

\begin{lem} \label{lem3}
  Let $x, y \in K^{\lor} \setminus K$, $x \nsim y$, and let $t_1, t_2 \in \mathbb{R}_{> 0}$. Suppose $t_2 d(x,K) = t_3 d(y,K)$. Then each $3$-dimensional subspace of $(K,|\cdot|_{t_1}) \oplus ([(1,0),(0,1),(x,y)], |\cdot|_{t_2} \times |\cdot|_{t_3})$ is isometrically isomorphic to either 
  \begin{align*}
    (K^3,|\cdot|_{t_1} \times |\cdot|_{t_2} \times |\cdot|_{t_3}) \ \text{or} \ ([(1,0),(0,1),(x,y)], |\cdot|_{t_2} \times |\cdot|_{t_3}).
  \end{align*}
  In particular, it contains no subspace of type $\mathrm{\Rnum{2}}_3$.
\end{lem}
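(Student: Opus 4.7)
The plan is to apply Theorem \ref{systhm3} to the decomposition $E := (K,|\cdot|_{t_1}) \oplus F$ with $F := ([(1,0),(0,1),(x,y)], |\cdot|_{t_2} \times |\cdot|_{t_3})$. By Theorem \ref{.4thm2}(1), the hypotheses $x \nsim y$ and $t_2 d(x,K) = t_3 d(y,K)$ ensure that $F$ is a $3$-dimensional normed space of type $\mathrm{\Rnum{3}}_3$. Let $E_1$ be any $3$-dimensional subspace of $E$. Theorem \ref{systhm3} then forces exactly one of the following two alternatives.

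In the first alternative, the restriction of the second projection $E \to F$ to $E_1$ is an isometry. Since $\dim_K E_1 = \dim_K F = 3$, this isometric embedding is automatically surjective, so $E_1 \cong F$, i.e.\ $E_1$ is isometrically isomorphic to $([(1,0),(0,1),(x,y)], |\cdot|_{t_2} \times |\cdot|_{t_3})$. In the second alternative, $E_1 \cong (K,|\cdot|_{t_1}) \oplus F_1$ for some $2$-dimensional subspace $F_1 \subseteq F$. Here I would invoke Theorem \ref{.4thm2}(3), which tells us that every $2$-dimensional subspace of the type $\mathrm{\Rnum{3}}_3$ space $F$ is isometrically isomorphic to $(K^2, |\cdot|_{t_2} \times |\cdot|_{t_3})$. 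Consequently
\begin{align*}
  E_1 \cong (K,|\cdot|_{t_1}) \oplus (K^2, |\cdot|_{t_2} \times |\cdot|_{t_3}) \cong (K^3, |\cdot|_{t_1} \times |\cdot|_{t_2} \times |\cdot|_{t_3}),
\end{align*}
which is exactly the second option in the statement.

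The final assertion that no $3$-dimensional subspace of $E$ is of type $\mathrm{\Rnum{2}}_3$ is now immediate: in the first alternative $E_1$ is of type $\mathrm{\Rnum{3}}_3$ (in particular indecomposable with $\dim_{K^{\lor}} E_1^{\lor} = 2$), while in the second alternative $\dim_{K^{\lor}} E_1^{\lor} = 3$, so $E_1$ is of type $\mathrm{\Rnum{1}}_3$. Neither matches the definition of type $\mathrm{\Rnum{2}}_3$.

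There is no serious obstacle here; the proof reduces to a clean application of the structural dichotomy in Theorem \ref{systhm3} together with the rigidity of $2$-dimensional subspaces established in Theorem \ref{.4thm2}(3). The only point that deserves care is verifying that no additional cases escape the classification, but this is exactly what Theorem \ref{systhm3} rules out by covering every $m$-dimensional subspace of a direct sum of the form $(K,|\cdot|_t) \oplus F$.
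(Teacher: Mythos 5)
Your proof is correct and follows exactly the paper's route: the paper's own proof is the one-line ``We can use Theorems \ref{systhm3} and \ref{.4thm2}'', and your write-up is precisely the intended expansion of that, using the dichotomy of Theorem \ref{systhm3} and the facts from Theorem \ref{.4thm2} that $F$ is of type $\mathrm{\Rnum{3}}_3$ and that its $2$-dimensional subspaces are all isometric to $(K^2,|\cdot|_{t_2}\times|\cdot|_{t_3})$.
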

\begin{proof}
  We can use Theorems \ref{systhm3} and \ref{.4thm2}.
\end{proof}

Now, we obtain theorems that determine the type of decomposable $4$-dimensional normed spaces $E$ with $\dim_{K^{\lor}} E^{\lor} = 3$.

\begin{thm}
  Let $E$ be a $4$-dimensional normed space. \\
  $(1)$ $E$ is of type $\mathrm{\Rnum{2}}_4$ if and only if there exist $x \in K^{\lor} \setminus K$ and $t_1, t_2, t_3 \in \mathbb{R}_{> 0}$ for which
  \begin{align*}
    E \cong (K^2, |\cdot|_{t_1} \times |\cdot|_{t_2}) \oplus ([1, x],|\cdot|_{t_3}).
  \end{align*}
  \noindent $(2)$ Suppose that $E$ is a $4$-dimensional normed space of type $\mathrm{\Rnum{2}}_4$ of the form as in $(1)$. Then $E$ satisfies $\mathrm{(SE)}$ if and only if $t_3 \notin V_K$.
\end{thm}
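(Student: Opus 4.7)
The approach for part $(1)$ is a direct bookkeeping exercise based on the three lemmas established immediately before the statement. By Lemma \ref{lem1}, every decomposable $4$-dimensional normed space $E$ with $\dim_{K^{\lor}} E^{\lor} = 3$ is isometrically isomorphic to one of the two shapes
\begin{align*}
(K^2, |\cdot|_{t_1} \times |\cdot|_{t_2}) \oplus ([1, x],|\cdot|_{t_3}) \quad\text{or}\quad (K,|\cdot|_{t_1}) \oplus ([(1,0),(0,1),(x,y)], |\cdot|_{t_2} \times |\cdot|_{t_3}),
\end{align*}
and Lemmas \ref{lem2} and \ref{lem3} identify the $3$-dimensional subspaces occurring in each shape: only types $\mathrm{\Rnum{1}}_3$ and $\mathrm{\Rnum{2}}_3$ for the first, and only types $\mathrm{\Rnum{1}}_3$ and $\mathrm{\Rnum{3}}_3$ for the second.

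For the forward direction of $(1)$, suppose $E$ is of type $\mathrm{\Rnum{2}}_4$. By definition $E$ is decomposable with $\dim_{K^{\lor}} E^{\lor} = 3$ and contains a $3$-dimensional subspace of type $\mathrm{\Rnum{2}}_3$. Lemma \ref{lem1} restricts $E$ to one of the two shapes above, and Lemma \ref{lem3} rules out the second shape (it contains no subspace of type $\mathrm{\Rnum{2}}_3$); hence $E$ has the first form. For the backward direction, the first shape is manifestly decomposable with $\dim_{K^{\lor}} E^{\lor} = 3$, and by Lemma \ref{lem2} it contains the $3$-dimensional subspace $(K, |\cdot|_{t_1}) \oplus ([1, x], |\cdot|_{t_3})$ of type $\mathrm{\Rnum{2}}_3$, so $E$ is of type $\mathrm{\Rnum{2}}_4$.

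For part $(2)$, the plan is to peel off the two orthogonal $K$-summands via Theorem \ref{systhm4}. Writing
\begin{align*}
E \cong (K, |\cdot|_{t_1}) \oplus \bigl( (K, |\cdot|_{t_2}) \oplus ([1, x], |\cdot|_{t_3}) \bigr),
\end{align*}
two successive applications of Theorem \ref{systhm4} show that $E$ satisfies $(\mathrm{SE})$ if and only if $([1, x], |\cdot|_{t_3})$ does. The $2$-dimensional characterization proved at the end of Section \ref{str} (the theorem asserting that $([1, x], |\cdot|_t)$ satisfies $(\mathrm{SE})$ iff $t \notin V_K$) then yields the desired equivalence.

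I do not expect a substantial obstacle: the whole theorem is essentially an assembly of Lemmas \ref{lem1}, \ref{lem2}, \ref{lem3}, Theorem \ref{systhm4}, and the 2-dimensional case. The only point that requires care is the logical orientation when discarding the second shape of Lemma \ref{lem1}, which must be done by invoking the ``in particular'' clause of Lemma \ref{lem3} contrapositively — the presence of any type $\mathrm{\Rnum{2}}_3$ subspace forbids that shape.
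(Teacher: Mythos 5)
Your proof is correct and follows essentially the same route as the paper, which likewise deduces $(1)$ from Lemmas \ref{lem1} and \ref{lem3} (with Lemma \ref{lem2} implicitly supplying the converse) and $(2)$ from Theorem \ref{systhm4} together with the $2$-dimensional criterion, the latter packaged in the paper as Theorem \ref{.4thm4}. Your double application of Theorem \ref{systhm4} merely unfolds the one-step reduction the paper performs via Theorem \ref{.4thm4}.
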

\begin{proof}
  By Lemmas \ref{lem1} and \ref{lem3}, we obtain $(1)$. Moreover, by Theorems \ref{systhm4} and \ref{.4thm4}, $(2)$ follows. 
\end{proof}

\begin{thm}
  Let $E$ be a $4$-dimensional normed space. \\
  $(1)$ $E$ is of type $\mathrm{\Rnum{3}}_4$ if and only if there exist $x, y \in K^{\lor} \setminus K$, $x \nsim y$, and $t_1, t_2 \in \mathbb{R}_{> 0}$ with $t_2 d(x,K) = t_3 d(y,K)$ for which
  \begin{align*}
    E \cong (K,|\cdot|_{t_1}) \oplus ([(1,0),(0,1),(x,y)], |\cdot|_{t_2} \times |\cdot|_{t_3}).
  \end{align*}
  \noindent $(2)$ Suppose that $E$ is a $4$-dimensional normed space of type $\mathrm{\Rnum{3}}_4$ of the form as in $(1)$. Then $E$ satisfies $\mathrm{(SE)}$ if and only if $t_2, t_3 \notin V_K$.
\end{thm}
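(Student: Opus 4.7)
The plan for part $(1)$ is to run the same dichotomy used for type $\mathrm{\Rnum{2}}_4$, now selecting the other branch. By Lemma \ref{lem1}, any decomposable $4$-dimensional normed space $E$ with $\dim_{K^{\lor}}E^{\lor}=3$ is isometrically isomorphic to either
\begin{align*}
    (K^2,|\cdot|_{t_1}\times |\cdot|_{t_2})\oplus ([1,x],|\cdot|_{t_3})
    \quad\text{or}\quad
    (K,|\cdot|_{t_1})\oplus ([(1,0),(0,1),(x,y)],|\cdot|_{t_2}\times |\cdot|_{t_3})
\end{align*}
(with $x\nsim y$ and $t_2 d(x,K)=t_3 d(y,K)$ in the second case). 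By Lemma \ref{lem2} the first form contains no $3$-dimensional subspace of type $\mathrm{\Rnum{3}}_3$, so a space of type $\mathrm{\Rnum{3}}_4$ must be of the second form. Conversely, if $E$ is of the second form, then the summand $([(1,0),(0,1),(x,y)],|\cdot|_{t_2}\times |\cdot|_{t_3})$ is itself a $3$-dimensional subspace, and Theorem \ref{.4thm2}(1) together with $x\nsim y$ and $t_2 d(x,K)=t_3 d(y,K)$ guarantees that it is of type $\mathrm{\Rnum{3}}_3$; hence $E$ is of type $\mathrm{\Rnum{3}}_4$.

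For part $(2)$, the strategy is to peel off the one-dimensional summand using Theorem \ref{systhm4}: applied to $F=([(1,0),(0,1),(x,y)],|\cdot|_{t_2}\times |\cdot|_{t_3})$, it tells us that $E$ satisfies $(\mathrm{SE})$ if and only if $F$ does. Since $F$ is a $3$-dimensional normed space of type $\mathrm{\Rnum{3}}_3$, Theorem \ref{.4thm2}(2) characterizes $(\mathrm{SE})$ for $F$ as the condition $t_2,t_3\notin V_K$. Combining the two equivalences finishes the proof.

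No serious obstacle is expected here; the argument is entirely a bookkeeping exercise, and the only thing to check carefully is that the two lemmas \ref{lem2} and \ref{lem3} are applied on the correct branch of Lemma \ref{lem1}. (The condition on $t_1$ plays no role in $(\mathrm{SE})$, as expected from Theorem \ref{systhm4}, which is the conceptual point of the statement.)
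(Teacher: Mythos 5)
Your proposal is correct and follows essentially the same route as the paper: Lemma \ref{lem1} for the dichotomy, Lemma \ref{lem2} to exclude the first branch, Theorem \ref{.4thm2}(1) for the converse, and Theorems \ref{systhm4} and \ref{.4thm2}(2) for the (SE) criterion. (The paper's own proof cites Theorem \ref{.4thm3} for part $(2)$, which appears to be a typo for Theorem \ref{.4thm2}; your reference is the correct one.)
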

\begin{proof}
  By Lemmas \ref{lem1} and \ref{lem2}, we obtain $(1)$. Moreover, by Theorems \ref{systhm4} and \ref{.4thm3}, we have $(2)$. 
\end{proof}

\begin{thm} \label{thm1} 
 A decomposable $4$-dimensional normed space $E$ with $\dim_{K^{\lor}}E^{\lor} = 3$ is either of type $\mathrm{\Rnum{2}}_4$ or type $\mathrm{\Rnum{3}}_4$.
\end{thm}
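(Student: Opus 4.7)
The plan is to reduce directly to the structural lemmas already established, namely Lemmas \ref{lem1}, \ref{lem2}, and \ref{lem3}. First I would invoke Lemma \ref{lem1}, which asserts that any decomposable $4$-dimensional normed space $E$ with $\dim_{K^{\lor}}E^{\lor} = 3$ must be isometrically isomorphic to one of exactly two canonical forms: either
\begin{align*}
  (\mathrm{A}) \quad (K^2, |\cdot|_{t_1} \times |\cdot|_{t_2}) \oplus ([1, x],|\cdot|_{t_3})
\end{align*}
for some $x \in K^{\lor} \setminus K$ and $t_1, t_2, t_3 \in \mathbb{R}_{>0}$, or
\begin{align*}
  (\mathrm{B}) \quad (K,|\cdot|_{t_1}) \oplus ([(1,0),(0,1),(x,y)], |\cdot|_{t_2} \times |\cdot|_{t_3})
\end{align*}
with $x \nsim y$ and $t_2 d(x,K) = t_3 d(y,K)$.

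In case (A), the 3-dimensional subspace $(K,|\cdot|_{t_1}) \oplus ([1,x],|\cdot|_{t_3})$ is manifestly of type $\mathrm{\Rnum{2}}_3$, so $E$ meets the definition of type $\mathrm{\Rnum{2}}_4$. In case (B), the 3-dimensional subspace $[(1,0),(0,1),(x,y)]$ is of type $\mathrm{\Rnum{3}}_3$ by Theorem \ref{.4thm2}(1), since $x \nsim y$ and $t_2 d(x,K) = t_3 d(y,K)$; hence $E$ meets the definition of type $\mathrm{\Rnum{3}}_4$.

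To see the two types do not overlap (which is needed for the dichotomy to give a well-defined classification), I would observe that Lemma \ref{lem2} ensures no 3-dimensional subspace of a space of form (A) is of type $\mathrm{\Rnum{3}}_3$, while Lemma \ref{lem3} ensures no 3-dimensional subspace of a space of form (B) is of type $\mathrm{\Rnum{2}}_3$. Thus the two cases in Lemma \ref{lem1} align precisely with the two definitions, and the proof concludes. There is no genuine obstacle: the heavy lifting has already been absorbed into Lemmas \ref{lem1}--\ref{lem3}, and the present statement is essentially a bookkeeping consequence that matches the canonical decompositions against the definitions of type $\mathrm{\Rnum{2}}_4$ and type $\mathrm{\Rnum{3}}_4$.
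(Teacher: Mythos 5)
Your proof is correct and follows exactly the route the paper takes: the paper's own proof is the one-line citation of Lemmas \ref{lem1}, \ref{lem2} and \ref{lem3}, and your argument simply spells out how those lemmas combine (Lemma \ref{lem1} for the two canonical forms, Theorem \ref{.4thm2} to recognize the type $\mathrm{\Rnum{3}}_3$ subspace, and Lemmas \ref{lem2}--\ref{lem3} for the disjointness of the two cases).
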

\begin{proof}
  We can use Lemmas \ref{lem1}, \ref{lem2} and \ref{lem3}.
\end{proof}

\subsection{Type $\mathrm{\protect \Rnum{4}}_4 \sim \mathrm{\protect \Rnum{7}}_4$}

As in section \ref{type2,3}, we first identify all subspaces of decomposable $4$-dimensional normed spaces $E$ with $\dim_{K^{\lor}}E^{\lor} = 2$.

\begin{lem} \label{lem4}
  Let $E$ be a decomposable $4$-dimensional normed space with $\dim_{K^{\lor}}E^{\lor} = 2$. Then $E$ is isometrically isomorphic to either
  \begin{align*}
  (K, |\cdot|_{t_1}) \oplus ([1, x, y], |\cdot|_{t_2}) \ \text{where $x, y \in K^{\lor} \setminus K$ and $t_1, t_2 \in \mathbb{R}_{> 0}$}
  \end{align*}
  or
  \begin{align*}
    ([1, x], |\cdot|_{t_1}) \oplus ([1, y], |\cdot|_{t_2}) \ \text{where $x, y \in K^{\lor} \setminus K$ and $t_1, t_2 \in \mathbb{R}_{> 0}$}.
  \end{align*}
\end{lem}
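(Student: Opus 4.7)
The plan is to exploit decomposability directly. Since $E$ is decomposable and four-dimensional, we may write $E \cong E_1 \oplus E_2$ with $E_1, E_2 \neq 0$, so that $(\dim_K E_1, \dim_K E_2)$ is either $(1,3)$ or $(2,2)$ up to swapping.

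The key input will be the additivity formula
\begin{align*}
\dim_{K^{\lor}}(E_1 \oplus E_2)^{\lor} = \dim_{K^{\lor}} E_1^{\lor} + \dim_{K^{\lor}} E_2^{\lor}.
\end{align*}
The inequality $\geq$ is immediate: the union of maximal orthogonal subsets of $E_1$ and $E_2$ remains an orthogonal subset of $E_1 \oplus E_2$. For the reverse inequality, I would identify $(E_1 \oplus E_2)^{\lor} \cong E_1^{\lor} \oplus E_2^{\lor}$ as $K^{\lor}$-Banach spaces by uniqueness of the spherical completion: the right-hand side is spherically complete because balls in a direct sum are products of coordinate balls, so a nested family of balls projects to nested families in each coordinate and coordinate-wise spherical completeness supplies a common point. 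Moreover, $E_1 \oplus E_2 \hookrightarrow E_1^{\lor} \oplus E_2^{\lor}$ is an immediate extension: for any nonzero $(u,v)$ with, say, $\|u\| \geq \|v\|$, choose $u' \in E_1$ with $\|u - u'\| < \|u\|$, and take either $v' = 0$ when $\|v\| < \|u\|$ (so that $\|u - u'\| \vee \|v\| < \|u\|$ after shrinking $\|u - u'\|$ below $\|v\|$), or $v' \in E_2$ with $\|v - v'\| < \|v\|$ when $\|v\| = \|u\|$.

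With additivity in hand and $\dim_{K^{\lor}} E^{\lor} = 2$, both nonzero factors must satisfy $\dim_{K^{\lor}} E_i^{\lor} = 1$. Theorem \ref{pthm1} applied to a one-element maximal orthogonal subset of $E_i$ then embeds each $E_i$ isometrically into some $(K^{\lor}, |\cdot|_{t_i})$. In the $(1,3)$ case this yields $E_1 \cong (K, |\cdot|_{t_1})$ and $E_2 \cong ([1, x, y], |\cdot|_{t_2})$, giving the first form; in the $(2,2)$ case it yields $E_i \cong ([1, x_i], |\cdot|_{t_i})$, giving the second form.

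The main obstacle is spelling out the additivity carefully, specifically the identification $(E_1 \oplus E_2)^{\lor} \cong E_1^{\lor} \oplus E_2^{\lor}$; once this is in place, the case analysis is a direct appeal to Theorem \ref{pthm1}.
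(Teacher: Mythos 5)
Your proposal is correct and follows essentially the same route as the paper: split $E \cong E_1 \oplus E_2$, observe that each summand has $\dim_{K^{\lor}} E_i^{\lor} = 1$, and apply Theorem \ref{pthm1} in the two cases $(1,3)$ and $(2,2)$. The only difference is that the full additivity identity $\dim_{K^{\lor}}(E_1 \oplus E_2)^{\lor} = \dim_{K^{\lor}} E_1^{\lor} + \dim_{K^{\lor}} E_2^{\lor}$ (in particular the identification $(E_1 \oplus E_2)^{\lor} \cong E_1^{\lor} \oplus E_2^{\lor}$, which you rightly flag as the main thing to spell out) is more than is needed and the paper simply calls the conclusion obvious: the easy inequality alone, namely that the union of orthogonal subsets of the summands is orthogonal in the direct sum, already gives $2 = \dim_{K^{\lor}} E^{\lor} \ge \dim_{K^{\lor}} E_1^{\lor} + \dim_{K^{\lor}} E_2^{\lor} \ge 1 + 1$, forcing both terms to equal $1$.
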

\begin{proof}
  Let $E_1, E_2$ be two subspaces of $E$ such that $E \cong E_1 \oplus E_2$. Obviously, we have $\dim_{K^{\lor}}E_1^{\lor} = \dim_{K^{\lor}}E_2^{\lor} = 1$. If $\dim_K E_1 = 1$, then $\dim_K E_2 = 3$ and 
  \begin{align*}
    E \cong (K, |\cdot|_{t_1}) \oplus ([1, x, y], |\cdot|_{t_2}) \ \text{where $x, y \in K^{\lor} \setminus K$ and $t_1, t_2 \in \mathbb{R}_{> 0}$}.
  \end{align*}
  If $\dim_K E_1 = 2$, then $\dim_K E_2 = 2$ and
  \begin{align*}
    E \cong ([1, x], |\cdot|_{t_1}) \oplus ([1, y], |\cdot|_{t_2}) \ \text{where $x, y \in K^{\lor} \setminus K$ and $t_1, t_2 \in \mathbb{R}_{> 0}$}.
  \end{align*}
\end{proof}

\begin{lem} \label{lem5}
   Let $x, y \in K^{\lor} \setminus K$ and $t_1, t_2 \in \mathbb{R}_{> 0}$. Set
  \begin{align*}
    E := (K, |\cdot|_{t_1}) \oplus ([1, x, y], |\cdot|_{t_2}).
  \end{align*}
  \noindent $(1)$ If $([1, x, y], |\cdot|_{t_2})$ is of type $\mathrm{\Rnum{4}}_3$, then each $3$-dimensional subspace $F$ of $E$ is isometrically isomorphic to either 
  \begin{align*}
    ([1, x, y], |\cdot|_{t_2}) \ \text{or} \ (K, |\cdot|_{t_1}) \oplus ([1, z], |\cdot|_{t_2}) \ \text{where $z \in [1, x, y] \setminus K$}.
  \end{align*}
  \noindent $(2)$ If $([1, x, y], |\cdot|_{t_2})$ is of type $\mathrm{\Rnum{5}}_3$, then each $3$-dimensional subspace $F$ of $E$ is isometrically isomorphic to either 
  \begin{align*}
    ([1, x, y], |\cdot|_{t_2}) \ \text{or} \ (K, |\cdot|_{t_1}) \oplus ([1, x], |\cdot|_{t_2}).
  \end{align*}
\end{lem}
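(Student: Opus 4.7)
The plan is to reduce everything to Theorem \ref{systhm3} applied to the decomposition $E = (K,|\cdot|_{t_1}) \oplus ([1,x,y],|\cdot|_{t_2})$. For any $3$-dimensional subspace $F \subseteq E$, Theorem \ref{systhm3} gives two alternatives: either (a) the second projection restricts to an isometry on $F$, or (b) $F \cong (K,|\cdot|_{t_1}) \oplus F_1$ for some $2$-dimensional subspace $F_1$ of $([1,x,y],|\cdot|_{t_2})$. In alternative (a), the image of $F$ under the second projection is a $3$-dimensional subspace of $[1,x,y]$ and hence must equal $[1,x,y]$, so $F \cong ([1,x,y],|\cdot|_{t_2})$. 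This case contributes the first listed normed space in both $(1)$ and $(2)$.

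For alternative (b), the task is reduced to identifying the $2$-dimensional subspaces $F_1 \subseteq ([1,x,y],|\cdot|_{t_2})$ up to isometry. For $(1)$, since $([1,x,y],|\cdot|_{t_2})$ is of type $\mathrm{\Rnum{4}}_3$, Theorem \ref{.4thm3}(3) directly gives $F_1 \cong ([1,z],|\cdot|_{t_2})$ for some $z \in [1,x,y]$; here $z \notin K$ because $\dim_K F_1 = 2$. This yields the second listed form in $(1)$.

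For $(2)$, since $([1,x,y],|\cdot|_{t_2})$ is of type $\mathrm{\Rnum{5}}_3$, it is hyper-symmetric, and Theorem \ref{.4thm1} says that all $2$-dimensional subspaces are isometrically isomorphic to one another. Taking the distinguished subspace $([1,x],|\cdot|_{t_2})$ as a representative, we obtain $F_1 \cong ([1,x],|\cdot|_{t_2})$, and hence the second listed form in $(2)$ follows.

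Since the argument is a direct combination of Theorem \ref{systhm3} with the known classification of $2$-dimensional subspaces in each of the two type cases, no genuine obstacle arises. The only points to be careful about are (i) ensuring that in alternative (a) the image under projection is in fact full-dimensional in $[1,x,y]$ (which follows from the injectivity of projection together with dimension counting), and (ii) noting in case $(1)$ that a $2$-dimensional subspace $F_1$ of $[1,x,y]$ must have a representative $z \notin K$, so the condition $z \in [1,x,y] \setminus K$ in the statement is automatic.
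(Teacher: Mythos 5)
Your proposal is correct and follows exactly the route the paper takes: the paper's own proof is just the citation ``use Theorems \ref{systhm3} and \ref{.4thm3} for $(1)$, and Theorems \ref{systhm3} and \ref{.4thm1} for $(2)$,'' which is precisely the dichotomy-plus-classification argument you spell out. Your two points of care (full-dimensionality of the projected image in alternative (a), and $z \notin K$ in case $(1)$) are both valid and resolve correctly as you say.
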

\begin{proof}
  For $(1)$, use Theorems \ref{systhm3} and \ref{.4thm3}. Moreover, by Theorems \ref{systhm3} and \ref{.4thm1}, we obtain $(2)$. 
\end{proof}

\begin{lem} \label{lem6}
  Let $x, y \in K^{\lor} \setminus K$ and $t_1, t_2 \in \mathbb{R}_{> 0}$. Set
  \begin{align*}
    E := ([1, x], |\cdot|_{t_1}) \oplus ([1, y], |\cdot|_{t_2}).
  \end{align*}
  \noindent $(1)$ If $x \nsim y$ and $\tfrac{t_1 d(x, K)}{t_2 d(y, K)} \in V_K$, then each $3$-dimensional subspace $F$ of $E$ is isometrically isomorphic to one of the following:
  \begin{itemize}
    \item $(K, |\cdot|_{t_1}) \oplus ([1, y], |\cdot|_{t_2})$.
    \item $([1, x], |\cdot|_{t_1}) \oplus (K, |\cdot|_{t_2})$. 
    \item $([(1,0), (0,1), (x, \lambda y)], |\cdot|_{t_1} \times |\cdot|_{t_2})$ where $\lambda \in K$ with $|\lambda| = \tfrac{t_1 d(x, K)}{t_2 d(y, K)}$, a $3$-dimensional normed of type $\mathrm{\Rnum{3}}_3$.
  \end{itemize}
  $(2)$ If $x \sim y$ or $\tfrac{t_1 d(x, K)}{t_2 d(y, K)} \notin V_K$, then each $3$-dimensional subspace $F$ of $E$ is isometrically isomorphic to either
  \begin{align*}
    (K, |\cdot|_{t_1}) \oplus ([1, y], |\cdot|_{t_2}) \ \text{or} \ ([1, x], |\cdot|_{t_1}) \oplus (K, |\cdot|_{t_2}).
  \end{align*}
\end{lem}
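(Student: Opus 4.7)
The plan is a case analysis on how $F$ intersects the orthogonal summands $U := [1, x] \oplus 0$ and $V := 0 \oplus [1, y]$. Since $\dim U = \dim V = 2$, $\dim F = 3$, and $U + V = E$, exactly one of the following holds: (A) $U \subseteq F$; (B) $V \subseteq F$; (C) $\dim(F \cap U) = \dim(F \cap V) = 1$. In cases (A) and (B), modularity gives $F = U \oplus (F \cap V)$ (respectively $F = (F \cap U) \oplus V$), an orthogonal direct sum inside $E = U \oplus V$. Since $V_{K^{\lor}} = V_K$, the norm of any non-zero vector in $F \cap V$ (resp.\ $F \cap U$) lies in $t_2 V_K$ (resp.\ $t_1 V_K$), so up to isometry this gives the second form (resp.\ first form) of the lemma.

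Case (C) is the substantive one. Let $(A, 0)$ and $(0, B)$ generate the one-dimensional intersections, and complete a basis of $F$ with $(c, d)$, where by hypothesis $c \notin K A$ and $d \notin K B$. I would define a $K$-linear map
\begin{equation*}
  T : F \longrightarrow G := \big[(1,0), (0,1), (c/A, d/B)\big] \subseteq \big((K^{\lor})^2, |\cdot|_{t_1 |A|} \times |\cdot|_{t_2 |B|}\big)
\end{equation*}
by $T(A, 0) = (1, 0)$, $T(0, B) = (0, 1)$, $T(c, d) = (c/A, d/B)$; the coordinate-wise identity $t_1 |\alpha A + \gamma c| = (t_1 |A|) \, |\alpha + \gamma(c/A)|$ shows that $T$ is an isometry. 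By Lemma \ref{lemh2} we have $c/A \sim x$ and $d/B \sim y$, so Theorem \ref{.4thm2} together with Remark \ref{sysrem1} classifies $G$: it is of type $\mathrm{\Rnum{3}}_3$ exactly when $x \nsim y$ and $(t_1 |A|) \, d(c/A, K) = (t_2 |B|) \, d(d/B, K)$, and otherwise of type $\mathrm{\Rnum{2}}_3$ with its two-dimensional indecomposable factor equivalent to $[1, x]$ or $[1, y]$ according to which side dominates.

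The explicit formula $d(c/A, K) = |\det(A, c)| \cdot r_1 / |A|^2$, extracted from the proof of Lemma \ref{lemh2} (with $\det(A, c)$ the determinant of the basis-change matrix from $\{1, x\}$ to $\{A, c\}$), and its analogue for $d/B$, reduce the type-$\mathrm{\Rnum{3}}_3$ equality to
\begin{equation*}
  \frac{t_1 r_1}{t_2 r_2} \cdot \frac{|\det(A, c)|}{|\det(B, d)|} \cdot \frac{|B|}{|A|} = 1.
\end{equation*}
The crucial input $V_{K^{\lor}} = V_K$ forces $|A|, |B|, |\det(A, c)|, |\det(B, d)| \in V_K$, so the equality holds for some admissible $A, B, c, d$ if and only if $t_1 r_1 /(t_2 r_2) \in V_K$. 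In case (1) this is precisely the hypothesis, and the choice $A = B = 1$, $c = x$, $d = \lambda y$ with $|\lambda| = t_1 r_1/(t_2 r_2)$ produces the representative $[(1,0),(0,1),(x,\lambda y)]$; Lemma \ref{syslem1}, applied to the duals via Theorem \ref{.4thm2}(3), shows any other case-(C) type-$\mathrm{\Rnum{3}}_3$ subspace is isometric to this one. In case (2), either $x \sim y$ forces $c/A \sim d/B$ (ruling out type $\mathrm{\Rnum{3}}_3$ directly), or $t_1 r_1 /(t_2 r_2) \notin V_K$ forces the displayed equality to fail; in either subcase every case-(C) subspace is of type $\mathrm{\Rnum{2}}_3$ and fits the first or second form.

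I expect the main obstacle to be the final step of case (2), namely ruling out type $\mathrm{\Rnum{3}}_3$ when $t_1 r_1/(t_2 r_2) \notin V_K$. This hinges essentially on $V_{K^{\lor}} = V_K$, without which $|A|, |B|$ could in principle compensate for the non-integrality of $t_1 r_1/(t_2 r_2)$; with that observation in hand, the rest is routine bookkeeping with Lemma \ref{lemh2} and Remark \ref{sysrem1}.
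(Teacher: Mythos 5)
Your argument is correct, but it is organized quite differently from the paper's. The paper proves this lemma by listing four explicit shapes of bases for $F$ (its cases $(a)$--$(d)$) and reducing cases $(c)$ and $(d)$ to case $(b)$ by dividing coordinates by $x+\lambda$ and rescaling; the decision between type $\mathrm{\Rnum{2}}_3$ and type $\mathrm{\Rnum{3}}_3$ is then read off from Remark \ref{sysrem1} and Theorem \ref{.4thm2}. You instead use the trichotomy on $\dim(F\cap U)$, $\dim(F\cap V)$ for the two orthogonal summands, normalize every case-(C) subspace to $[(1,0),(0,1),(c/A,d/B)]$ by the diagonal isometry $T$, and decide the type by a single numerical invariant, namely whether $\tfrac{t_1 r_1}{t_2 r_2}$ can be matched by $|\det(A,c)|\,|B|\,/\,(|\det(B,d)|\,|A|)\in V_K$. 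This is cleaner and makes the role of $V_{K^\lor}=V_K$ in case $(2)$ completely transparent, whereas the paper's reduction keeps everything inside its already-established toolkit. The one ingredient you should justify rather than attribute to Lemma \ref{lemh2} is the formula $d(c,[A])=|\det(A,c)|\,r_1/|A|$: Lemma \ref{lemh2} and Proposition \ref{prph5} only give the special cases $y=1/x$ and the leading-coefficient scaling. The general statement is the basis-change invariance of the covolume $\|u\|\,d(v,[u])$, which in turn rests on the symmetry $\|u\|\,d(v,[u])=\|v\|\,d(u,[v])$ for a two-dimensional space; this is standard and has a three-line proof (normalize $\|u\|=\|v\|=1$ and note that any $\mu$ with $\|v-\mu u\|<1$ must satisfy $|\mu|=1$), but it is not stated anywhere in the paper, so you should include it. With that supplied, your identification of the type-$\mathrm{\Rnum{3}}_3$ representatives via Lemma \ref{syslem1} and the dual structure theorem (Remark \ref{rem3}) goes through, and the whole proof is sound.
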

\begin{proof}
  Let $u, v, w \in E$ be such that $F = [u, v, w]$. It suffices to consider the following four cases:
  \begin{itemize}
   \item[$(a)$] $u = (0, y), v = (1, 0), w = (0, 1)$.
   \item[$(b)$] $u = (x, \lambda y), v = (1, 0), w = (0, 1)$ ($\lambda \in K$).
   \item[$(c)$] $u = (x + \lambda, 0), v = (\mu , y), w = (0, 1)$ ($\lambda, \mu \in K$).
   \item[$(d)$] $u = (x, \lambda), v = (0, y + \mu), w = (1, \eta)$ ($\lambda, \mu, \eta \in K$).
  \end{itemize}
  In the case $(a)$, $F$ is always isometrically isomorphic to $(K, |\cdot|_{t_1}) \oplus ([1, y], |\cdot|_{t_2})$. Next, consider the case $(b)$. If $x \sim y$ or  $|\lambda| \neq \tfrac{t_1 d(x, K)}{t_2 d(y, K)}$, then by Remark \ref{sysrem1}, $F$ is isometrically isomorphic to either
  \begin{align*}
    (K, |\cdot|_{t_1}) \oplus ([1, y], |\cdot|_{t_2}) \ \text{or} \ ([1, x], |\cdot|_{t_1}) \oplus (K, |\cdot|_{t_2}).
  \end{align*}
  If $x \nsim y$ and  $|\lambda| = \tfrac{t_1 d(x, K)}{t_2 d(y, K)}$, then $F$ is isometrically isomorphic to a $3$-dimensional normed $([(1,0), (0,1), (x, \lambda y)], |\cdot|_{t_1} \times |\cdot|_{t_2})$ of type $\mathrm{\Rnum{3}}_3$. \par
  Finally, we shall show that both the cases $(c)$ and $(d)$ can be reduced to the case $(b)$. In the case $(c)$, we have
  \begin{align*}
    F \cong ([(1, 0), (0, 1), (\frac{\mu}{x + \lambda}, y)], |\cdot|_{t_1 \cdot |x + \lambda|} \times |\cdot|_{t_2}).
  \end{align*}
  If $\mu = 0$, then we are done. Suppose $\mu \neq 0$. Then we can choose $\lambda_0 \in K$ such that $d(\tfrac{\mu}{x + \lambda} - \lambda_0 x, K) < d(\tfrac{\mu}{x + \lambda}, K)$. Now, we can conclude
  \begin{align*}
    F &\cong ([(1, 0), (0, 1), (\frac{\mu}{x + \lambda}, y)], |\cdot|_{t_1 \cdot |x + \lambda|} \times |\cdot|_{t_2}) \\
    &\cong  ([(1, 0), (0, 1), (\lambda_0 x, y)], |\cdot|_{t_1 \cdot |x + \lambda|} \times |\cdot|_{t_2}) \\
    &\cong ([(1, 0), (0, 1), (x, \lambda_0^{-1} y)], |\cdot|_{t_1} \times |\cdot|_{\frac{t_2}{|x + \lambda|}}) \\
    &\cong ([(1, 0), (0, 1), (x, \lambda_1 \lambda_0^{-1} y)], |\cdot|_{t_1} \times |\cdot|_{t_2})
  \end{align*}
  where $\lambda_1 \in K$ with $|\lambda_1| = 1 / |x + \lambda|$. Thus, the case $(c)$ can be reduced to the case $(a)$. Now, consider the case $(d)$. Then it is easy to see
  \begin{align*}
    F \cong \left\{ 
    \begin{aligned}
      ([(1,0), (0,1), (x, \frac{\lambda}{y + \mu})], |\cdot|_{t_1} \times |\cdot|_{t_2 \cdot |y + \mu|}) \ &; \eta = 0 \\
      ([(1,0), (0,1), (\frac{1}{x - \lambda / \eta}, \frac{\eta}{y + \mu})], |\cdot|_{t_1 \cdot |x - \lambda / \eta|} \times |\cdot|_{t_2 \cdot |y + \mu|}) \  &; \eta \neq 0.
    \end{aligned} \right.
  \end{align*}
  Therefore, as in the case $(c)$, the case $(d)$ can be reduced to the case $(b)$. Thus, the proof is complete.
\end{proof}

Finally, we can determine the type of a decomposable $4$-dimensional normed space $E$ with $\dim_{K^{\lor}} E^{\lor} = 2$.

\begin{thm}
  Let $E$ be a $4$-dimensional normed space. \\
  $(1)$ $E$ is of type $\mathrm{\Rnum{4}}_4$ if and only if there exist $x, y \in K^{\lor} \setminus K$ and $t_1, t_2 \in \mathbb{R}_{> 0}$ for which
  \begin{align*}
    E \cong (K, |\cdot|_{t_1}) \oplus ([1, x, y],|\cdot|_{t_2})
  \end{align*}
  and $([1, x, y],|\cdot|_{t_2})$ is of type $\mathrm{\Rnum{4}}_3$. \\
  $(2)$ Suppose that $E$ is a normed space of type $\mathrm{\Rnum{4}}_4$ of the form as in $(1)$. Then $E$ satisfies $\mathrm{(SE)}$ if and only if $t_2 \notin V_K$.
\end{thm}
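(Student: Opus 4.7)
The plan is to prove (1) by combining the structural classification Lemma \ref{lem4} with the subspace enumerations in Lemmas \ref{lem5} and \ref{lem6}, and to deduce (2) by composing Theorem \ref{systhm4} with Theorem \ref{.4thm3}(2).

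For the forward direction of (1), I first invoke Lemma \ref{lem4} to write $E$ in one of two canonical decomposable forms: either $E \cong (K,|\cdot|_{t_1}) \oplus ([1,x,y], |\cdot|_{t_2})$ or $E \cong ([1,x], |\cdot|_{t_1}) \oplus ([1,y], |\cdot|_{t_2})$. In the second form, Lemma \ref{lem6} gives a complete list of $3$-dimensional subspaces: each is either of type $\mathrm{\Rnum{2}}_3$ (the two summand extensions $(K,|\cdot|_{t_1}) \oplus ([1,y], |\cdot|_{t_2})$ and $([1,x], |\cdot|_{t_1}) \oplus (K,|\cdot|_{t_2})$) or of type $\mathrm{\Rnum{3}}_3$ (the skew subspace $[(1,0),(0,1),(x,\lambda y)]$ in the one case where it arises). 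None is of type $\mathrm{\Rnum{4}}_3$, contradicting the hypothesis that $E$ is of type $\mathrm{\Rnum{4}}_4$. Hence $E$ must be of the first form. Now, because $[1,x,y]$ lies in $K^{\lor}$, the summand $([1,x,y],|\cdot|_{t_2})$ satisfies $\dim_{K^{\lor}}([1,x,y])^{\lor}=1$, so it is either of type $\mathrm{\Rnum{4}}_3$ or of type $\mathrm{\Rnum{5}}_3$. If it were of type $\mathrm{\Rnum{5}}_3$, then Lemma \ref{lem5}(2) would force every $3$-dimensional subspace of $E$ to be of type $\mathrm{\Rnum{2}}_3$ or $\mathrm{\Rnum{5}}_3$, again contradicting the hypothesis. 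Thus the summand must be of type $\mathrm{\Rnum{4}}_3$.

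The converse of (1) is a short verification: given $E \cong (K,|\cdot|_{t_1}) \oplus ([1,x,y], |\cdot|_{t_2})$ with the second summand of type $\mathrm{\Rnum{4}}_3$, $E$ is manifestly decomposable, $\dim_{K^{\lor}} E^{\lor} = 1+1 = 2$, and $\{0\} \oplus [1,x,y]$ is a $3$-dimensional subspace isometric to $([1,x,y],|\cdot|_{t_2})$, which is of type $\mathrm{\Rnum{4}}_3$. This matches the definition of type $\mathrm{\Rnum{4}}_4$.

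For (2), I apply Theorem \ref{systhm4} to reduce $(\mathrm{SE})$ for $E = (K,|\cdot|_{t_1}) \oplus ([1,x,y], |\cdot|_{t_2})$ to $(\mathrm{SE})$ for the second summand, and then Theorem \ref{.4thm3}(2), applied to the type-$\mathrm{\Rnum{4}}_3$ summand, yields the criterion $t_2 \notin V_K$. There is no substantial obstacle in this argument, since all the heavy lifting is encapsulated in the earlier lemmas; the only care required is the case analysis in the forward direction of (1), ensuring that each alternative returned by Lemma \ref{lem4} is either eliminated by a subspace-type incompatibility or refined to the asserted shape.
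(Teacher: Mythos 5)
Your proposal is correct and follows the paper's own route exactly: part (1) via Lemma \ref{lem4} to reduce to the two decomposable forms, then Lemmas \ref{lem6} and \ref{lem5} to rule out the second form and the type $\mathrm{\Rnum{5}}_3$ summand respectively, and part (2) via Theorem \ref{systhm4} combined with Theorem \ref{.4thm3}. The paper states this proof in one line citing the same results; your write-up simply makes the case analysis explicit.
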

\begin{proof}
  By Lemmas \ref{lem4}, \ref{lem5}, and \ref{lem6}, we have $(1)$. Moreover, by Theorems \ref{systhm4} and \ref{.4thm3}, we obtain $(2)$.
\end{proof}

\begin{thm}
  Let $E$ be a $4$-dimensional normed space. \\
  $(1)$ $E$ is of type $\mathrm{\Rnum{5}}_4$ if and only if there exist $x, y \in K^{\lor} \setminus K$ and $t_1, t_2 \in \mathbb{R}_{> 0}$ for which
  \begin{align*}
    E \cong (K, |\cdot|_{t_1}) \oplus ([1, x, y],|\cdot|_{t_2})
  \end{align*}
  and $([1, x, y],|\cdot|_{t_2})$ is of type $\mathrm{\Rnum{5}}_3$. \\
  $(2)$ Suppose that $E$ is a normed space of type $\mathrm{\Rnum{5}}_4$ of the form as in $(1)$. Then $E$ satisfies $\mathrm{(SE)}$ if and only if $t_2, t_2 d(x, K) \notin V_K$.
\end{thm}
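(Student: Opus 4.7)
The plan is to mirror, almost verbatim, the structure of the preceding proof for type $\mathrm{\Rnum{4}}_4$, since the definitions of type $\mathrm{\Rnum{4}}_4$ and type $\mathrm{\Rnum{5}}_4$ differ only in which sub-type of $3$-dimensional normed space must appear as a subspace. The key inputs are Lemma~\ref{lem4} (the dichotomy for decomposable $4$-dimensional spaces with $\dim_{K^{\lor}} E^{\lor} = 2$), Lemmas~\ref{lem5} and~\ref{lem6} (enumeration of $3$-dimensional subspaces), Theorem~\ref{systhm4} (passage of (SE) through a direct summand with $K$), and Theorem~\ref{.4thm5} (the (SE) criterion for a subtype $(x,y,t)$ space).

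For part $(1)$, the ``if'' direction is immediate from the definition: the ambient space is decomposable with $\dim_{K^{\lor}} E^{\lor} = 2$, and $([1,x,y],|\cdot|_{t_2})$ is itself a $3$-dimensional subspace of type $\mathrm{\Rnum{5}}_3$. For the converse, first apply Lemma~\ref{lem4} to write $E$ in one of the two decomposable forms. The form $([1,x],|\cdot|_{t_1}) \oplus ([1,y],|\cdot|_{t_2})$ is ruled out because, by Lemma~\ref{lem6}, every one of its $3$-dimensional subspaces is either decomposable or of type $\mathrm{\Rnum{3}}_3$, hence never of type $\mathrm{\Rnum{5}}_3$. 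Thus $E \cong (K,|\cdot|_{t_1}) \oplus ([1,x,y],|\cdot|_{t_2})$. Finally, Lemma~\ref{lem5} lists the $3$-dimensional subspaces of this space: either $([1,x,y],|\cdot|_{t_2})$ itself or a decomposable space of the form $(K,|\cdot|_{t_1}) \oplus ([1,z],|\cdot|_{t_2})$; since the latter is never of type $\mathrm{\Rnum{5}}_3$, the summand $([1,x,y],|\cdot|_{t_2})$ must itself be of type $\mathrm{\Rnum{5}}_3$.

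For part $(2)$, Theorem~\ref{systhm4} reduces the (SE) property of $E$ to the (SE) property of the $3$-dimensional summand $([1,x,y],|\cdot|_{t_2})$. Since this summand is of type $\mathrm{\Rnum{5}}_3$, i.e.\ hyper-symmetric, Theorem~\ref{.4thm5} (applied with $t = t_2$ and $r = d(x,K)$) yields that it satisfies (SE) precisely when $t_2 \notin V_K$ and $t_2 \, d(x,K) \notin V_K$, which is the desired characterization.

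I do not anticipate any genuine obstacle: everything reduces to quoting the three lemmas in the right order and then invoking two previously proved theorems. The only mild care needed is in the ``only if'' direction of $(1)$, where one must be careful to exclude the wrong summand decomposition using Lemma~\ref{lem6} before identifying the type of $3$-dimensional subspace forced by the remaining case via Lemma~\ref{lem5}.
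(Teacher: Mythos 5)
Your proposal is correct and follows essentially the same route as the paper, whose proof of $(1)$ simply cites Lemmas~\ref{lem4}, \ref{lem5}, and \ref{lem6} and whose proof of $(2)$ cites Theorems~\ref{systhm4} and \ref{.4thm5}; you have merely spelled out the order in which these are invoked. No gaps.
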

\begin{proof}
  For $(1)$, use Lemmas \ref{lem4}, \ref{lem5}, and \ref{lem6}. By Theorems \ref{systhm4} and \ref{.4thm5}. we obtain $(2)$.
\end{proof}

\begin{thm} \label{thm2} 
  Let $E$ be a $4$-dimensional normed space. \\
  $(1)$ $E$ is of type $\mathrm{\Rnum{6}}_4$ if and only if there exist $x, y \in K^{\lor} \setminus K$ and $t_1, t_2 \in \mathbb{R}_{> 0}$ with $x \nsim y$ and $\tfrac{t_1 d(x, K)}{t_2 d(y, K)} \in V_K$ for which
  \begin{align*}
    E \cong ([1, x], |\cdot|_{t_1}) \oplus ([1, y],|\cdot|_{t_2})
  \end{align*}
  \noindent $(2)$ Suppose that $E$ is a normed space of type $\mathrm{\Rnum{6}}_4$ of the form as in $(1)$. Then $E$ satisfies $\mathrm{(SE)}$ if and only if $t_1, t_2 \notin V_K$.
\end{thm}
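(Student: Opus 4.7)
My plan for (1) is to chain Lemmas \ref{lem4}, \ref{lem5}, and \ref{lem6}. First, Lemma \ref{lem4} reduces any decomposable $4$-dimensional $E$ with $\dim_{K^{\lor}} E^{\lor} = 2$ to one of two shapes, $(K, |\cdot|_{t_1}) \oplus ([1, x, y], |\cdot|_{t_2})$ or $([1, x], |\cdot|_{t_1}) \oplus ([1, y], |\cdot|_{t_2})$. Lemma \ref{lem5} shows that the first shape admits no $3$-dimensional subspace of type $\mathrm{\Rnum{3}}_3$, so membership in type $\mathrm{\Rnum{6}}_4$ forces the second shape. Lemma \ref{lem6} then characterizes exactly when $([1, x], |\cdot|_{t_1}) \oplus ([1, y], |\cdot|_{t_2})$ contains a type $\mathrm{\Rnum{3}}_3$ subspace, namely when $x \nsim y$ and $\tfrac{t_1 d(x, K)}{t_2 d(y, K)} \in V_K$; this gives both directions of (1).

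For (2), I would invoke Lemma \ref{selem1}. The necessity of $t_1, t_2 \notin V_K$ is obtained from the two $3$-dimensional decomposable subspaces $(K, |\cdot|_{t_1}) \oplus ([1, y], |\cdot|_{t_2})$ and $([1, x], |\cdot|_{t_1}) \oplus (K, |\cdot|_{t_2})$ that appear in Lemma \ref{lem6}'s list: each must itself satisfy $(\mathrm{SE})$, which by Theorem \ref{.4thm4} forces $t_2 \notin V_K$ and $t_1 \notin V_K$ respectively. Conversely, assuming $t_1, t_2 \notin V_K$, condition~(1) of Lemma \ref{selem1} -- that every $3$-dimensional subspace satisfies $(\mathrm{SE})$ -- is immediate from Lemma \ref{lem6} together with Theorems \ref{.4thm4} and \ref{.4thm2}(2), since the type $\mathrm{\Rnum{3}}_3$ candidate needs exactly $t_1, t_2 \notin V_K$ and the two decomposable candidates need $t_2 \notin V_K$ and $t_1 \notin V_K$ respectively.

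The step I expect to be the main obstacle is condition~(2) of Lemma \ref{selem1}, which concerns $2$-dimensional subspaces of $E'$. Applying Theorem \ref{thmh10} to each direct summand gives $E' \cong ([1, x], |\cdot|_{1/(t_1 r)}) \oplus ([1, y], |\cdot|_{1/(t_2 s)})$ with $r = d(x, K), s = d(y, K)$, so $E'$ has the same structural form as $E$. The key structural claim I would establish as a preliminary lemma is: for $x \nsim y$, the only $2$-dimensional subspaces $F$ of $([1, x], |\cdot|_{s_1}) \oplus ([1, y], |\cdot|_{s_2})$ with $\dim_{K^{\lor}} F^{\lor} = 1$ are the two direct summands themselves. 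To prove it, embed into $(K^{\lor})^2$: such an $F$ must lie in a $K^{\lor}$-line $K^{\lor} \cdot (p, q)$, and if both $p$ and $q$ are nonzero, the admissible scalars $\alpha$ with $\alpha (p, q) \in E$ form the $K$-subspace $p^{-1}[1, x]_K \cap q^{-1}[1, y]_K$; if this intersection were $2$-dimensional, then $p^{-1}[1, x]_K = q^{-1}[1, y]_K$, which would express $x$ as a fractional linear transformation of $y$ over $K$, forcing $x \sim y$ via Lemma \ref{lemh2} and contradicting the hypothesis. Hence the intersection is $1$-dimensional and $F$ cannot be $2$-dimensional. Applying this claim to $E'$, the only relevant subspaces $F$ are $([1, x], |\cdot|_{1/(t_1 r)})$ and $([1, y], |\cdot|_{1/(t_2 s)})$, for which $t \cdot d(z, K)$ equals $1/t_1$ and $1/t_2$; since $V_K$ is closed under inversion, these lie outside $V_K$ precisely when $t_1, t_2 \notin V_K$, completing the verification.
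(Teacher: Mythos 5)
Parts (1) and the first half of (2) (condition (1) of Lemma \ref{selem1} together with the necessity of $t_1, t_2 \notin V_K$) follow the paper's argument and are fine. The gap is in your preliminary lemma for condition (2) of Lemma \ref{selem1}. As stated it is false: the $2$-dimensional subspaces of $G := ([1,x],|\cdot|_{s_1}) \oplus ([1,y],|\cdot|_{s_2})$ without an orthogonal base need not be the two direct summands \emph{themselves}. More importantly, the first step of your proof --- that $\dim_{K^{\lor}} F^{\lor} = 1$ forces $F$ to lie in a $K^{\lor}$-line of $(K^{\lor})^2$ --- is wrong. Take $F = [(1,0),(x,1)] \subseteq G$ and suppose $s_2 < s_1 d(x,K)$ (which can occur for the dual norms $s_1 = 1/(t_1 d(x,K))$, $s_2 = 1/(t_2 d(y,K))$). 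Then for all $\alpha, \beta \in K$ one has $s_2|\beta| < s_1|\beta|\, d(x,K) \le s_1 |\alpha + \beta x|$, so $\|\alpha(1,0) + \beta(x,1)\| = s_1|\alpha + \beta x|$, and $F \cong ([1,x],|\cdot|_{s_1})$ has $\dim_{K^{\lor}} F^{\lor} = 1$; yet $(1,0)$ and $(x,1)$ are $K^{\lor}$-linearly independent, so $F$ lies in no $K^{\lor}$-line. The source of the error is conflating the abstract spherical completion $F^{\lor}$ (an immediate extension of $F$) with the $K^{\lor}$-span of $F$ inside the ambient $(K^{\lor})^2$; the latter is not an immediate extension of $F$, so its $K^{\lor}$-dimension need not equal $\dim_{K^{\lor}} F^{\lor}$.

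What you actually need --- and what is true --- is only that every $2$-dimensional subspace of $E'$ without an orthogonal base is \emph{isometrically isomorphic} to one of the two summands; note that the $F$ above is isometric to $([1,x],|\cdot|_{s_1})$, so it does no harm to your final verification. The paper obtains this by a different route: every $2$-dimensional subspace of $E'$ sits inside a $3$-dimensional one, Lemma \ref{lem6} lists the $3$-dimensional subspaces of $E' \cong ([1,x],|\cdot|_{1/t_1 d(x,K)}) \oplus ([1,y],|\cdot|_{1/t_2 d(y,K)})$ up to isometry, and then Theorem \ref{.4thm6} (for the two decomposable candidates) and Theorem \ref{.4thm2}(3) (for the type $\mathrm{\Rnum{3}}_3$ candidate) show that every $2$-dimensional subspace is isometric to $([1,x],|\cdot|_{1/t_1 d(x,K)})$, $([1,y],|\cdot|_{1/t_2 d(y,K)})$, or $(K^2, |\cdot|_{1/t_1 d(x,K)} \times |\cdot|_{1/t_2 d(y,K)})$. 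Your closing computation, that $t \cdot d(\cdot,K)$ equals $1/t_1$ resp. $1/t_2$ for the two summands, then goes through unchanged. Replace your lemma and its proof by this reduction and the argument closes.
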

\begin{proof}
  By Lemmas \ref{lem4}, \ref{lem5}, and \ref{lem6}, we have $(1)$. We shall prove $(2)$. By Lemmas \ref{lem6} and Theorems \ref{.4thm4} and \ref{.4thm2}, each $3$-dimensional subspace satisfies (SE) if and only if $t_1, t_2 \notin V_K$. Moreover, since
  \begin{align*}
    E' \cong ([1, x], |\cdot|_{1 / t_1 d(x, K)}) \oplus ([1, y],|\cdot|_{1 / t_2 d(y, K)}),
  \end{align*}
  by Lemma \ref{lem6} and Theorems \ref{.4thm6} and \ref{.4thm2}, each $2$-dimensional subspace of $E'$ is isometrically isomorphic to one of the following:
  \begin{itemize}
    \item $([1, x], |\cdot|_{1 / t_1 d(x, K)})$.
    \item $([1, y],|\cdot|_{1 / t_2 d(y, K)})$.
    \item $(K^2, |\cdot|_{1 / t_1 d(x, K)} \times |\cdot|_{1 / t_2 d(y, K)})$.
  \end{itemize}
  Therefore, $E$ satisfies the condition $(2)$ in Lemma \ref{selem1} if and only if $t_1, t_2 \notin V_K$. Finally, by Lemma \ref{selem1}, $E$ satisfies (SE) if and only if $t_1, t_2 \notin V_K$. Thus, the proof is complete.
\end{proof}

\begin{thm}
  Let $E$ be a $4$-dimensional normed space. \\
  $(1)$ $E$ is of type $\mathrm{\Rnum{7}}_4$ if and only if there exist $x, y \in K^{\lor} \setminus K$ and $t_1, t_2 \in \mathbb{R}_{> 0}$ with $x \sim y$ or $\tfrac{t_1 d(x, K)}{t_2 d(y, K)} \notin V_K$ for which
  \begin{align*}
    E \cong ([1, x], |\cdot|_{t_1}) \oplus ([1, y],|\cdot|_{t_2})
  \end{align*}
  \noindent $(2)$ Suppose that $E$ is a normed space of type $\mathrm{\Rnum{7}}_4$ of the form as in $(1)$. Then $E$ satisfies $\mathrm{(SE)}$ if and only if $t_1, t_2 \notin V_K$.
\end{thm}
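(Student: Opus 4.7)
The plan is to proceed in the same manner as Theorem \ref{thm2}, with the case analysis of Lemma \ref{lem6} falling into its other branch. For (1), I first apply Lemma \ref{lem4}, which presents two candidate forms for a decomposable $4$-dimensional $E$ with $\dim_{K^\lor} E^\lor = 2$. The form $(K, |\cdot|_{t_1}) \oplus ([1, x, y], |\cdot|_{t_2})$ is immediately excluded from type $\mathrm{\Rnum{7}}_4$, because it contains the $3$-dimensional subspace $([1, x, y], |\cdot|_{t_2})$, whose $\dim_{K^\lor} = 1$, making it of type $\mathrm{\Rnum{4}}_3$ or $\mathrm{\Rnum{5}}_3$ rather than $\mathrm{\Rnum{2}}_3$. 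Hence $E$ must have the form $([1, x], |\cdot|_{t_1}) \oplus ([1, y], |\cdot|_{t_2})$. Lemma \ref{lem6} then splits this into two cases; case $(1)$ of that lemma produces a $3$-dimensional subspace of type $\mathrm{\Rnum{3}}_3$ (so $E$ would be of type $\mathrm{\Rnum{6}}_4$), whereas case $(2)$ yields only $3$-dimensional subspaces of the form $(K, \cdot) \oplus ([1, z], \cdot)$, all of type $\mathrm{\Rnum{2}}_3$, which is exactly type $\mathrm{\Rnum{7}}_4$. The condition distinguishing case $(2)$ from case $(1)$ is precisely the one in the theorem statement.

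For (2), I plan to apply Lemma \ref{selem1}. For its condition $(1)$ on $3$-dimensional subspaces of $E$, Lemma \ref{lem6}$(2)$ gives that every such subspace is isometric to $(K, |\cdot|_{t_1}) \oplus ([1, y], |\cdot|_{t_2})$ or $([1, x], |\cdot|_{t_1}) \oplus (K, |\cdot|_{t_2})$; by Theorem \ref{.4thm4}, these satisfy (SE) precisely when $t_2 \notin V_K$ or $t_1 \notin V_K$, respectively, so both are forced. For condition $(2)$ of Lemma \ref{selem1}, I compute $E' \cong ([1, x], |\cdot|_{1 / t_1 d(x, K)}) \oplus ([1, y], |\cdot|_{1 / t_2 d(y, K)})$ by Theorem \ref{thmh10}, and then, following the template of the proof of Theorem \ref{thm2}, use Lemma \ref{lem6} together with Theorems \ref{.4thm6} and \ref{.4thm2} to deduce that every $2$-dimensional subspace of $E'$ is isometric to one of $([1, x], |\cdot|_{1 / t_1 d(x, K)})$, $([1, y], |\cdot|_{1 / t_2 d(y, K)})$, or $(K^2, |\cdot|_{1 / t_1 d(x, K)} \times |\cdot|_{1 / t_2 d(y, K)})$. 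The last possesses an orthogonal base; for the first two the condition $t \cdot d(x, K) \notin V_K$ reduces to $1/t_1 \notin V_K$ and $1/t_2 \notin V_K$, equivalent to $t_1, t_2 \notin V_K$. Both conditions of Lemma \ref{selem1} thus coincide and yield $t_1, t_2 \notin V_K$.

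The main obstacle lies in classifying the $2$-dimensional subspaces of $E'$: because $E'$ may itself be of type $\mathrm{\Rnum{6}}_4$ or type $\mathrm{\Rnum{7}}_4$ depending on whether $t_2 / t_1 \in V_K$, a uniform argument must handle both. The approach is to embed any $2$-dimensional subspace of $E'$ into a $3$-dimensional one and apply Lemma \ref{lem6}; when that $3$-dimensional overspace turns out to be of type $\mathrm{\Rnum{3}}_3$ (as is possible when $E'$ is of type $\mathrm{\Rnum{6}}_4$), Theorem \ref{.4thm2}$(3)$ guarantees that its $2$-dimensional subspaces have orthogonal bases, so they fall into the third listed type and the case analysis remains uniform across both possibilities for $E'$.
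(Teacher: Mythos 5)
Your proposal is correct and follows essentially the same route as the paper, whose proof of this theorem is literally ``apply a proof similar to that of Theorem \ref{thm2}'': Lemmas \ref{lem4} and \ref{lem6} for part $(1)$, then Lemma \ref{selem1} combined with Theorem \ref{.4thm4} for the $3$-dimensional subspaces and the dual computation via Theorem \ref{thmh10} with Lemma \ref{lem6}, Theorem \ref{.4thm6} and Theorem \ref{.4thm2}$(3)$ for the $2$-dimensional subspaces of $E'$. Your explicit handling of the case where $E'$ lands in case $(1)$ of Lemma \ref{lem6} is a point the paper leaves implicit, and you resolve it correctly.
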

\begin{proof}
  We can apply a proof similar to that of Theorem \ref{thm2}. 
\end{proof}

\begin{thm}
  A decomposable $4$-dimensional normed space $E$ with $\dim_{K^{\lor}}E^{\lor} = 2$ is exactly of one type of type $\mathrm{\Rnum{4}}_4 \sim \mathrm{\Rnum{7}}_4$.
\end{thm}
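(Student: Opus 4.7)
The plan is to reduce the problem to a case analysis driven by Lemmas \ref{lem4}, \ref{lem5}, and \ref{lem6}, together with the classification of $3$-dimensional normed spaces obtained earlier. First I would invoke Lemma \ref{lem4}, which says that any decomposable $4$-dimensional normed space $E$ with $\dim_{K^{\lor}} E^{\lor} = 2$ is isometrically isomorphic either to $(K,|\cdot|_{t_1}) \oplus ([1,x,y],|\cdot|_{t_2})$ (Case A) or to $([1,x],|\cdot|_{t_1}) \oplus ([1,y],|\cdot|_{t_2})$ (Case B).

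In Case A, the factor $([1,x,y],|\cdot|_{t_2})$ is a $3$-dimensional normed space with $\dim_{K^{\lor}}([1,x,y],|\cdot|_{t_2})^{\lor} = 1$, hence by definition of type $\mathrm{\Rnum{4}}_3$ and $\mathrm{\Rnum{5}}_3$ it falls into exactly one of these two types. Lemma \ref{lem5} then provides a complete list of $3$-dimensional subspaces: either $[1,x,y]$ itself, or a space of the form $(K,|\cdot|_{t_1}) \oplus ([1,z],|\cdot|_{t_2})$, which is of type $\mathrm{\Rnum{2}}_3$ (decomposable with $\dim_{K^{\lor}} = 2$ and no subspace of type $\mathrm{\Rnum{3}}_3$). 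Thus in Case A the $3$-dimensional subspaces of $E$ are of type $\mathrm{\Rnum{2}}_3$ together with exactly one non-decomposable representative which is either of type $\mathrm{\Rnum{4}}_3$ or $\mathrm{\Rnum{5}}_3$; accordingly $E$ is of type $\mathrm{\Rnum{4}}_4$ or $\mathrm{\Rnum{5}}_4$, and neither VI$_4$ nor VII$_4$ since, respectively, no subspace is of type $\mathrm{\Rnum{3}}_3$ and $E$ contains a subspace outside type $\mathrm{\Rnum{2}}_3$.

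In Case B, Lemma \ref{lem6} enumerates the $3$-dimensional subspaces. If $x \nsim y$ and $t_1 d(x,K)/t_2 d(y,K) \in V_K$, then one of the enumerated subspaces is $([(1,0),(0,1),(x,\lambda y)], |\cdot|_{t_1} \times |\cdot|_{t_2})$, which is of type $\mathrm{\Rnum{3}}_3$; the remaining subspaces $(K,|\cdot|_{t_i}) \oplus ([1,\cdot],|\cdot|_{t_j})$ are of type $\mathrm{\Rnum{2}}_3$. Hence $E$ is of type $\mathrm{\Rnum{6}}_4$. Otherwise Lemma \ref{lem6}(2) says every $3$-dimensional subspace is of type $\mathrm{\Rnum{2}}_3$, so $E$ is of type $\mathrm{\Rnum{7}}_4$. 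In neither subcase does $E$ contain a $3$-dimensional subspace with $\dim_{K^{\lor}} = 1$ (such subspaces would be of type $\mathrm{\Rnum{4}}_3$ or $\mathrm{\Rnum{5}}_3$), so $E$ is neither of type $\mathrm{\Rnum{4}}_4$ nor of type $\mathrm{\Rnum{5}}_4$.

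Finally I would collect these observations to verify mutual exclusion: the defining conditions of types $\mathrm{\Rnum{4}}_4 \sim \mathrm{\Rnum{7}}_4$ are pairwise incompatible since in each of the four situations the complete list of $3$-dimensional subspaces is different, and each situation actually occurs by the reductions above. No real obstacle arises: the whole statement is a direct bookkeeping exercise once Lemmas \ref{lem4}, \ref{lem5}, \ref{lem6} are in hand; the most delicate point is simply to note, in Case A, that spaces of the form $(K,|\cdot|_{t_1}) \oplus ([1,z],|\cdot|_{t_2})$ with $z \in [1,x,y] \setminus K$ are of type $\mathrm{\Rnum{2}}_3$ and never of type $\mathrm{\Rnum{3}}_3$, which is ensured by Theorem \ref{.4thm6}.
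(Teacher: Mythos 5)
Your proposal is correct and follows essentially the same route as the paper, whose proof consists precisely of invoking Lemmas \ref{lem4}, \ref{lem5} and \ref{lem6}; you have simply written out the case analysis that the paper leaves implicit. The one cosmetic quibble is that the final appeal to Theorem \ref{.4thm6} is unnecessary: a space of the form $(K,|\cdot|_{t_1}) \oplus ([1,z],|\cdot|_{t_2})$ is of type $\mathrm{\Rnum{2}}_3$ directly by definition, being decomposable with $\dim_{K^{\lor}}$ equal to $2$.
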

\begin{proof}
  We can use Lemmas \ref{lem4}, \ref{lem5} and \ref{lem6}. 
\end{proof}

By Theorem \ref{thm1} and the theorem above, we have the following.

\begin{thm} \label{thm15}
  Every decomposable $4$-dimensional normed space $(E, \|\cdot\|)$ is exactly of one type of type $\mathrm{\Rnum{1}}_4 \sim \mathrm{\Rnum{7}}_4$.
\end{thm}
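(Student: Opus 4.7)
The plan is a short case analysis on the invariant $m := \dim_{K^{\lor}} E^{\lor}$. Because $E$ is decomposable, I would write $E \cong E_1 \oplus E_2$ with both summands non-zero; the union of maximal orthogonal subsets of $E_1$ and $E_2$ is an orthogonal subset of $E$, so $m \ge 2$. Since $\dim_K E = 4$ and every orthogonal subset has at most $\dim_K E$ elements, this gives $m \in \{2,3,4\}$.

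The three cases then dispatch using previously established classifications. If $m = 4$, then $E$ has an orthogonal base and is by definition of type $\mathrm{\Rnum{1}}_4$. If $m = 3$, Theorem \ref{thm1} says that $E$ is of type $\mathrm{\Rnum{2}}_4$ or $\mathrm{\Rnum{3}}_4$. If $m = 2$, the theorem immediately preceding establishes that $E$ is of exactly one of types $\mathrm{\Rnum{4}}_4, \mathrm{\Rnum{5}}_4, \mathrm{\Rnum{6}}_4, \mathrm{\Rnum{7}}_4$. Together these dispose of existence.

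For uniqueness, the three value clusters $\{\mathrm{\Rnum{1}}_4\}$, $\{\mathrm{\Rnum{2}}_4, \mathrm{\Rnum{3}}_4\}$, $\{\mathrm{\Rnum{4}}_4, \ldots, \mathrm{\Rnum{7}}_4\}$ are separated by the isomorphism invariant $m$, which takes the three distinct values $4, 3, 2$. Within $\{\mathrm{\Rnum{2}}_4, \mathrm{\Rnum{3}}_4\}$ the disjointness follows from Lemmas \ref{lem2} and \ref{lem3}, which show that the two isometric models contain, respectively, no $3$-dimensional subspace of type $\mathrm{\Rnum{3}}_3$ and no $3$-dimensional subspace of type $\mathrm{\Rnum{2}}_3$. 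Within $\{\mathrm{\Rnum{4}}_4, \ldots, \mathrm{\Rnum{7}}_4\}$ the types are already mutually exclusive by definition, as the defining conditions are stated in terms of the $3$-dimensional subspace structure, and Lemmas \ref{lem5} and \ref{lem6} precisely enumerate which $3$-dimensional subspace types appear in each isometric model.

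I do not expect any genuine obstacle here: the theorem is essentially a bookkeeping consolidation of Theorem \ref{thm1} with the preceding classification theorem. The only substantive step is the elementary observation that decomposability forces $m \ge 2$, which is what packages the whole result into a single statement.
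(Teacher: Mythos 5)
Your proposal is correct and follows essentially the same route as the paper, which simply combines Theorem \ref{thm1} (the case $\dim_{K^{\lor}}E^{\lor}=3$) with the immediately preceding classification theorem (the case $\dim_{K^{\lor}}E^{\lor}=2$). The extra details you supply --- that decomposability forces $\dim_{K^{\lor}}E^{\lor}\ge 2$ via the union of orthogonal subsets of the two summands, and that Lemmas \ref{lem2} and \ref{lem3} separate types $\mathrm{\Rnum{2}}_4$ and $\mathrm{\Rnum{3}}_4$ --- are exactly the points the paper leaves implicit, and they are correct.
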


Now, by Corollary \ref{.4cor1} and Proposition \ref{.4prop3}, we can identify the dual spaces of decomposable $4$-dimensional normed spaces.

\begin{thm}
  If $E$ is a $4$-dimensional normed space of type $\mathrm{\Rnum{1}}_4$ (resp. $\mathrm{\Rnum{2}}_4$, $\mathrm{\Rnum{3}}_4$, $\mathrm{\Rnum{4}}_4$, $\mathrm{\Rnum{5}}_4$, $\mathrm{\Rnum{6}}_4$, $\mathrm{\Rnum{7}}_4$), then $E'$ is of type $\mathrm{\Rnum{1}}_4$ (resp. $\mathrm{\Rnum{2}}_4$, $\mathrm{\Rnum{4}}_4$, $\mathrm{\Rnum{3}}_4$, $\mathrm{\Rnum{5}}_4$, $\mathrm{\Rnum{6}}_4$, $\mathrm{\Rnum{7}}_4$).
\end{thm}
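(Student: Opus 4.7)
The plan is to dispatch the seven cases separately, combining the identification $(A\oplus B)'\cong A'\oplus B'$ (valid for $\max$-norm direct sums in the non-archimedean setting) with the $2$- and $3$-dimensional duality results already established. For each type I fix the canonical normal form supplied by the relevant characterization theorem in Sections 6.1--6.2, dualize each summand using Theorem \ref{thmh10} for the $2$-dimensional factors of the form $([1,x],|\cdot|_t)$, Corollary \ref{.4cor1} for the $\mathrm{\Rnum{3}}_3 \leftrightarrow \mathrm{\Rnum{4}}_3$ swap, and Proposition \ref{.4prop3} for the self-duality of type $\mathrm{\Rnum{5}}_3$, then reassemble and match against the type definitions.

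Types $\mathrm{\Rnum{1}}_4$ and $\mathrm{\Rnum{2}}_4$ are the shortest: the first dualizes coordinate-by-coordinate, and the second becomes $(K^2,|\cdot|_{1/t_1}\times|\cdot|_{1/t_2})\oplus ([1,x],|\cdot|_{1/(t_3 r)})$ via Theorem \ref{thmh10} but keeps its $(K^2,\cdot)\oplus ([1,x],\cdot)$ shape. Types $\mathrm{\Rnum{3}}_4$, $\mathrm{\Rnum{4}}_4$, $\mathrm{\Rnum{5}}_4$ all split as $(K,|\cdot|_{t_1})\oplus G$ with $G$ a $3$-dimensional space of type $\mathrm{\Rnum{3}}_3$, $\mathrm{\Rnum{4}}_3$, $\mathrm{\Rnum{5}}_3$, respectively; Corollary \ref{.4cor1} (resp.\ Proposition \ref{.4prop3}) shows $G'$ to be of type $\mathrm{\Rnum{4}}_3$, $\mathrm{\Rnum{3}}_3$, $\mathrm{\Rnum{5}}_3$, so $E'\cong (K,|\cdot|_{1/t_1})\oplus G'$ matches type $\mathrm{\Rnum{4}}_4$, $\mathrm{\Rnum{3}}_4$, $\mathrm{\Rnum{5}}_4$ by the characterization theorems of those types.

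For types $\mathrm{\Rnum{6}}_4$ and $\mathrm{\Rnum{7}}_4$, $E\cong ([1,x],|\cdot|_{t_1})\oplus ([1,y],|\cdot|_{t_2})$, so Theorem \ref{thmh10} applied to each summand yields $E' \cong ([1,x],|\cdot|_{1/(t_1 r)})\oplus ([1,y],|\cdot|_{1/(t_2 s)})$, again of the same direct-sum shape, with $r=d(x,K)$ and $s=d(y,K)$. The condition $x\nsim y$ is intrinsic and so is preserved; what remains is to show that the quantitative criterion of Lemma \ref{lem6}(1), namely whether $\tfrac{t_1 r}{t_2 s}\in V_K$, which separates type $\mathrm{\Rnum{6}}_4$ from type $\mathrm{\Rnum{7}}_4$, is respected after the transformation $(t_1,t_2)\mapsto (1/(t_1 r),1/(t_2 s))$. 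This bookkeeping step, which relies on Proposition \ref{prph5} to control $r$ and $s$ modulo $V_K$ across $\sim$-equivalence classes, is the main obstacle; once it is carried out, the seven-case analysis yields exactly the correspondence asserted in the theorem.
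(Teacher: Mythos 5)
Your overall strategy --- dualize the normal form summand by summand using Theorem \ref{thmh10}, Corollary \ref{.4cor1} and Proposition \ref{.4prop3}, then match against the characterization theorems --- is the same route the paper takes, and your treatment of types $\mathrm{\Rnum{1}}_4$ through $\mathrm{\Rnum{5}}_4$ is fine. The problem is the step you defer in the $\mathrm{\Rnum{6}}_4$/$\mathrm{\Rnum{7}}_4$ case, which is not mere bookkeeping and which your proposed tool cannot close. Writing $r=d(x,K)$ and $s=d(y,K)$, the criterion separating type $\mathrm{\Rnum{6}}_4$ from type $\mathrm{\Rnum{7}}_4$ for $E=([1,x],|\cdot|_{t_1})\oplus([1,y],|\cdot|_{t_2})$ is $x\nsim y$ together with $t_1r/(t_2s)\in V_K$, while for $E'\cong([1,x],|\cdot|_{1/(t_1r)})\oplus([1,y],|\cdot|_{1/(t_2s)})$ the same criterion reads
\begin{align*}
\frac{\bigl(1/(t_1r)\bigr)\,r}{\bigl(1/(t_2s)\bigr)\,s}=\frac{t_2}{t_1}\in V_K.
\end{align*}
The two conditions differ by the factor $r/s$, and Proposition \ref{prph5} gives no control over $r/s$ modulo $V_K$: it applies only to \emph{equivalent} elements, whereas in type $\mathrm{\Rnum{6}}_4$ one has $x\nsim y$ by hypothesis.

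Worse, the deferred step appears to fail outright rather than merely being unproved. Suppose $r/s\notin V_K$ (this already forces $x\nsim y$ by Proposition \ref{prph5}, and is realizable for separable $K$ since Theorem \ref{systhm5} produces elements with arbitrary prescribed distance to $K$), and take $t_1=1$, $t_2=r/s$. Then $t_1r/(t_2s)=1\in V_K$, so $[(1,0),(0,1),(x,y)]$ is a subspace of type $\mathrm{\Rnum{3}}_3$ by Theorem \ref{.4thm2} and $E$ is of type $\mathrm{\Rnum{6}}_4$; but $t_2/t_1=r/s\notin V_K$, so by Lemma \ref{lem6} every $3$-dimensional subspace of $E'$ is of type $\mathrm{\Rnum{2}}_3$ and $E'$ is of type $\mathrm{\Rnum{7}}_4$. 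So either Lemma \ref{lem6} is being misapplied somewhere, or the $\mathrm{\Rnum{6}}_4$ and $\mathrm{\Rnum{7}}_4$ entries of the statement (which the paper itself asserts without a written proof) require an additional hypothesis or a genuinely different argument; in either case your proof as written does not establish these two cases.
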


\subsection{Type $\mathrm{\protect \Rnum{8}}_4, \mathrm{\protect \Rnum{13}}_4$}

As in section \ref{type3,4}, we obtain the following results. We omit the proofs of the results.

\begin{thm}   \label{thm12}
  Let $x, y, z \in K^{\lor} \setminus K$ and $t_1, t_2, t_3 \in \mathbb{R}_{>0}$. Let $\pi : K^{\lor} \to K^{\lor} / K$ be the canonical quotient map. We set
  \begin{align*}
    (E,\|\cdot\|) := [(1,0,0),(0,1,0),(0,0,1),(x,y,z)],
  \end{align*}
  a subspace of $((K^{\lor})^3,|\cdot|_{t_1} \times |\cdot|_{t_2} \times |\cdot|_{t_3})$. \\
  $(1)$ $E$ is of type $\mathrm{\Rnum{13}}_4$ if and only if $ t_1 d(x,K) = t_2 d(y,K) = t_3 d(z,K)$ and $\{\pi(x), \pi(y), \pi(z)\}$ is an orthogonal set. \\
  $(2)$ Suppose that $E$ is of type $\mathrm{\Rnum{13}}_4$. Then $E$ satisfies $(\mathrm{SE})$ if and only if $t_1,t_2,t_3 \notin V_K$. \\
  $(3)$ Suppose that $E$ is of type $\mathrm{\Rnum{13}}_4$. Then each $3$-dimensional subspace of $E$ is isometrically isomorphic to
\begin{align*}
  (K^3,|\cdot|_{t_1} \times |\cdot|_{t_2} \times |\cdot|_{t_3}).
\end{align*} 
Moreover, let $e_1, e_2, e_3, e_4 \in E'$ be the dual basis of $(x,y,z),(1,0,0),(0,1,0),(0,0,1)$. Then the map
  \begin{align*}
    E' &\to ([1, x, y], |\cdot|_{\frac{1}{t}}) \\
    e_1 &\mapsto 1 \\
    e_2 &\mapsto -x \\
    e_3 &\mapsto -y \\
    e_4 &\mapsto -z
  \end{align*}
  gives a linear isometry.
\end{thm}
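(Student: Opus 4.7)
The plan is to apply Theorem \ref{systhm2} with $n = 3$. Since the three unit vectors $(1,0,0), (0,1,0), (0,0,1) \in E$ form an orthogonal set, we have $\dim_{K^{\lor}} E^{\lor} = 3$, and so $E$ is of type $\mathrm{\Rnum{13}}_4$ if and only if it is indecomposable. Part $(1)$ is then immediate from Theorem \ref{systhm2}$(1)$ and $(2)$: the indecomposability of $E$ is equivalent to the conditions $t_1 d(x,K) = t_2 d(y,K) = t_3 d(z,K)$ together with orthogonality of $\{\pi(x), \pi(y), \pi(z)\}$. Part $(3)$ is the word-for-word specialization of the dual-space formula of Theorem \ref{systhm2}$(2)$ with $n = 3$.

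For part $(2)$, I will exploit the identification $E' \cong ([1, x, y, z], |\cdot|_{1/t})$ provided by $(3)$. This is a $4$-dimensional normed space with $\dim_{K^{\lor}}(E')^{\lor} = 1$, so Corollary \ref{syscor3}$(2)$ applies to it. Combined with reflexivity $E \cong (E')'$, taking the distinguished non-zero element $u = 1 \in E'$ gives: $E$ satisfies $\mathrm{(SE)}$ if and only if $\|E'/[1]\| \cap V_K = \emptyset$ and $(E'/[1])'$ satisfies $\mathrm{(SE)}$.

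The key computation is that $E'/[1]$ has an orthogonal basis. Indeed, from $\{\pi(x), \pi(y), \pi(z)\}$ being orthogonal in $K^{\lor}/K$ with norms $r_1, r_2, r_3$, and from $t = t_i r_i$, the induced norms in $E'/[1]$ are $r_i/t = 1/t_i$, yielding
\begin{align*}
  E'/[1] \cong (K^3, |\cdot|_{1/t_1} \times |\cdot|_{1/t_2} \times |\cdot|_{1/t_3}).
\end{align*}
Its dual is then $(K^3, |\cdot|_{t_1} \times |\cdot|_{t_2} \times |\cdot|_{t_3})$, which is of type $\mathrm{\Rnum{1}}_3$ and always satisfies $\mathrm{(SE)}$. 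Therefore the criterion collapses to $\|E'/[1]\| \cap V_K = \emptyset$. Since the direct-sum norm equals $\max_i |\lambda_i|/t_i$ for $(\lambda_1, \lambda_2, \lambda_3) \in K^3$, a short check (for necessity, put two coordinates equal to zero; for sufficiency, observe the maximum is attained by some $|\lambda_j|/t_j$, which lies outside $V_K$ when $t_j$ does) establishes that this is equivalent to $t_1, t_2, t_3 \notin V_K$.

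The main obstacle is simply setting up the duality in the right direction: Corollary \ref{syscor3}$(2)$ is a statement whose conclusion is about the dual space, so one must apply it to $E'$ rather than to $E$; this is legitimate because $\dim_{K^{\lor}}(E')^{\lor} = 1$ while $\dim_{K^{\lor}} E^{\lor} = 3$. Everything else reduces to a routine orthogonal-basis calculation.
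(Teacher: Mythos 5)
Your proposal is correct and follows essentially the same route as the paper, which omits the proof of this theorem and refers back to the method of Section \ref{type3,4}: namely, specialize Theorem \ref{systhm2} (with $n=3$) for parts $(1)$ and $(3)$, and obtain part $(2)$ by applying Corollary \ref{syscor3}$(2)$ to $E'$ (which has $\dim_{K^{\lor}}(E')^{\lor}=1$) together with reflexivity, exactly as in the paper's corollary stating that an indecomposable $E$ with $\dim_{K^{\lor}}E^{\lor}=\dim_K E-1$ satisfies $(\mathrm{SE})$ iff $\|E\|\cap V_K=\emptyset$. Your identification $E'/[1]\cong (K^3,|\cdot|_{1/t_1}\times|\cdot|_{1/t_2}\times|\cdot|_{1/t_3})$ and the reduction to $t_1,t_2,t_3\notin V_K$ are both right.
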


\begin{thm} 
  Let $x,y,z \in K^{\lor} \setminus K$ and $t \in \mathbb{R}_{>0}$.  Let $\pi : K^{\lor} \to K^{\lor} / K$ be the canonical quotient map. We set $E := ([1, x, y, z], |\cdot|_{t})$. \\
  $(1)$ $E$ is of type $\mathrm{\Rnum{8}}_4$ if and only if $\{\pi(x'), \pi(y'), \pi(z')\}$ is an orthogonal set for some $x', y', z' \in E$. \\
  $(2)$ Suppose that $E$ is of type $\mathrm{\Rnum{8}}_4$. Then $E$ satisfies $\mathrm{(SE)}$ if and only if $t \notin V_K$. \\
  $(3)$ Suppose that $E$ is of type $\mathrm{\Rnum{8}}_4$. Then for each $3$-dimensional subspace F of $E$, there exists $x', y' \in [1, x, y, z]$ such that
  \begin{align*}
    F \cong ([1,x',y'], |\cdot|_t)
  \end{align*}
  Moreover, we have
  \begin{align*}
    E' \cong ([(1,0, 0),(0,1,0),(0,0,1)(x,y,z)], |\cdot|_{\frac{1}{tt_1}} \times |\cdot|_{\frac{1}{tt_2}} \times |\cdot|_{\frac{1}{tt_3}})
  \end{align*}
  where $t_1 = d(x,K), t_2 = d(y,K), t_3 = d(z,K)$.
\end{thm}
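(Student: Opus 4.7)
My plan is to mirror the proof of the three-dimensional analog (Theorem \ref{.4thm3}) by assembling the three claims from the machinery developed in section \ref{sys}.

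For part $(1)$, by Corollary \ref{syscor1} the quotients $E/[u]$, $u \neq 0$, are all mutually isometric, so it suffices to analyze $E/[1]$. Being of type $\mathrm{\Rnum{1}}_3$ means $E/[1]$ admits an orthogonal base, i.e.\ $\dim_{K^{\lor}}(E/[1])^{\lor} = 3$. Lifting such a base to $E$ produces $x', y', z' \in E$ whose images $\pi(x'), \pi(y'), \pi(z')$ form an orthogonal set. Conversely, any three-element orthogonal subset of $E/[1]$ is automatically $K$-linearly independent in the three-dimensional space $E/[1]$, and hence is an orthogonal base, which gives $E/[1]$ type $\mathrm{\Rnum{1}}_3$.

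For part $(2)$, since $\pi(x'), \pi(y'), \pi(z')$ are $K$-linearly independent modulo $K$, the set $\{1, x', y', z'\}$ is a $K$-basis of $E$, so we may present $E = ([1, x', y', z'], |\cdot|_t)$; Corollary \ref{syscor4} then yields $(\mathrm{SE})$ if and only if $t \notin V_K$.

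For part $(3)$, the assertion on three-dimensional subspaces is exactly the $n = 3$ instance of the theorem preceding Corollary \ref{syscor2}, applied to the presentation $E = ([1, x', y', z'], |\cdot|_t)$: every three-dimensional subspace $F \subseteq E$ has the form $([1, y_1, y_2], |\cdot|_t)$ for some $y_1, y_2 \in E$. The dual formula then follows from Corollary \ref{syscor5}, which produces the explicit isometry from $E'$ onto the subspace $[(1,0,0), (0,1,0), (0,0,1), (x', y', z')]$ of $((K^{\lor})^3, |\cdot|_{1/(t r_1')} \times |\cdot|_{1/(t r_2')} \times |\cdot|_{1/(t r_3')})$, with $r_i' = d(x_i', K)$. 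The only small obstacle is the relabeling: the displayed formula in the theorem is written in terms of the original generators $x, y, z$, which is valid under the (implicit) hypothesis that $\{\pi(x), \pi(y), \pi(z)\}$ itself is orthogonal, in which case we may take $(x', y', z') = (x, y, z)$. Once this bookkeeping is settled, no ideas beyond the systematic framework of section \ref{sys} are needed.
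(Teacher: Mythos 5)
Your proof is correct and follows exactly the route the paper intends: the paper omits this proof, stating only that the results are obtained ``as in section \ref{type3,4}'', i.e.\ via Corollary \ref{syscor1}, the subspace theorem preceding Corollary \ref{syscor2}, and Corollaries \ref{syscor4} and \ref{syscor5}, which is precisely the machinery you assemble. Your observation that the displayed dual formula implicitly requires $\{\pi(x),\pi(y),\pi(z)\}$ itself to be orthogonal is a fair reading, consistent with the explicit hypothesis ``if $x \nsim y$'' in the three-dimensional analogue (Theorem \ref{.4thm3}).
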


\begin{cor} \label{cor3}
  If $E$ is a $3$-dimensional normed space of type $\mathrm{\Rnum{8}}_4$ (resp. $\mathrm{\Rnum{13}}_4$), then $E'$ is of type $\mathrm{\Rnum{13}}_4$ (resp. $\mathrm{\Rnum{8}}_4$). 
\end{cor}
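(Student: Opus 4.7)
The plan is to reduce this corollary to the two preceding structural theorems, which respectively compute the duals of spaces of type $\mathrm{\Rnum{13}}_4$ and of type $\mathrm{\Rnum{8}}_4$. Since $E$ is finite-dimensional, $E'' \cong E$ canonically as normed spaces, so it suffices to establish one direction; the other follows by applying the first direction to $E'$ and invoking reflexivity.

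For the direction $\mathrm{\Rnum{13}}_4 \Rightarrow \mathrm{\Rnum{8}}_4$, I would start from the hypothesis that $E$ is of type $\mathrm{\Rnum{13}}_4$ and use Theorem \ref{thm12}. Writing $E$ in its canonical form as a subspace $[(1,0,0),(0,1,0),(0,0,1),(x,y,z)]$ of $((K^{\lor})^3,|\cdot|_{t_1}\times|\cdot|_{t_2}\times|\cdot|_{t_3})$ with $t_1 d(x,K)=t_2 d(y,K)=t_3 d(z,K)=:t$ and $\{\pi(x),\pi(y),\pi(z)\}$ orthogonal, part (3) of Theorem \ref{thm12} gives an explicit linear isometry $E' \cong ([1,x,y,z],|\cdot|_{1/t})$. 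Since the image of $\{\pi(x),\pi(y),\pi(z)\}$ under $\pi:K^{\lor}\to K^{\lor}/K$ is orthogonal, the criterion (1) in the theorem characterizing type $\mathrm{\Rnum{8}}_4$ (the theorem immediately preceding this corollary) applies, and $E'$ is of type $\mathrm{\Rnum{8}}_4$.

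For the converse $\mathrm{\Rnum{8}}_4 \Rightarrow \mathrm{\Rnum{13}}_4$, I would apply the preceding theorem (characterizing type $\mathrm{\Rnum{8}}_4$) to write $E \cong ([1,x,y,z],|\cdot|_t)$ with $\{\pi(x),\pi(y),\pi(z)\}$ orthogonal. Part (3) of that theorem then supplies an explicit isometry
\begin{align*}
  E' \cong [(1,0,0),(0,1,0),(0,0,1),(x,y,z)] \subseteq ((K^{\lor})^3,|\cdot|_{1/tt_1}\times|\cdot|_{1/tt_2}\times|\cdot|_{1/tt_3}),
\end{align*}
where $t_i=d(x,K),d(y,K),d(z,K)$ respectively. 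With this representation, the orthogonality of $\{\pi(x),\pi(y),\pi(z)\}$ together with the equalities $(1/tt_i)\cdot t_i=1/t$, which are uniform in $i$, ensure that the two conditions in part (1) of Theorem \ref{thm12} are satisfied, so $E'$ is of type $\mathrm{\Rnum{13}}_4$. (Alternatively, one can bypass this check by invoking reflexivity: if we had already proved the first direction, then from $E'$ being, by contradiction, of some other decomposable or mismatched type one would deduce $E\cong E''$ is not of type $\mathrm{\Rnum{8}}_4$.)

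The only real subtlety — which I would not expect to be a true obstacle, but is worth flagging — is to check that the normalizing scalars appearing in the two structure theorems are mutually consistent, i.e.\ that the product of the "radii" in the direct and dual presentations line up so that the single common value $t_i\cdot d(x_i,K)$ required by type $\mathrm{\Rnum{13}}_4$ actually materializes. This is immediate from the formulas in Theorem \ref{thm12}(3) and in the $\mathrm{\Rnum{8}}_4$-theorem part (3), but is the only thing that needs to be verified explicitly beyond quoting the two results.
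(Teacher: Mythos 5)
Your proposal is correct and is essentially the argument the paper intends (the paper omits the proof, deriving the corollary directly from the two preceding structure theorems exactly as you do): read off the explicit dual presentation from Theorem \ref{thm12}(3), respectively from part (3) of the type-$\mathrm{\Rnum{8}}_4$ theorem, and check the orthogonality and radius conditions of the other theorem's criterion (1). The consistency check you flag, namely $(1/(t\,t_i))\cdot t_i = 1/t$ uniformly in $i$, is indeed the only computation needed, and it goes through.
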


\begin{thm}
  Let $x_1, y_1, z_1, x_2, y_2, z_2 \in K^{\lor} \setminus K$ and $t, s \in \mathbb{R}_{>0}$. Let $\pi : K^{\lor} \to K^{\lor} / K$ be the canonical quotient map. Suppose that both $\{\pi(x_1), \pi(y_1), \pi(z_1)\}$ and $\{\pi(x_2), \pi(y_2), \pi(z_2)\}$ are orthogonal sets. Then
  \begin{align*}
    ([1, x_1, y_1, z_1], |\cdot|_t) \cong ([1, x_2, y_2, z_2], |\cdot|_s)
  \end{align*}
  if and only if $t / s \in V_K$ and there exist $x, y, z \in [1, x_1, y_1, z_1]$ satisfying $x \sim x_2$, $y \sim y_2$ and $z \sim z_2$.
\end{thm}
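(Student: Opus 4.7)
The plan is to mirror the proof of the analogous three-dimensional structure theorem (the unnamed theorem at the end of subsection ``Type $\mathrm{\Rnum{3}}_3$, $\mathrm{\Rnum{4}}_3$''), adapting from two ``generators'' to three. The main tools will be Lemma \ref{syslem1}, the unnamed structure theorem just preceding Corollary \ref{syscor2} (on $n$-dimensional subspaces of $([1, x_1, \cdots, x_n], |\cdot|)$ when $\{\pi(x_i)\}$ is orthogonal), and the $2$-dimensional classification of Theorem \ref{thmh3}.

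For the ``only if'' direction, suppose $T : E_1 := ([1, x_1, y_1, z_1], |\cdot|_t) \to E_2 := ([1, x_2, y_2, z_2], |\cdot|_s)$ is a linear isometry. Since $\|T(1)\|_{E_2} = t$ and every nonzero norm in $E_2$ lies in $s \cdot V_K$ (using $V_{K^{\lor}} = V_K$), we immediately obtain $t/s \in V_K$. To produce $x \in [1, x_1, y_1, z_1]$ with $x \sim x_2$, set $F := T^{-1}([1, x_2])$, a $2$-dimensional subspace of $E_1$, and fix any $3$-dimensional subspace $F' \subseteq E_1$ with $F \subseteq F'$. The structure theorem preceding Corollary \ref{syscor2}, applied to $F' \subseteq E_1$ with $n = 3$, yields $F' \cong ([1, a, b], |\cdot|_t)$ with $a, b \in [1, x_1, y_1, z_1]$; moreover, inspecting its proof (which builds the $y_i$'s from an orthogonal triple via Lemma \ref{syslem1}) shows that $\{\pi(a), \pi(b)\}$ is orthogonal as well. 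Reapplying the same theorem to this $([1, a, b], |\cdot|_t)$ with $n = 2$ gives $F \cong ([1, c], |\cdot|_t)$ for some $c \in [1, a, b] \subseteq [1, x_1, y_1, z_1]$. Since $T$ restricts to an isometry $F \to ([1, x_2], |\cdot|_s)$, Theorem \ref{thmh3} forces $c \sim x_2$. Setting $x := c$ and repeating the argument with $x_2$ replaced by $y_2$ and $z_2$ produces the desired $y$ and $z$.

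For the ``if'' direction, given $t/s \in V_K$ and $x, y, z \in [1, x_1, y_1, z_1]$ with $x \sim x_2$, $y \sim y_2$, $z \sim z_2$, apply Lemma \ref{syslem1} to the orthogonal triple $\{\pi(x_2), \pi(y_2), \pi(z_2)\}$ together with these equivalences. This simultaneously yields an isometric isomorphism $([1, x_2, y_2, z_2], |\cdot|) \cong ([1, x, y, z], |\cdot|)$ and the orthogonality of $\{\pi(x), \pi(y), \pi(z)\}$. The latter forces $\{1, x, y, z\}$ to be $K$-linearly independent, so $[1, x, y, z]$ is a $4$-dimensional $K$-subspace of the $4$-dimensional space $[1, x_1, y_1, z_1]$, whence the two coincide as subspaces of $K^{\lor}$. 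Finally, multiplication by $a \in K$ with $|a| = s/t$ is an isometry from $([1, x_2, y_2, z_2], |\cdot|_s)$ onto $([1, x_2, y_2, z_2], |\cdot|_t)$, and concatenating gives
\begin{align*}
  ([1, x_1, y_1, z_1], |\cdot|_t) = ([1, x, y, z], |\cdot|_t) \cong ([1, x_2, y_2, z_2], |\cdot|_t) \cong ([1, x_2, y_2, z_2], |\cdot|_s),
\end{align*}
as desired.

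The main obstacle is the two-stage descent in the ``only if'' direction. The first application of the $n$-dimensional subspace theorem is straightforward, but the second requires that $\{\pi(a), \pi(b)\}$ inherit orthogonality; this is not part of the theorem's statement and must be extracted from its proof (via Lemma \ref{syslem1}). Once the realization $F \cong ([1, c], |\cdot|_t)$ is secured with $c \in [1, x_1, y_1, z_1]$, the passage to $c \sim x_2$ is routine via the $2$-dimensional classification.
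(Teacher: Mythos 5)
Your proof is correct and follows essentially the route the paper intends: the paper omits the proof with the remark that it proceeds ``as in section 5.3,'' i.e.\ by mirroring the $3$-dimensional structure theorem, whose converse is Lemma 4.8 plus rescaling and whose forward direction realizes $T^{-1}([1,x_2])$ inside $[1,x_1,y_1,z_1]$ via the subspace theorem and then invokes Theorem 2.6. Your two-stage descent (with the observation that orthogonality of $\{\pi(a),\pi(b)\}$ must be read off from the proof of the subspace theorem, or secured by choosing $F'$ with $K\cap F'=\{0\}$) is exactly the extra step the jump from dimension $3$ to dimension $4$ requires.
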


\subsection{Type $\mathrm{\protect \Rnum{12}}_4$}

Since a $3$-dimensional normed space is of type $\mathrm{\Rnum{5}}_3$ if and only if it is hyper-symmetric, we have the following theorem.

\begin{thm}
  A $4$-dimensional normed space is of type $\mathrm{\Rnum{12}}_4$ if and only if it is hyper-symmetric.
\end{thm}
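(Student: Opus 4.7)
The plan is to prove this by directly unfolding both definitions and invoking the already-established 3-dimensional analogue together with Corollary \ref{syscor1}.

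First I would recall the recursive definition of hyper-symmetric: a $4$-dimensional normed space $E$ is hyper-symmetric exactly when $\dim_{K^{\lor}} E^{\lor} = 1$ and the $3$-dimensional quotient $E/[u]$ (for any, equivalently for every, non-zero $u \in E$) is itself hyper-symmetric. The well-definedness of picking a specific $u$ rests on Corollary \ref{syscor1}, which guarantees that under $\dim_{K^{\lor}} E^{\lor} = 1$, all $3$-dimensional quotients $E/[u]$ are isometrically isomorphic to one another. Thus the conditions "for some $u \neq 0$" and "for every $u \neq 0$" coincide in this setting.

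Next, I would invoke the immediately preceding theorem of Section \ref{type5}, which asserts that a $3$-dimensional normed space is of type $\mathrm{\Rnum{5}}_3$ if and only if it is hyper-symmetric. Combining this with the definition of type $\mathrm{\Rnum{12}}_4$, namely that $\dim_{K^{\lor}} E^{\lor} = 1$ and $E/[u]$ is of type $\mathrm{\Rnum{5}}_3$ for every non-zero $u \in E$, the two characterizations of $E$ coincide term by term. Hence $E$ is of type $\mathrm{\Rnum{12}}_4$ if and only if it is hyper-symmetric.

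No substantial obstacle is expected: the entire proof is a direct comparison of two definitions, mediated by the $3$-dimensional case (type $\mathrm{\Rnum{5}}_3$ $=$ hyper-symmetric) and by Corollary \ref{syscor1} to pass freely between "for some $u$" and "for all $u$" in the quotient condition.
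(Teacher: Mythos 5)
Your proposal is correct and is exactly the argument the paper intends: the theorem is stated as an immediate consequence of the equivalence ``type $\mathrm{\Rnum{5}}_3$ $=$ hyper-symmetric'' for $3$-dimensional spaces, with Corollary \ref{syscor1} ensuring that the quotient condition is independent of the choice of $u$. Nothing is missing.
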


Thus, as in section \ref{type5}, we can apply the results of section \ref{hyper} to a normed space of type $\mathrm{\Rnum{12}}_4$. We omit the proofs of the following results.

\begin{prop} \label{prop1}
  If $E$ is a $4$-dimensional normed space of type $\mathrm{\Rnum{12}}_4$, then $E'$ is of type $\mathrm{\Rnum{12}}_4$. 
\end{prop}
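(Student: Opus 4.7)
The plan is to invoke the preceding characterization together with Theorem \ref{systhm8}(3). By the theorem immediately preceding this proposition, a $4$-dimensional normed space is of type $\mathrm{\Rnum{12}}_4$ if and only if it is hyper-symmetric. Thus the statement to be proved is equivalent to the assertion that the dual of a $4$-dimensional hyper-symmetric space is hyper-symmetric.

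First I would note that this is exactly the content of Theorem \ref{systhm8}(3), which asserts that the dual $E'$ of any (finite-dimensional) hyper-symmetric normed space is again hyper-symmetric. Since $\dim_K E' = \dim_K E = 4$, the hyper-symmetric space $E'$ is in particular $4$-dimensional, and hence of type $\mathrm{\Rnum{12}}_4$ by the preceding theorem.

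There is no real obstacle: the work was already done in Theorem \ref{systhm8}, where heredity under duality was established by induction on dimension using the identification $E' / [f] \cong ([f]_{\perp})'$ and the fact that $\dim_{K^{\lor}}(E')^{\lor}$ can be controlled by looking at $2$-dimensional subspaces of $E'$ via $F \cong (E / F_{\perp})'$. So the proof of the present proposition reduces to a one-line citation of Theorem \ref{systhm8}(3) combined with the definitional equivalence between type $\mathrm{\Rnum{12}}_4$ and hyper-symmetry in dimension $4$.
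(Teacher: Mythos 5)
Your proposal is correct and is exactly the argument the paper intends: the paper omits the proof but explicitly points to the preceding theorem (type $\mathrm{\Rnum{12}}_4$ is equivalent to hyper-symmetry in dimension $4$) together with the results of the hyper-symmetric section, i.e.\ Theorem \ref{systhm8}(3), mirroring the proof of Proposition \ref{.4prop3} for type $\mathrm{\Rnum{5}}_3$. Nothing further is needed.
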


\begin{thm} 
   Let $E$ be a $4$-dimensional normed space of type $\mathrm{\Rnum{12}}_4$. Then all $3$-dimensional subspaces of $E$ are isometrically isomorphic to each other. Moreover, all $3$-dimensional quotient spaces of $E$ are isometrically isomorphic to each other.
\end{thm}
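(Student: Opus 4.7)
The plan is to reduce directly to Theorem \ref{systhm7}, which already provides the analogous statement for arbitrary hyper-symmetric spaces. By the immediately preceding theorem, a $4$-dimensional normed space is of type $\mathrm{\Rnum{12}}_4$ if and only if it is hyper-symmetric. Hence I may assume $E$ is hyper-symmetric of dimension $n = 4$.

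Applying Theorem \ref{systhm7} with $n = 4$ and $m = 3$, part $(1)$ immediately yields that all $3$-dimensional quotient spaces of $E$ are isometrically isomorphic to each other, and part $(2)$ yields that all $3$-dimensional subspaces of $E$ are isometrically isomorphic to each other. This gives both assertions with no further work.

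The only step that requires any checking at all is that the hypothesis $1 \le m \le n-1$ of Theorem \ref{systhm7} is satisfied, which is clear since $m = 3$ and $n - 1 = 3$. Thus the proof reduces to a single invocation of Theorem \ref{systhm7}, and there is no genuine obstacle—the substantive content (Corollary \ref{syscor1} via the dual theory, feeding into the inductive structure of Theorem \ref{systhm7}) has been absorbed into the general hyper-symmetric machinery developed in section \ref{hyper}.
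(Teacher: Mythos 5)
Your proof is correct and is exactly the route the paper intends: the paper omits the proof precisely because, after the preceding theorem identifies type $\mathrm{\Rnum{12}}_4$ with hyper-symmetry, the statement is the case $n=4$, $m=3$ of Theorem \ref{systhm7}. Nothing further is needed.
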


\begin{thm}
  Let $x, y, z \in K^{\lor} \setminus K$ and $t \in \mathbb{R}_{> 0}$. Let $\pi : K^{\lor} \to K^{\lor} / K$ be the canonical quotient map. We set $E := ([1, x, y, z], |\cdot|_{t})$. \\
  $(1)$ $E$ is of type $\mathrm{\Rnum{12}}_4$ if and only if $[\pi(x), \pi(y), \pi(z)]$ is of type $\mathrm{\Rnum{5}}_3$. \\
  $(2)$ Suppose that $E$ is of type $\mathrm{\Rnum{12}}_4$. Then $E$ satisfies $\mathrm{(SE)}$ if and only if
  \begin{align*}
    t, t d(x,K), t d(y,[1,x]) \notin V_K.
  \end{align*}
\end{thm}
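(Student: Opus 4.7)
For part (1), I would invoke the theorem immediately preceding this one, which identifies type $\mathrm{\Rnum{12}}_4$ with hyper-symmetry in dimension four. The condition $\dim_{K^{\lor}} E^{\lor} = 1$ is automatic, since $E \subseteq (K^{\lor}, |\cdot|_t)$ forces $E^{\lor} \subseteq K^{\lor}$. By the recursive definition of hyper-symmetry, together with Corollary \ref{syscor1} (which makes the choice of quotient irrelevant), $E$ is of type $\mathrm{\Rnum{12}}_4$ if and only if $E/[1]$ is hyper-symmetric, equivalently $E/[1]$ is of type $\mathrm{\Rnum{5}}_3$. Computing the quotient norm gives $\|v + [1]\|_{E/[1]} = t \cdot d(v, K)$, so $E/[1]$ is isometric, up to a global rescaling of the norm by the factor $t$, to the three-dimensional $K$-normed space $F := [\pi(x), \pi(y), \pi(z)] \subseteq K^{\lor}/K$; the dimension count uses that $1, x, y, z$ are $K$-linearly independent. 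Since rescaling preserves the type, part (1) follows.

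For part (2), I would apply Theorem \ref{hypthm1} with the linearly independent choice $u_1 := 1$ and $u_2 := x$; condition $(\mathrm{SE})$ for $E$ then becomes the three conditions $\|E\| \cap V_K = \emptyset$, $\|E/[1]\| \cap V_K = \emptyset$, and $\|E/[1, x]\| \cap V_K = \emptyset$. The common tool is the following observation: if $V$ is any finite-dimensional $K$-normed space with $\dim_{K^{\lor}} V^{\lor} = 1$, then by Theorem \ref{pthm1} it embeds isometrically into $(K^{\lor}, |\cdot|_s)$ for some $s > 0$, so $\|V\| \subseteq s V_K \cup \{0\}$; and if in addition some element of $V$ has norm exactly $s$, then $\|V\| \cap V_K \neq \emptyset$ if and only if $s \in V_K$.

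Applying this to the three spaces in turn: for $V = E$ take $s = t$ with witness $1 \in E$; for $V = E/[1]$, using the identification with the rescaled $F$ from part (1), take $s = t \cdot d(x, K)$ with witness the image of $x$; and for $V = E/[1, x]$, which naturally identifies with $(E/[1])/[\pi(x)] \cong F/[\pi(x)]$ (again rescaled by $t$), take $s = t \cdot d(y, [1, x])$ with witness the image of $y$. The quotient $F/[\pi(x)]$ is two-dimensional hyper-symmetric by Theorem \ref{systhm8}, which guarantees $\dim_{K^{\lor}} (F/[\pi(x)])^{\lor} = 1$, so the tool applies. Reading off the three conditions yields exactly $t \notin V_K$, $t d(x, K) \notin V_K$, and $t d(y, [1, x]) \notin V_K$. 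The only subtlety in executing this plan is tracking the $t$-rescaling through the two successive quotients; everything else is routine bookkeeping.
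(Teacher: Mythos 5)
Your proof is correct and follows exactly the route the paper intends: the paper omits the proof of this theorem, stating only that one should apply the hyper-symmetry results of Section \ref{hyper} (the characterization of type $\mathrm{\Rnum{12}}_4$ as hyper-symmetry together with Theorem \ref{hypthm1}), which is precisely what you do, with the rescaling bookkeeping and the use of Theorem \ref{systhm8} for the second quotient handled correctly.
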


\subsection{Type $\mathrm{\protect \Rnum{9}}_4, \mathrm{\protect \Rnum{10}}_4, \mathrm{\protect \Rnum{11}}_4$}

By Corollary \ref{syscor1}, we have the following.
\begin{thm} \label{thm10} 
  Let $E$ be a $4$-dimensional normed space with $\dim_{K^{\lor}} E^{\lor} = 1$. Then $E$ is exactly of one type of type $\mathrm{\Rnum{8}}_4 \sim \mathrm{\Rnum{12}}_4$. Moreover, if $E$ is of type $\mathrm{\Rnum{8}}_4$ (resp. $\mathrm{\Rnum{9}}_4, \mathrm{\Rnum{10}}_4, \mathrm{\Rnum{11}}_4, \mathrm{\Rnum{12}}_4$), then each $3$-dimensional subspace of $E'$ is of type $\mathrm{\Rnum{1}}_3$ (resp. $\mathrm{\Rnum{2}}_3, \mathrm{\Rnum{4}}_3, \mathrm{\Rnum{3}}_3, \mathrm{\Rnum{5}}_3$).
\end{thm}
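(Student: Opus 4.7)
The plan is to reduce the two halves of the theorem to results that have already been established, namely Corollary \ref{syscor1} for the first half and the duality correspondences for $3$-dimensional types for the second half.

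For the first half, I would invoke Corollary \ref{syscor1}: since $\dim_{K^{\lor}} E^{\lor} = 1$ and $\dim_K E = 4$, all $3$-dimensional quotients $E/[u]$ with $u \neq 0$ are isometrically isomorphic to each other. Hence the isometry class of $E/[u]$ does not depend on the choice of non-zero $u$, and therefore its type in $\{\mathrm{\Rnum{1}}_3,\dots,\mathrm{\Rnum{5}}_3\}$ is an invariant of $E$. Since every $3$-dimensional normed space is of exactly one of these five types, $E$ is of exactly one of the types $\mathrm{\Rnum{8}}_4,\dots,\mathrm{\Rnum{12}}_4$.

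For the second half, the key observation is the canonical correspondence between $3$-dimensional subspaces of $E'$ and $1$-dimensional subspaces of $E$. Concretely, if $F \subseteq E'$ has $\dim_K F = 3$, then its annihilator $F_{\perp} := \{x \in E : f(x)=0 \text{ for all } f \in F\}$ has dimension $4 - 3 = 1$, so $F_{\perp} = [u]$ for some non-zero $u \in E$; under the canonical identification discussed in the preliminaries we then have $F \cong (E/[u])'$. Thus each $3$-dimensional subspace of $E'$ is isometrically isomorphic to the dual of the $3$-dimensional quotient $E/[u]$, which by the first half has a well-defined type.

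It then suffices to recall, for each type of $3$-dimensional normed space, what the type of its dual is. By the type-$\mathrm{\Rnum{1}}_3$ theorem the dual of a type-$\mathrm{\Rnum{1}}_3$ space is of type $\mathrm{\Rnum{1}}_3$; by Theorem \ref{.4thm7} the dual of a type-$\mathrm{\Rnum{2}}_3$ space is of type $\mathrm{\Rnum{2}}_3$; by Corollary \ref{.4cor1} types $\mathrm{\Rnum{3}}_3$ and $\mathrm{\Rnum{4}}_3$ are interchanged under duality; and by Proposition \ref{.4prop3} the dual of a type-$\mathrm{\Rnum{5}}_3$ space is of type $\mathrm{\Rnum{5}}_3$. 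Applying these to $E/[u]$ in the five cases $\mathrm{\Rnum{8}}_4,\dots,\mathrm{\Rnum{12}}_4$ yields the stated correspondence $\mathrm{\Rnum{1}}_3,\mathrm{\Rnum{2}}_3,\mathrm{\Rnum{4}}_3,\mathrm{\Rnum{3}}_3,\mathrm{\Rnum{5}}_3$ for the types of the $3$-dimensional subspaces of $E'$. There is no real obstacle here; the only point requiring care is to make sure that the bijection $F \leftrightarrow F_{\perp}$ really hits every $3$-dimensional subspace of $E'$ (so that the conclusion is about \emph{all} such subspaces and not merely some of them), which is immediate from the fact that $E$ is finite-dimensional and $(F_{\perp})^{\perp}=F$.
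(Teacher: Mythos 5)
Your proposal is correct and is essentially the argument the paper intends: the paper derives Theorem \ref{thm10} directly from Corollary \ref{syscor1}, and your expansion (well-definedness of the type of $E/[u]$, the identification $F \cong (E/F_{\perp})'$ for $3$-dimensional $F \subseteq E'$, and the $3$-dimensional duality results $\mathrm{\Rnum{1}}_3 \mapsto \mathrm{\Rnum{1}}_3$, $\mathrm{\Rnum{2}}_3 \mapsto \mathrm{\Rnum{2}}_3$, $\mathrm{\Rnum{3}}_3 \leftrightarrow \mathrm{\Rnum{4}}_3$, $\mathrm{\Rnum{5}}_3 \mapsto \mathrm{\Rnum{5}}_3$) fills in exactly the details the paper leaves implicit.
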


Since the dual space of an indecomposable normed space is also indecomposable, we have the following.

\begin{thm}
  If $E$ is a $4$-dimensional normed space of type $\mathrm{\Rnum{9}}_4$ (resp. $\mathrm{\Rnum{10}}_4, \mathrm{\Rnum{11}}_4$), then $E'$ is of type $\mathrm{\Rnum{14}}_4$ (resp. $\mathrm{\Rnum{10}}_4, \mathrm{\Rnum{16}}_4$).
\end{thm}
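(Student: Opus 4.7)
The plan is to identify $E'$ among the seventeen types by computing, in turn, its indecomposability, its $K^{\lor}$-dimension, and (where needed) the types of its $3$-dimensional subspaces and the $K^{\lor}$-dimension of its $2$-dimensional quotients.

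First, since $\dim_{K^{\lor}} E^{\lor} = 1 < 4 = \dim_K E$, any nontrivial direct-sum decomposition of $E$ would force $\dim_{K^{\lor}} E^{\lor} \ge 2$, so $E$ is indecomposable, and hence so is $E'$. Next, Theorem \ref{thm10} tells us that every $3$-dim subspace of $E'$ is of type $\mathrm{\Rnum{2}}_3$ (resp.\ $\mathrm{\Rnum{4}}_3$, $\mathrm{\Rnum{3}}_3$) in the three respective cases, with $\dim_{K^{\lor}}$ equal to $2,1,2$. Any orthogonal set of size $k$ in $E'$ generates a $k$-dim subspace of $K^{\lor}$-dimension $k$, which embeds in some $3$-dim subspace of $E'$; this lets me read off $\dim_{K^{\lor}} (E')^{\lor} = 2, 1, 2$ in the three cases.

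In cases $\mathrm{\Rnum{9}}_4$ and $\mathrm{\Rnum{11}}_4$, $E'$ is indecomposable with $\dim_{K^{\lor}} (E')^{\lor} = 2$ and all $3$-dim subspaces of type $\mathrm{\Rnum{2}}_3$ (resp.\ $\mathrm{\Rnum{3}}_3$). For $\mathrm{\Rnum{9}}_4$, this is exactly the definition of type $\mathrm{\Rnum{14}}_4$. For $\mathrm{\Rnum{11}}_4$, I additionally verify that $\dim_{K^{\lor}} (E'/F)^{\lor} = 1$ for each $2$-dim $F \subseteq E'$. Via $(E'/F)' \cong F^{\perp}$, the question reduces to noting that $F^{\perp} \subseteq E$ is a $2$-dim subspace, so $\dim_{K^{\lor}} (F^{\perp})^{\lor} \le \dim_{K^{\lor}} E^{\lor} = 1$, hence $= 1$; Theorem \ref{thmh10} then ensures that duality preserves this $K^{\lor}$-dimension in dimension $2$, so $\dim_{K^{\lor}} (E'/F)^{\lor} = 1$, matching type $\mathrm{\Rnum{16}}_4$.

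Case $\mathrm{\Rnum{10}}_4$ is the main obstacle. Here $E'$ is indecomposable with $\dim_{K^{\lor}} (E')^{\lor} = 1$, hence falls in one of the types $\mathrm{\Rnum{8}}_4$--$\mathrm{\Rnum{12}}_4$, and the exact type is determined by the $3$-dim quotients $E'/[f]$. Via $(E'/[f])' \cong [f]^{\perp}$, if I can show that every $3$-dim subspace of $E$ is of type $\mathrm{\Rnum{4}}_3$, then Corollary \ref{.4cor1} yields $E'/[f]$ of type $\mathrm{\Rnum{3}}_3$ for all $f \neq 0$, i.e., $E'$ is of type $\mathrm{\Rnum{10}}_4$. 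Since $\dim_{K^{\lor}} E^{\lor} = 1$, any $3$-dim subspace $F \subseteq E$ satisfies $\dim_{K^{\lor}} F^{\lor} = 1$, so $F$ is of type $\mathrm{\Rnum{4}}_3$ or $\mathrm{\Rnum{5}}_3$. The crux is to rule out $\mathrm{\Rnum{5}}_3$: if $F$ were hyper-symmetric, pick $u \in F \setminus \{0\}$; the map $F/[u] \to E/[u]$ is an isometric embedding because both quotient norms equal $\inf_{\lambda \in K} \|x - \lambda u\|$, exhibiting a $2$-dim subspace of $E/[u]$ with $\dim_{K^{\lor}} = 1$. But $E/[u]$ is of type $\mathrm{\Rnum{3}}_3$, and Theorem \ref{.4thm2}(3) forces every $2$-dim subspace to be isometric to $(K^2, |\cdot|_{t_1} \times |\cdot|_{t_2})$ with $\dim_{K^{\lor}} = 2$, a contradiction that completes the proof.
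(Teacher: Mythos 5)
Your proof is correct. For the cases $\mathrm{\Rnum{9}}_4$ and $\mathrm{\Rnum{11}}_4$ you follow essentially the paper's route: Theorem \ref{thm10} gives the types of the $3$-dimensional subspaces of $E'$, indecomposability passes to the dual, $\dim_{K^{\lor}}(E')^{\lor}$ is read off from those subspaces, and for $\mathrm{\Rnum{11}}_4$ the extra condition $\dim_{K^{\lor}}(E'/F)^{\lor}=1$ is checked through $(E'/F)' \cong F_{\perp} \subseteq E$ exactly as in the paper (the paper compresses this to the one line $\dim_{K^{\lor}}(E'')^{\lor}=\dim_{K^{\lor}}E^{\lor}=1$). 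Where you genuinely diverge is the case $\mathrm{\Rnum{10}}_4$. The paper argues top-down: since $\dim_{K^{\lor}}(E')^{\lor}=1$, Theorem \ref{thm10} applied to $E'$ (whose bidual is $E$, with all $3$-dimensional subspaces of type $\mathrm{\Rnum{4}}_3$ or $\mathrm{\Rnum{5}}_3$) narrows $E'$ to type $\mathrm{\Rnum{10}}_4$ or $\mathrm{\Rnum{12}}_4$, and then Proposition \ref{prop1} (self-duality of the hyper-symmetric type) excludes $\mathrm{\Rnum{12}}_4$. You instead work bottom-up: you show directly that every $3$-dimensional subspace $F$ of $E$ is of type $\mathrm{\Rnum{4}}_3$, by the neat observation that $F/[u]$ embeds isometrically into $E/[u]$, which is of type $\mathrm{\Rnum{3}}_3$ and hence by Theorem \ref{.4thm2}(3) admits only orthogonalizable $2$-dimensional subspaces; then Corollary \ref{.4cor1} and reflexivity give $E'/[f]\cong(\ker f)'$ of type $\mathrm{\Rnum{3}}_3$ for every $f\neq 0$. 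Your route buys independence from the hyper-symmetric machinery (Theorem \ref{systhm8} and Proposition \ref{prop1}) and yields the slightly stronger intermediate fact that a space of type $\mathrm{\Rnum{10}}_4$ has all $3$-dimensional subspaces of type $\mathrm{\Rnum{4}}_3$; the paper's route is shorter because it leans on results already established for hyper-symmetric spaces.
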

\begin{proof}
  If $E$ is of type $\mathrm{\Rnum{9}}_4$ (resp. $\mathrm{\Rnum{10}}_4, \mathrm{\Rnum{11}}_4$), then by the preceding theorem, $E'$ is an indecomposable normed space whose $3$-dimensional subspaces are all of type $\mathrm{\Rnum{2}}_3$ (resp. $\mathrm{\Rnum{4}}_3, \mathrm{\Rnum{3}}_3$). \par
  First, suppose that $E$ is of type $\mathrm{\Rnum{9}}_4$. Then we have $\dim_{K^{\lor}} (E')^{\lor} = 2$ and $E'$ is of type $\mathrm{\Rnum{14}}_4$. \par
  Secondly, suppose that $E$ is of type $\mathrm{\Rnum{11}}_4$. Then we have $\dim_{K^{\lor}} (E')^{\lor} = 2$. Let $F$ be a $2$-dimensional subspace of $E'$. Since $\dim_{K^{\lor}} (E'')^{\lor} = \dim_{K^{\lor}} E^{\lor} =  1$, we obtain $\dim_{K^{\lor}} (E'/F)^{\lor} = 1$. Thus, $E'$ is of type $\mathrm{\Rnum{16}}_4$. \par
  Finally, suppose that $E$ is of type $\mathrm{\Rnum{10}}_4$. Then $\dim_{K^{\lor}} (E')^{\lor} = 1$. Then by the preceding theorem, $E'$ must be either of type $\mathrm{\Rnum{10}}_4$ or $\mathrm{\Rnum{12}}_4$. Moreover, by Proposition \ref{prop1}, $E'$ must be of type $\mathrm{\Rnum{10}}_4$, which completes the proof.
\end{proof}

\begin{thm}
  Let $x, y, z \in K^{\lor} \setminus K$ and $t \in \mathbb{R}_{> 0}$. Set $(E, \|\cdot\|) := ([1, x, y, z], |\cdot|_t)$ and let $\pi : E \to E / K$ be the canonical quotient map. Then $E$ is of type $\mathrm{\Rnum{9}}_4$ (resp. $\mathrm{\Rnum{10}}_4, \mathrm{\Rnum{11}}_4$) if and only if $[\pi(x), \pi(y), \pi(z)]$ is of type $\mathrm{\Rnum{2}}_3$ (resp. $\mathrm{\Rnum{3}}_3, \mathrm{\Rnum{4}}_3$). Moreover, we have the following: \\
  $(1)$ Suppose that $E$ is of type $\mathrm{\Rnum{9}}_4$. Then there exists $x', y' \in [1, x, y, z] \setminus K$ such that $[1,x',y']$ is of type $\mathrm{\Rnum{5}}_3$. Furthermore, $E$ satisfies $\mathrm{(SE)}$ if and only if $t, t d(x', K) \notin V_K$. \\
  $(2)$ Suppose that $E$ is of type $\mathrm{\Rnum{10}}_4$. Then there exists $x', y' \in [1, x, y] \setminus K$ for which $x' \nsim y'$. Furthermore, $E$ satisfies $\mathrm{(SE)}$ if and only if $t, td(x',K), td(y',K) \notin V_K$. \\
  $(3)$ Suppose that $E$ is of type $\mathrm{\Rnum{11}}_4$. Then $E$ satisfies $\mathrm{(SE)}$ if and only if $t, td(x,K) \notin V_K$.
\end{thm}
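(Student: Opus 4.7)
The plan is to reduce each assertion to the corresponding result for $3$-dimensional normed spaces, with $E/K = [\pi(x),\pi(y),\pi(z)]$ playing the role of the quotient. For the first assertion, Corollary \ref{syscor1} guarantees that all quotients $E/[u]$, $u \neq 0$, are mutually isometric, so taking $u = 1$ identifies their common isometry class with $E/K$; the characterizations of types $\mathrm{\Rnum{9}}_4$, $\mathrm{\Rnum{10}}_4$, $\mathrm{\Rnum{11}}_4$ then follow immediately from their definitions.

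For the existence claims in (1) and (2) I would lift structure from $E/K$ back into $E$. In case (1), Theorem \ref{.4thm7} provides a decomposition $E/K \cong (K, |\cdot|_{s_1}) \oplus ([1, a], |\cdot|_{s_2})$; lifting the basis of the $2$-dimensional summand to $x', y' \in E \setminus K$, the subspace $F := [1, x', y']$ inherits $\dim_{K^{\lor}} F^{\lor} = 1$ from $E$, while $F/K$ is isometric to $([1, a], |\cdot|_{s_2})$, which has $\dim_{K^{\lor}} = 1$ and hence no orthogonal base. Applying Corollary \ref{syscor1} inside $F$ then extends this to every quotient $F/[u]$, $u \neq 0$, so $F$ is of type $\mathrm{\Rnum{5}}_3$. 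In case (2), Theorem \ref{.4thm2}(3) shows that the type $\mathrm{\Rnum{3}}_3$ space $E/K$ has a $2$-dimensional subspace with $\dim_{K^{\lor}} = 2$; lifting an orthogonal basis of such a subspace to $x', y' \in E$ and applying Theorem \ref{thmh4} converts the orthogonality of $\{\pi(x'), \pi(y')\}$ in $K^{\lor}/K$ into the relation $x' \nsim y'$.

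For the (SE) characterizations I would apply Corollary \ref{syscor3}(1): $E$ satisfies (SE) if and only if $\|E\| \cap V_K = \emptyset$ and $E/K$ satisfies (SE). The first condition reduces to $t \notin V_K$, since $E \subseteq (K^{\lor}, |\cdot|_t)$, $V_{K^{\lor}} = V_K$, and $1 \in E$. The quotient norm on $E/K$ is $\|\pi(u)\| = t\, d(u, K)$, so in case (1) the scale $s_2$ of the non-trivial summand of $E/K$ equals $\|\pi(x')\| = t\, d(x', K)$ modulo $V_K$, and Theorem \ref{.4thm4} gives the condition $t\, d(x', K) \notin V_K$; in case (2) the two parameters of the type $\mathrm{\Rnum{3}}_3$ realization of $E/K$ are $t\, d(x', K)$ and $t\, d(y', K)$, and Theorem \ref{.4thm2}(2) gives the condition that both lie outside $V_K$; in case (3) $E/K$ is of type $\mathrm{\Rnum{4}}_3$, and by Theorem \ref{.4thm3}(1) is isometric to $([1, z', w'], |\cdot|_{t'})$, where choosing the isometry that sends $\pi(x)$ to $1$ forces $t' = t\, d(x, K)$, so Theorem \ref{.4thm3}(2) yields $t\, d(x, K) \notin V_K$. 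Combining each with $t \notin V_K$ gives the stated (SE) conditions.

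The main obstacle will be the norm bookkeeping in the third paragraph: matching the parameters $s_1, s_2$ from the $3$-dimensional structure theorems to the quantities $t\, d(x', K)$ and $t\, d(y', K)$ modulo $V_K$, and verifying that the resulting (SE) conditions depend on the choice of $x', y'$ and on the decomposition of $E/K$ only through cosets modulo $V_K$, so that they are genuine invariants of $E$.
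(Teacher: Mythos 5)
Your proposal matches the paper's proof essentially step for step: Corollary \ref{syscor1} for the type correspondence via $E/K$, Corollary \ref{syscor3} to reduce (SE) to $t \notin V_K$ together with (SE) for $E/K$, and the $3$-dimensional results (Theorems \ref{.4thm4}, \ref{.4thm2}, \ref{.4thm3}) with the identification $\|\pi(u)\| = t\,d(u,K)$ to pin down the norm parameters. The only detail to tighten is in case $(2)$: the paper applies Theorem \ref{.4thm2} to the specific subspace $[\pi(x),\pi(y)]$ so that the orthogonal lifts $x',y'$ land in $[1,x,y]$ as the statement requires, whereas your ``some $2$-dimensional subspace'' should be specialized to this one.
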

\begin{proof}
  By Corollary \ref{syscor1}, $E$ is of type $\mathrm{\Rnum{9}}_4$ (resp. $\mathrm{\Rnum{10}}_4, \mathrm{\Rnum{11}}_4$) if and only if $[\pi(x), \pi(y), \pi(z)]$ is of type $\mathrm{\Rnum{2}}_3$ (resp. $\mathrm{\Rnum{3}}_3, \mathrm{\Rnum{4}}_3$). Now, we shall prove $(1), (2)$ and $(3)$. By Corollary \ref{syscor3}, $E$ satisfies (SE) if and only if $t \notin V_K$ and a $3$-dimensional quotient space $[\pi(x),\pi(y), \pi(z)]$ of $E$ satisfies (SE). Hence by Theorem \ref{.4thm3}, $(3)$ immediately follows. It remains to prove $(1)$ and $(2)$. \par
  First, suppose that $E$ is of type $\mathrm{\Rnum{10}}_4$. Then by Theorem \ref{.4thm2}, $[\pi(x), \pi(y)]$ has an orthogonal base. Therefore, there exist $x', y' \in [1, x, y] \setminus K$ for which $x' \nsim y'$. Since $\|[\pi(x),\pi(y), \pi(z)]\| = t d(x', K) \cup t d(y', K)$, by Theorem \ref{.4thm2}, $[\pi(x),\pi(y), \pi(z)]$ satisfies (SE) if and only if $t, td(x',K), td(y',K) \notin V_K$. Thus, $(2)$ is proved. \\
  Finally, suppose that $E$ is of type $\mathrm{\Rnum{9}}_4$. Since $E/K$ contains a $2$-dimensional subspace without an orthogonal base, there exists $x', y' \in [1, x, y, z] \setminus K$ such that $[1,x',y']$ is of type $\mathrm{\Rnum{5}}_3$. Now, we fix $x', y' \in [1, x, y, z] \setminus K$ satisfying that $[1,x',y']$ is of type $\mathrm{\Rnum{5}}_3$. Then there exists an isometry $T$ from $[\pi(x), \pi(y), \pi(z)]$ to a $3$-dimensional normed space
  \begin{align*}
    (K,|\cdot|_{s}) \oplus ([1,w],|\cdot|_{td(x',K)}) \ \text{where $w \in K^{\lor} \setminus K$ and $s \in \mathbb{R}_{> 0}$}.
  \end{align*}
 for which $T(\pi(x')) = (0,1)$ and $T(\pi(y')) = (0,w)$. By Theorem \ref{.4thm4}, $[\pi(x), \pi(y), \pi(z)]$ satisfies (SE) if and only if $td(x',K) \notin V_K$. Thus, we obtain $(1)$.
\end{proof}

\subsection{Type $\mathrm{\protect \Rnum{14}}_4, \mathrm{\protect \Rnum{15}}_4$} 

Let $E$ be a $4$-dimensional normed space with $\dim_{K^{\lor}} E^{\lor} = 2$ that contains a $3$-dimensional normed space, isometrically isomorphic to 
\begin{align*}
  ([1,x],|\cdot|_t) \oplus (K,|\cdot|_s) \ \text{where $x \in K^{\lor} \setminus K$ and $t, s \in \mathbb{R}_{> 0}$}.
\end{align*}
Then there exist $y, z \in K^{\lor} \setminus K$ for which
\begin{align*}
  E \cong ([(1,0), (0,1), (x,0), (y,z)],|\cdot|_{t} \times |\cdot|_{s}).
\end{align*}
We will study $4$-dimensional normed spaces of the above form.

\begin{lem} \label{lem7}
  Let $x, y, z \in K^{\lor} \setminus K$ and $t, s \in \mathbb{R}_{> 0}$. We set 
  \begin{align*}
    (E, \|\cdot\|) := ([(1,0), (0,1), (x,0), (y,z)],|\cdot|_{t} \times |\cdot|_{s}),
  \end{align*}
  and put $\gamma = d(z,K)$ and $\delta = d(y, [1,x])$. \\
  $(1)$ If $t \delta < s \gamma$, then $E$ is isometrically isomorphic to
  \begin{align*}
    ([1,x], |\cdot|_t) \oplus ([1,z], |\cdot|_s)
  \end{align*}
  either of type $\mathrm{\Rnum{6}}_4$ or $\mathrm{\Rnum{7}}_4$. \\
  $(2)$ If $t \delta > s \gamma$, then $E$ is isometrically isomorphic to
  \begin{align*}
    ([1, x, y], |\cdot|_t) \oplus (K, |\cdot|_s),
  \end{align*}
  either of type $\mathrm{\Rnum{4}}_4$ or $\mathrm{\Rnum{5}}_4$. 
\end{lem}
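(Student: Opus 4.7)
The plan is to construct, in each case, an explicit orthogonal direct sum decomposition of $E$ by absorbing the ``small'' coordinate of one generator into the others, while keeping the overall span unchanged.

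In case $(1)$, since $\delta = d(y,[1,x])$ and $t\delta < s\gamma$, I would choose $\lambda,\mu \in K$ so that $y' := y - \mu x - \lambda$ satisfies $|y'|_t < s\gamma$, and replace the generator $(y,z)$ by $(y',z) = (y,z) - \mu(x,0) - \lambda(1,0) \in E$. The goal is to show
\begin{align*}
  E = [(1,0),(x,0)] \oplus [(0,1),(y',z)].
\end{align*}
The first summand is isometric to $([1,x],|\cdot|_t)$ by projecting onto the first coordinate. For the second, a general element has the form $(dy',c+dz)$, and for $d \neq 0$ the strict inequality $|dy'|_t = |d||y'|_t < |d|s\gamma \le |c+dz|_s$ forces the norm to equal $|c+dz|_s$, giving an isometry onto $([1,z],|\cdot|_s)$ via $(dy',c+dz) \mapsto c+dz$. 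Orthogonality of the two summands would then reduce to an ultrametric case analysis on whether $|a+bx|_t$ equals $|dy'|_t$; the only delicate case is when they coincide, where the second-coordinate term will be shown to strictly dominate both.

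In case $(2)$, since $\gamma = d(z,K) < t\delta/s$, I would choose $k \in K$ so that $z' := z - k$ satisfies $s|z'| < t\delta$, and replace $(y,z)$ by $(y,z') = (y,z) - k(0,1) \in E$. The goal is to show
\begin{align*}
  E = [(1,0),(x,0),(y,z')] \oplus [(0,1)].
\end{align*}
For the $3$-dimensional summand, an element is $(a+bx+cy,cz')$, and for $c \neq 0$ the inequality $|a+bx+cy|_t \ge t|c|\delta > s|c||z'| = |cz'|_s$ ensures that the norm is realised in the first coordinate, yielding an isometry onto $([1,x,y],|\cdot|_t)$ by projection. Orthogonality with $[(0,1)]$ would then follow by a similar short ultrametric check, again using the strict first/second coordinate gap.

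The type assertion is then automatic from the classification and the fact that any subspace of $K^{\lor}$ automatically has $\dim_{K^{\lor}}(\cdot)^{\lor} = 1$. In case $(1)$, $E$ is a direct sum of two $2$-dimensional spaces each with $\dim_{K^{\lor}}(\cdot)^{\lor} = 1$, so $\dim_{K^{\lor}} E^{\lor} = 2$ and $E$ is decomposable; by the type definitions, $E$ must be of type $\mathrm{\Rnum{6}}_4$ or $\mathrm{\Rnum{7}}_4$. In case $(2)$, the $3$-dimensional summand $([1,x,y],|\cdot|_t)$ lies in $K^{\lor}$ hence is of type $\mathrm{\Rnum{4}}_3$ or $\mathrm{\Rnum{5}}_3$, which forces $E$ to be of type $\mathrm{\Rnum{4}}_4$ or $\mathrm{\Rnum{5}}_4$ respectively.

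The main obstacle I expect is the orthogonality verification for the direct sum: the ultrametric inequality can fail to be an equality when two sub-norms coincide and cancellation occurs, and the argument must handle this degenerate case by showing a third sub-norm dominates. This is exactly where the quantitative gap $t\delta < s\gamma$ (respectively $t\delta > s\gamma$) is essential, rather than just the separation of $y$ from $[1,x]$ and $z$ from $K$.
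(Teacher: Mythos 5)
Your proposal is correct and takes essentially the same route as the paper: in case $(1)$ the paper likewise replaces $(y,z)$ by $(y-ax-b,z)$ with $|y-ax-b|_t < s\gamma$ and exhibits the orthogonal decomposition $E = [(1,0),(x,0)] \oplus [(y-ax-b,z),(0,1)]$, and in case $(2)$ it replaces $(y,z)$ by $(y,z-c)$ with $|z-c|_s < t\delta$ and splits off $[(0,1)]$, verifying orthogonality via the same strict coordinate gap. The type assertions then follow, as you indicate, from the earlier classification of decomposable spaces with $\dim_{K^{\lor}}E^{\lor}=2$ (Lemmas \ref{lem4}--\ref{lem6} and the characterizations of types $\mathrm{\Rnum{4}}_4$--$\mathrm{\Rnum{7}}_4$).
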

\begin{proof}
  First, suppose $t \delta < s \gamma$. Then there exist $a, b \in K$ for which $|y - ax - b|_t < s \gamma$. Then for each $\lambda, \mu \in K$, we have
  \begin{align*}
    d_E (\lambda (y - ax - b,z) + \mu (0,1), [(1,0),(x,0)]) &= |\lambda z + \mu|_s \\
    &= \|\lambda (y - ax - b,z) + \mu (0,1)\|.
  \end{align*}
  Therefore, we obtain $E \cong ([1,x], |\cdot|_t) \oplus ([1,z], |\cdot|_s)$. \par
  Secondly, suppose $t \delta > s \gamma$. Then there exists $c \in K$ for which $|z - c|_s < t \delta$. Then for each $\lambda, \mu, \eta \in K$, we have
  \begin{align*}
    &d_E (\lambda (y, z -c) + \mu (x,0) + \eta (1,0), [(0,1)]) \\
     = \quad &|\lambda y + \mu x + \eta|_t = \|\lambda (y, z-a) + \mu (x,0) + \eta (1,0)\|.
  \end{align*}
  Therefore, we have $E \cong ([1, x, y], |\cdot|_t) \oplus (K, |\cdot|_s)$, which completes the proof.
\end{proof}

From the lemma above, if $E$ is indecomposable, then $t \delta = s \gamma$. However, the converse is not true.

\begin{lem} \label{lem8}
  Let us keep the notation in Lemma \ref{lem7}. Suppose $t \delta = s \gamma$ and that $([1, x, y], |\cdot|_t)$ is of type $\mathrm{\Rnum{5}}_3$ and of subtype $(x,w,t)$. If $w \sim z$, then $E$ is isometrically isomorphic to
  \begin{align*}
    ([1, x, y], |\cdot|_t) \oplus (K,|\cdot|_s). 
  \end{align*}
\end{lem}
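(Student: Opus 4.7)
I would build an isometric isomorphism $\Psi:([1,x,y],|\cdot|_t)\oplus(K,|\cdot|_s)\to E$ of the form $\Psi(u,v)=(u,\sigma(u)+v)$, where $\sigma:[1,x,y]\to K^{\lor}$ is a carefully chosen $K$-linear map. Writing $u=A+Bx+Cy$, the image $\Psi(u,v)$ lies in $E$ exactly when $\sigma(u)-Cz\in K$ for all $A,B,C\in K$. A short ultrametric calculation then reduces the isometry condition to the single bound $|\sigma(u)|_s\le|u|_t$ for all $u\in[1,x,y]$: granting this, $|\sigma(u)+v|_s\le\max(|\sigma(u)|_s,|v|_s)\le\max(|u|_t,|v|_s)$, and conversely $|v|_s\le\max(|\sigma(u)+v|_s,|\sigma(u)|_s)\le\max(|\sigma(u)+v|_s,|u|_t)$, so $\|\Psi(u,v)\|_E=\max(|u|_t,|v|_s)$.

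To build $\sigma$, I would identify the quotient $[1,x,y]/K$ with $([1,z],|\cdot|_s)$. By the subtype hypothesis $(x,w,t)$ there is an isometry $T:[1,x,y]/K\to([1,w],|\cdot|_{tr})$ with $r=d(x,K)$, and one reads off $T(\pi(y))=a_T+b_Tw$ with $|b_T|=\delta/(r\,r_w)$, where $r_w=d(w,K)$. The equivalence $w\sim z$ gives $\lambda_0,\mu_0\in K$ with $z':=\lambda_0w+\mu_0$ satisfying $|z'-z|<\gamma$ and $|\lambda_0|=\gamma/r_w$ by Proposition~\ref{prph5}. Combined with the hypothesis $t\delta=s\gamma$, one computes $tr/s=r\gamma/\delta=|\lambda_0|/|b_T|\in V_K$, so Theorem~\ref{thmh3} produces an isometry $R:([1,z'],|\cdot|_{tr})\to([1,z],|\cdot|_s)$, $\lambda+\mu z'\mapsto a(\lambda+\mu z)$, with $|a|=tr/s$. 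Since $[1,w]=[1,z']$ as $K$-subspaces of $K^{\lor}$, the composition $S:=R\circ T:[1,x,y]/K\to([1,z],|\cdot|_s)$ is an isometry.

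I would then normalize $S$ and define $\sigma:=(S/b^*)\circ\pi$. Tracing coefficients through $R$, $S(\pi(y))=a^*+b^*z$ with $|b^*|=|a|\,|b_T|/|\lambda_0|=t\delta/(s\gamma)=1$, the last equality being the hypothesis, so $S/b^*$ is still an isometry (as $|1/b^*|=1$) sending $\pi(y)$ to $c+z$ for some $c\in K$; choosing $T$ in advance so that $T(\pi(x))\in K$ further forces $(S/b^*)(\pi(x))=e_1\in K$. Then $\sigma(A+Bx+Cy)=Be_1+Cc+Cz\in Cz+K$ has the required form, and the isometry property of $S/b^*$ yields $|\sigma(u)|_s=\|\pi(u)\|_{[1,x,y]/K}=t\,d(u,K)\le t|u|=|u|_t$. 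Hence $\Psi$ is the desired isometric isomorphism. The principal technical point is the double normalization (arranging $T(\pi(x))\in K$ and then exploiting $|b^*|=1$), both of which rest on the delicate interplay between the subtype $(x,w,t)$, the equivalence $w\sim z$, and the hypothesis $t\delta=s\gamma$; once these are in place, the remainder is routine ultrametric bookkeeping.
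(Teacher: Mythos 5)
Your proposal is correct and is essentially the paper's own argument in different packaging: both proofs use the subtype hypothesis to produce an isometry of $[1,x,y]/K$ onto a line $[1,w']\subseteq K^{\lor}$ sending $\pi(x)$ into $K$, use $w\sim z$ together with $t\delta=s\gamma$ to transport it onto $([1,z],|\cdot|_s)$, and then realize $([1,x,y],|\cdot|_t)$ inside $E$ as the graph of the resulting map $\sigma$ (the paper instead renormalizes the presentation of $E$, replacing $z$ by $aw'+b$ and rescaling the second norm to $t\,d(x,K)$, which amounts to the same normalization you perform on $S$). The decisive inequality $|\sigma(u)|_s=t\,d(u,K)\le|u|_t$, which makes the graph an orthogonal complement of $[(0,1)]$, is identical in both.
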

\begin{proof}
  Let $T$ be a linear isometry from $[(1,0), (x,0), (y,0)] / [(1,0)]$ to $([1, w], |\cdot|_{t \alpha})$ such that $T(\pi((x,0))) = 1$ where $\alpha = d (x, K)$ and $\pi : E \to E / [(1,0)]$ is the canonical quotient map. Put $w' = T(\pi((y,0)))$. Since $T$ is an isometry, we have
  \begin{align*}
    t \alpha \cdot d(w',K) = d(\pi((y,0)), [\pi((x,0))]) = t d(y,[1,x]) = t \delta.
  \end{align*}
  Hence, $d(w', K) = \delta / \alpha$. Moreover, by assumption, we have $z \sim w'$. Thus, there exist $a, b \in K$ with $|a| = \gamma \alpha / \delta$ such that $|z - a w' - b| < d(z,K)$. Then it is easy to see
  \begin{align*}
    E &\cong ([(1,0), (0,1), (x,0), (y,a w')],|\cdot|_{t} \times |\cdot|_{s}) \\
    &\cong ([(1,0), (0,1), (x,0), (y,w')],|\cdot|_{t} \times |\cdot|_{\frac{s \gamma \alpha}{\delta}}) \\
    &= ([(1,0), (0,1), (x,0), (y,w')],|\cdot|_{t} \times |\cdot|_{t \alpha}) 
  \end{align*}
  where the last equality follows from $t \delta = s \gamma$. Now, for each $\lambda, \mu, \eta \in K$, we have
  \begin{align*}
    |\lambda y + \mu x + \eta|_t \ge \|\lambda \pi((y,0)) + \mu \pi((x,0))\| = |\lambda w' + \mu|_{t \alpha},
  \end{align*}
  and hence,
  \begin{align*}
   |\lambda y + \mu x + \eta|_t &= \|\lambda (y, w') + \mu (x,1) + \eta (1,0)\| \\ 
   &\ge d(\lambda (y, w') + \mu (x,1) + \eta (1,0),[(0,1)]).
  \end{align*}
  Therefore, we obtain
  \begin{align*}
    E \cong ([1, x, y],|\cdot|_t) \oplus (K, |\cdot|_{t \alpha}).
  \end{align*}
  Finally, since $z \sim w'$, we have $d(z,K)/d(w',K) = t \alpha / s \in V_K$, which completes the proof.
\end{proof}

\begin{lem} \label{lem11}
  Let us keep the notation in Lemma \ref{lem7}. Suppose $t \delta = s \gamma$ and that $([1, x, y], |\cdot|_t)$ is of type $\mathrm{\Rnum{4}}_3$. If there exists $w \in [1, x, y] \setminus K$ such that $w \nsim x$ and $w \sim z$. Then $E$ is isometrically isomorphic to
  \begin{align*}
    ([1,x],|\cdot|_t) \oplus ([1, w], |\cdot|_t). 
  \end{align*}
\end{lem}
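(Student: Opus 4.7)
The plan is to exhibit an explicit direct sum decomposition $E = F_1 \oplus F_2$ inside the ambient $((K^{\lor})^2, |\cdot|_t \times |\cdot|_s)$, with $F_1 := [(1,0),(x,0)] \cong ([1,x],|\cdot|_t)$ and $F_2$ a cleverly chosen 2-dimensional subspace, and then identify $F_2 \cong ([1,z],|\cdot|_s) \cong ([1,w],|\cdot|_t)$ via Theorem \ref{thmh3}. The entire argument hinges on locating an element $p_0 \in K$ whose absolute value is exactly $\delta/\gamma$, manufactured out of the two hypotheses $w \sim z$ and $w = ay + bx + c$.

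First I would unpack the hypotheses. Since $w \in [1,x,y]\setminus K$ and $w \nsim x$, Theorem \ref{thmh4} says $\{\pi(x),\pi(w)\}$ is orthogonal in $K^{\lor}/K$, so $d(w,[1,x]) = \|\pi(w)\| = \rho$; writing $w = ay+bx+c$ we must have $a \neq 0$ (else $w \in [1,x]$ forces $w \sim x$), and $d(w,[1,x]) = |a|\delta$ yields $|a| = \rho/\delta$. From $w \sim z$ and Proposition \ref{prph5}, pick $\xi, \eta \in K$ with $|\xi| = \gamma/\rho$ and $|\xi w + \eta - z| < \gamma$. Combining these two gives $d(\xi a y - z, [1,x]) < \gamma$, and after dividing by $\xi a$ one obtains $d(y - p_0 z, [1,x]) < \delta$, where $p_0 := 1/(\xi a) \in K$ satisfies $|p_0| = \delta/\gamma$. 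In particular $\delta/\gamma \in V_K$, so $s/t = \delta/\gamma \in V_K$, which is exactly the ratio needed later. By replacing $z$ with $z + k$ for $k \in K$ of sufficiently large norm — which does not alter $E$ since $(0,1) \in E$ — I may assume $|z| > \gamma$ strictly. I then pick $q \in [1,x]$ with $\epsilon' := y + q - p_0 z$ of norm $|\epsilon'| < \delta = |p_0|\gamma$, and define $u_1 := (p_0, 1) \in E$ and $u_2 := (y+q, z) \in E$, $F_2 := [u_1, u_2]$.

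The remaining verifications are routine ultrametric computations. That $E = F_1 \oplus F_2$ as $K$-vector spaces is immediate since $p_0 \in K$ and $q \in [1,x]$ recover the original generators, while $F_1 \cap F_2 = 0$ because any element of $F_2$ has second coordinate $\alpha + \beta z$, which vanishes only for $\alpha = \beta = 0$. Writing $\alpha u_1 + \beta u_2 = (p_0(\alpha + \beta z) + \beta\epsilon',\, \alpha + \beta z)$ and noting that
\begin{align*}
|\beta \epsilon'| < |\beta|\delta \leq |p_0|\cdot|\beta|\gamma \leq |p_0(\alpha + \beta z)|,
\end{align*}
the first coordinate has norm exactly $|p_0||\alpha + \beta z| = (\delta/\gamma)|\alpha+\beta z|$, so $\|\alpha u_1 + \beta u_2\|_E = s|\alpha + \beta z|$; this gives $F_2 \cong ([1,z],|\cdot|_s)$ via $u_1 \mapsto 1,\, u_2 \mapsto z$. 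The orthogonality of $F_1 \oplus F_2$ then reduces to the identity $\max(|A + B|, |B|) = \max(|A|, |B|)$ for ultrametric absolute values, applied to $A = \lambda + \mu x$ and $B = p_0(\alpha + \beta z) + \beta\epsilon'$. Finally, Theorem \ref{thmh3} together with $z \sim w$ and $s/t \in V_K$ converts $F_2 \cong ([1,z],|\cdot|_s)$ into $F_2 \cong ([1,w],|\cdot|_t)$, completing the isomorphism. The main obstacle is really just the careful choice of $p_0$: it is the interlocking of the two equivalences that produces a scalar in $K$ of precisely the right norm $\delta/\gamma$, and everything else follows from ultrametric bookkeeping once the mild normalization $|z| > \gamma$ is in place.
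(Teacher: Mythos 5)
Your proof is correct and follows essentially the same route as the paper's: both extract from $w \sim z$ (via Proposition \ref{prph5}) a scalar of norm $\delta/\gamma$ (or its inverse) and then exhibit an explicit orthogonal complement to $[(1,0),(x,0)]$, reducing orthogonality to the identity $\max(|A+B|,|B|)=\max(|A|,|B|)$. The only difference is cosmetic: the paper perturbs the generator $(y,z)$ to $(y,\mu w)$ and rescales the second coordinate so that the complement becomes the diagonal $[(1,1),(w,w)]$, whereas you keep $z$, take the complement $[(p_0,1),(y+q,z)] \cong ([1,z],|\cdot|_s)$, and convert to $([1,w],|\cdot|_t)$ at the end via Theorem \ref{thmh3} (your normalization $|z|>\gamma$ is not actually needed, since $|\alpha+\beta z|\ge |\beta|\,d(z,K)$ already holds).
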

\begin{proof}
  Let $a \in K$ be such that $y + a x \nsim x$. Obviously, we have $d(y,[1,x]) = d(y + ax,[1,x])$. Therefore, without loss of generality, we may assume $x \nsim y$ and $w$ is of the form $w = y + \lambda x$ where $\lambda \in K$. Since $w \nsim x$, we have 
  \begin{align*}
    d(w,K) = d(w,[1,x]) = d(y,K) = d(y,[1,x]) = \delta.
  \end{align*} 
  Hence by assumption $w \sim z$, there exists $\mu, \eta \in K$ with $|\mu| = \gamma / \delta$ for which $|z - \mu w - \eta| < d(z,K)$. Therefore, we have
  \begin{align*}
    E &\cong ([(1,0), (0,1), (x,0), (y,\mu w)],|\cdot|_{t} \times |\cdot|_{s}) \\
    &\cong ([(1,0), (0,1), (x,0), (y,w)],|\cdot|_{t} \times |\cdot|_{s \gamma / \delta}) \\
    &= ([(1,0), (0,1), (x,0), (y,w)],|\cdot|_{t} \times |\cdot|_{t}) 
  \end{align*}
  where the last equality follows from $t \delta = s \gamma$. Now, it is easy to see
  \begin{align*}
    ([(1,0), (0,1), (x,0), (y,w)],|\cdot|_{t} \times |\cdot|_{t}) \cong [(1,0),(x,0)] \oplus [(1,1),(w,w)],
  \end{align*}
  which completes the proof.
\end{proof}

\begin{cor} \label{cor1}
  Let us keep the notation in Lemma \ref{lem7}. Suppose that $E$ is indecomposable and $([1, x, y], |\cdot|_t)$ is of type $\mathrm{\Rnum{4}}_3$. Then $E$ contains a $3$-dimensional subspace of $\mathrm{\Rnum{3}}_3$.
\end{cor}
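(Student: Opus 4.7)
The plan is to exhibit a $3$-dimensional subspace of $E$ of type $\mathrm{\Rnum{3}}_3$, concretely of the form $F = [(1,0), (0,1), (w, z)]$ for a suitable $w \in y + [1, x]$ (which ensures $(w, z) \in E$). By Theorem \ref{.4thm2}(1), such an $F$ has type $\mathrm{\Rnum{3}}_3$ iff $w \nsim z$ and $t \cdot d(w, K) = s \cdot d(z, K) = s \gamma$.

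Indecomposability of $E$ together with Lemma \ref{lem7} forces $t \delta = s \gamma$, so the norm condition will reduce to $d(w, K) = \delta$. The crux is to produce $w \in y + [1, x]$ with $w \nsim x$. Since $([1, x, y], |\cdot|_t)$ is of type $\mathrm{\Rnum{4}}_3$, the $2$-dimensional quotient $[\pi(x), \pi(y)] \subseteq K^{\lor}/K$ admits an orthogonal base $\{u_1, u_2\}$. Writing $\pi(x) = \alpha u_1 + \beta u_2$, a direct calculation (comparing $\|u_i + \lambda \pi(x)\|$ with $\max(\|u_i\|, |\lambda|\|\pi(x)\|)$ at the critical $\lambda$) shows that $\{u_1, \pi(x)\}$ is orthogonal when $|\beta|\,\|u_2\| \ge |\alpha|\,\|u_1\|$, and $\{u_2, \pi(x)\}$ is orthogonal in the opposite case. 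Pick $u \in \{u_1, u_2\}$ that pairs orthogonally with $\pi(x)$; writing $u = \pi(a + bx + cy)$, linear independence of $u$ from $\pi(x)$ forces $c \neq 0$, and then $w := (a + bx + cy)/c$ lies in $y + [1, x]$ with $\pi(w) = u/c$ still orthogonal to $\pi(x)$, giving $w \nsim x$ by Theorem \ref{thmh4}.

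Next, since $E$ is indecomposable and $w \in [1, x, y] \setminus K$ satisfies $w \nsim x$, Lemma \ref{lem11} implies $w \nsim z$ (otherwise $E$ would decompose). Combining $w \nsim x$ with $w \in y + [1, x]$ gives
\begin{align*}
d(w, K) = d(w, [1, x]) = d(y, [1, x]) = \delta,
\end{align*}
so $t \cdot d(w, K) = t \delta = s \gamma$. Since $(w, z) = c^{-1}(a(1,0) + b(x,0) + c(y,z)) \in E$ and $w \notin K$ (as $\{1, x, y\}$ is $K$-linearly independent), the three vectors $(1,0), (0,1), (w, z)$ are linearly independent, and Theorem \ref{.4thm2}(1) confirms that $F$ has type $\mathrm{\Rnum{3}}_3$.

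The main obstacle is the second paragraph: the orthogonal base $\{u_1, u_2\}$ guaranteed by type $\mathrm{\Rnum{4}}_3$ need not contain $\pi(x)$ itself, and the case-split on whether $|\alpha|\,\|u_1\|$ or $|\beta|\,\|u_2\|$ dominates is what lets us extract from $\{u_1, u_2\}$ an orthogonal partner for $\pi(x)$ realizable on the affine slice $y + [1, x]$.
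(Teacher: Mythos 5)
Your proof is correct and takes essentially the same route as the paper's: the paper simply assumes without loss of generality that $x \nsim y$ (i.e., replaces $y$ by a suitable $w \in y + [1,x]$ with $w \nsim x$), invokes Lemma \ref{lem11} together with indecomposability to get $w \nsim z$, and concludes via $t\,d(w,K) = t\delta = s\gamma$ and Theorem \ref{.4thm2}. Your second paragraph merely spells out, via the orthogonal base of $[\pi(x),\pi(y)]$, the existence of such a $w$, which the paper's reduction leaves implicit.
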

\begin{proof}
  Without loss of generality, we may assume $x \nsim y$. Then by the preceding lemma, we have $y \nsim z$. Since $t d(y,K) = t \delta = s \gamma$, $[(1,0),(0,1),(y,z)]$ is of $\mathrm{\Rnum{3}}_3$.
\end{proof}

The following lemma is easy to prove, and thus we omit the proof.

\begin{lem} \label{lem10}
  Let us keep the notation in Lemma \ref{lem7} and suppose $t \delta = s \gamma$. Let $\pi : E \to E / [(0,1)]$ be the canonical quotient map. Then the map
  \begin{align*}
     E / [(0,1)] &\to ([1,x,y],|\cdot|_t) \\
    \pi((1,0)) &\mapsto 1 \\
    \pi((x,0)) &\mapsto x \\
    \pi((y,z)) &\mapsto y
  \end{align*}
  gives a linear isometry.
\end{lem}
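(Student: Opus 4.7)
The plan is to verify directly that the stated linear map is an isometry; its linearity and bijectivity are automatic since $(1,0), (x,0), (y,z), (0,1)$ form a basis of $E$ with $(0,1)$ the vector being quotiented out, so $\pi((1,0)), \pi((x,0)), \pi((y,z))$ is a basis of $E/[(0,1)]$ mapped to the basis $1, x, y$ of $[1, x, y]$. The entire content of the lemma is the norm-preservation statement, namely that for all $\lambda, \mu, \eta \in K$,
\begin{align*}
  \inf_{\nu \in K} \bigl(|\lambda + \mu x + \eta y|_t \vee |\nu + \eta z|_s\bigr) = |\lambda + \mu x + \eta y|_t.
\end{align*}

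First I would reduce the infimum on the left. For fixed $\eta$, the quantity $\inf_{\nu \in K} |\nu + \eta z|_s$ equals $|\eta| s \gamma$, but it is not attained (as $d(z, K)$ is not attained). A short case analysis on whether $|\lambda + \mu x + \eta y|_t$ is larger or smaller than $|\eta| s \gamma$ then shows that the infimum of the max equals $\max\bigl(|\lambda + \mu x + \eta y|_t,\, |\eta| s \gamma\bigr)$, in both cases using the ultrametric nature of the norm.

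It therefore suffices to show $|\eta| s \gamma \le |\lambda + \mu x + \eta y|_t$ for every choice of scalars. This is trivial for $\eta = 0$, and for $\eta \ne 0$ I would divide by $|\eta|$ and use the very definition $\delta = d(y, [1, x])$, which gives $|y + (\mu/\eta) x + (\lambda/\eta)| \ge \delta$, whence $|\lambda + \mu x + \eta y| \ge |\eta|\delta$. Multiplying by $t$ and invoking the hypothesis $t\delta = s\gamma$ yields $|\lambda + \mu x + \eta y|_t \ge |\eta| t \delta = |\eta| s \gamma$, as required.

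There is no real obstacle here; the one mildly subtle point is the non-attainment of $d(z, K)$ which must be handled carefully when $|\lambda + \mu x + \eta y|_t$ happens to equal $|\eta| s \gamma$, but in that borderline case one still obtains the infimum as the common value by letting $\nu$ approach the optimum. This is exactly why the author chose to omit the calculation, and the hypothesis $t\delta = s\gamma$ is used in precisely one place, namely in the final inequality balancing the two summands of the direct-sum norm.
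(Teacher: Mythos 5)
Your proof is correct, and since the paper omits the argument as ``easy to prove,'' your direct computation of the quotient norm
\begin{align*}
  \inf_{\nu \in K} \bigl(|\lambda + \mu x + \eta y|_t \vee |\nu + \eta z|_s\bigr) = |\lambda + \mu x + \eta y|_t \vee |\eta| s \gamma = |\lambda + \mu x + \eta y|_t
\end{align*}
via $|\lambda + \mu x + \eta y| \ge |\eta|\,d(y,[1,x]) = |\eta|\delta$ is exactly the intended one. The only cosmetic remark is that the interchange $\inf_\nu \max(A, f(\nu)) = \max(A, \inf_\nu f(\nu))$ holds for any constant $A$ without case analysis or any appeal to non-attainment, and that your argument in fact only uses the inequality $s\gamma \le t\delta$ rather than the full equality.
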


\begin{lem} \label{lem9}
  Let us keep the notation in Lemma \ref{lem7} and suppose $t \delta = s \gamma$. Put $\alpha = d(x,K)$. Consider $E$ as a subspace of $E_1 := E + [(y,0)]$. Let $\pi : E_1 \to E_1 / [(1,0)]$ be the canonical quotient map.\\
  $(1)$ Suppose $x \nsim y$. Then the map
  \begin{align*}
    E / [(1,0)] &\to (K,|\cdot|_{t \alpha}) \oplus ([1, z], |\cdot|_s) \\
    \pi((x,0)) &\mapsto (1,0) \\
    \pi((0,1)) &\mapsto (0,1) \\
    \pi((y,z)) &\mapsto (0,z) 
  \end{align*}
  gives a linear isometry. \\
  $(2)$ Suppose that $([1, x, y], |\cdot|_t)$ is of type $\mathrm{\Rnum{5}}_3$ and of subtype $(x,w,t)$. Let $T$ be a linear isometry from $[(1,0), (x,0), (y,0)] / [(1,0)]$ to $([1, w], |\cdot|_{t \alpha})$ such that $T(\pi((x,0))) = 1$. Put $w' = T(\pi((y,0)))$. Then $d(w', K) = \delta / \alpha$ in $(K^{\lor},|\cdot|)$ and the map
  \begin{align*}
    E / [(1,0)] &\to ([(1,0), (0,1), (w', z)],|\cdot|_{t \alpha} \times |\cdot|_{s}) \\
    \pi((x,0)) &\mapsto (1,0) \\
    \pi((0,1)) &\mapsto (0,1) \\
    \pi((y,z)) &\mapsto (w',z) \\
  \end{align*}
  gives a linear isometry.
\end{lem}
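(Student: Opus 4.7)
The plan is to reduce both claims to a single quotient-norm computation and then identify the right-hand norms: via the orthogonality forced by $x\nsim y$ in Part~(1), and via the given isometry $T$ in Part~(2). The starting point, valid in either case, is the formula
\[
\|\lambda_1\pi((x,0))+\lambda_2\pi((0,1))+\lambda_3\pi((y,z))\|
= t\cdot d(\lambda_1 x+\lambda_3 y,K)\lor |\lambda_2+\lambda_3 z|_s
\]
for $\lambda_1,\lambda_2,\lambda_3\in K$. This follows from the definition of the quotient norm on $E_1/[(1,0)]$ together with the observation that a representative can be shifted only in the first coordinate and that the infimum of a maximum with one side constant equals the maximum of the constant with the infimum of the other side. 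Linear independence of the three images in each target is immediate from $w',z\notin K$, so each candidate map is at least well-defined.

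For Part~(1), Theorem \ref{thmh4} applied to $x\nsim y$ gives that $\{\pi(x),\pi(y)\}$ is orthogonal in $K^\lor/K$; in particular $d(y,K)=d(y,[1,x])=\delta$ and $d(\lambda_1 x+\lambda_3 y,K)=|\lambda_1|\alpha\lor|\lambda_3|\delta$. Since $|\lambda_2+\lambda_3 z|\ge|\lambda_3|d(z,K)=|\lambda_3|\gamma$ and $t\delta=s\gamma$, the middle term $t|\lambda_3|\delta$ is dominated by $|\lambda_2+\lambda_3 z|_s$, so the quotient norm collapses to $t\alpha|\lambda_1|\lor|\lambda_2+\lambda_3 z|_s$. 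This is exactly the norm of $(\lambda_1,\lambda_2+\lambda_3 z)$ in $(K,|\cdot|_{t\alpha})\oplus([1,z],|\cdot|_s)$, so the map is isometric.

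For Part~(2), the first task is to pin down $d(w',K)$. Since $T$ is an isometry with $T(\pi((x,0)))=1$, applying it to $\pi((y-\lambda x,0))=\pi((y,0))-\lambda\pi((x,0))$ gives $|w'-\lambda|_{t\alpha}=t\cdot d(y-\lambda x,K)$ for every $\lambda\in K$; taking infima over $\lambda$ yields $t\alpha\cdot d(w',K)=t\cdot d(y,[1,x])=t\delta$, so $d(w',K)=\delta/\alpha$. The same use of $T$ on $\lambda_1\pi((x,0))+\lambda_3\pi((y,0))$ gives $t\alpha|\lambda_1+\lambda_3 w'|=t\cdot d(\lambda_1 x+\lambda_3 y,K)$, and plugging this into the quotient-norm formula matches the target norm $|\lambda_1+\lambda_3 w'|_{t\alpha}\lor|\lambda_2+\lambda_3 z|_s$ on $[(1,0),(0,1),(w',z)]\subseteq((K^\lor)^2,|\cdot|_{t\alpha}\times|\cdot|_s)$. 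The main delicate point throughout is the bookkeeping in Part~(2): one must keep the two norms $|\cdot|$ and $|\cdot|_{t\alpha}$ on $K^\lor$ separate and remember that the vector $(y,0)$ lives only in the auxiliary space $E_1$ rather than in $E$, which is precisely why the statement introduces $E_1$ in the first place.
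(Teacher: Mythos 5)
Your proof is correct and follows essentially the same route as the paper: reduce to the quotient-norm identity $\|\pi((\lambda_1 x+\lambda_3 y,\lambda_2+\lambda_3 z))\| = t\,d(\lambda_1 x+\lambda_3 y,K)\lor|\lambda_2+\lambda_3 z|_s$, then evaluate $d(\lambda_1 x+\lambda_3 y,K)$ via orthogonality of $\{\pi(x),\pi(y)\}$ (using $t\delta=s\gamma$ to absorb the $t|\lambda_3|\delta$ term) in part $(1)$ and via the isometry $T$ in part $(2)$, with the same computation $t\alpha\,d(w',K)=t\,d(y,[1,x])$ for $d(w',K)$. Your write-up of part $(2)$ is in fact slightly more detailed than the paper's, which asserts the final norm identity without spelling out the role of $T$.
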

\begin{proof}
  First, suppose $x \nsim y$. Then we have $\delta = d(y, K)$ and $\{\pi((x,0)), \pi((y,0))\}$ is an orthogonal set. Therefore, for each $\lambda, \mu, \eta \in K$, we obtain
  \begin{align*}
    \|\lambda \pi((x,0)) + \mu \pi((y,z)) + \eta \pi((0,1))\| &= |\lambda|_{t \alpha} \lor t \delta |\mu| \lor s |\mu z + \eta| \\
    &= |\lambda|_{t \alpha} \lor s \gamma |\mu| \lor s |\mu z + \eta| \\
    &= |\lambda|_{t \alpha} \lor |\mu z + \eta|_s
  \end{align*}
  Thus, $(1)$ is proved. \\
  Secondly, we shall prove $(2)$. By the same proof as that of Lemma \ref{lem8}, we have $d(w', K) = \delta / \alpha$. Moreover, for $\lambda, \mu, \eta \in K$, we have
  \begin{align*}
    \|\lambda \pi((x,0)) + \mu \pi((0,1)) + \eta \pi((y,z))\| = |\lambda + \eta w'|_{t \alpha} \lor |\mu + \eta z|_s.
  \end{align*}
  Thus, the proof is complete.
\end{proof}

\begin{rem} \label{rem2}
  Let us keep the notation in Lemma \ref{lem7}. Then there exists a linear isometry
  \begin{align*}
    E &\to ([(1,0), (0,1), (\frac{1}{x}, 1), (\frac{y}{x}, z)],|\cdot|_{t |x|} \times |\cdot|_s) \\
    (x,0) &\mapsto (1,0) \\
    (0,1) &\mapsto (0,1) \\
    (1,0) &\mapsto (\frac{1}{x},0) \\
    (y,z) &\mapsto (\frac{y}{x},z).
  \end{align*}
  Moreover, we see $d(\tfrac{y}{x},[1, \tfrac{1}{x}]) = d(y,[1,x]) / |x|$. Hence, if $t \delta = s \gamma$, then we have
  \begin{align*}
    (t |x|) \cdot d(\frac{y}{x},[1, \frac{1}{x}]) = s \gamma.
  \end{align*}
\end{rem}

Now, we obtain the following theorem that characterizes $4$-dimensional normed spaces of type $\mathrm{\Rnum{14}}_4$.

 \begin{thm} \label{thm11}
  Let $x, y, z \in K^{\lor} \setminus K$ and $t, s \in \mathbb{R}_{> 0}$. We set 
  \begin{align*}
    (E, \|\cdot\|) := ([(1,0), (0,1), (x,0), (y,z)],|\cdot|_{t} \times |\cdot|_{s}),
  \end{align*}
  and put $\gamma = d(z,K)$ and $\delta = d(y, [1,x])$. \\
  $(1)$ $E$ is of type $\mathrm{\Rnum{14}}_4$ if and only if $t \delta = s \gamma$, and $([1,x,y],|\cdot|_t)$ is of type $\mathrm{\Rnum{5}}_3$ and of subtype $(x,w,t)$ with $w \nsim z$. \\
  $(2)$ Suppose that $E$ is of type $\mathrm{\Rnum{14}}_4$. Then each $3$-dimensional subspace of $E$ is isometrically isomorphic to $([1,x],|\cdot|_t) \oplus (K,|\cdot|_s)$. \\
  $(3)$ Suppose that $E$ is of type $\mathrm{\Rnum{14}}_4$. Then $E$ satisfies $\mathrm{(SE)}$ if and only if
  \begin{align*}
    t, s, td(x,K) \notin V_K.
  \end{align*}
\end{thm}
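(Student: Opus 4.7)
The plan is to establish the three parts in sequence. For the necessity direction of (1), assume $E$ is of type $\mathrm{\Rnum{14}}_4$. Indecomposability combined with Lemma \ref{lem7} forces $t\delta = s\gamma$; the quotient $E/[(0,1)] \cong ([1,x,y], |\cdot|_t)$ (by Lemma \ref{lem10}) is $3$-dimensional with $\dim_{K^{\lor}} = 1$, so is of type $\mathrm{\Rnum{4}}_3$ or $\mathrm{\Rnum{5}}_3$. The former is ruled out by Corollary \ref{cor1}, which would exhibit a $3$-dimensional subspace of $E$ of type $\mathrm{\Rnum{3}}_3$, contradicting the definition of type $\mathrm{\Rnum{14}}_4$; so $([1,x,y], |\cdot|_t)$ is of type $\mathrm{\Rnum{5}}_3$ and admits some subtype $(x,w,t)$. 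Finally, Lemma \ref{lem8} rules out $w \sim z$ (else $E$ decomposes as $([1,x,y],|\cdot|_t) \oplus (K,|\cdot|_s)$), yielding $w \nsim z$.

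For the sufficiency in (1) together with (2), I assume the stated conditions and prove that every $3$-dimensional subspace $F \subseteq E$ is isometrically isomorphic to $([1,x],|\cdot|_t) \oplus (K,|\cdot|_s)$. This simultaneously establishes (2) and shows each such $F$ is of type $\mathrm{\Rnum{2}}_3$. The enumeration proceeds by case analysis over generating triples for $F$ drawn from $\{(1,0),(0,1),(x,0),(y,z)\}$ and their $K$-linear combinations; Lemmas \ref{lem7}, \ref{lem8}, \ref{lem11} apply in each case to reduce $F$ to the target standard form, with Remark \ref{rem2} invoked to substitute $1/x$ for $x$ when tractable. The assumption $w \nsim z$, together with the subtype of $([1,x,y],|\cdot|_t)$ and the balance $t\delta = s\gamma$, forbids any $3$-dimensional subspace of types $\mathrm{\Rnum{3}}_3$, $\mathrm{\Rnum{4}}_3$, or $\mathrm{\Rnum{5}}_3$. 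Indecomposability of $E$ then follows from the decomposable classification (Lemmas \ref{lem4}, \ref{lem5}, \ref{lem6}): types $\mathrm{\Rnum{4}}_4$, $\mathrm{\Rnum{5}}_4$, $\mathrm{\Rnum{6}}_4$ each admit a $3$-dimensional subspace of type $\mathrm{\Rnum{4}}_3$, $\mathrm{\Rnum{5}}_3$, or $\mathrm{\Rnum{3}}_3$ respectively, contradicting (2); type $\mathrm{\Rnum{7}}_4$ is excluded by comparing quotient structure, since the specific type $\mathrm{\Rnum{5}}_3$ quotient $E/[(0,1)]$ of subtype $(x,w,t)$ is incompatible with any decomposition $E \cong ([1,x'],|\cdot|_{t_1}) \oplus ([1,y'],|\cdot|_{t_2})$ under our hypotheses.

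For (3), I apply Lemma \ref{selem1}. Condition (1) of that lemma reduces, via (2), Theorem \ref{systhm4}, and Theorem \ref{.4thm4}, to $t \notin V_K$. For condition (2), I analyze the $2$-dimensional subspaces of $E'$ by duality: $3$-dimensional subspaces of $E'$ are duals of $3$-dimensional quotients of $E$, the distinguished cases being $([1,x,y],|\cdot|_t)$ of type $\mathrm{\Rnum{5}}_3$ (self-dual, section \ref{type5}) and $E/[(1,0)]$ of type $\mathrm{\Rnum{3}}_3$ (dual of type $\mathrm{\Rnum{4}}_3$, via Lemma \ref{lem9}(2)). Using the subtype formula for the dual of a type $\mathrm{\Rnum{5}}_3$ space from section \ref{type5} and Theorem \ref{.4thm3} for type $\mathrm{\Rnum{4}}_3$, the condition $t' d(x',K) \notin V_K$ for every $2$-dimensional subspace $F \cong ([1,x'],|\cdot|_{t'})$ of $E'$ translates into the two extra conditions $s \notin V_K$ and $t d(x,K) \notin V_K$.

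The main obstacle is the sufficiency direction of (1) together with (2). The enumeration of $3$-dimensional subspaces requires disciplined case analysis and careful use of Remark \ref{rem2}, and the indecomposability step must separate $E$ from the decomposable type $\mathrm{\Rnum{7}}_4$, which can also have all $3$-dimensional subspaces of type $\mathrm{\Rnum{2}}_3$; this distinction should be made via an invariant of the quotient or dual structure not shared by type $\mathrm{\Rnum{7}}_4$ spaces, such as the precise subtype $(x,w,t)$ carried by the quotient $E/[(0,1)]$.
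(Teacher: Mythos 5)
Your necessity direction of (1) is exactly the paper's argument (Lemma \ref{lem7} for $t\delta = s\gamma$, Corollary \ref{cor1} to force type $\mathrm{\Rnum{5}}_3$, Lemma \ref{lem8} to force $w \nsim z$), and your reductions of (3) to Lemma \ref{selem1} are reasonable in outline. But the sufficiency direction of (1) together with (2) --- which is the entire technical content of the theorem --- is not actually proved. You propose to classify all $3$-dimensional subspaces of $E$ directly and cite Lemmas \ref{lem7}, \ref{lem8}, \ref{lem11} as the tools; those lemmas decide decomposability of the \emph{four}-dimensional space $E$ itself and say nothing about its $3$-dimensional subspaces. The easy subspaces ($[(1,0),(0,1),(x,0)]$ and $[(1,0),(0,1),(y+\lambda x,z)]$, handled by Remark \ref{sysrem1} and the type-$\mathrm{\Rnum{5}}_3$ hypothesis) are not the problem; the problem is subspaces of the form $[(1,0),(x,0)+\cdots,(y,z)+\lambda(x,0)+\eta(1,0)+\mu(0,1)]$ and their relatives, which do not contain $(0,1)$ and cannot be reduced to the standard form by Remark \ref{rem2} alone. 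For the analogous statement about type $\mathrm{\Rnum{15}}_4$ the paper needs the full strength of Lemma \ref{lem15} and Theorem \ref{thm20} to handle precisely these cases, and your sketch gives no indication of how they would be handled here. The same unexecuted enumeration reappears in your (3): verifying condition $(2)$ of Lemma \ref{selem1} requires knowing \emph{every} $2$-dimensional subspace of $E'$ without an orthogonal base, hence every quotient $E/[u]$ for arbitrary $u$, not just the two distinguished quotients $E/[(0,1)]$ and $E/[(1,0)]$.

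The idea you are missing is the one the paper actually uses: instead of classifying $3$-dimensional subspaces of $E$, prove that $\dim_{K^{\lor}}(E/F)^{\lor} = 1$ for every $2$-dimensional subspace $F \subseteq E$, i.e.\ $\dim_{K^{\lor}}(E')^{\lor} = 1$. This is a six-case analysis on $2$-dimensional subspaces $F = [u,v]$, where the only delicate case is $u = (y,z)+\cdots$, $v = (a,1)$, split according to $|a|_t \le |1|_s$ (where $E/[v]$ is shown to be an immediate extension of a copy of $([1,x],|\cdot|_t)$, hence of type $\mathrm{\Rnum{5}}_3$) versus $|a|_t > |1|_s$ (where $E/[v]$ is shown to have two inequivalent $2$-dimensional quotients, hence is of type $\mathrm{\Rnum{3}}_3$, and Corollary \ref{.4cor2} applies). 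Once $\dim_{K^{\lor}}(E')^{\lor} = 1$ is known, everything else is free: indecomposability is immediate (so the awkward separation from type $\mathrm{\Rnum{7}}_4$ that you flag as an obstacle never arises), (2) follows from Theorem \ref{systhm1} since one $3$-dimensional subspace is visibly $([1,x],|\cdot|_t)\oplus(K,|\cdot|_s)$, the type is pinned down by Theorem \ref{thm10}, and (3) follows from Corollary \ref{syscor3} applied to $E'$, which reduces (SE) to the single subspace $([1,x],|\cdot|_t)\oplus(K,|\cdot|_s)$ and the norm set of its dual, giving $t, s, td(x,K)\notin V_K$ via Theorems \ref{.4thm7} and \ref{.4thm4}. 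As written, your proposal defers all of this work to case analyses it does not perform and supports with lemmas that do not apply, so it does not constitute a proof.
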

\begin{proof}
  First, we shall prove $(1)$. Suppose $E$ is of type $\mathrm{\Rnum{14}}_4$. Then by Lemma \ref{lem7}, we have $t \delta = s \gamma$. Since $E$ contains no $3$-dimensional subspace of type $\mathrm{\Rnum{3}}_3$, by Corollary \ref{cor1}, $([1,x,y],|\cdot|_t)$ is of type $\mathrm{\Rnum{5}}_3$. Now, let $w \in K^{\lor} \setminus K$ be such that $E$ is of subtype $(x,w,t)$. Then by Lemma \ref{lem8}, we obtain $w \nsim z$. \par
  Conversely, suppose $t \delta = s \gamma$, and that $([1,x,y],|\cdot|_t)$ is of type $\mathrm{\Rnum{5}}_3$ and of subtype $(x,w,t)$ with $w \nsim z$. We shall prove $\dim_{K^{\lor}} (E')^{\lor} = 1$, that is $\dim_{K^{\lor}} (E/F)^{\lor} = 1$ for each $2$-dimensional subspace $F$ of $E$. Let $F$ be a $2$-dimensional subspace of $E$ and $u, v \in F$ be such that $F = [u,v]$. It suffices to consider the following six cases:
  \begin{itemize}
    \item[$(A)$] $u = (0,1)$, $v = (1,0)$. 
    \item[$(B)$] $u = (x,0) + \lambda (0,1)$, $v = (1,0)$ $(\lambda \in K)$. 
    \item[$(C)$] $u = (y,z) + \lambda (x,0) + \mu (0,1)$, $v = (1,0)$ $(\lambda, \mu, \eta \in K)$. 
    \item[$(D)$] $u = (x,0) + \mu (1,0)$, $v = (0,1) + a(1,0)$ $(\mu,  a \in K)$.
    \item[$(E)$] $u = (y,z) + \lambda (x,0) + \eta (1,0)$, $v = (0,1) + a(1,0)$ $(\lambda, \mu, \eta, a \in K)$. 
    \item[$(F)$] $u = (y,z) + \mu (0,1) + \eta (1,0)$, $v = (x,0) + a(0,1) + b(1,0)$ $(\lambda, \mu, \eta, a, b\in K)$.
  \end{itemize}
  By Lemma \ref{lem9} and $w \nsim z$, $E / [(1,0)]$ is of type $\mathrm{\Rnum{3}}_3$. Therefore, in cases $(A), (B)$ and $(C)$, we are done from Corollary \ref{.4cor2}. Secondly, consider the case $(D)$. Since $d(y,[1,x]) = d(y,[1,x + \mu])$, we may assume $\mu = 0$. Then by Remark \ref{rem2}, the case $(D)$ can be reduced to the case $(A)$ or $(B)$. \par
  Thirdly, consider the case $(F)$. Considering
  \begin{align*}
    x' = x + b, y' = y + \eta \ \text{and} \ z' = z + \mu,
  \end{align*}
  without loss of generality, we may assume $u = (y,z), v = (x,a)$. Then by Remark \ref{rem2}, the case $(F)$ can be reduced to the case $(C)$ or $(E)$. \par
  Now, it remains to consider the case $(E)$. Considering $y' = y + \lambda x + \eta$, without loss of generality, we may assume $u = (y,z), v = (a,1)$. Let $\pi : E \to E / [v]$ be the canonical quotient map. First, suppose $|a|_t \le |1|_s$. We shall prove $\dim_{K^{\lor}} (E/[v])^{\lor} = 1$. Since $|a|_t \le |1|_s$, the map
  \begin{align*}
    [\pi((1,0)), \pi((x,0))] &\to ([1,x],|\cdot|_t) \\
    \pi((1,0)) &\mapsto 1 \\
    \pi((x,0)) &\mapsto x
  \end{align*}
  gives a linear isometry. In particular, $\dim_{K^{\lor}} ( [\pi((1,0)), \pi((x,0))])^{\lor} = 1$. It remains to prove that $E / [v]$ is an immediate extension of $[\pi((1,0)), \pi((x,0))]$. Let $\lambda, \mu \in K$. Then
  \begin{align*}
    \|\pi((y,z)) + \lambda \pi((x,0)) + \mu \pi((1,0))\| &= \inf_{\eta \in K} \|(y + \lambda x + \mu, z) + \eta (a,1)\| \\
    &\ge t d(y + \lambda x + \mu, K) \\
    &> t d(y, [1,x])
  \end{align*}
  where the last inequality follows because $([1,x,y],|\cdot|_t)$ is of type $\mathrm{\Rnum{5}}_3$. Now, since $d(\pi((y,z)), [\pi((1,0)), \pi((x,0))]) = t d(y,[1,x])$, $E / [v]$ is an immediate extension of $[\pi((1,0)), \pi((x,0))]$. Hence, $\dim_{K^{\lor}} (E/[v])^{\lor} = 1$ and $E / [v]$ is either of  type $\mathrm{\Rnum{4}}_3$ or $\mathrm{\Rnum{5}}_3$. On the other hand, by Lemma \ref{lem10}, we have
  \begin{align*}
    (E/[v]) \big/ [\pi((1,0))] \cong E/ [(1,0),(0,1)] \cong ([1,x,y],|\cdot|_t) / K.
  \end{align*}
  Therefore, since $([1,x,y],|\cdot|_t)$ is of type $\mathrm{\Rnum{5}}_3$, $E / [v]$ must be of type $\mathrm{\Rnum{5}}_3$. Thus, we obtain $\dim_{K^{\lor}} (E/F)^{\lor} = 1$. \par
  Finally suppose $|a|_t > |1|_s$. Let $\lambda \in K$ be such that $|x +\lambda| < |a|_t \alpha/ |1|_s$ where $\alpha = d(x,K)$. Put $x' = x + \lambda$. By Lemma \ref{lem10}, we have
  \begin{align*}
    (E/[v]) \big/ [\pi((1,0))] \cong E/ [(1,0),(0,1)] \cong ([1,x,y],|\cdot|_t) / K \cong ([1,w],|\cdot|_{t \alpha}).
  \end{align*}
  On the other hand, by Lemma \ref{lem9} and Remark \ref{rem2}, we have
  \begin{align*}
    (E/[v]) \big/ [\pi((x',0))] &\cong E /[(a,1),(x',0)] \\
    &\cong ([(1,0)(0,1)(1/x',0)(y/x',z)],|\cdot|_{t |x'|} \times |\cdot|_s) \big/ [(a/x', 1), (1,0)] \\
    &\cong ([(1,0),(0,1),(w',z)],|\cdot|_{t \alpha/ |x'|} \times |\cdot|_s) \big/ [(a,1)]
  \end{align*}
  where $w' \sim w$ and $d(w',K) = (\delta/|x'|) \big/ d(\tfrac{1}{x'},K) = |x'| \delta / \alpha$. Hence, we have
  \begin{align*}
    w' \nsim z \ \text{and} \ \frac{t \alpha d(w', K)}{|x'|} = s d(z, K).
  \end{align*}
  Now, by Corollary \ref{.4cor3}, we obtain
  \begin{align*}
    ([(1,0),(0,1),(w',z)],|\cdot|_{t \alpha/ |x'|} \times |\cdot|_s) \big/ [(a,1)] \cong ([1,w' - az],|\cdot|_{\frac{s \gamma}{\beta}})
  \end{align*}
  where $\beta = d(w' - az, K)$. Moreover, by the choice of $a$, we have
  \begin{align*}
    d(w',K) = \frac{|x'| \delta}{\alpha} < \frac{|a|t \delta}{s} = |a| \gamma = d(az,K).
  \end{align*}
  Thus, we obtain $az - w' \sim z$ and $([1,az - w'],|\cdot|_{\frac{s \gamma}{\beta}}) \cong ([1,z],|\cdot|_{\frac{s \gamma}{\beta}})$. From what has been proved, we see that both
  \begin{align*}
    ([1,w],|\cdot|_{t \alpha}) \ \text{and} \ ([1,z],|\cdot|_{\frac{s \gamma}{\beta}})
  \end{align*}
  are quotient spaces of the $3$-dimensional normed space $E / [v]$. Since $w \nsim z$, $E / [v]$ must be of type $\mathrm{\Rnum{3}}_3$. In particular, by Corollary \ref{.4cor2}, $\dim_{K^{\lor}} (E/F)^{\lor} = 1$. \par
  Thus, we have proved $\dim_{K^{\lor}} (E')^{\lor} = 1$. In particular, $E'$ is indecomposable, and so is $E$. Since $E$ contains a $3$-dimensional normed space of type $\mathrm{\Rnum{2}}_3$, isometrically isomorphic to
  \begin{align*}
    ([1,x],|\cdot|_t) \oplus (K,|\cdot|_s),
  \end{align*}
  it follows from Theorem \ref{thm10} that $E'$ is of type $\mathrm{\Rnum{9}}_4$ and $E$ is of type $\mathrm{\Rnum{14}}_4$. Thus, $(1)$ is proved. Moreover, by Corollary \ref{syscor1}, we obtain $(2)$. \par
  Finally, we prove $(3)$. Since $\dim_{K^{\lor}} (E')^{\lor} = 1$, we can apply Corollary \ref{syscor3}. Hence, $E$ satisfies (SE) if and only if $\| \big(([1,x],|\cdot|_t) \oplus (K,|\cdot|_s)\big)' \| \cap V_K = \emptyset$ and $([1,x],|\cdot|_t) \oplus (K,|\cdot|_s)$ satisfies (SE). Then by Theorems \ref{.4thm7} and \ref{.4thm4}, we complete the proof.
\end{proof}

\begin{cor} \label{cor2}
  Let $E$ be a $4$-dimensional normed space of type $\mathrm{\Rnum{14}}_4$. Then $E'$ is of type $\mathrm{\Rnum{9}}_4$ and all $3$-dimensional subspaces of $E$ are isometrically isomorphic to each other.
\end{cor}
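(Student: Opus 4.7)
The plan is to extract both assertions essentially from Theorem \ref{thm11} together with Theorem \ref{thm10}. For the second assertion, part $(2)$ of Theorem \ref{thm11} already identifies every $3$-dimensional subspace of $E$ with a single fixed normed space $([1,x],|\cdot|_t) \oplus (K,|\cdot|_s)$, so the $3$-dimensional subspaces are pairwise isometrically isomorphic and nothing further needs to be done.

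For the first assertion, I would observe that the proof of Theorem \ref{thm11}$(1)$ explicitly established $\dim_{K^{\lor}}(E')^{\lor} = 1$ (this was precisely the point of the lengthy case analysis over $(A)$--$(F)$). Hence by Theorem \ref{thm10}, $E'$ must fall into exactly one of the types $\mathrm{\Rnum{8}}_4, \mathrm{\Rnum{9}}_4, \mathrm{\Rnum{10}}_4, \mathrm{\Rnum{11}}_4, \mathrm{\Rnum{12}}_4$. Now identify $E$ with $(E')'$ via the canonical isometric isomorphism (available because all spaces in the paper are finite-dimensional). By definition of type $\mathrm{\Rnum{14}}_4$, $E$ contains a $3$-dimensional subspace of type $\mathrm{\Rnum{2}}_3$, namely $([1,x],|\cdot|_t) \oplus (K,|\cdot|_s)$. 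Applying Theorem \ref{thm10} to $E'$ in the list above, the five candidate types force all $3$-dimensional subspaces of $(E')' = E$ to be of type $\mathrm{\Rnum{1}}_3$, $\mathrm{\Rnum{2}}_3$, $\mathrm{\Rnum{4}}_3$, $\mathrm{\Rnum{3}}_3$, $\mathrm{\Rnum{5}}_3$ respectively; only type $\mathrm{\Rnum{9}}_4$ is consistent with the presence of a type $\mathrm{\Rnum{2}}_3$ subspace in $E$. Therefore $E'$ is of type $\mathrm{\Rnum{9}}_4$.

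There is no serious obstacle: the corollary is a direct bookkeeping consequence of what was already proved in Theorem \ref{thm11}$(1)$--$(2)$ and the dual correspondence encoded in Theorem \ref{thm10}. The only point requiring slight care is the use of reflexivity $E \cong (E')'$, but this is standard in the finite-dimensional setting.
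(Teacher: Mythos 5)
Your argument is correct and is essentially the paper's own proof: the paper likewise reduces everything to the proof of Theorem \ref{thm11}, where $\dim_{K^{\lor}}(E')^{\lor}=1$ was established by the case analysis and Theorem \ref{thm10}, combined with the presence of a type $\mathrm{\Rnum{2}}_3$ subspace, was invoked to conclude that $E'$ is of type $\mathrm{\Rnum{9}}_4$, with the isomorphy of the $3$-dimensional subspaces coming from the same source. The only step you leave implicit is the one sentence the paper's proof actually supplies: an abstract $E$ of type $\mathrm{\Rnum{14}}_4$ must first be written in the concrete form $([(1,0),(0,1),(x,0),(y,z)],|\cdot|_{t}\times|\cdot|_{s})$ to which Theorem \ref{thm11} applies, which follows because $\dim_{K^{\lor}}E^{\lor}=2$ and $E$ contains a $3$-dimensional subspace isometrically isomorphic to $([1,x],|\cdot|_{t})\oplus(K,|\cdot|_{s})$.
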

\begin{proof}
  By definition, $\dim_{K^{\lor}} E^{\lor} = 2$ and $E$ contains a $3$-dimensional normed space of type $\mathrm{\Rnum{2}}_3$. Therefore, there exist $x, y, z \in K^{\lor} \setminus K$ and $t, s \in \mathbb{R}_{> 0}$ such that
  \begin{align*}
    E \cong ([(1,0), (0,1), (x,0), (y,z)],|\cdot|_{t} \times |\cdot|_{s}).
  \end{align*}
  Thus, by the proof of the preceding theorem, the proof is complete.
\end{proof}

We also characterize $4$-dimensional normed spaces of type $\mathrm{\Rnum{15}}_4$.

\begin{thm} \label{thm13}
  Let $E$ be a $4$-dimensional normed space of type $\mathrm{\Rnum{15}}_4$. Then $E'$ is of type $\mathrm{\Rnum{15}}_4$.
\end{thm}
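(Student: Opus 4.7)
The plan is to prove $E'$ is of type $\mathrm{\Rnum{15}}_4$ by elimination, exploiting the duality correspondences already recorded for the other $4$-dimensional types. I first establish that $E'$ is indecomposable with $\dim_{K^{\lor}}(E')^{\lor} = 2$, then rule out each of $\mathrm{\Rnum{14}}_4$, $\mathrm{\Rnum{16}}_4$, $\mathrm{\Rnum{17}}_4$, leaving only $\mathrm{\Rnum{15}}_4$.

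Indecomposability of $E'$ is immediate: if $E' \cong V_1 \oplus V_2$ non-trivially, then $E \cong E''$ decomposes as $V_1' \oplus V_2'$, contradicting $E$ being of type $\mathrm{\Rnum{15}}_4$. To determine $\dim_{K^{\lor}}(E')^{\lor}$, I eliminate the remaining values. The value $4$ would give $E'$ an orthogonal base, making it decomposable. The value $3$, together with indecomposability and Theorem \ref{thm15}, forces $E'$ to be of type $\mathrm{\Rnum{13}}_4$; then Corollary \ref{cor3} makes $E \cong E''$ of type $\mathrm{\Rnum{8}}_4$, contradiction. The value $1$ places $E'$ in one of the types $\mathrm{\Rnum{8}}_4, \ldots, \mathrm{\Rnum{12}}_4$ by Theorem \ref{thm10}, and the already-established dualities---Corollary \ref{cor3} for $\mathrm{\Rnum{8}}_4$, the theorem just before Corollary \ref{cor2} for $\mathrm{\Rnum{9}}_4, \mathrm{\Rnum{10}}_4, \mathrm{\Rnum{11}}_4$, and Proposition \ref{prop1} for $\mathrm{\Rnum{12}}_4$---assign the dual types $\mathrm{\Rnum{13}}_4$, $\mathrm{\Rnum{14}}_4$, $\mathrm{\Rnum{10}}_4$, $\mathrm{\Rnum{16}}_4$, $\mathrm{\Rnum{12}}_4$ to $E \cong E''$, none of which is $\mathrm{\Rnum{15}}_4$. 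Hence $\dim_{K^{\lor}}(E')^{\lor} = 2$.

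With $E'$ now known to be indecomposable with $\dim_{K^{\lor}}(E')^{\lor} = 2$, it must be one of $\mathrm{\Rnum{14}}_4, \mathrm{\Rnum{15}}_4, \mathrm{\Rnum{16}}_4, \mathrm{\Rnum{17}}_4$. The case $\mathrm{\Rnum{14}}_4$ is excluded by Corollary \ref{cor2}, which would force $E \cong E''$ to be of type $\mathrm{\Rnum{9}}_4$. To handle $\mathrm{\Rnum{16}}_4$ and $\mathrm{\Rnum{17}}_4$ I use the canonical identification $E'/D^{\perp} \cong D'$ for $2$-dimensional $D \subseteq E$, noting that every $2$-dimensional $F \subseteq E'$ arises as $F = D^{\perp}$ for $D = F^{\perp}$. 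Any $2$-dimensional normed space is either $(K^2, |\cdot|_{s_1} \times |\cdot|_{s_2})$ (whose dual also carries an orthogonal base) or $([1,x], |\cdot|_t)$ with $x \in K^{\lor} \setminus K$, in which case Theorem \ref{thmh10} gives $D' \cong ([1,x], |\cdot|_{1/tr})$ (still without orthogonal base). Hence $\dim_{K^{\lor}}(D')^{\lor} = \dim_{K^{\lor}} D^{\lor}$, so the defining condition of $\mathrm{\Rnum{16}}_4$ (resp.\ $\mathrm{\Rnum{17}}_4$) on $E'$ translates to the statement that every $2$-dimensional subspace of $E$ lacks (resp.\ has) an orthogonal base.

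Neither alternative is compatible with $E$ being of type $\mathrm{\Rnum{15}}_4$. By hypothesis $E$ contains a $3$-dimensional subspace of type $\mathrm{\Rnum{3}}_3$, realized as $[(1,0),(0,1),(x,y)] \subseteq ((K^{\lor})^2,|\cdot|_{t_1} \times |\cdot|_{t_2})$, inside which $[(1,0),(0,1)] \cong (K^2, |\cdot|_{t_1} \times |\cdot|_{t_2})$ has an orthogonal base, ruling out $\mathrm{\Rnum{16}}_4$; and $E$ contains a $3$-dimensional subspace of type $\mathrm{\Rnum{2}}_3$, realized as $(K,|\cdot|_{t_1}) \oplus ([1,x],|\cdot|_{t_2})$, whose summand $([1,x],|\cdot|_{t_2})$ carries no orthogonal base, ruling out $\mathrm{\Rnum{17}}_4$. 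Therefore $E'$ must be of type $\mathrm{\Rnum{15}}_4$. The only delicate step is the translation $\dim_{K^{\lor}}(E'/D^{\perp})^{\lor} = \dim_{K^{\lor}} D^{\lor}$, but this follows cleanly from $E'/D^{\perp} \cong D'$ together with Theorem \ref{thmh10}.
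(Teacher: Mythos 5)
Your first paragraph is sound: the indecomposability of $E'$ and the elimination of $\dim_{K^{\lor}}(E')^{\lor} \in \{1,3,4\}$ via Corollary \ref{cor3}, Theorem \ref{thm10} and the duality theorems for types $\mathrm{\Rnum{9}}_4$--$\mathrm{\Rnum{12}}_4$ all go through. The gap is the pivot of your second half: ``it must be one of $\mathrm{\Rnum{14}}_4, \mathrm{\Rnum{15}}_4, \mathrm{\Rnum{16}}_4, \mathrm{\Rnum{17}}_4$.'' At the point where this theorem sits, that exhaustiveness is not yet available: it is Theorem \ref{thm16}, whose proof rests on Lemma \ref{lem13}, and the proof of Lemma \ref{lem13} explicitly invokes Theorem \ref{thm13} --- the statement you are proving. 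What is available (Lemma \ref{lem12}) only says that every $3$-dimensional subspace of $E'$ is of type $\mathrm{\Rnum{2}}_3$ or $\mathrm{\Rnum{3}}_3$; the a priori cases are therefore ``all $\mathrm{\Rnum{2}}_3$'' ($\mathrm{\Rnum{14}}_4$), ``mixed'' ($\mathrm{\Rnum{15}}_4$), and ``all $\mathrm{\Rnum{3}}_3$'', and in the last case $E'$ is of type $\mathrm{\Rnum{16}}_4$ or $\mathrm{\Rnum{17}}_4$ only if $\dim_{K^{\lor}}(E'/F)^{\lor}$ is constant over $2$-dimensional $F \subseteq E'$. Your translation $E'/D^{\perp} \cong D'$ correctly shows this dimension is \emph{not} constant (since $E$ has $2$-dimensional subspaces both with and without orthogonal bases), so it excludes the literal types $\mathrm{\Rnum{16}}_4$ and $\mathrm{\Rnum{17}}_4$ --- but it does not exclude the residual case ``all $3$-dimensional subspaces of $E'$ of type $\mathrm{\Rnum{3}}_3$ with non-constant quotient dimension,'' which, for all you have shown, belongs to none of the seventeen types. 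Elimination therefore cannot yet land on $\mathrm{\Rnum{15}}_4$.

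To close the gap you must verify the definition of $\mathrm{\Rnum{15}}_4$ positively, i.e., produce $3$-dimensional subspaces of $E'$ of type $\mathrm{\Rnum{2}}_3$ and of type $\mathrm{\Rnum{3}}_3$. This is what the paper does: it writes $E \cong ([(1,0),(0,1),(x,0),(y,z)], |\cdot|_t \times |\cdot|_s)$, uses Lemma \ref{lem7} to obtain the balance $t\, d(y,[1,x]) = s\, d(z,K)$, Lemma \ref{lem8} together with Theorem \ref{thm11} to see that $([1,x,y],|\cdot|_t)$ is of type $\mathrm{\Rnum{4}}_3$, and then Lemmas \ref{lem10} and \ref{lem9} to exhibit quotients $E/[(0,1)]$ of type $\mathrm{\Rnum{4}}_3$ and $E/[(1,0)]$ of type $\mathrm{\Rnum{2}}_3$; dualizing, $E'$ contains $3$-dimensional subspaces of types $\mathrm{\Rnum{3}}_3$ and $\mathrm{\Rnum{2}}_3$, which together with indecomposability and $\dim_{K^{\lor}}(E')^{\lor}=2$ is exactly the definition of type $\mathrm{\Rnum{15}}_4$. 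Note that $3$-dimensional subspaces of $E'$ correspond to duals of $3$-dimensional \emph{quotients} of $E$, so your observations about subspaces of $E$ cannot substitute for this step.
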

\begin{proof}
  By the same proof as that of the preceding corollary, there exist $x, y, z \in K^{\lor} \setminus K$ and $t, s \in \mathbb{R}_{> 0}$ such that
  \begin{align*}
    E \cong ([(1,0), (0,1), (x,0), (y,z)],|\cdot|_{t} \times |\cdot|_{s}).
  \end{align*}
  Since $E$ is indecomposable, we have $t \delta = s \gamma$ by Lemma \ref{lem7}. Moreover, by Lemma \ref{lem8} and Theorem \ref{thm11}, $([1,x,y],|\cdot|_t)$ is of type $\mathrm{\Rnum{4}}_3$. Hence by Lemma \ref{lem10} (resp. Lemma \ref{lem9}), $E'$ contains a $3$-dimensional subspace of type $\mathrm{\Rnum{3}}_3$ (resp. $\mathrm{\Rnum{2}}_3$). In particular, $\dim_{K^{\lor}} (E')^{\lor} \ge 2$. If $\dim_{K^{\lor}} (E')^{\lor} = 3$, then $E'$ is of type $\mathrm{\Rnum{13}}_4$ and $E$ is of type $\mathrm{\Rnum{8}}_3$ by Corollary \ref{cor3}, which is a contradiction. Hence, we have $\dim_{K^{\lor}} (E')^{\lor} = 2$ and $E'$ is of type $\mathrm{\Rnum{15}}_4$.
\end{proof}

\begin{thm} \label{thm14}
  Let $x, y, z \in K^{\lor} \setminus K$ and $t, s \in \mathbb{R}_{> 0}$. We set 
  \begin{align*}
    (E, \|\cdot\|) := ([(1,0), (0,1), (x,0), (y,z)],|\cdot|_{t} \times |\cdot|_{s}),
  \end{align*}
  and put $\gamma = d(z,K)$ and $\delta = d(y, [1,x])$. \\
  $(1)$ $E$ is of type $\mathrm{\Rnum{15}}_4$ if and only if $E$ satisfies the following three conditions:
  \begin{itemize}
    \item[$(a)$] $t \delta = s \gamma$.
    \item[$(b)$] $([1,x,y],|\cdot|_t)$ is of type $\mathrm{\Rnum{4}}_3$.
    \item[$(c)$] $w \nsim z$ for each $w \in [1,x,y] \setminus K$ satisfying $w \nsim x$.
  \end{itemize}
  $(2)$ Suppose that $E$ is of type $\mathrm{\Rnum{15}}_4$. Then each $2$-dimensional subspace of $E$ is isometrically isomorphic to either
  \begin{align*}
    (K^2,|\cdot|_t \times |\cdot|_s) \ \text{or} \ ([1,x],|\cdot|_t).
  \end{align*}
  Moreover, each $3$-dimensional subspace of $E$, of type $\mathrm{\Rnum{2}}_3$, is isometrically isomorphic to
  \begin{align*}
    ([1,x],|\cdot|_t) \oplus (K,|\cdot|_s).
  \end{align*}
  \noindent $(3)$ Suppose that $E$ is of type $\mathrm{\Rnum{15}}_4$. Then $E$ satisfies $\mathrm{(SE)}$ if and only if $t, s \notin V_K$.
\end{thm}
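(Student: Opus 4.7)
For $(1)$, direction $\Rightarrow$: assume $E$ is of type $\mathrm{\Rnum{15}}_4$. Indecomposability forces $t\delta = s\gamma$ by Lemma \ref{lem7}, giving $(a)$. Since $\dim_{K^{\lor}}[1,x,y]^{\lor} = 1$, the space $([1,x,y], |\cdot|_t)$ is of type $\mathrm{\Rnum{4}}_3$ or $\mathrm{\Rnum{5}}_3$; if the latter, of subtype $(x,w,t)$, then Lemma \ref{lem8} (when $w \sim z$) makes $E$ decomposable, while Theorem \ref{thm11} (when $w \nsim z$) forces $E$ of type $\mathrm{\Rnum{14}}_4$; both contradict type $\mathrm{\Rnum{15}}_4$, proving $(b)$. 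Any $w \in [1,x,y] \setminus K$ with $w \nsim x$ and $w \sim z$ would make $E$ decomposable by Lemma \ref{lem11}, giving $(c)$.

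For $(1)$, direction $\Leftarrow$: assume $(a)$, $(b)$, $(c)$. First I exhibit subspaces of the required types: $[(1,0),(0,1),(x,0)] \cong ([1,x],|\cdot|_t) \oplus (K,|\cdot|_s)$ is of type $\mathrm{\Rnum{2}}_3$, and by $(b)$ together with Theorem \ref{.4thm3}(1) there is $y' \in [1,x,y] \setminus K$ with $y' \nsim x$; then $d(y',K) = d(y',[1,x]) = \delta$, by $(c)$ $y' \nsim z$, and combined with $(a)$, Theorem \ref{.4thm2}(1) yields that $[(1,0),(0,1),(y',z)] \subseteq E$ is of type $\mathrm{\Rnum{3}}_3$. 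Among types $\mathrm{\Rnum{1}}_4$--$\mathrm{\Rnum{17}}_4$, only $\mathrm{\Rnum{6}}_4$ and $\mathrm{\Rnum{15}}_4$ admit both a $\mathrm{\Rnum{2}}_3$ and a $\mathrm{\Rnum{3}}_3$ three-dim subspace: types $\mathrm{\Rnum{8}}_4$--$\mathrm{\Rnum{12}}_4$ have $\dim_{K^{\lor}} E^{\lor} = 1$, so all their $3$-dim subspaces have $\dim_{K^{\lor}} \le 1$; the decomposable types $\mathrm{\Rnum{1}}_4$--$\mathrm{\Rnum{5}}_4, \mathrm{\Rnum{7}}_4$ miss one of the two by Lemmas \ref{lem2}, \ref{lem3}, \ref{lem5}, \ref{lem6}; and $\mathrm{\Rnum{13}}_4, \mathrm{\Rnum{14}}_4$ admit only type $\mathrm{\Rnum{1}}_3$ and type $\mathrm{\Rnum{2}}_3$ respectively by Theorem \ref{thm12}(3) and Corollary \ref{cor2}. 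Condition $(c)$ rules out $\mathrm{\Rnum{6}}_4$: if $E$ were of type $\mathrm{\Rnum{6}}_4$, then under $(a)$, $(b)$ the unique $\mathrm{\Rnum{2}}_3$ factor forces a decomposition $E \cong ([1,x],|\cdot|_t) \oplus ([1,w],|\cdot|_s)$, and tracing $(y,z)$ through this isomorphism yields a $w' \in [1,x,y] \setminus K$ with $w' \nsim x$ and $w' \sim z$, contradicting $(c)$.

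For $(2)$, by Theorem \ref{thm13} $E'$ is also of type $\mathrm{\Rnum{15}}_4$, and the analysis in $(1)$ shows every $\mathrm{\Rnum{2}}_3$ three-dim subspace of $E$ is isometric to $([1,x],|\cdot|_t) \oplus (K,|\cdot|_s)$. For a $2$-dim subspace $F$ of $E$: if $\dim_{K^{\lor}} F^{\lor} = 2$, then $F$ has an orthogonal base and Theorem \ref{.1thm1} together with the embedding forces $F \cong (K^2, |\cdot|_t \times |\cdot|_s)$; if $\dim_{K^{\lor}} F^{\lor} = 1$, then $F$ lies inside a $\mathrm{\Rnum{2}}_3$ three-dim subspace, hence $F \cong ([1,x], |\cdot|_t)$. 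For $(3)$, I apply Lemma \ref{selem1}: by $(2)$ and Theorem \ref{.4thm4}, every $3$-dim subspace of $E$ satisfies $(\mathrm{SE})$ iff $t \notin V_K$; by Theorem \ref{thm13} together with the parameter identification via Theorem \ref{.4thm2}(3) (which gives the norms on $E'$ as $1/(td(x,K))$ and $1/s$), each $2$-dim subspace $F$ of $E'$ with $\dim_{K^{\lor}} F^{\lor} = 1$ satisfies $\|F\| \cap V_K = \emptyset$ iff $s \notin V_K$, yielding $(3)$.

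The main obstacle is establishing the converse of Lemma \ref{lem11} used to exclude $\mathrm{\Rnum{6}}_4$ in the $\Leftarrow$ direction of $(1)$: that under $(a)$, $(b)$, if $E$ is of type $\mathrm{\Rnum{6}}_4$ then there exists $w \in [1,x,y] \setminus K$ with $w \nsim x$ and $w \sim z$. I would handle this by using the uniqueness of the $\mathrm{\Rnum{2}}_3$ factor to pin down the decomposition as $([1,x],|\cdot|_t) \oplus ([1,w],|\cdot|_s)$ in the concrete embedding $E \subseteq ((K^{\lor})^2, |\cdot|_t \times |\cdot|_s)$, then reading off the image of $(y,z)$ in the second factor and applying Theorem \ref{thmh3} and Lemma \ref{syslem1} to locate the required $w$ within $[1,x,y]$.
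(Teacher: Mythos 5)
Your direction $\Rightarrow$ of $(1)$ matches the paper's argument (Lemma \ref{lem7} for $(a)$, Lemma \ref{lem8}/Theorem \ref{thm11} for $(b)$, Lemma \ref{lem11} for $(c)$). But your direction $\Leftarrow$ has a genuine gap, and you have located it yourself: the exclusion of type $\mathrm{\Rnum{6}}_4$. Your elimination among the seventeen types correctly reduces the problem to distinguishing $\mathrm{\Rnum{15}}_4$ from $\mathrm{\Rnum{6}}_4$, but the step "tracing $(y,z)$ through this isomorphism yields a $w' \in [1,x,y]\setminus K$ with $w' \nsim x$ and $w' \sim z$" is only a plan, not a proof: the hypothetical isometry $E \cong ([1,x],|\cdot|_t)\oplus([1,w],|\cdot|_s)$ is abstract, and extracting from it an element of the concrete subspace $[1,x,y]$ of $K^{\lor}$ equivalent to $z$ is precisely the content that needs an argument (it is essentially a converse to Lemma \ref{lem11}). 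The paper closes this differently: it first carries out an exhaustive case analysis of the $2$-dimensional subspaces of $E$ (every such subspace lies in one of the three families $[(1,0),(0,1),(x,0)]$, $[(1,0),(0,1),(y+\lambda x,z)]$, $[(0,1),(x+\mu,0),(y+\lambda,z)]$, and in each case conditions $(a)$--$(c)$ force it to be $(K^2,|\cdot|_t\times|\cdot|_s)$ or $([1,x],|\cdot|_t)$), and then observes that a space of type $\mathrm{\Rnum{6}}_4$ must contain two non-isometric $2$-dimensional subspaces without orthogonal base, which is impossible. So the subspace computation is not a corollary of the type identification, as your write-up of $(2)$ assumes; it is the engine that makes the type identification work.

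This gap propagates into $(2)$ and $(3)$. In $(2)$, "the analysis in $(1)$ shows every $\mathrm{\Rnum{2}}_3$ three-dim subspace of $E$ is isometric to $([1,x],|\cdot|_t)\oplus(K,|\cdot|_s)$" is unsupported: your $(1)$ exhibits only one such subspace, and pinning the $K$-factor to the scale $s$ (rather than some other $t'$) requires the paper's observation that $E$ is an immediate extension of any $3$-dimensional subspace $F$ with $\dim_{K^{\lor}}F^{\lor}=2$, hence $\|E\|=\|F\|$. Likewise "Theorem \ref{.1thm1} together with the embedding forces $F\cong(K^2,|\cdot|_t\times|\cdot|_s)$" does not follow: an orthogonal pair in $((K^{\lor})^2,|\cdot|_t\times|\cdot|_s)$ could a priori have both norms in $tV_K$ or both in $sV_K$; ruling this out is again the case analysis. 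Finally, in $(3)$ the assertion that every $3$-dimensional subspace satisfies $(\mathrm{SE})$ iff $t\notin V_K$ is false for the subspaces of type $\mathrm{\Rnum{3}}_3$, which by Theorem \ref{.4thm2} require $t,s\notin V_K$; your final condition is correct, but the verification of hypothesis $(1)$ of Lemma \ref{selem1} must account for both kinds of $3$-dimensional subspaces (Lemma \ref{lem12}).
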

\begin{proof}
  First, we prove $(1)$ and $(2)$. Suppose that $E$ is of type $\mathrm{\Rnum{15}}_4$. Then by the same proof as that of Theorem \ref{thm13}, $E$ satisfies $(a)$ and $(b)$. Moreover, by Lemma \ref{lem11}, we have $(c)$. \par
  Conversely, suppose that $(a)$, $(b)$ and $(c)$ hold. We shall prove that each $2$-dimensional subspace of $E$ is isometrically isomorphic to either
  \begin{align*}
    (K^2,|\cdot|_t \times |\cdot|_s) \ \text{or} \ ([1,x],|\cdot|_t).
  \end{align*}
  Let $F$ be a $2$-dimensional subspace of $E$. Then $F$ is contained in one of the following $3$-dimensional subspaces of $E$: 
  \begin{itemize}
    \item[$(A)$] $[(1,0),(0,1),(x,0)]$. 
    \item[$(B)$] $[(1,0),(0,1),(y + \lambda x,z)]$ $(\lambda \in K)$.
    \item[$(C)$] $[(0,1), (x + \mu, 0),(y + \lambda, z)]$ $(\lambda, \mu \in K)$.
  \end{itemize}
  In the case $(A)$, we are done from Theorem \ref{.4thm6}. In the case $(B)$, we observe that the following equivalence holds:
  \begin{align*}
    y + \lambda x \nsim x \Leftrightarrow t d(y + \lambda x, K) = t \delta.
  \end{align*}
  Hence, if $y + \lambda x \nsim x$, then by Theorem \ref{.4thm2} and the condition $(c)$, $[(1,0),(0,1),(y + \lambda x,z)]$ is of type $\mathrm{\Rnum{3}}_3$ and $F$ is isometrically isomorphic to $(K^2,|\cdot|_t \times |\cdot|_s)$. If $y + \lambda x \sim x$, then $t d(y + \lambda x, K) > t \delta$, and therefore, we have
  \begin{align*}
    [(1,0),(0,1),(y + \lambda x,z)] \cong ([1,y + \lambda x],|\cdot|_t) \oplus (K,|\cdot|_s) \cong ([1,x],|\cdot|_t) \oplus (K,|\cdot|_s)
  \end{align*}
  by Remark \ref{sysrem1}. Thus, by Theorem \ref{.4thm6}, $F$ is isometrically isomorphic to either
    \begin{align*}
    (K^2,|\cdot|_t \times |\cdot|_s) \ \text{or} \ ([1,x],|\cdot|_t).
  \end{align*}
  Finally, consider the case $(C)$. Since $d(\tfrac{y + \lambda}{x + \mu},[1, \tfrac{1}{x + \mu}]) = \delta / |x + \mu|$, we have
  \begin{align*}
    \frac{y + \lambda}{x + \mu} \nsim \frac{1}{x + \mu} \Leftrightarrow d(\frac{y + \lambda}{x + \mu}, K) = \frac{\delta}{|x + \mu|}.
  \end{align*}
  Moreover, by Theorem \ref{.4thm3} and the condition $(c)$, $\tfrac{y + \lambda}{x + \mu} \nsim \tfrac{1}{x + \mu}$ implies $\tfrac{y + \lambda}{x + \mu} \nsim z$. Furthermore, we have
  \begin{align*}
    [(0,1), (x + \mu, 0),(y + \lambda, z)] \cong ([(1,0),(0,1),(\frac{y + \lambda}{x + \mu}, z)], |\cdot|_{t |x + \mu|} \times |\cdot|_s).
  \end{align*}
  Now, we can apply a proof similar to that in the case $(B)$. \par
  Thus, we have proved each $2$-dimensional subspace of $E$ is isometrically isomorphic to either
  \begin{align*}
    (K^2,|\cdot|_t \times |\cdot|_s) \ \text{or} \ ([1,x],|\cdot|_t).
  \end{align*}
  Let us prove that $E$ is of type $\mathrm{\Rnum{15}}_4$. Indeed, since $([1,x, y],|\cdot|_t)$ is of type $\mathrm{\Rnum{4}}_3$, there exists $\lambda \in K$ such that $y + \lambda x \nsim x$. Then $d(y + \lambda x,K) = \delta$ and therefore a subspace $[(1,0),(0,1),(y + \lambda x,z)]$ is of type $\mathrm{\Rnum{3}}_3$. Obviously, $E$ contains a $3$-dimensional subspace of type $\mathrm{\Rnum{2}}_3$. It remains to prove that $E$ is indecomposable. Suppose that $E$ is decomposable. Then since $\dim_{K^{\lor}} E^{\lor} = 2$ and $E$ contains a $3$-dimensional subspace of type $\mathrm{\Rnum{3}}_3$, by Lemmas \ref{lem5} and \ref{lem6}, $E$ must be of type $\mathrm{\Rnum{6}}_4$. However, each $2$-dimensional subspace without an orthogonal base of $E$ is isometrically isomorphic to $([1,x],|\cdot|_t)$. Therefore, $E$ cannot be of type $\mathrm{\Rnum{6}}_4$ (see Theorem \ref{thm2}), which is a contradiction. Thus, $(1)$ is proved. \par
  Now, we shall prove that each $3$-dimensional subspace, of type $\mathrm{\Rnum{2}}_3$, of $E$ is isometrically isomorphic to
  \begin{align*}
    ([1,x],|\cdot|_t) \oplus (K,|\cdot|_s).
  \end{align*}
  Indeed, let $F$ be a $3$-dimensional subspace, of type $\mathrm{\Rnum{2}}_3$, of $E$. Since each $2$-dimensional subspace without an orthogonal base of $E$ is isometrically isomorphic to $([1,x],|\cdot|_t)$, we have
  \begin{align*}
    F \cong ([1,x],|\cdot|_t) \oplus (K,|\cdot|_{t'}) \ \text{for some $t' \in \mathbb{R}_{> 0}$}.
  \end{align*}
 Since $\dim_{K^{\lor}} E^{\lor} = \dim_{K^{\lor}} F^{\lor}$, $E$ is an immediate extension of $F$. In particular, we have $\|E\| = \|F\|$. Hence, we obtain $s / t' \in V_K$ and $(2)$ is proved. \par
  Finally, we shall prove $(3)$. We will use Lemma \ref{selem1}. Suppose $t, s \notin V_K$. Then $\|E\| \cap V_K = \emptyset$. Let $F$ be a $3$-dimensional subspace of $E$. If $F$ is of type $\mathrm{\Rnum{2}}_3$, then by $(2)$ and Theorem \ref{.4thm4}, $F$ satisfies (SE). If $F$ is of type $\mathrm{\Rnum{3}}_3$, then by Theorem \ref{.4thm2}, $F$ satisfies (SE). Thus, by Lemma \ref{lem12} below, each $3$-dimensional subspace of $E$ satisfies (SE). Now, we will prove that the condition $(2)$ in Lemma \ref{selem1} is satisfied. By Lemma \ref{lem9} and Theorem \ref{thm13}, $E'$ is of type $\mathrm{\Rnum{15}}_4$ and contains a $3$-dimensional subspace, isometrically isomorphic to 
  \begin{align*}
    (K,|\cdot|_{\frac{1}{t \alpha}}) \oplus ([1,z],|\cdot|_{\frac{1}{s \gamma}}) \ \text{where $ \alpha = d(x,K)$}.
  \end{align*}
  Then by $(2)$, each $2$-dimensional subspace without an orthogonal base is isometrically isomorphic to $([1,z],|\cdot|_{\frac{1}{s \gamma}})$. Since $d(z,K)/s\gamma = 1/s \notin V_K$, by Lemma \ref{selem1}, $E$ satisfies (SE). \par
  Conversely, suppose that $E$ satisfies (SE). Then each $3$-dimensional subspace of $E$ satisfies (SE). Let $\lambda \in K$ be such that $y + \lambda x \nsim x$. Then a $3$-dimensional normed space $[(1,0),(0,1),(y + \lambda x,z)]$ of $\mathrm{\Rnum{3}}_3$ satisfies (SE). Hence by Theorem \ref{.4thm2}, we have $t, s \notin V_K$. Thus, the proof is complete.
\end{proof}

\begin{lem} \label{lem12}
  Let $E$ be an indecomposable $4$-dimensional normed space such that $\dim_{K^{\lor}} E^{\lor} = 2$. Then each $3$-dimensional subspace of $E$ is either of type $\mathrm{\Rnum{2}}_3$ or $\mathrm{\Rnum{3}}_3$.
\end{lem}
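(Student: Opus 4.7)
The plan is to rule out, for any $3$-dimensional subspace $F \subseteq E$, both possibilities $\dim_{K^{\lor}} F^{\lor} = 3$ and $\dim_{K^{\lor}} F^{\lor} = 1$; this will force $\dim_{K^{\lor}} F^{\lor} = 2$, i.e., $F$ is of type $\mathrm{\Rnum{2}}_3$ or $\mathrm{\Rnum{3}}_3$. The first case I would handle immediately via the first lemma of Section 1.2 (orthogonal subsets of $F$ remain orthogonal in $E^{\lor}$ with respect to the $K^{\lor}$-structure): a maximal orthogonal subset of $F$, viewed inside $E^{\lor}$, has cardinality at most $\dim_{K^{\lor}} E^{\lor} = 2$.

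The substance is the case $\dim_{K^{\lor}} F^{\lor} = 1$. I plan to show that this forces $E$ to be decomposable, contradicting the hypothesis. The key step is the following: if $E$ were an immediate extension of $F$, then by \cite[Lemma 4.42]{van} the identity map $F \to F^{\lor}$ would extend to an isometric embedding $T : E \hookrightarrow F^{\lor}$. Since $F^{\lor}$ is spherically complete and remains an immediate extension of $T(E)$ (any $y \in F^{\lor} \setminus \{0\}$ admits $v \in F \subseteq T(E)$ with $\|y - v\| < \|y\|$, using only that $F^{\lor}$ is immediate over $F$), uniqueness of the spherical completion yields $F^{\lor} = E^{\lor}$ and hence $\dim_{K^{\lor}} E^{\lor} = 1$, a contradiction. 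Therefore $E$ is not an immediate extension of $F$, so there exists a nonzero $x \in E$ with $\|x - v\| \ge \|x\|$ for every $v \in F$. Such $x$ necessarily lies in $E \setminus F$, and taking $v = 0$ in the displayed inequality shows $d(x, F) = \|x\|$.

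It then remains to verify that $E \cong F \oplus [x]$ as a $K$-normed space, which contradicts the indecomposability of $E$ (note $E = F + Kx$ by a dimension count). For $v \in F$ and $\lambda \in K$ with $\lambda \neq 0$, translation invariance gives
\begin{align*}
  \|v + \lambda x\| \ge d(v + \lambda x, F) = |\lambda|\, d(x, F) = |\lambda|\, \|x\|,
\end{align*}
while the ultrametric inequality gives $\|v + \lambda x\| \le \max(\|v\|, |\lambda|\|x\|)$; these two bounds combine to yield equality $\|v + \lambda x\| = \max(\|v\|, |\lambda|\|x\|)$ (the cases $\|v\| \neq |\lambda|\|x\|$ follow from strict ultrametric, and the case of equality is obtained by squeezing). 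The only step that I expect to handle with some care is the spherical-completion identification $E^{\lor} = F^{\lor}$ for immediate extensions; once that is in place, the orthogonal decomposition $E = F \oplus [x]$ falls out of the ultrametric with no further work.
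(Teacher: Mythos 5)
Your proof is correct. It differs from the paper's in how the orthogonal complement is produced in the key case $\dim_{K^{\lor}}F^{\lor}=1$. The paper argues in the opposite direction: it picks a nonzero $u\in F$, extends $\{u\}$ to an orthogonal pair $\{u,v\}$ in $E$ (possible since $\dim_{K^{\lor}}E^{\lor}=2$), and then uses that $F$ is an immediate extension of $[u]$ (a consequence of $\dim_{K^{\lor}}F^{\lor}=1$) to upgrade the orthogonality of $v$ to $u$ into orthogonality of $v$ to all of $F$, yielding $E\cong F\oplus[v]$ directly. You instead show that $E$ cannot be an immediate extension of $F$ --- via the identification of $E^{\lor}$ with $F^{\lor}$ for immediate extensions and the resulting equality of $\dim_{K^{\lor}}$ --- and then extract the orthogonal vector $x$ with $d(x,F)=\|x\|$ from the negation of immediacy; the ultrametric squeeze you describe does give $\|v+\lambda x\|=\|v\|\lor\|\lambda x\|$, so $E\cong F\oplus[x]$ and the same contradiction with indecomposability follows. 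Your route is a bit more modular (it isolates the general fact that a codimension-one subspace over which $E$ is not immediate splits off orthogonally, with $\dim_{K^{\lor}}F^{\lor}=1$ used only to rule out immediacy), at the cost of invoking uniqueness of the spherical completion; the paper's route avoids that and needs only the elementary propagation of orthogonality through immediate extensions. Both treatments of the case $\dim_{K^{\lor}}F^{\lor}=3$ are the same routine observation.
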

\begin{proof}
  Let $F$ be a $3$-dimensional subspace of $E$. Obviously, we have $\dim_{K^{\lor}} F^{\lor} \le 2$. Suppose $\dim_{K^{\lor}} F^{\lor} = 1$. Let $u \in F$ be a non-zero element. Since $\dim_{K^{\lor}} E^{\lor} = 2$, there exists $v \in E$ such that $\{u,v\}$ is an orthogonal set. Thus, since $F$ is an immediate extension of $[u]$, we have $E \cong F \oplus [v]$, which is a contradiction.
\end{proof}

By Theorem \ref{thm14}, we can identify $3$-dimensional subspaces, of type $\mathrm{\Rnum{2}}_3$, of a $4$-dimensional normed space of type $\mathrm{\Rnum{15}}_4$. In fact, we can identify $3$-dimensional subspaces of type $\mathrm{\Rnum{3}}_3$.

\begin{lem} \label{lem15}/
  Let $x, y, z \in K^{\lor} \setminus K$, $\lambda \in K$ and $t, s \in \mathbb{R}_{> 0}$. Put $\gamma = d(z, K)$ and $\delta = d(y, [1, x])$. Suppose that the following four conditions hold: \\
  $(1)$ $t \delta = s \gamma$. \\
  $(2)$ The $3$-dimensional normed space $([1, x, y],|\cdot|_t)$ is of type $\mathrm{\Rnum{4}}_3$. \\
  $(3)$ $w \nsim z$ for each $w \in [1, x, y] \setminus K$ satisfying $w \nsim x$. \\
  $(4)$ The $3$-dimensional normed space $F := ([(1,0),(x, \lambda), (y,z)],|\cdot|_t \times |\cdot|_s)$ is of type $\mathrm{\Rnum{3}}_3$. \par
  Then there exists $\mu \in K$ such that $y + \mu x \nsim x$ and
  \begin{align*}
    F \cong ([(1,0),(0,1),(y + \mu x,z)], |\cdot|_t \times |\cdot|_{s}).
  \end{align*}
\end{lem}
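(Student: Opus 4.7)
The plan is to prove Lemma~\ref{lem15} in three stages: first extract the desired scalar $\mu$ from hypothesis~(2); then rewrite $F$ in a basis adapted to the target space $G$ and verify $G$ is of type $\mathrm{III}_3$; and finally construct an explicit isometric isomorphism $T:F\to G$.

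For the first stage, condition~(2) says $([1,x,y],|\cdot|_t)$ is of type $\mathrm{IV}_3$, so Theorem~\ref{.4thm3}(1) yields two $\sim$-inequivalent elements of $[1,x,y]\setminus K$. Since $\sim$ is an equivalence relation and $x\in[1,x,y]\setminus K$, at least one of those elements, call it $w$, satisfies $w\nsim x$. Writing $w=p+qx+ry$ with $p,q,r\in K$, the coefficient $r$ must be nonzero (otherwise $w\in[1,x]$ would force $w\in K$ or $w\sim x$, a contradiction), and therefore $y+(q/r)x=(w-p)/r\sim w\nsim x$. Set $\mu:=q/r$. For the second stage, put $y^*:=y+\mu x$ and $z^*:=z+\mu\lambda$. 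Because $(y^*,z^*)=(y,z)+\mu(x,\lambda)\in F$, the triple $\{(1,0),(x,\lambda),(y^*,z^*)\}$ is a $K$-basis of $F$. From $y^*\nsim x$ I get $d(y^*,K)=d(y^*,[1,x])=\delta$; from $z^*-z=\mu\lambda\in K$ I get $z^*\sim z$ and $d(z^*,K)=\gamma$; and condition~(3) applied to $w=y^*$ yields $y^*\nsim z$, so $y^*\nsim z^*$ by transitivity of $\sim$. Setting $G:=[(1,0),(0,1),(y+\mu x,z)]$, one has $G=[(1,0),(0,1),(y^*,z^*)]$ as $K$-subspaces of $((K^\vee)^2,|\cdot|_t\times|\cdot|_s)$, because $(y^*,z^*)-(y^*,z)=\mu\lambda(0,1)$. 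Theorem~\ref{.4thm2}(1) then shows that $G$ is of type $\mathrm{III}_3$: indeed, $y^*\nsim z^*$ and $t\delta=s\gamma$.

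For the third and main stage, I need an isometry $T:F\to G$. Hypothesis~(4) combined with Theorem~\ref{.4thm2}(3) implies that every $2$-dimensional subspace of $F$ admits an orthogonal base. Applied to $V:=[(1,0),(x,\lambda)]$, this forces $s|\lambda|>t\alpha$, where $\alpha:=d(x,K)$; for if $s|\lambda|\le t\alpha$ then a direct computation shows the norm on $V$ collapses onto $t$ times the first coordinate, so $V$ is isometric to $([1,x],|\cdot|_t)$, which has no orthogonal base. Consequently I may pick $a\in K$ satisfying $t|x-a|\le s|\lambda|$, and define $T$ on the basis of $F$ by
\[
T((1,0))=(1,0),\quad T((x,\lambda))=(a,\lambda),\quad T((y^*,z^*))=(y^*,z^*),
\]
extended $K$-linearly. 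Since $(a,\lambda)=a(1,0)+\lambda(0,1)\in G$, the map $T$ is a $K$-linear bijection onto $G$. To verify that $T$ is an isometry, one writes $v=\alpha(1,0)+\beta(x,\lambda)+\gamma(y^*,z^*)$ and observes that the $s$-components of $\|v\|_F$ and $\|T(v)\|_G$ both equal $|\beta\lambda+\gamma z^*|_s$, so the check reduces to comparing the $t$-components $|\alpha+\beta x+\gamma y^*|_t$ and $|\alpha+\beta a+\gamma y^*|_t$, which differ only by the error term $\beta(x-a)$. A case analysis on the relative sizes of $|\beta(x-a)|$, $|\alpha+\gamma y^*|_t$, and $|\beta\lambda+\gamma z^*|_s$, using the orthogonality of $\{\pi(x),\pi(y^*)\}$ in $K^\vee/K$ (via Theorem~\ref{thmh4}, from $y^*\nsim x$) together with the balance $t\delta=s\gamma$, then shows the pointwise maxima agree in every case. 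The main technical obstacle lies precisely in this case analysis: one must track how the ultrametric cancellation in the $t$-component interacts with the $s$-component, and in the subtle sub-case where $|z^*|\in V_K$ permits the balanced cancellation $|\gamma z^*|=|\beta\lambda|$, one must refine the choice of $a$ so that the stronger bound $|x-a|\le\delta|\lambda|/|z^*|$ holds—an achievability ensured by invariant consistency across the various $2$-dimensional subspaces of $F$ (also a consequence of~(4)), which forces $|\lambda|,|z^*|\in V_K$.
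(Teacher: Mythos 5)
Your route is genuinely different from the paper's: you try to build the isometry directly, replacing the basis vector $(x,\lambda)$ of $F$ by $(a,\lambda)$ for a well-chosen $a\in K$, whereas the paper works through the two quotients $F/[(1,0)]$ and $F/[(x,\lambda)]$ and identifies the target only indirectly. Your first two stages are fine (in particular $\lambda\neq 0$, $s|\lambda|>t\alpha$ with $\alpha=d(x,K)$, and the verification that $G$ is of type $\mathrm{\Rnum{3}}_3$). The gap is exactly where you locate it, but your resolution does not close it. Write $z^{*}=z+\mu\lambda$ and take $v=c_1(1,0)+c_2(x,\lambda)+(y^{*},z^{*})$. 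The two norms can disagree only when $A:=|c_1+c_2x+y^{*}|$ and $B:=|c_1+c_2a+y^{*}|$ differ, in which case $\max(A,B)=|c_2|\,|x-a|$ and equality of $\max(tA,sC)$ and $\max(tB,sC)$ with $C=|c_2\lambda+z^{*}|$ forces $t|c_2|\,|x-a|\le sC$. In the balanced case $|c_2|=|z^{*}|/|\lambda|$ with $C$ arbitrarily close to $\gamma$, this requires $|x-a|\le \delta|\lambda|/|z^{*}|$, hence (since $|x-a|>\alpha$ is unavoidable) the \emph{strict} inequality $\alpha|z^{*}|<\delta|\lambda|$. Hypothesis $(4)$ only yields $\alpha|z^{*}|\le\delta|\lambda|$, and for your $\mu$ --- chosen solely so that $y+\mu x\nsim x$, which only bounds $|\mu|\alpha\le\delta$ --- equality can occur; in that case, taking $|c_2|=|z^{*}|/|\lambda|$ with $C$ close to $\gamma$ and $c_1$ making $A$ close to $\delta$, one gets $\|Tv\|=t|c_2|\,|x-a|>t\delta\ge\max(tA,sC)=\|v\|$ for every choice of $a$, so $T$ is not an isometry. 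Your closing appeal to the fact that $|\lambda|,|z^{*}|\in V_K$ is vacuous: both memberships hold automatically and do not produce the needed strict inequality.

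The missing idea is that $\mu$ must be adapted to $z$, not only to $x$ and $y$: one needs $\mu\lambda$ to be a near-best approximation of $-z$, so that $|z^{*}|=|z+\mu\lambda|$ is close to $\gamma$; then $\alpha|z^{*}|<\delta|\lambda|$ does hold (using $t\delta=s\gamma$ and $s|\lambda|>t\alpha$) and your perturbation argument can be pushed through. This is precisely what the paper extracts from its computation of the quotient $F/[(x,\lambda)]$, where the relevant scalar comes out with $|\mu\lambda|=|z|$ and $|z+\mu\lambda|$ strictly below $\delta|\lambda|/|x|$. Whether your particular $\mu$ still satisfies the conclusion is a separate question (one would have to compare the candidate targets for different admissible $\mu$ via the structure theorem for type $\mathrm{\Rnum{4}}_3$ duals, using that $|\mu-\mu'|\alpha\le\delta$); but in any case the explicit map you propose does not establish it.
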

\begin{proof}
  By the condition $(4)$, each $2$-dimensional subspace of $F$ has an orthogonal base. Therefore, there exists $\lambda_0 \in K$ such that $t |x + \lambda_0| < s |\lambda|$. In particular, $\lambda \neq 0$. Put $x' = x + \lambda_0$. By the condition $(2)$, there exist $\mu_0, \eta_0 \in K$ such that
  \begin{align*}
    y + \mu_0 x' + \eta_0 \nsim x' \ \text{and} \ \frac{y + \mu_0 x' + \eta_0}{x'} \nsim \frac{1}{x'}.
  \end{align*}
  Thus, without loss of generality, we may assume
  \begin{align*}
    \lambda = 1, t |x| < s, y \nsim x \ \text{and} \ \frac{y}{x} \nsim \frac{1}{x}.
  \end{align*}
  Then we have
  \begin{align*}
    d(y,K) = \delta \ \text{and} \ d(\frac{y}{x}, K) = \frac{\delta}{|x|}.
  \end{align*}
  We set $(E, \|\cdot\|) := ([(1,0), (0,1), (x,0), (y,z)],|\cdot|_{t} \times |\cdot|_{s})$ and consider $F$ as a subspace of $E$. By Corollary \ref{.4cor2}, we have $\dim_{K^{\lor}} (F / [(1,0)])^{\lor} = 1$. Hence, by Lemmas \ref{lemh3} and \ref{lem9}, we obtain
  \begin{align*}
    F / [(1,0)] \cong ([1,z],|\cdot|_s) \ \text{and} \ t \alpha \le \frac{s \gamma}{|z|}, \text{hence} \ |z| \alpha \le \delta
  \end{align*}
  where $\alpha = d(x,K)$. We note that by Theorem \ref{systhm3}, we obtain the explicit isometry from $F / [(1,0)]$ to $([1,z],|\cdot|_s)$. Therefore, since $\dim_{K^{\lor}} F^{\lor} = 2$ and $\{(1,0),(x,1)\}$ is an orthogonal set, there exist $x_0 \in K^{\lor} \setminus K$ and a linear isometry
  \begin{align*}
    F &\to ([(1,0),(0,1),(x_0, z)], |\cdot|_t \times |\cdot|_s) \\
    (1,0) &\mapsto (1,0) \\
    (x,1) &\mapsto (0,1) \\
    (y,z) &\mapsto (x_0, z).
  \end{align*}
  Finally, we consider $F / [(x, 1)]$. Obviously, we have
  \begin{align*}
    F / [(x, 1)] \cong \big([(\tfrac{1}{x}, 0), (1, 1), (\tfrac{y}{x}, z)], |\cdot|_{t |x|} \times |\cdot|_s \big) \big/ [(1, 1)]
  \end{align*}
  We consider $([(\tfrac{1}{x}, 0), (1, 1), (\tfrac{y}{x}, z)], |\cdot|_{t |x|} \times |\cdot|_s)$ as a subspace of a $4$-dimensional normed space
  \begin{align*}
    E_0 := ([(1,0), (0,1), (\tfrac{1}{x}, 0), (\tfrac{y}{x}, z)], |\cdot|_{t |x|} \times |\cdot|_s).
  \end{align*}
  Let $\pi : E_0 \to E_0 / [(1,1)]$ be the canonical quotient map. By Lemma \ref{lemh4}, the map
  \begin{align*}
    [(1,0),(0,1),(\tfrac{1}{x},0)] / [(1,1)] &\to ([1, \tfrac{1}{x}], |\cdot|_{t |x|}) \\
    \pi((1,0)) &\mapsto 1 \\
    \pi((\tfrac{1}{x}, 0)) &\mapsto \tfrac{1}{x}
  \end{align*}
  gives a linear isometry. Moreover, by Theorem \ref{.4thm3} and the conditions $(2)$ and $(3)$, we have $\tfrac{y}{x} \nsim z$. Hence, by Corollary \ref{.4cor3}, the map
  \begin{align*}
    [(1,0),(0,1),(\tfrac{y}{x},z)] / [(1,1)] &\to ([1, \tfrac{y}{x} - z], |\cdot|_{\tfrac{s \gamma}{\beta}}) \\
    \pi((1,0)) &\mapsto 1 \\
    \pi((\tfrac{y}{x}, z)) &\mapsto \tfrac{y}{x} - z
  \end{align*}
  gives a linear isometry where $\beta = d(\tfrac{y}{x} - z, K)$. Now, by the choice of $x, y$, we have
  \begin{align*}
    d(z,K) = \gamma = \frac{t \delta}{s} < \frac{\delta}{|x|} = d(\frac{y}{x}, K).
  \end{align*}
  Hence, $s \gamma /\beta = t |x|$ and there exists $\mu \in K$ with $|\mu| = |z|$ such that $|z - \mu| < d(\tfrac{y}{x},K)$. Therefore, the map
  \begin{align*}
    [(1,0),(0,1),(\tfrac{y}{x},z)] / [(1,1)] &\to ([1, \tfrac{y}{x}], |\cdot|_{t |x|}) \\
    \pi((1,0)) &\mapsto 1 \\
    \pi((\tfrac{y}{x}, z)) &\mapsto \tfrac{y}{x} - \mu
  \end{align*}
  gives a linear isometry. Since $\tfrac{y}{x} \nsim x$, $E_0 / [(1,1)]$ must be of type $\mathrm{\Rnum{4}}_3$. In particular, $\dim_{K^{\lor}} (E_0 / [(1,1)])^{\lor} = 1$. Finally, by Lemma \ref{syslem1}, there exists a linear isometry
  \begin{align*}
    E_0 / [(1,1)] &\to ([1, \tfrac{1}{x}, \tfrac{y}{x}], |\cdot|_{t |x|}) \\ 
    \pi((1,0)) &\mapsto 1 \\
    \pi((\tfrac{1}{x}, 0)) &\mapsto \tfrac{1}{x} \\
    \pi((\tfrac{y}{x}, z)) &\mapsto \tfrac{y}{x} - \mu.
  \end{align*}
  Consequently, we have a linear isometry
  \begin{align*}
    F / [(x,1)] &\to ([1, y - \mu x], |\cdot|_t) \\
    \pi'((1,0)) &\mapsto 1 \\
    \pi'((y, z)) &\mapsto y - \mu x
  \end{align*}
  where $\pi' : F \to F / [(x,1)]$ is the canonical quotient map. Since 
  \begin{align*}
    d(\mu x, K) = |\mu| d(x,K) = |z| \alpha \le \delta = d(y,K),
  \end{align*}
  we have $y - \mu x \nsim x$, which completes the proof. 
\end{proof}

\begin{thm} \label{thm20}
    Let $x, y, z \in K^{\lor} \setminus K$ and $t, s \in \mathbb{R}_{> 0}$. We set 
  \begin{align*}
    (E, \|\cdot\|) := ([(1,0), (0,1), (x,0), (y,z)],|\cdot|_{t} \times |\cdot|_{s}),
  \end{align*}
  and suppose that $E$ is of type $\mathrm{\Rnum{15}}_4$. Then each $3$-dimensional subspace of $E$, of type $\mathrm{\Rnum{3}}_3$, is isometrically isomorphic to a $3$-dimensional normed space of the form
  \begin{align*}
    ([(1,0),(0,1),(y + \lambda x, z)],|\cdot|_t \times |\cdot|_s) \ \text{with $\lambda \in K$ satisfying $y + \lambda x \nsim x$}.
  \end{align*}
\end{thm}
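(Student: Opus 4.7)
The plan is to reduce Theorem \ref{thm20} to Lemma \ref{lem15} via case analysis on the subspace $G \cap [(1,0),(x,0)]$. Let $G$ be a 3-dimensional subspace of $E$ of type $\mathrm{\Rnum{3}}_3$. The intersection $G \cap [(1,0),(x,0)]$ has dimension at least $1$ by dimension counting, and it cannot equal $[(1,0),(x,0)]$ since the latter is isometric to $([1,x],|\cdot|_t)$, which has no orthogonal base and so cannot sit inside a type $\mathrm{\Rnum{3}}_3$ space. Hence $G \cap [(1,0),(x,0)] = [(u,0)]$ for some $u \in [1,x] \setminus \{0\}$. After possibly scaling $u$ and replacing $x$ by $x + c$ for some $c \in K$ (neither of which affects $E$, the hypotheses of Theorem \ref{thm14}(1), or the target form), we may assume either $u = 1$ or $u = x$.

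Consider first $u = 1$, so $(1,0) \in G$. Arguing as in the proof of Theorem \ref{thm14}(1), $G \cap E_0 = [(1,0),(bx,1)]$ for some $b \in K$, and the third basis vector of $G$ reduces modulo $G \cap E_0$ to $(y + \mu x, z)$ for some $\mu \in K$. If $b = 0$, then $G$ is already of the target form. Otherwise, dividing $(bx,1)$ by $b$ gives $G = [(1,0),(x,1/b),(y + \mu x, z)]$, which is exactly the form $F_\lambda$ of Lemma \ref{lem15} with $\lambda = 1/b$ and with $y + \mu x$ playing the role of $y$. Hypotheses (1)--(3) of Lemma \ref{lem15} are preserved under $y \mapsto y + \mu x$ (since $d(y + \mu x,[1,x]) = \delta$, $[1,x,y + \mu x] = [1,x,y]$, and condition (c) is independent of $\mu$), and (4) is the assumption that $G$ is of type $\mathrm{\Rnum{3}}_3$. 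Hence Lemma \ref{lem15} yields $G \cong [(1,0),(0,1),(y + \widetilde\lambda x, z)]$ for some $\widetilde\lambda \in K$ with $y + \widetilde\lambda x \nsim x$.

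In the case $u = x$, so $(x,0) \in G$ but $(1,0) \notin G$, apply the isometry $\psi : ((K^{\lor})^2, |\cdot|_t \times |\cdot|_s) \to ((K^{\lor})^2, |\cdot|_{t|x|} \times |\cdot|_s)$ defined by $\psi(a,b) = (a/x, b)$. Then $\psi((x,0)) = (1,0) \in \psi(G)$, and $\psi(E) = [(1,0),(0,1),(1/x,0),(y/x,z)]$ has the same shape as $E$ with new data $\widetilde x = 1/x$, $\widetilde y = y/x$, $\widetilde z = z$, $\widetilde t = t|x|$, $\widetilde s = s$. Using $1/x \sim x$ (Lemma \ref{lemh2}) and the distance formulas $d(\widetilde y,[1,\widetilde x]) = \delta/|x|$ and $d(\widetilde z, K) = \gamma$, one checks that hypotheses (a)--(c) of Theorem \ref{thm14}(1) continue to hold in this new setup. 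So the previous case applies to $\psi(G)$, giving $G \cong \psi(G) \cong [(1,0),(0,1),((y + \widetilde\lambda)/x, z)]$ in the new ambient for some $\widetilde\lambda \in K$. To conclude one must identify this space with $[(1,0),(0,1),(y + \lambda x, z)]$ in the original ambient for some $\lambda \in K$: passing to duals via Corollary \ref{syscor5} (both duals have norm parameter $1/(t\delta)$) and invoking the classification of type $\mathrm{\Rnum{4}}_3$ spaces following Theorem \ref{.4thm3}, this reduces to exhibiting $w_1 \in [1,(y+\widetilde\lambda)/x,z]$ and $\lambda \in K$ with $w_1 \sim y + \lambda x$ (the generator $z$ handles itself). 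This final identification is the main obstacle: it requires tracking $\sim$-equivalence classes through the substitution $\widetilde x = 1/x$, $\widetilde y = y/x$, using Lemma \ref{lemh2} together with hypothesis (c) of Theorem \ref{thm14}(1) to produce the required $w_1$ and $\lambda$.
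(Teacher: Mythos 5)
Your overall strategy coincides with the paper's: split according to how the candidate subspace meets $[(1,0),(x,0)]$, feed the nontrivial configurations into Lemma \ref{lem15}, and translate the answer back to the stated normal form. Your case $u=1$ is complete and correct (it covers the paper's cases $(A)$--$(C)$), and the reduction of the case $u=x$ to the case $u=1$ via the rescaling $\psi(a,b)=(a/x,b)$ is also sound, since $\psi(E)$ is again of the standard form of Theorem \ref{thm14} and is of type $\mathrm{\Rnum{15}}_4$, so conditions $(a)$--$(c)$ hold automatically for the new data.

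The problem is the step you yourself label ``the main obstacle'': you never actually produce the element $w_1$ and the scalar $\lambda$, and the tools you point to are not the right ones. Lemma \ref{lemh2} only asserts $\frac{c+dx}{a+bx}\sim x$ for M\"obius transforms of a \emph{single} element $x$; it says nothing about the class of $\frac{y+\widetilde\lambda}{x}$ when $x$ and $y$ are independent modulo $K$, and condition $(c)$ of Theorem \ref{thm14} does not supply such an element either. What is actually needed is the quotient--representation fact established in the first paragraph of the proof of the theorem following Lemma \ref{syslem1} (and packaged in Theorem \ref{.4thm3}$(3)$): if $[\pi(x),\pi(y)]$ has an orthogonal base --- which here follows from $([1,x,y],|\cdot|_t)$ being of type $\mathrm{\Rnum{4}}_3$ --- then there exists $z'\in[1,x,y+\widetilde\lambda]=[1,x,y]$ with $z'\sim\frac{y+\widetilde\lambda}{x}$. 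Writing $z'=a+\mu_1x+\lambda_1 y$ and using that $\sim$ is stable under affine changes over $K$, one gets $z'\sim \lambda_1y+\mu_1x$ with $\lambda_1\neq 0$ (since $\frac{y+\widetilde\lambda}{x}\nsim\frac1x\sim x$ forces $z'\nsim x$). Even then one is not done: to pass from the rescaled ambient norm $|\cdot|_{t|x|}\times|\cdot|_s$ back to $|\cdot|_t\times|\cdot|_s$ one must check $|\lambda_1|=|x|^{-1}$, which the paper extracts from $d(\lambda_1y+\mu_1x,K)=|\lambda_1|\,d(y,[1,x])$ together with the equality of the quantities $t\,d(\cdot,K)$ forced by Theorems \ref{.4thm3} and \ref{thm14}; only after that can one divide by $\lambda_1$ to reach the form $y+\lambda x$ with $\lambda=\lambda_1^{-1}\mu_1$. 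Since this identification is precisely the content of the theorem in the case where the subspace does not contain $(1,0)$, leaving it as an acknowledged obstacle means the proof is not complete.
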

\begin{proof}
  Let $F$ be a $3$-dimensional subspace of $E$, of type $\mathrm{\Rnum{3}}_3$, and $u, v, w \in F$ be its basis. It suffices to consider the following four cases:
  \begin{itemize}
    \item[$(A)$] $u = (x,0)$, $v = (0,1)$, $w = (1,0)$. 
    \item[$(B)$] $u = (y + \lambda x,z)$, $v = (0,1)$, $w = (1,0)$ $(\lambda \in K)$. 
    \item[$(C)$] $u = (y,z + \lambda)$, $v = (x,\mu)$, $w = (1,0)$ $(\lambda, \mu \in K)$. 
    \item[$(D)$] $u = (y + \lambda,z)$, $v = (x + \mu,0)$, $w = (\eta, 1)$ $(\lambda, \mu, \eta \in K)$.
  \end{itemize}
  In the case $(A)$, obviously $F$ is not of type $\mathrm{\Rnum{3}}_3$. In the case $(B)$, by Theorem \ref{.4thm2} and the proof of Theorem \ref{thm14}, we have $y + \lambda x \nsim x$. \par
  Let us consider the case $(C)$. Since $E$ is of type $\mathrm{\Rnum{15}}_4$, the assumption in Lemma \ref{lem15} is satisfied. Hence, there exists $\mu_0 \in K$ such that $y + \mu_0 x \nsim x$ and  
  \begin{align*}
    F \cong ([(1,0),(0,1),(y + \mu_0 x, z)],|\cdot|_t \times |\cdot|_s). 
  \end{align*}
  Finally, in the case $(D)$, $F$ is isometrically isomorphic to 
  \begin{align*}
    ([(1,0), (\frac{\eta}{x + \mu},1), (\frac{y + \lambda}{x + \mu}, z)], |\cdot|_{t |x + \mu|} \times |\cdot|_{s}).
  \end{align*}
  If $\eta = 0$, then by Theorem \ref{.4thm2} and the proof of Theorem \ref{thm14}, we have $\tfrac{y + \lambda}{x + \mu} \nsim x$. If $\eta \neq 0$, since a $4$-dimensional normed space
  \begin{align*}
     ([(1,0), (0,1), (\frac{1}{x + \mu},1), (\frac{y + \lambda}{x + \mu}, z)], |\cdot|_{t |x + \mu|} \times |\cdot|_{s})
  \end{align*}
  is of type $\mathrm{\Rnum{15}}_4$, we can apply Lemma \ref{lem15}. Therefore, there exists $w \in [1, \tfrac{1}{x + \mu}, \tfrac{y + \lambda}{x + \mu}] \setminus K$ such that $w \nsim x$ and
  \begin{align*}
    F \cong ([(1,0), (0,1), (w, z), |\cdot|_{t |x + \mu|} \times |\cdot|_{s}).
  \end{align*}
  In both cases, by Theorem \ref{.4thm3}, there exist $\lambda_1, \mu_1 \in K$ such that $\lambda_1 y + \mu_1 x \nsim x$ and
  \begin{align*}
     F \cong ([(1,0), (0,1), (\lambda_1 y + \mu_1 x, z), |\cdot|_{t |x + \mu|} \times |\cdot|_{s}).
  \end{align*}
  Since $\lambda_1 y + \mu_1 x \nsim x$, we have 
  \begin{align*}
    d (\lambda_1 y + \mu_1 x, K) = d(\lambda_1 y + \mu_1 x,[1,x]) = |\lambda_1| d(y, [1, x]).
  \end{align*}
  Moreover, by Theorems \ref{.4thm3} and \ref{thm14}, we obtain $|\lambda_1| = |x + \mu|^{-1}$. Thus, $F$ is isometrically isomorphic to
  \begin{align*}
    ([(1,0), (0,1), (y + \lambda_1^{-1} \mu_1 x, z), |\cdot|_{t} \times |\cdot|_{s}),
  \end{align*}
  which completes the proof.
\end{proof}

By Theorem \ref{thm13}, the dual space of a normed space of type $\mathrm{\Rnum{15}}_4$ is also of type $\mathrm{\Rnum{15}}_4$. More precisely, we have the following.

\begin{thm}
  Let $x, y, z \in K^{\lor} \setminus K$ and $t, s \in \mathbb{R}_{> 0}$. We set 
  \begin{align*}
    (E, \|\cdot\|) := ([(1,0), (0,1), (x,0), (y,z)],|\cdot|_{t} \times |\cdot|_{s}),
  \end{align*}
  and put $\alpha = d(x, K)$ and $\gamma = d(z,K)$. Suppose $y \nsim x$ and that $E$ is of type $\mathrm{\Rnum{15}}_4$. Let $e_1, e_2, e_3, e_4 \in E'$ be the dual basis of $(y,z), (x,0), (0,1), (1,0)$. Then the map
  \begin{align*}
    E' &\to ([(1,0), (0,1), (z,0), (y,x)],|\cdot|_{\tfrac{1}{s \gamma}} \times |\cdot|_{\tfrac{1}{t \alpha}}) \\
    e_1 &\mapsto (1,0) \\
    e_2 &\mapsto (0,1) \\
    e_3 &\mapsto - (z,0) \\
    e_4 &\mapsto - (y,x)
  \end{align*}
  gives a linear isometry.
\end{thm}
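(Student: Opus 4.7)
The idea is to pin down $T$ by identifying it on two complementary annihilators inside $E'$ via quotient constructions, and then to verify global isometry by a direct ultrametric estimate using the orthogonality coming from $y\nsim x$.

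First I would analyze $[(1,0)]^{\perp}=[e_{1},e_{2},e_{3}]$, which is canonically isometric to $(E/[(1,0)])'$. Since $y\nsim x$ and (by Theorem \ref{thm14}(1)) $t\delta=s\gamma$, Lemma \ref{lem9}(1) gives an explicit linear isometry
\[
E/[(1,0)]\;\xrightarrow{\sim}\;(K,|\cdot|_{t\alpha})\oplus([1,z],|\cdot|_{s})
\]
sending $\pi((x,0)),\pi((0,1)),\pi((y,z))$ to $(1,0),(0,1),(0,z)$. Dualizing by Theorem \ref{thmh10} on the second factor yields $(E/[(1,0)])'\cong (K,|\cdot|_{1/(t\alpha)})\oplus([1,z],|\cdot|_{1/(s\gamma)})$; tracing the dual basis and swapping coordinates to match the target norm $|\cdot|_{1/(s\gamma)}\times|\cdot|_{1/(t\alpha)}$ confirms that the prescribed $T$ is an isometry on $[e_{1},e_{2},e_{3}]$ with the stated images $(1,0),(0,1),-(z,0)$.

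Next I would analyze $[(0,1)]^{\perp}=[e_{1},e_{2},e_{4}]$. By Lemma \ref{lem10}, $E/[(0,1)]\cong ([1,x,y],|\cdot|_{t})$, and this quotient is of type $\mathrm{\Rnum{4}}_{3}$ by Theorem \ref{thm14}(1). Corollary \ref{syscor5} with $n=2$ then supplies an explicit isometric identification of its dual with $[(1,0),(0,1),(x,y)]\subseteq((K^{\lor})^{2},|\cdot|_{1/(t\alpha)}\times|\cdot|_{1/(t\delta)})$. Using $t\delta=s\gamma$ and swapping the two factors to match the target, and fixing the global sign so as to agree with the previous step on the intersection $[e_{1},e_{2}]$, we obtain that $T$ restricts to an isometry on $[e_{1},e_{2},e_{4}]$ sending $e_{4}\mapsto -(y,x)$.

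The two restrictions agree on the two-dimensional intersection $[e_{1},e_{2}]$, so $T$ is a well-defined linear bijection from $E'$ onto the stated subspace. The main obstacle is that isometry on two spanning three-dimensional subspaces does not formally entail global isometry, so I would complete the proof by a direct ultrametric estimate. For $f=\lambda e_{1}+\mu e_{2}+\nu e_{3}+\rho e_{4}$ and $u=a(1,0)+b(0,1)+c(x,0)+d(y,z)=:(u_{1},u_{2})$, the identity
\[
f(u)=\rho u_{1}+\nu u_{2}+c(\mu-\rho x)+d(\lambda-\nu z-\rho y)
\]
holds in $K^{\lor}$. Since $\{\pi(x),\pi(y)\}$ is orthogonal (from $y\nsim x$), one has $|u_{1}|\ge\max(|c|\alpha,|d|\delta)$ and hence $t|c|\alpha,\ s|d|\gamma\le\|u\|$ (using $t\delta=s\gamma$), which together with $|\mu-\rho x|\ge|\rho|\alpha$ and $|\lambda-\nu z-\rho y|\ge\max(|\nu|\gamma,|\rho|\delta)$ (the latter from $y\nsim z$ granted by Theorem \ref{thm14}(1)(c)) gives $|f(u)|\le M\|u\|$ with
\[
M=|\lambda-\nu z-\rho y|/(s\gamma)\;\lor\;|\mu-\rho x|/(t\alpha).
\]
The reverse bound $\|f\|\ge M$ is achieved on sequences $(a_{n},b_{n},0,1)$ with $a_{n}\to -y,\ b_{n}\to -z$ (respectively $(a_{n},0,1,0)$ with $a_{n}\to -x$) in the ultrametric sense, for which the dominant-term estimate forced by strict strong triangle yields $|f(u_{n})|/\|u_{n}\|\to M$, completing the verification that $T$ is a linear isometry.
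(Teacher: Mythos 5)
Your proposal is correct, and its decisive step is genuinely different from the paper's. The paper also begins exactly as you do, identifying $[e_1,e_2,e_3]\cong(E/[(1,0)])'$ via Lemma \ref{lem9} and Theorem \ref{thmh10}; but it then resolves the "two overlapping $3$-dimensional subspaces do not determine a global isometry" issue by a completion argument: since $E'$ is of type $\mathrm{\Rnum{15}}_4$ (Theorem \ref{thm13}) one has $\dim_{K^{\lor}}(E')^{\lor}=2$, so the isometry on $[e_1,e_2,e_3]$ extends to an embedding of all of $E'$ into $((K^{\lor})^2,|\cdot|_{1/s\gamma}\times|\cdot|_{1/t\alpha})$ with $e_4\mapsto(z_0,z_1)$ for unknown $z_0,z_1$; Corollary \ref{syscor5} applied to $E/[(0,1)]\cong([1,x,y],|\cdot|_t)$ then forces $|z_0+y|\le d(y,K)$ and $|z_1+x|\le d(x,K)$, and the condition $y\nsim z$ (from Theorem \ref{thm14}) upgrades the first bound to $|z_0+y|\le d(y,[1,z])$, so that $(z_0,z_1)$ may be replaced by $-(y,x)$ by the usual non-attainment argument. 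You instead bypass the extension entirely: your identity $f(u)=\rho u_1+\nu u_2+c(\mu-\rho x)+d(\lambda-\nu z-\rho y)$ is correct, and the four ultrametric estimates (using orthogonality of $\{\pi(x),\pi(y)\}$ and of $\{\pi(y),\pi(z)\}$, both supplied by Theorem \ref{thm14} together with Theorem \ref{thmh4}, and the relation $t\delta=s\gamma$) give $\|f\|\le M$, while your two test sequences attain $M$ because $d(x,K)$ and $d(\nu z+\rho y,K)$ are not attained, so the dominant term wins eventually. This makes your first two paragraphs logically redundant — the direct computation alone proves the theorem — and buys a self-contained, elementary verification at the cost of an explicit coordinate calculation; the paper's route is shorter on the page but leans on Theorem \ref{thm13}, the spherical-completion extension lemma, and Corollary \ref{syscor5}.
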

\begin{proof}
  By Theorem \ref{thmh10} and Lemma \ref{lem9}, there exists a linear isometry
  \begin{align*}
    [e_1, e_2, e_3] &\to ([1,z], |\cdot|_{\tfrac{1}{s \gamma}}) \oplus (K, |\cdot|_{\tfrac{1}{t \alpha}}) \\
    e_1 &\mapsto (1,0) \\
    e_2 &\mapsto (0,1) \\
    e_3 &\mapsto - (z,0).
  \end{align*}
  Since $E'$ is of type $\mathrm{\Rnum{15}}_4$, we have $\dim_{K^{\lor}} (E')^{\lor} = 2$. Therefore, we have a linear isometry
  \begin{align*}
    E' &\to ([(1,0), (0,1), (z,0), (z_0,z_1)],|\cdot|_{\tfrac{1}{s \gamma}} \times |\cdot|_{\tfrac{1}{t \alpha}}) \\
    e_1 &\mapsto (1,0) \\
    e_2 &\mapsto (0,1) \\
    e_3 &\mapsto - (z,0) \\
    e_4 &\mapsto (z_0,z_1)
  \end{align*}
  where $z_0, z_1 \in K^{\lor}$. On the other hand, by Corollary \ref{syscor5} and Lemma \ref{lem10}, the map
  \begin{align*}
    ([(1,0), (0,1), (y, x)],|\cdot|_{\tfrac{1}{t \delta}} \times |\cdot|_{\tfrac{1}{t \alpha}}) &\to ([(1,0), (0,1), (z_0,z_1)],|\cdot|_{\tfrac{1}{s \gamma}} \times |\cdot|_{\tfrac{1}{t \alpha}}) \\
    (1,0) &\mapsto (1,0) \\
    (0,1) &\mapsto (0,1) \\
    (y,x) &\mapsto - (z_0, z_1)
  \end{align*}
  where $\delta = d(y,K)$ gives a linear isometry. Therefore, we have $|z_0 + y| \le d(y,K)$ and $|z_1 + x| \le d(x,K)$. Moreover, by Theorem \ref{thm14}, $y \nsim x$ implies $y \nsim z$. Hence, we obtain
  \begin{align*}
    |z_0 + y| \le d(y, K) = d(y, [1, z]),
  \end{align*}
  which completes the proof.
\end{proof}

The following corollary is an extension of \cite[Theorem 3]{ort}. For details, see \cite{ort}.

\begin{cor}
  Let $E$ be a $4$-dimensional normed space of type $\mathrm{\Rnum{15}}_4$. Then there exists a $2$-dimensional subspace $D$ of $E$ satisfying the following properties:
  \begin{itemize}
    \item $D$ is strict in $E$, that is, $d(u,D)$ is attained for each $u \in E$. 
    \item $D$ is a HB-subspace of $E$, that is, for each $f \in D'$, $f$ has an extension $\tilde{f} \in E'$ with $\|f\| = \|\tilde{f}\|$.
    \item $D$ is not orthocomplemented in $E$, that is, there exists no subspace $D_1$ such that $E = D \oplus D_1$.
  \end{itemize}
\end{cor}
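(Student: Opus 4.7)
The plan is to exhibit $D$ explicitly in the standard representation of $E$ provided by Theorem \ref{thm14}(1). I would first write $E \cong ([(1,0),(0,1),(x,0),(y,z)], |\cdot|_t \times |\cdot|_s)$; since $[1,x,y]$ is of type $\mathrm{\Rnum{4}}_3$ by Theorem \ref{thm14}(1), after replacing $y$ by $y + \lambda x$ for a suitable $\lambda \in K$ (an operation which does not alter $E$ inside $(K^{\lor})^2$) I may assume $y \nsim x$. Then I would set $D := [(1,0), (x,0)] \cong ([1, x], |\cdot|_t)$, a $2$-dimensional subspace with no orthogonal base. Throughout I would write $\delta_0 := d(y, K)$, $\gamma_0 := d(z, K)$, $\alpha_0 := d(x, K)$, and recall $t \delta_0 = s \gamma_0$ from Theorem \ref{thm14}(1).

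For strictness, any $u = (a, b) \in E$ can be written as $a = \alpha + \beta x + \gamma y$ and $b = \delta + \gamma z$ with $\alpha, \beta, \gamma, \delta \in K$, and a direct max-norm computation yields
\begin{align*}
d(u, D) = \bigl(t \cdot d(a, [1, x]_K)\bigr) \lor |b|_s.
\end{align*}
If $\gamma = 0$, then $a \in [1,x]_K$, so $d(u, D) = |b|_s$, attained by $(a, 0) \in D$. If $\gamma \neq 0$, then $y \nsim x$ forces $d(a, [1, x]_K) = |\gamma| \delta_0$, while $|b| = |\gamma| \cdot |z + \delta/\gamma| > |\gamma| \gamma_0$ because $d(z, K)$ is not attained in $K$; combined with $t \delta_0 = s \gamma_0$ this yields $|b|_s > t \cdot d(a, [1, x]_K)$, so $d(u, D) = |b|_s$, attained by choosing any $\lambda, \mu \in K$ with $|a - \lambda - \mu x|_t \leq |b|_s$.

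For the HB-property I would invoke the standard duality $D' \cong E'/D^{\perp}$ and the resulting equivalence: $D$ is a HB-subspace of $E$ iff $D^{\perp}$ is strict in $E'$ (an extension of $f \in D'$ with preserved norm corresponds exactly to attainment of $d(g, D^{\perp})$ for any lift $g \in E'$). Using the explicit isomorphism
\begin{align*}
E' \cong ([(1,0),(0,1),(z,0),(y,x)], |\cdot|_{1/s\gamma_0} \times |\cdot|_{1/t\alpha_0})
\end{align*}
from the theorem just preceding the corollary, $D^{\perp}$ is spanned by the duals of $(y, z)$ and $(0, 1)$, which are sent to $(1, 0)$ and $-(z, 0)$; thus $D^{\perp}$ corresponds to $[(1, 0), (z, 0)]$ inside this representation. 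Since $y \nsim z$ by Theorem \ref{thm14}(1)(c), and $E'$ is itself of type $\mathrm{\Rnum{15}}_4$ with the consistency relation $(1/s\gamma_0) \cdot d(y, K) = (1/t\alpha_0) \cdot d(x, K)$ (equivalent to $t\delta_0 = s\gamma_0$), the strictness argument of the previous paragraph applies verbatim to $D^{\perp}$ in $E'$, and so $D$ is a HB-subspace of $E$.

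Non-orthocomplementation is immediate: if $E = D \oplus D_1$ were an isometric direct sum, then $E$ would be decomposable, contradicting that $E$ is of type $\mathrm{\Rnum{15}}_4$. The main obstacle is the strictness verification: naively one might fear that $d(u, D)$ is unattained precisely because $d(a, [1, x])$ itself is unattained whenever $\gamma \neq 0$; the saving point is that the max-norm on $(K^{\lor})^2$, together with the identity $t \delta_0 = s \gamma_0$ and the strict (non-attained) inequality $|z + \delta/\gamma| > d(z, K)$ arising from non-spherical completeness of $K$, forces the second coordinate $|b|_s$ to dominate, and $|b|_s$ is trivially attained. The HB-verification is then this same phenomenon dualized inside $E'$, which is precisely why the choice $D = [(1,0),(x,0)]$ is dual-symmetric and handles both conditions at once.
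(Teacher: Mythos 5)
Your choice $D=[(1,0),(x,0)]$, the reduction to $y\nsim x$, the strictness computation, and the non\-/orthocomplementation step all match the paper's proof (the paper likewise reduces $u$ modulo $D$ to $\lambda(y,z)+(0,\mu)$ and uses $|\lambda z+\mu|_s>|\lambda|_s\gamma=|\lambda|t\delta$, which is exactly your max-norm argument), and they are correct. Where you genuinely diverge is the HB-property. The paper projects along $(0,1)$: the quotient map $\pi:E\to E/[(0,1)]$ restricts to an isometry on $D$, the quotient is of type $\mathrm{\Rnum{4}}_3$ by Lemma \ref{lem10}, so its dual is of type $\mathrm{\Rnum{3}}_3$ and every $2$-dimensional subspace of that dual has an orthogonal base; since $\pi(D)^{\perp}$ is one-dimensional, $d(g,\pi(D)^{\perp})$ is then attained, so $\pi(D)$ is HB in the quotient and hence $D$ is HB in $E$. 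You instead dualize globally: HB for $D$ in $E$ is equivalent, via $D'\cong E'/D^{\perp}$, to strictness of $D^{\perp}$ in $E'$, and the explicit isometry $E'\cong([(1,0),(0,1),(z,0),(y,x)],|\cdot|_{1/s\gamma}\times|\cdot|_{1/t\alpha})$ places $D^{\perp}=[(1,0),(z,0)]$ in exactly the position of $D$, with the needed hypotheses in place ($y\nsim z$ from condition $(c)$ of Theorem \ref{thm14}, and the rescaled compatibility relation equivalent to $t\delta=s\gamma$), so your strictness computation runs a second time verbatim. Both routes are sound. The paper's is lighter, needing only Lemma \ref{lem10} and the $\mathrm{\Rnum{4}}_3$/$\mathrm{\Rnum{3}}_3$ duality rather than the full explicit dual isometry theorem; yours makes the self-dual symmetry of the construction explicit and settles strictness and the HB-property by one and the same calculation.
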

\begin{proof}
  Let $x, y, z \in K^{\lor} \setminus K$ and $t, s \in \mathbb{R}_{> 0}$ be such that $x \nsim y$ and
  \begin{align*}
    E \cong ([(1,0), (0,1), (x,0), (y,z)],|\cdot|_{t} \times |\cdot|_{s}).
  \end{align*}
  Put $\gamma = d(z,K)$ and $\delta = d(y, [1,x])$. Set $D := [(1,0), (x,0)]$. Since $E$ is indecomposable, $D$ is not orthocomplemented in $E$. We prove that $D$ is strict in $E$. Indeed, let $u \in E$. Without loss of generality, we may assume that $u$ is of the form $u = \lambda (y, z) + (0,\mu)$ where $\lambda, \mu \in K$. Since $d(u,D) = |\lambda z + \mu|_s$ and $|\lambda z + \mu|_s > |\lambda|_s \gamma$, we see that $d(u,D)$ is attained. Here, we use $t \delta = s \gamma$. \par
  Finally, we prove that $D$ is a HB-subspace of $E$. Let $\pi: E \to E / [(0,1)]$ be the canonical quotient map. Obviously, the restriction of $\pi$ to $D$ is an isometry. Thus, it suffices to prove a $2$-dimensional subspace $\pi(D)$ is a HB-subspace of $E / [(0,1)]$. On the other hand, by Lemma \ref{lem10}, $E / [(0,1)]$ is of type $\mathrm{\Rnum{4}}_3$. Thus, $(E / [(0,1)])'$ is of type $\mathrm{\Rnum{3}}_3$ and each $2$-dimensional subspace of $(E / [(0,1)])'$ has an orthogonal base. Let $f \in (\pi(D))'$ and $g \in (E / [(0,1)])'$ be an extension of $f$. Since $\pi(D)^{\perp}$ is one-dimensional, $d(g,\pi(D)^{\perp})$ is attained. Therefore, $\pi(D)$ is a HB-subspace of $E / [(0,1)]$, which completes the proof.
\end{proof}

A structure theorem for normed spaces of type $\mathrm{\Rnum{15}}_4$ is unknown. We expect that the following conjecture holds.

\begin{conj}
  Let $x, x_1, y, y_1, z, z_1 \in K^{\lor} \setminus K$ and $t, t_1, s, s_1 \in \mathbb{R}_{> 0}$. We set 
  \begin{align*}
    &(E, \|\cdot\|) := ([(1,0), (0,1), (x,0), (y,z)],|\cdot|_{t} \times |\cdot|_{s}), \\
    &(E_1, \|\cdot\|) := ([(1,0), (0,1), (x_1,0), (y_1,z_1)],|\cdot|_{t_1} \times |\cdot|_{s_1}).
  \end{align*}
  Suppose that both $E$ and $E_1$ are of type $\mathrm{\Rnum{15}}_4$. Then $E \cong E_1$ if and only if the following three conditions are satisfied: \\
  $(1)$ $t / t_1, s / s_1 \in V_K$. \\
  $(2)$ $x \sim x_1$, $z \sim z_1$. \\
  $(3)$ $([1, x, y], |\cdot|) \cong ([1, x_1, y_1], |\cdot|)$.
\end{conj}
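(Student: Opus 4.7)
The plan is to address the two directions separately. The easier direction is sufficiency; necessity is where the main difficulty lies, which is presumably why the statement is posed as a conjecture.

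\textbf{Sufficiency.} Assume $(1)$, $(2)$, $(3)$. Using $(1)$, after scaling we reduce to $t = t_1$ and $s = s_1$. Perform successive basis adjustments in $E_1$: using $x \sim x_1$ from $(2)$, replace $(x_1, 0)$ by $(\tilde{x}_1, 0)$ with $\tilde{x}_1 = \lambda x_1 + \mu \in [1, x_1]$ satisfying $|x - \tilde{x}_1| \le d(x, K)$; using $z \sim z_1$, choose $c \in K^\times$ so that $|z - c z_1| \le d(z, K)$; using $(3)$ combined with the structure theorem for type $\mathrm{\Rnum{4}}_3$ spaces, produce $y'' \in [1, x_1, y_1]$ with $y'' \sim y$, rescaled so that $y'' = a + b \tilde{x}_1 + c y_1$ for some $a, b \in K$ (the free parameter $\alpha$ in the rescaling absorbs the compatibility with the $c$ chosen above) and $|y - y''| \le d(y, K)$; replace $(y_1, z_1)$ by $(y'', c z_1)$, which lies in $E_1$ by construction. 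Choosing $x, y$ with $x \nsim y$ (possible since $([1, x, y], |\cdot|_t)$ is of type $\mathrm{\Rnum{4}}_3$), the map $T : E \to E_1$ sending $(1, 0), (0, 1), (x, 0), (y, z)$ to $(1, 0), (0, 1), (\tilde{x}_1, 0), (y'', c z_1)$ is an isometry: the norm $|\lambda + \nu x + \eta y|_t \lor |\mu + \eta z|_s$ on the left equals the corresponding norm on the right by Lemma \ref{syslem1}, applied both to the orthogonal set $\{\pi(x), \pi(y)\} \subseteq K^\lor / K$ for the first coordinate and to the single-element case for the second.

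\textbf{Necessity.} Assume $E \cong E_1$. By Theorem \ref{thm14}(2), every $2$-dimensional subspace of $E$ is isometric to either $(K^2, |\cdot|_t \times |\cdot|_s)$ or $([1, x], |\cdot|_t)$, and analogously for $E_1$. Matching these two isomorphism classes via Theorems \ref{.1thm1} and \ref{thmh3} yields $t/t_1, s/s_1 \in V_K$ and $x \sim x_1$, establishing $(1)$ and the first half of $(2)$. For $z \sim z_1$, dualize: after a basis change in $E$ so that $y \nsim x$ (possible since $([1, x, y], |\cdot|_t)$ is of type $\mathrm{\Rnum{4}}_3$), the theorem immediately preceding the conjecture identifies $E'$ with a type $\mathrm{\Rnum{15}}_4$ space of the same form in which $z$ plays the role previously played by $x$. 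Since $E' \cong E_1'$ is also of type $\mathrm{\Rnum{15}}_4$ by Theorem \ref{thm13}, a second application of the $2$-dimensional subspace matching yields $z \sim z_1$, completing $(2)$.

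\textbf{The main obstacle} is establishing $(3)$. The natural isomorphism-invariant candidate for a distinguished $3$-dimensional quotient is $E/[(0, 1)] \cong ([1, x, y], |\cdot|_t)$, which by Lemma \ref{lem10} is of type $\mathrm{\Rnum{4}}_3$. Condition $(3)$ would follow from the statement that every $3$-dimensional quotient of $E$ of type $\mathrm{\Rnum{4}}_3$ lies in a single isomorphism class, for then any isomorphism $E \cong E_1$ would force the distinguished quotients to match. Verifying this uniqueness requires analyzing $E/[u]$ for general $u = \lambda (1, 0) + \mu (0, 1) + \nu (x, 0) + \eta (y, z)$ in the \emph{diagonal} directions ($\eta \ne 0$), extending Lemmas \ref{lem9}, \ref{lem10}, and \ref{lem15}. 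The crux is that the isomorphism type of $E/[u]$ depends delicately on the relative valuations of the coordinates of $u$; even restricting to those $u$ producing a type $\mathrm{\Rnum{4}}_3$ quotient, it is not \emph{a priori} obvious that all such quotients are isomorphic, and whether this holds in full generality (or requires additional hypotheses relating $x, y, z$ and the ratios $t, s$) is the core open question behind the conjecture.
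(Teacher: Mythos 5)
First, a point of order: the paper states this result only as a conjecture, explicitly remarking just before it that a structure theorem for normed spaces of type $\mathrm{\Rnum{15}}_4$ is unknown, and it supplies no proof. So there is no argument of the author's to measure yours against, and your closing assessment --- that the necessity of $(3)$ is the genuinely open core --- matches the paper's own framing. Your partial results on necessity are correct as far as they go: by Theorem \ref{thm14}$(2)$ the two isometry classes of $2$-dimensional subspaces of $E$ are $(K^2,|\cdot|_t\times|\cdot|_s)$ and $([1,x],|\cdot|_t)$, so Theorems \ref{thmh3} and \ref{.1thm1} give $(1)$ together with $x\sim x_1$, and passing to $E'$ (after arranging $y\nsim x$, which does not change $E$ as a subspace) puts $z$ in the position previously occupied by $x$ and gives $z\sim z_1$ by the same invariant together with Theorem \ref{thm13}.

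The sufficiency direction, however, is not established, and the gap sits exactly at the phrase claiming that the free rescaling parameter ``absorbs the compatibility.'' Any vector of $E_1$ whose second coordinate is $cz_1+\mu$ necessarily has first coordinate $a+bx_1+cy_1$ with the \emph{same} scalar $c\in K$, so your construction needs a single $c$ satisfying both $|z-(cz_1+\mu)|\le d(z,K)$ and $|y-(a+bx_1+cy_1)|\le d(y,K)$, as required by Lemma \ref{syslem1}. Each of these conditions confines $c$ to an open ball whose radius equals the absolute value of its centre (and the two centres have equal absolute value once one normalizes $t=t_1$, $s=s_1$, using $t\,d(y,[1,x])=s\,d(z,K)$ and its analogue for $E_1$); but two such balls are disjoint whenever the distance between their centres equals that common absolute value, and nothing in hypotheses $(1)$--$(3)$ rules this out. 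This is precisely the obstruction exhibited in the example at the end of Section 5, where two spaces of the same subtype admit no isometry fixing the distinguished coordinates. So your argument only shows that the obvious coordinate-respecting map can fail to be an isometry; whether some other isometry must exist is as open as the necessity of $(3)$, and both directions of the conjecture remain unproved.
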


\subsection{Type $\mathrm{\protect \Rnum{16}}_4, \mathrm{\protect \Rnum{17}}_4$}

Finally, we will study $4$-dimensional normed spaces of type $\mathrm{\Rnum{16}}_4$ or $\mathrm{\Rnum{17}}_4$. Our key lemma is Lemma \ref{lem13} which allows us to classify $4$-dimensional normed spaces. 

\begin{lem} \label{lem14}
  Let $E$ be a $4$-dimensional normed space of type $\mathrm{\Rnum{16}}_4$. Then $E'$ is of type $\mathrm{\Rnum{11}}_4$. 
\end{lem}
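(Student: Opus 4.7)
The plan is to verify the two defining conditions of type $\mathrm{\Rnum{11}}_4$ for $E'$: namely that $\dim_{K^{\lor}}(E')^{\lor} = 1$, and that $E'/[f]$ is $3$-dimensional of type $\mathrm{\Rnum{4}}_3$ for every non-zero $f \in E'$. Both conditions will fall out of the two hypotheses defining type $\mathrm{\Rnum{16}}_4$ after applying the standard duality identifications $(E/D)' \cong D^{\perp}$ and $D' \cong E'/D^{\perp}$ (valid for finite-dimensional $E$).

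For the first condition, I would take an arbitrary $2$-dimensional subspace $H \subseteq E'$ and consider its annihilator $H_{\perp} := \{x \in E : f(x) = 0 \text{ for all } f \in H\} \subseteq E$, which is $2$-dimensional. By finite-dimensional reflexivity, $(H_{\perp})^{\perp} = H$, so the canonical isometric isomorphism gives $H \cong (E/H_{\perp})'$. By the second hypothesis of type $\mathrm{\Rnum{16}}_4$, $\dim_{K^{\lor}}(E/H_{\perp})^{\lor} = 1$, so there exist $x \in K^{\lor} \setminus K$ and $t \in \mathbb{R}_{>0}$ with $E/H_{\perp} \cong ([1,x], |\cdot|_{t})$. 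Theorem \ref{thmh10} then gives $H \cong ([1,x], |\cdot|_{1/(tr)})$ with $r = d(x,K)$, and in particular $\dim_{K^{\lor}} H^{\lor} = 1$. Since no $2$-dimensional subspace of $E'$ has an orthogonal base, no two non-zero elements of $E'$ form an orthogonal set, hence $\dim_{K^{\lor}}(E')^{\lor} = 1$.

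For the second condition, take any non-zero $f \in E'$ and set $D := \ker f$, a $3$-dimensional subspace of $E$. Then $[f] = D^{\perp}$, so $E'/[f] = E'/D^{\perp} \cong D'$. By the first hypothesis of type $\mathrm{\Rnum{16}}_4$, $D$ is of type $\mathrm{\Rnum{3}}_3$, and Corollary \ref{.4cor1} gives that $D'$ is of type $\mathrm{\Rnum{4}}_3$. Hence $E'/[f]$ is of type $\mathrm{\Rnum{4}}_3$ for every non-zero $f$.

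Combining these two facts, $E'$ satisfies the definition of type $\mathrm{\Rnum{11}}_4$. The argument is essentially formal once one invokes the duality tools already developed; the main conceptual step is recognizing that the hypothesis $\dim_{K^{\lor}}(E/F)^{\lor} = 1$ on every $2$-dimensional $F \subseteq E$ translates, via $(E/F)' \cong F^{\perp}$ and Theorem \ref{thmh10}, into the blockage of orthogonality in $E'$, thereby forcing $\dim_{K^{\lor}}(E')^{\lor} = 1$.
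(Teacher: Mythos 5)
Your proof is correct and follows essentially the same route as the paper, which simply notes that $\dim_{K^{\lor}}(E')^{\lor}=1$ holds ``by definition'' (i.e.\ via the duality between the quotients $E/F$ and the $2$-dimensional subspaces of $E'$ that you spell out) and then pins down the type by Theorem \ref{thm10}. Your only divergence is presentational: you verify the definition of type $\mathrm{\Rnum{11}}_4$ directly through $E'/[f]\cong(\ker f)'$ and Corollary \ref{.4cor1} instead of citing the classification in Theorem \ref{thm10}, which amounts to unwinding the same $\mathrm{\Rnum{3}}_3$/$\mathrm{\Rnum{4}}_3$ duality.
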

\begin{proof}
  By definition, we have $\dim_{K^{\lor}} (E')^{\lor} = 1$. Then by Theorem \ref{thm10}, $E'$ must be of type $\mathrm{\Rnum{11}}_4$.
\end{proof}

\begin{lem} \label{lem13}
  Let $E$ be an indecomposable $4$-dimensional normed space such that $\dim_{K^{\lor}} E^{\lor} = 2$ and all $3$-dimensional subspaces of $E$ are of type $\mathrm{\Rnum{3}}_3$. Then the following are equivalent: \\
  $(1)$ $\dim_{K^{\lor}} (E / F)^{\lor} = 1$ for each $2$-dimensional subspace of $E$. \\
  $(2)$ $\dim_{K^{\lor}} (E / F)^{\lor} = 1$ for some $2$-dimensional subspace of $E$.
\end{lem}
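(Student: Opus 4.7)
The direction $(1) \Rightarrow (2)$ is immediate since $E$ has $2$-dimensional subspaces. For $(2) \Rightarrow (1)$, suppose $\dim_{K^{\lor}}(E/F_0)^{\lor} = 1$ for some $2$-dimensional $F_0 \subseteq E$, and let $F \subseteq E$ be an arbitrary $2$-dimensional subspace. The plan is to reduce to the case $\dim_K(F \cap F_0) = 1$ and then analyze the $3$-dimensional quotient $E/[u]$ for $u \in (F \cap F_0) \setminus \{0\}$. If instead $F \cap F_0 = \{0\}$, pick non-zero $u \in F$ and $v \in F_0$ and set $F' := [u, v]$; then $F'$ is $2$-dimensional and meets both $F$ and $F_0$ in a line (any larger intersection would force $v \in F$ or $u \in F_0$, against $F \cap F_0 = 0$), so a double application of the intersection-$1$ case (from $F_0$ to $F'$, then from $F'$ to $F$) will finish the argument.

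For the intersection-$1$ case, pick $u \in (F \cap F_0) \setminus \{0\}$ and consider $\bar E := E/[u]$, a $3$-dimensional normed space. The images $\bar F := F/[u]$ and $\bar F_0 := F_0/[u]$ are $1$-dimensional, with $\bar E/\bar F \cong E/F$ and $\bar E/\bar F_0 \cong E/F_0$; so the hypothesis gives $\dim_{K^{\lor}}(\bar E/\bar F_0)^{\lor} = 1$, and I want the same for $\bar F$. I will go through the five possible types of $\bar E$: type $\mathrm{\Rnum{1}}_3$ is excluded since $\dim_{K^{\lor}} \bar E^{\lor} \le \dim_{K^{\lor}} E^{\lor} = 2$; type $\mathrm{\Rnum{4}}_3$ is excluded because every $1$-dimensional quotient of such a space has an orthogonal base, contradicting $\bar E/\bar F_0$; and in types $\mathrm{\Rnum{3}}_3$ (by Corollary \ref{.4cor2}) and $\mathrm{\Rnum{5}}_3$ (by definition), every $1$-dimensional quotient has no orthogonal base, so $\bar E/\bar F$ inherits the desired property.

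The hard part will be ruling out type $\mathrm{\Rnum{2}}_3$ for $\bar E$. Suppose, for contradiction, $\bar E \cong (K, |\cdot|_{s_1}) \oplus ([1, x], |\cdot|_{s_2})$, and lift the generators $\bar a = (1,0)$ and $\bar b_1 = (0,1)$ of the two summands to elements $a, b_1 \in E$. Since $\bar a$ and $\bar b_1$ are $K$-linearly independent in $\bar E$, the set $\{u, a, b_1\}$ is $K$-linearly independent and $G_1 := [u, a, b_1]$ is a $3$-dimensional subspace of $E$. The quotient $G_1/[u]$ embeds isometrically onto $[\bar a, \bar b_1] \subseteq \bar E$, and because $\bar a$ sits in the $K$-summand while $\bar b_1$ sits in the $([1,x],|\cdot|_{s_2})$-summand, the pair $\{\bar a, \bar b_1\}$ is orthogonal in $\bar E$; hence $G_1/[u]$ has an orthogonal base. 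On the other hand, the hypothesis of the lemma forces $G_1$ to be of type $\mathrm{\Rnum{3}}_3$, and Corollary \ref{.4cor2} then asserts that no $1$-dimensional quotient of $G_1$ has an orthogonal base, the desired contradiction. This construction---exploiting the direct-sum structure of the hypothetical decomposable $\bar E$ to produce a $3$-dimensional subspace of $E$ violating the ``all type $\mathrm{\Rnum{3}}_3$'' assumption---is the main technical content of the proof.
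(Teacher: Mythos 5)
Your argument is correct in outline and takes a genuinely different route from the paper. The paper works entirely in the dual: it shows $\dim_{K^{\lor}}(E')^{\lor} = 1$ by ruling out the values $2$ and $3$ using the four-dimensional classification machinery (Lemma \ref{lem12}, Corollary \ref{cor2}, Theorem \ref{thm13} and Corollary \ref{cor3}), and then reads off $(1)$ from the fact that every $2$-dimensional subspace $F^{\perp} \cong (E/F)'$ of $E'$ then lacks an orthogonal base. Your proof stays inside $E$: the chain argument on $2$-planes together with the analysis of the $3$-dimensional quotient $E/[u]$ needs only the $3$-dimensional theory (the definitions of the types and Corollary \ref{.4cor2}), so it is more self-contained. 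The central device --- lifting an orthogonal pair $\{\bar a, \bar b_1\}$ of $E/[u]$ to obtain a $3$-dimensional subspace $[u,a,b_1]$ of $E$ whose quotient by $[u]$ has an orthogonal base, contradicting Corollary \ref{.4cor2} --- is sound, and in fact it does more than you use it for: it excludes every type of $\bar E := E/[u]$ that contains an orthogonal pair, i.e.\ types $\mathrm{\Rnum{1}}_3$, $\mathrm{\Rnum{2}}_3$ and $\mathrm{\Rnum{3}}_3$ alike, so under the hypotheses $\bar E$ is always of type $\mathrm{\Rnum{4}}_3$ or $\mathrm{\Rnum{5}}_3$ and your type-$\mathrm{\Rnum{3}}_3$ case is vacuous.

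One justification is wrong and must be replaced. You exclude type $\mathrm{\Rnum{1}}_3$ by asserting $\dim_{K^{\lor}}\bar E^{\lor} \le \dim_{K^{\lor}}E^{\lor}$; this monotonicity holds for subspaces but fails for quotients. A counterexample is already in the paper: a space $([1,x,y],|\cdot|_t)$ of type $\mathrm{\Rnum{4}}_3$ has $\dim_{K^{\lor}}$ over $K^{\lor}$ equal to $1$, yet its quotient by $K$ is $[\pi(x),\pi(y)]$, which has an orthogonal base by Theorem \ref{thmh4} when $x \nsim y$, hence has $\dim_{K^{\lor}}$ equal to $2$. The conclusion you need is nevertheless immediate by either of two arguments already present in your proof: a type-$\mathrm{\Rnum{1}}_3$ space has all of its quotients by lines isometric to a two-dimensional space with an orthogonal base, contradicting $\dim_{K^{\lor}}(\bar E/\bar F_0)^{\lor} = 1$ exactly as in your type-$\mathrm{\Rnum{4}}_3$ exclusion; alternatively, it contains an orthogonal pair, so your lifting argument applies verbatim. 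With that repair (and noting that your phrase ``$1$-dimensional quotient'' should read ``quotient by a $1$-dimensional subspace''), the proof is complete.
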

\begin{proof}
  Suppose $(2)$. We shall prove $\dim_{K^{\lor}} (E')^{\lor} = 1$. By assumption, $E'$ contains a $2$-dimensional subspace without an orthogonal base. Therefore, if $\dim_{K^{\lor}} (E')^{\lor} = 2$, then by Lemma \ref{lem12}, $E'$ is either of type $\mathrm{\Rnum{14}}_4$ or $\mathrm{\Rnum{15}}_4$. If $E'$ is of type $\mathrm{\Rnum{14}}_4$, then it follows from Corollary \ref{cor2} that $E''$ is of type $\mathrm{\Rnum{9}}_4$. In particular, we have $\dim_{K^{\lor}} E^{\lor} = 1$, which is a contradiction. If $E'$ is of type $\mathrm{\Rnum{15}}_4$, then by Theorem \ref{thm13}, $E''$ is of type $\mathrm{\Rnum{15}}_4$. Hence, $E$ contains a $3$-dimensional subspace of type $\mathrm{\Rnum{2}}_3$. Thus, we derive a contradiction. Now, suppose  $\dim_{K^{\lor}} (E')^{\lor} = 3$. Then $E'$ is of type $\mathrm{\Rnum{13}}_4$ and $\dim_{K^{\lor}} E^{\lor} = 1$ by Corollary \ref{cor3}, which is a contradiction. Thus, we have $\dim_{K^{\lor}} E^{\lor} = 1$ and obtain $(1)$.
\end{proof}

By Lemmas \ref{lem12} and \ref{lem13}, we obtain the following theorem.

\begin{thm} \label{thm16}
  Let $E$ be an indecomposable $4$-dimensional normed space such that $\dim_{K^{\lor}} E^{\lor} = 2$. Then $E$ is exactly of one type of type $\mathrm{\Rnum{14}}_4 \sim \mathrm{\Rnum{17}}_4$.
\end{thm}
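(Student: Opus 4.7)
The plan is to reduce the theorem to a clean case analysis already made available by Lemmas \ref{lem12} and \ref{lem13}. Throughout, let $E$ be an indecomposable $4$-dimensional normed space with $\dim_{K^{\lor}} E^{\lor} = 2$.

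First, I would apply Lemma \ref{lem12} to observe that every $3$-dimensional subspace of $E$ is either of type $\mathrm{\Rnum{2}}_3$ or of type $\mathrm{\Rnum{3}}_3$. This immediately partitions the possibilities into three mutually exclusive cases: (i) every $3$-dimensional subspace is of type $\mathrm{\Rnum{2}}_3$; (ii) $E$ contains at least one $3$-dimensional subspace of type $\mathrm{\Rnum{2}}_3$ and at least one of type $\mathrm{\Rnum{3}}_3$; (iii) every $3$-dimensional subspace is of type $\mathrm{\Rnum{3}}_3$. By the definitions, cases (i) and (ii) are exactly the defining conditions for types $\mathrm{\Rnum{14}}_4$ and $\mathrm{\Rnum{15}}_4$, respectively, and these types are by construction disjoint from types $\mathrm{\Rnum{16}}_4,\mathrm{\Rnum{17}}_4$.

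It remains to handle case (iii). For each $2$-dimensional subspace $F \subseteq E$ we have $\dim_K(E/F) = 2$, so $\dim_{K^{\lor}}(E/F)^{\lor} \in \{1,2\}$. Apply Lemma \ref{lem13}: in case (iii) the hypotheses of that lemma are exactly satisfied, so the condition ``$\dim_{K^{\lor}}(E/F)^{\lor} = 1$ for some $2$-dimensional subspace $F$'' is equivalent to ``$\dim_{K^{\lor}}(E/F)^{\lor} = 1$ for every $2$-dimensional subspace $F$''. Thus either every $2$-dimensional quotient satisfies $\dim_{K^{\lor}}(E/F)^{\lor} = 1$ (type $\mathrm{\Rnum{16}}_4$) or every such quotient satisfies $\dim_{K^{\lor}}(E/F)^{\lor} = 2$ (type $\mathrm{\Rnum{17}}_4$), and these two subcases are disjoint.

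Combining the three cases shows that $E$ is of exactly one of the types $\mathrm{\Rnum{14}}_4,\mathrm{\Rnum{15}}_4,\mathrm{\Rnum{16}}_4,\mathrm{\Rnum{17}}_4$. Since the real work — the non-trivial orthogonality analysis needed to rule out mixed behaviour of $2$-dimensional quotient spaces — is hidden inside Lemma \ref{lem13}, there is no remaining obstacle, and the proof will be short (essentially a citation of Lemmas \ref{lem12} and \ref{lem13}). The only place where one must be slightly careful is to verify that the defining conditions of the four types are pairwise incompatible, which is immediate from inspection of the definitions.
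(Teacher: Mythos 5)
Your proposal is correct and follows exactly the paper's route: the paper's own proof is a one-line citation of Lemmas \ref{lem12} and \ref{lem13}, and your case analysis (types $\mathrm{\Rnum{14}}_4$/$\mathrm{\Rnum{15}}_4$ from Lemma \ref{lem12}, then the dichotomy between $\mathrm{\Rnum{16}}_4$ and $\mathrm{\Rnum{17}}_4$ from Lemma \ref{lem13}) is precisely the intended argument.
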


Now, we see that the types $\mathrm{\Rnum{1}}_4$ through $\mathrm{\Rnum{17}}_4$ completely classify $4$-dimensional normed spaces.

\begin{thm} \label{thm19}
  Let $E$ be a $4$-dimensional normed space. Then $E$ is exactly of one type of type $\mathrm{\Rnum{1}}_4 \sim \mathrm{\Rnum{17}}_4$.
\end{thm}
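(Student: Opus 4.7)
The plan is to case-split on $m := \dim_{K^{\lor}}E^{\lor} \in \{1,2,3,4\}$ and on whether $E$ is decomposable, then in each case invoke the appropriate earlier theorem. For uniqueness, I observe that the defining conditions distinguish the types by (i) the value of $m$, (ii) decomposability, and (iii) the isometry type of $2$- or $3$-dimensional subspaces or quotients; these three data together separate any two of the seventeen classes.

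First I would dispose of the easy extremes. If $m=4$, then $E$ has an orthogonal basis, so by Theorem \ref{.1thm1} we get $E \cong (K^4, |\cdot|_{t_1} \times \cdots \times |\cdot|_{t_4})$, i.e.\ $E$ is of type $\mathrm{\Rnum{1}}_4$. If $m=3$, then either $E$ is decomposable, in which case Theorem \ref{thm1} gives type $\mathrm{\Rnum{2}}_4$ or $\mathrm{\Rnum{3}}_4$, or else $E$ is indecomposable and therefore of type $\mathrm{\Rnum{13}}_4$ by definition. Mutual exclusion of $\mathrm{\Rnum{2}}_4$ and $\mathrm{\Rnum{3}}_4$ comes from Lemmas \ref{lem2} and \ref{lem3}: the two normal forms of Lemma \ref{lem1} contain only subspaces of type $\mathrm{\Rnum{2}}_3$ in the first case and only subspaces of type $\mathrm{\Rnum{3}}_3$ in the second.

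Next, for $m=2$ I split again by decomposability. If $E$ is decomposable, Theorem \ref{thm15} already shows that $E$ lies in one of the types $\mathrm{\Rnum{1}}_4$–$\mathrm{\Rnum{7}}_4$; since $m=2$ rules out $\mathrm{\Rnum{1}}_4$, $\mathrm{\Rnum{2}}_4$, $\mathrm{\Rnum{3}}_4$, we land in $\mathrm{\Rnum{4}}_4$–$\mathrm{\Rnum{7}}_4$, and mutual exclusion here follows because the normal forms in Lemma \ref{lem4} together with the subspace analysis in Lemmas \ref{lem5} and \ref{lem6} pin down which of the $3$-dimensional subspace types $\mathrm{\Rnum{2}}_3, \mathrm{\Rnum{3}}_3, \mathrm{\Rnum{4}}_3, \mathrm{\Rnum{5}}_3$ actually occur. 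If $E$ is indecomposable with $m=2$, Theorem \ref{thm16} places $E$ in one of $\mathrm{\Rnum{14}}_4$–$\mathrm{\Rnum{17}}_4$, and those are separated by the pair (type of $3$-dimensional subspaces, value of $\dim_{K^{\lor}}(E/F)^{\lor}$ for $2$-dimensional $F$) according to their defining conditions.

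Finally, the case $m=1$ is the one that most directly uses our systematic machinery. Here $E$ is automatically indecomposable (a direct sum with both summands nonzero would force $m \ge 2$). For every nonzero $u \in E$, the quotient $E/[u]$ is a $3$-dimensional normed space, which belongs to exactly one of the classes $\mathrm{\Rnum{1}}_3$–$\mathrm{\Rnum{5}}_3$. The crucial input is Corollary \ref{syscor1}: it guarantees that all such quotients $E/[u]$ are mutually isometrically isomorphic, so the type of $E/[u]$ does not depend on the choice of $u$. This common type determines unambiguously which of $\mathrm{\Rnum{8}}_4$–$\mathrm{\Rnum{12}}_4$ the space $E$ belongs to. Mutual exclusion across different values of $m$ or different decomposability status is automatic, so combining the four cases yields the result. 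The only subtle point—and the one I would be most careful about—is the mutual exclusion within the indecomposable $m=2$ family $\mathrm{\Rnum{14}}_4$–$\mathrm{\Rnum{17}}_4$, but this is already taken care of by Theorem \ref{thm16} and the dichotomy in Lemma \ref{lem13}.
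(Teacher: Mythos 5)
Your proposal is correct and follows essentially the same route as the paper, whose proof simply combines Theorem \ref{thm15} (decomposable case), Theorem \ref{thm10} (the case $\dim_{K^{\lor}}E^{\lor}=1$, which is exactly your Corollary \ref{syscor1} argument), and Theorem \ref{thm16} (indecomposable with $\dim_{K^{\lor}}E^{\lor}=2$), with the remaining indecomposable cases handled by the definition of type $\mathrm{\Rnum{13}}_4$. Your write-up just unpacks the same case split on $\dim_{K^{\lor}}E^{\lor}$ and decomposability in more detail.
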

\begin{proof}
  We can use Theorems \ref{thm15}, \ref{thm10} and \ref{thm16}. 
\end{proof}

\begin{thm}
  Let $E$ be a $4$-dimensional normed space of type $\mathrm{\Rnum{17}}_4$. Then $E'$ is of type $\mathrm{\Rnum{17}}_4$.
\end{thm}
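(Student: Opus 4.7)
The plan is to argue entirely by elimination, using the classification of $4$-dimensional normed spaces in Theorem \ref{thm19} together with the duality correspondences already established for the other types.

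The first step is to observe that $E'$ is indecomposable. Indeed, by finite-dimensional reflexivity $E \cong E''$, any decomposition $E' \cong G_1 \oplus G_2$ would yield $E \cong G_1' \oplus G_2'$, contradicting that $E$ is of type $\mathrm{\Rnum{17}}_4$. Applying Theorem \ref{thm19}, $E'$ must therefore be of one of the five indecomposable types: $\mathrm{\Rnum{13}}_4$, $\mathrm{\Rnum{14}}_4$, $\mathrm{\Rnum{15}}_4$, $\mathrm{\Rnum{16}}_4$, or $\mathrm{\Rnum{17}}_4$.

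The second step is to rule out the first four possibilities by dualizing once more and invoking the duality correspondences already recorded in the excerpt. If $E'$ were of type $\mathrm{\Rnum{13}}_4$, Corollary \ref{cor3} would give $E \cong E''$ of type $\mathrm{\Rnum{8}}_4$; if of type $\mathrm{\Rnum{14}}_4$, Corollary \ref{cor2} would give $E$ of type $\mathrm{\Rnum{9}}_4$; if of type $\mathrm{\Rnum{15}}_4$, Theorem \ref{thm13} would give $E$ of type $\mathrm{\Rnum{15}}_4$; if of type $\mathrm{\Rnum{16}}_4$, Lemma \ref{lem14} would give $E$ of type $\mathrm{\Rnum{11}}_4$. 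Every one of these contradicts the hypothesis that $E$ is of type $\mathrm{\Rnum{17}}_4$, so the only remaining possibility is that $E'$ is of type $\mathrm{\Rnum{17}}_4$.

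There is no genuine obstacle in the argument: all substantive content has been absorbed into the classification theorem and the previously proved duality assertions, and the present theorem is exactly the pigeon-hole completion of that list. The only thing to be checked carefully is that the dual involution on types, as read off from Corollaries \ref{cor3}, \ref{cor2}, Theorem \ref{thm13} and Lemma \ref{lem14} (together with the earlier results on types $\mathrm{\Rnum{1}}_4$--$\mathrm{\Rnum{7}}_4$ and on types $\mathrm{\Rnum{9}}_4$, $\mathrm{\Rnum{10}}_4$, $\mathrm{\Rnum{11}}_4$), has precisely one unaccounted-for slot, namely type $\mathrm{\Rnum{17}}_4$, which must therefore be a fixed point of the involution.
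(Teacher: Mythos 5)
Your overall strategy (eliminate all other types using Theorem \ref{thm19} and the duality correspondences already proved) is the same as the paper's, but there is a concrete gap in the first elimination step: the indecomposable $4$-dimensional types are not five but ten. Types $\mathrm{\Rnum{8}}_4$ through $\mathrm{\Rnum{12}}_4$ (those with $\dim_{K^{\lor}}E^{\lor}=1$) are all indecomposable --- indeed a decomposable space contains a two-element orthogonal set, forcing $\dim_{K^{\lor}}E^{\lor}\ge 2$ --- and they are listed as indecomposable in the paper's own table. So after observing that $E'$ is indecomposable you have only narrowed the possibilities to $\mathrm{\Rnum{8}}_4 \sim \mathrm{\Rnum{12}}_4$ and $\mathrm{\Rnum{13}}_4 \sim \mathrm{\Rnum{17}}_4$, and your case analysis silently drops five of these.

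The gap is repairable in either of two ways. You could extend your elimination: if $E'$ were of type $\mathrm{\Rnum{8}}_4$, Corollary \ref{cor3} would make $E\cong E''$ of type $\mathrm{\Rnum{13}}_4$; if of type $\mathrm{\Rnum{9}}_4$, $\mathrm{\Rnum{10}}_4$ or $\mathrm{\Rnum{11}}_4$, the theorem on duals of those types would make $E$ of type $\mathrm{\Rnum{14}}_4$, $\mathrm{\Rnum{10}}_4$ or $\mathrm{\Rnum{16}}_4$ respectively; if of type $\mathrm{\Rnum{12}}_4$, Proposition \ref{prop1} would make $E$ of type $\mathrm{\Rnum{12}}_4$ --- all contradictions. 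The paper instead disposes of these five cases (and of $\mathrm{\Rnum{14}}_4$, $\mathrm{\Rnum{15}}_4$) at once by noting that for $E$ of type $\mathrm{\Rnum{17}}_4$ every $2$-dimensional subspace of $E'$ is of the form $F^{\perp}\cong(E/F)'$ with $\dim_{K^{\lor}}(E/F)^{\lor}=2$, hence has an orthogonal base; this forces $\dim_{K^{\lor}}(E')^{\lor}\ge 2$ and excludes any type containing a $3$-dimensional subspace of type $\mathrm{\Rnum{2}}_3$, leaving only $\mathrm{\Rnum{13}}_4$, $\mathrm{\Rnum{16}}_4$, $\mathrm{\Rnum{17}}_4$ before the final appeal to Corollary \ref{cor3} and Lemma \ref{lem14}. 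Either repair is routine, but as written your list of cases is incomplete.
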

\begin{proof}
  By definition, each $2$-dimensional subspace of $E'$ has an orthogonal base. In particular, we have $\dim_{K^{\lor}} E^{\lor} \ge 2$. By the preceding theorem, $E'$ must be exactly of one type of type $\mathrm{\Rnum{13}}_4$, $\mathrm{\Rnum{16}}_4$ or $\mathrm{\Rnum{17}}_4$. Now, by Corollary \ref{cor3} and Lemma \ref{lem14}, $E'$ must be of type $\mathrm{\Rnum{17}}_4$.
\end{proof}

By the same proof as that of the above theorem, we have the following.

\begin{thm}
  Let $E$ be an indecomposable $4$-dimensional normed space with $\dim_{K^{\lor}} E^{\lor} = 2$. Then $E$ is of type $\mathrm{\Rnum{17}}_4$ if and only if $\dim_{K^{\lor}} (E / F)^{\lor} = 2$ for each $2$-dimensional subspace $F$ of $E$.
\end{thm}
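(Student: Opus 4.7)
The forward direction is immediate from the definition of type $\mathrm{\Rnum{17}}_4$, so the whole content lies in the converse. The plan is to combine the classification Theorem \ref{thm16} with the duality identity $(E/F)' \cong F^{\perp}$, which is an isometric isomorphism between the quotient dual of a 2-dimensional subspace $F \subseteq E$ and the 2-dimensional annihilator $F^{\perp} \subseteq E'$. Since $F \mapsto F^{\perp}$ is a bijection between 2-dimensional subspaces of $E$ and of $E'$, the hypothesis $\dim_{K^{\lor}}(E/F)^{\lor} = 2$ for every 2-dimensional $F \subseteq E$ translates to: \emph{every 2-dimensional subspace of $E'$ admits an orthogonal base}. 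This reformulation is the key move, and everything else is elimination of cases.

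Once the hypothesis is translated, I would invoke Theorem \ref{thm16}, which says that $E$ must be of exactly one of types $\mathrm{\Rnum{14}}_4, \mathrm{\Rnum{15}}_4, \mathrm{\Rnum{16}}_4, \mathrm{\Rnum{17}}_4$. The goal is then to rule out the first three. Type $\mathrm{\Rnum{16}}_4$ is immediate: by definition it requires $\dim_{K^{\lor}}(E/F)^{\lor} = 1$ for each 2-dimensional $F$, contradicting the hypothesis directly. For type $\mathrm{\Rnum{14}}_4$, Corollary \ref{cor2} gives $E' \cong$ something of type $\mathrm{\Rnum{9}}_4$, which satisfies $\dim_{K^{\lor}}(E')^{\lor} = 1$; then \emph{any} 2-dimensional subspace of $E'$ has $\dim_{K^{\lor}} \le 1$ and therefore cannot have an orthogonal base, contradicting the reformulated hypothesis. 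For type $\mathrm{\Rnum{15}}_4$, Theorem \ref{thm13} shows that $E'$ is also of type $\mathrm{\Rnum{15}}_4$; by definition $E'$ then contains a 3-dimensional subspace of type $\mathrm{\Rnum{2}}_3$, which is of the form $(K,|\cdot|_{t_1}) \oplus ([1,x],|\cdot|_{t_2})$, and the summand $\{0\} \oplus ([1,x],|\cdot|_{t_2})$ is a 2-dimensional subspace of $E'$ with $\dim_{K^{\lor}} = 1$, again contradicting the reformulated hypothesis.

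Eliminating these three types forces $E$ to be of type $\mathrm{\Rnum{17}}_4$, which is what we want. The argument is essentially a parallel of the proof of the preceding theorem (where the dual of a type-$\mathrm{\Rnum{17}}_4$ space was shown to be of type $\mathrm{\Rnum{17}}_4$), and the author's remark \emph{"by the same proof"} points exactly at this case analysis. The main obstacle, if one can call it that, is mechanical: one must remember to pass to the dual via $(E/F)' \cong F^{\perp}$ in order to read the hypothesis in a form where the structural information about each type of $E'$ (supplied by Corollary \ref{cor2}, Theorem \ref{thm13}, Lemma \ref{lem14}, and the definitions of types $\mathrm{\Rnum{14}}_4$--$\mathrm{\Rnum{17}}_4$) becomes directly applicable. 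Once that identification is made, all four cases collapse to a one-line argument each.
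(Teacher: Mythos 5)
Your proposal is correct and follows essentially the same route the paper intends: the paper proves this ``by the same proof as that of the above theorem,'' namely translating the hypothesis through $(E/F)'\cong F^{\perp}$ into the statement that every $2$-dimensional subspace of $E'$ has an orthogonal base and then eliminating the other indecomposable types via the duality table (Corollary \ref{cor2}/\ref{cor3}, Theorem \ref{thm13}, Lemma \ref{lem14}). Your only deviation is cosmetic: you run the case elimination on $E$ via Theorem \ref{thm16} rather than on $E'$ via Theorem \ref{thm19}, which changes nothing of substance.
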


Now, we can identify the dual spaces of $4$-dimensional normed spaces according to the types.

\begin{thm} \label{thm17}
  Let $E$ be a $4$-dimensional normed space of type $\mathrm{\Rnum{1}}_4$ (resp. $\mathrm{\Rnum{2}}_4$, $\mathrm{\Rnum{3}}_4$, $\mathrm{\Rnum{4}}_4$, $\mathrm{\Rnum{5}}_4$, $\mathrm{\Rnum{6}}_4$, $\mathrm{\Rnum{7}}_4$, $\mathrm{\Rnum{8}}_4$, $\mathrm{\Rnum{9}}_4$, $\mathrm{\Rnum{10}}_4$, $\mathrm{\Rnum{11}}_4$, $\mathrm{\Rnum{12}}_4$, $\mathrm{\Rnum{13}}_4$, $\mathrm{\Rnum{14}}_4$, $\mathrm{\Rnum{15}}_4$, $\mathrm{\Rnum{16}}_4$, $\mathrm{\Rnum{17}}_4$). Then $E'$ is of type $\mathrm{\Rnum{1}}_4$ (resp. $\mathrm{\Rnum{2}}_4$, $\mathrm{\Rnum{4}}_4$, $\mathrm{\Rnum{3}}_4$, $\mathrm{\Rnum{5}}_4$, $\mathrm{\Rnum{6}}_4$, $\mathrm{\Rnum{7}}_4$, $\mathrm{\Rnum{13}}_4$, $\mathrm{\Rnum{14}}_4$, $\mathrm{\Rnum{10}}_4$, $\mathrm{\Rnum{16}}_4$, $\mathrm{\Rnum{12}}_4$, $\mathrm{\Rnum{8}}_4$, $\mathrm{\Rnum{9}}_4$, $\mathrm{\Rnum{15}}_4$, $\mathrm{\Rnum{11}}_4$, $\mathrm{\Rnum{17}}_4$).
\end{thm}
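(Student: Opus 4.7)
The plan is to observe that Theorem \ref{thm17} is essentially a bookkeeping statement: each individual duality correspondence has already been established in the preceding subsections, so the proof is a case-by-case citation rather than fresh work. I would organize the argument by grouping the seventeen types according to where the dual correspondence was proved.

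First I would dispose of the decomposable types $\mathrm{\Rnum{1}}_4$ through $\mathrm{\Rnum{7}}_4$ in one stroke by invoking the theorem at the end of the subsection on types $\mathrm{\Rnum{4}}_4$ through $\mathrm{\Rnum{7}}_4$, which records exactly the dualities $\mathrm{\Rnum{1}}_4 \leftrightarrow \mathrm{\Rnum{1}}_4$, $\mathrm{\Rnum{2}}_4 \leftrightarrow \mathrm{\Rnum{2}}_4$, $\mathrm{\Rnum{3}}_4 \leftrightarrow \mathrm{\Rnum{4}}_4$, $\mathrm{\Rnum{5}}_4 \leftrightarrow \mathrm{\Rnum{5}}_4$, $\mathrm{\Rnum{6}}_4 \leftrightarrow \mathrm{\Rnum{6}}_4$, $\mathrm{\Rnum{7}}_4 \leftrightarrow \mathrm{\Rnum{7}}_4$. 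Next, for the pair $\mathrm{\Rnum{8}}_4 \leftrightarrow \mathrm{\Rnum{13}}_4$, I would cite Corollary \ref{cor3}. For the types with $\dim_{K^{\lor}} E^{\lor} = 1$, namely $\mathrm{\Rnum{9}}_4 \to \mathrm{\Rnum{14}}_4$, $\mathrm{\Rnum{10}}_4 \to \mathrm{\Rnum{10}}_4$, and $\mathrm{\Rnum{11}}_4 \to \mathrm{\Rnum{16}}_4$, I would invoke the theorem in the subsection on types $\mathrm{\Rnum{9}}_4, \mathrm{\Rnum{10}}_4, \mathrm{\Rnum{11}}_4$ that computed these duals directly. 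The self-duality $\mathrm{\Rnum{12}}_4 \to \mathrm{\Rnum{12}}_4$ is Proposition \ref{prop1}.

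For the indecomposable types with $\dim_{K^{\lor}} E^{\lor} = 2$, I would cite: Corollary \ref{cor2} for $\mathrm{\Rnum{14}}_4 \to \mathrm{\Rnum{9}}_4$; Theorem \ref{thm13} for $\mathrm{\Rnum{15}}_4 \to \mathrm{\Rnum{15}}_4$; Lemma \ref{lem14} for $\mathrm{\Rnum{16}}_4 \to \mathrm{\Rnum{11}}_4$; and the theorem on $E$ of type $\mathrm{\Rnum{17}}_4$ immediately preceding this statement for $\mathrm{\Rnum{17}}_4 \to \mathrm{\Rnum{17}}_4$. Once all seventeen cases are listed, Theorem \ref{thm19} guarantees that every $4$-dimensional normed space falls into exactly one of these types, so there is nothing further to check.

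There is essentially no obstacle here, since every individual assignment has been established. The only mild care needed is to verify that my organization truly covers all seventeen rows of the table without overlap or omission; in particular, the type $\mathrm{\Rnum{17}}_4 \to \mathrm{\Rnum{17}}_4$ requires that the theorem immediately before Theorem \ref{thm17} has already been proved, which it has. A short remark at the close would point out that the involutive structure of the table ($\mathrm{\Rnum{3}}_4 \leftrightarrow \mathrm{\Rnum{4}}_4$, $\mathrm{\Rnum{8}}_4 \leftrightarrow \mathrm{\Rnum{13}}_4$, $\mathrm{\Rnum{9}}_4 \leftrightarrow \mathrm{\Rnum{14}}_4$, $\mathrm{\Rnum{11}}_4 \leftrightarrow \mathrm{\Rnum{16}}_4$, with the remaining types self-dual) is consistent with reflexivity of finite-dimensional $E$, serving as a sanity check that no case was double-counted.
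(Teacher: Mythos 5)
Your proposal is correct and matches the paper exactly: the paper states this theorem without a separate proof precisely because it is a compilation of the duality assignments already established (the theorem closing the subsection on types $\mathrm{\Rnum{4}}_4$--$\mathrm{\Rnum{7}}_4$ for the decomposable types, Corollary \ref{cor3}, the theorem on duals of types $\mathrm{\Rnum{9}}_4$--$\mathrm{\Rnum{11}}_4$, Proposition \ref{prop1}, Corollary \ref{cor2}, Theorem \ref{thm13}, Lemma \ref{lem14}, and the $\mathrm{\Rnum{17}}_4$ self-duality theorem), together with the exhaustiveness guaranteed by Theorem \ref{thm19}. Your case list covers all seventeen types with the correct citations, so nothing is missing.
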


Let $E$ be a $4$-dimensional normed space with $\dim_{K^{\lor}} E^{\lor} = 2$. Then there exist $x, y, z, w \in K^{\lor} \setminus K$ and $t, s \in \mathbb{R}_{> 0}$ for which
\begin{align*}
  E \cong ([(1,0),(0,1),(x,z),(y,w)],|\cdot|_t \times |\cdot|_s).
\end{align*}
First, we will study whether $E$ contains a $3$-dimensional subspace of type $\mathrm{\Rnum{2}}_3$ or not.

\begin{lem}
  Let $x, y, z, w \in K^{\lor} \setminus K$ and $t, s \in \mathbb{R}_{> 0}$. We set
\begin{align*}
  E := ([(1,0),(0,1),(x,z),(y,w)],|\cdot|_t \times |\cdot|_s).
\end{align*}
Suppose $\dim_K E = 4$. If $E$ contains no $3$-dimensional subspace of type $\mathrm{\Rnum{2}}_3$, then $E$ satisfies the following three conditions:
\begin{itemize}
  \item $\dim_K [1,x,y] = 3$. 
  \item $t \cdot d(x + \lambda y, K) = s \cdot d(z + \lambda w, K)$ for each $\lambda \in K$.
  \item $x + \lambda y \nsim z + \lambda w$ for each $\lambda \in K$.
\end{itemize}
\end{lem}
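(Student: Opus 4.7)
The plan is to analyze the one-parameter family of $3$-dimensional subspaces
\begin{align*}
  F_\lambda := [(1,0),\,(0,1),\,(x+\lambda y,\, z+\lambda w)], \qquad \lambda \in K,
\end{align*}
each of which contains the orthogonal pair $\{(1,0),(0,1)\}$ and sits inside $((K^{\lor})^2, |\cdot|_t \times |\cdot|_s)$, so that $\dim_{K^{\lor}} F_\lambda^{\lor} = 2$ whenever $\dim_K F_\lambda = 3$. The crucial input is the dichotomy from Theorem \ref{.4thm2} together with Remark \ref{sysrem1}: a subspace of the form $[(1,0),(0,1),(u,v)]$ with $u, v \in K^{\lor} \setminus K$ is of type $\mathrm{\Rnum{3}}_3$ precisely when $t\,d(u,K) = s\,d(v,K)$ and $u \nsim v$, and in every other case is decomposable, hence (since $\dim_{K^{\lor}}$ equals $2$) of type $\mathrm{\Rnum{2}}_3$. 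Likewise, if exactly one of the two coordinates lies in $K$, a one-line direct-sum decomposition (subtract that coordinate's multiple of $(1,0)$ or $(0,1)$) shows that $F_\lambda$ is already of type $\mathrm{\Rnum{2}}_3$.

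To establish $(1)$, I would suppose for contradiction that $\dim_K[1,x,y] \le 2$. Since $x, y \notin K$, this forces $y = ax + b$ with $a \in K^{\times}$ and $b \in K$. Choosing $\lambda = -1/a$ yields $x + \lambda y = -b/a \in K$, while $z + \lambda w \notin K$: otherwise $w = a(z - k)$ for some $k \in K$, giving $(y,w) = a(x,z) + b(1,0) - ak(0,1)$, contradicting $\dim_K E = 4$. Consequently $F_{-1/a} \cong (K, |\cdot|_t) \oplus ([1,\, z+\lambda w],\, |\cdot|_s)$ is a $3$-dimensional subspace of type $\mathrm{\Rnum{2}}_3$, which is forbidden by hypothesis; hence $\dim_K[1,x,y] = 3$.

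For $(2)$ and $(3)$, I fix $\lambda \in K$. By $(1)$ we have $x + \lambda y \in K^{\lor} \setminus K$, so $\dim_K F_\lambda = 3$. If $z + \lambda w \in K$, the symmetric observation yields $F_\lambda \cong ([1,\,x+\lambda y],\,|\cdot|_t) \oplus (K,\,|\cdot|_s)$, again of type $\mathrm{\Rnum{2}}_3$ and hence forbidden; therefore $z + \lambda w \in K^{\lor} \setminus K$ as well. Both coordinates now lie in $K^{\lor} \setminus K$, so the dichotomy above forces $F_\lambda$ to be of type $\mathrm{\Rnum{3}}_3$, giving both $t\,d(x+\lambda y, K) = s\,d(z+\lambda w, K)$ and $x + \lambda y \nsim z + \lambda w$, as required. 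The argument is almost mechanical once the family $F_\lambda$ is in place; the only step demanding care is the bookkeeping in $(1)$, where one must exploit $\dim_K E = 4$ to guarantee that $z + \lambda w \notin K$ whenever $x + \lambda y \in K$.
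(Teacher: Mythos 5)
Your proof is correct and follows essentially the same route as the paper, whose entire proof is the citation of Theorem \ref{.4thm2}: you examine the subspaces $F_\lambda = [(1,0),(0,1),(x,z)+\lambda(y,w)]$ and invoke the dichotomy of Theorem \ref{.4thm2} and Remark \ref{sysrem1} to force type $\mathrm{\Rnum{3}}_3$, with the degenerate cases (a coordinate falling into $K$) correctly ruled out using $\dim_K E = 4$. Nothing further is needed.
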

\begin{proof}
  We can use Theorem \ref{.4thm2}.
\end{proof}

\begin{thm} \label{thm18}
   Let $x, y, z, w \in K^{\lor} \setminus K$ and $t, s \in \mathbb{R}_{> 0}$. We set
\begin{align*}
  (E, \|\cdot\|) := ([(1,0),(0,1),(x,z),(y,w)],|\cdot|_t \times |\cdot|_s).
\end{align*} 
\noindent $(1)$ $E$ is of type $\mathrm{\Rnum{16}}_4$ if and only if $E$ satisfies the following three conditions:
\begin{itemize}
  \item[$(a)$] $([1,x,y],|\cdot|_t)$ is of type $\mathrm{\Rnum{5}}_3$. 
  \item[$(b)$] $x \nsim z$.
  \item[$(c)$] $t \cdot d(x + \lambda y, K) = s \cdot d(z + \lambda w, K)$ for each $\lambda \in K$.
\end{itemize}
$(2)$ Suppose that $E$ is of type $\mathrm{\Rnum{16}}_4$. Then each $3$-dimensional subspace of $E$ is isometrically isomorphic to
\begin{align*}
  ([(1,0),(0,1),(x,z)],|\cdot|_t \times |\cdot|_s).
\end{align*}
\noindent $(3)$ Suppose that $E$ is of type $\mathrm{\Rnum{16}}_4$. Then $E$ satisfies $\mathrm{(SE)}$ if and only if
\begin{align*}
  t, s, t d(x,K) \notin V_K.
\end{align*}
\end{thm}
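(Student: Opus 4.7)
The proof plan parallels the strategy used for Theorem \ref{thm14} (type $\mathrm{\Rnum{15}}_4$), adapted to the stronger constraint that \emph{every} 3-dimensional subspace of $E$ be of type $\mathrm{\Rnum{3}}_3$. I will handle (1) first, then deduce (2) from duality, and (3) from Corollary \ref{syscor3} applied to $E'$.

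\textbf{For (1), forward direction.} Conditions (b) and (c) fall out immediately by applying Theorem \ref{.4thm2}(1) to each 3-dimensional subspace $[(1,0),(0,1),(x+\lambda y,\,z+\lambda w)]$, which is of type $\mathrm{\Rnum{3}}_3$ by the definition of type $\mathrm{\Rnum{16}}_4$; specializing $\lambda = 0$ yields (b) and the distance equation is (c). For (a), take the 2-dimensional subspace $G := [(1,0),(0,1)] \subseteq E$. A short quotient-norm computation using (c) shows $E/G \cong ([1,x,y]/[1],|\cdot|_t)$ via $[(x,z)] \mapsto [x]$, $[(y,w)] \mapsto [y]$. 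By the definition of type $\mathrm{\Rnum{16}}_4$, $\dim_{K^{\lor}}(E/G)^{\lor}=1$, so $\dim_{K^{\lor}}([1,x,y]/[1])^{\lor}=1$. Since $\dim_{K^{\lor}}[1,x,y]^{\lor}=1$ (as $[1,x,y]\subseteq K^{\lor}$), Corollary \ref{syscor1} gives that all 2-dimensional quotients of $[1,x,y]$ are mutually isometric, hence each has $\dim_{K^{\lor}}=1$; this is precisely the defining property of type $\mathrm{\Rnum{5}}_3$.

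\textbf{For (1), converse direction, and for (2).} Assume (a), (b), (c). Note (a) implies $1, x, y$ are $K$-linearly independent so $\dim_K E = 4$, and $\dim_{K^{\lor}} E^{\lor}=2$ automatically from $E \subseteq ((K^{\lor})^2, |\cdot|_t \times |\cdot|_s)$. An arbitrary 3-dimensional subspace $F$ is brought into the form $[(1,0),(0,1),\mu(x,z)+\nu(y,w)]$ via Remark \ref{rem2} and a case enumeration analogous to the proof of Theorem \ref{thm14}; then (b), (c), and Theorem \ref{.4thm2}(1) give that $F$ is of type $\mathrm{\Rnum{3}}_3$. A parallel analysis of 2-dimensional subspaces $G$, with quotient norms controlled by the type-$\mathrm{\Rnum{5}}_3$ structure of $[1,x,y]$ guaranteed by (a), yields $\dim_{K^{\lor}}(E/G)^{\lor}=1$ for all such $G$; Lemma \ref{lem13} then forces $E$ to be indecomposable, completing the verification of type $\mathrm{\Rnum{16}}_4$. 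For (2), Lemma \ref{lem14} makes $E'$ of type $\mathrm{\Rnum{11}}_4$, so $\dim_{K^{\lor}}(E')^{\lor}=1$; Corollary \ref{syscor1} applied to $E'$, combined with $D' \cong E'/D^{\perp}$ and $E'' \cong E$, yields that all 3-dimensional subspaces of $E$ are mutually isometric, and the explicit model is $[(1,0),(0,1),(x,z)]$, which is of type $\mathrm{\Rnum{3}}_3$ by (b), (c), and Theorem \ref{.4thm2}(1).

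\textbf{For (3).} Apply Corollary \ref{syscor3}(2) to $E'$: since $E = E''$, $E$ satisfies $(\mathrm{SE})$ iff $\|E'/[f]\| \cap V_K = \emptyset$ and $(E'/[f])'$ satisfies $(\mathrm{SE})$ for any nonzero $f \in E'$. By (2), $(E'/[f])' \cong [(1,0),(0,1),(x,z)]$, and Theorem \ref{.4thm2}(2) gives that this satisfies $(\mathrm{SE})$ iff $t,s \notin V_K$. Meanwhile $E'/[f] \cong [(1,0),(0,1),(x,z)]' \cong ([1,x,z],|\cdot|_{1/(t\,d(x,K))})$ by Theorem \ref{.4thm2}(3), and the condition $\|E'/[f]\| \cap V_K = \emptyset$ reduces to $t\,d(x,K) \notin V_K$. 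Combining yields $E$ satisfies $(\mathrm{SE})$ iff $t, s, t\,d(x,K) \notin V_K$.

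\textbf{Main obstacle.} The bulk of the technical work lies in the converse direction of (1): the case analysis on 3-dimensional subspaces and on 2-dimensional quotients $G \subseteq E$ is lengthy, and in each case one must verify the norm computation using (c) and then combine it with (a) or (b) to derive the required non-equivalence or dimensional count. Once one is comfortable with the Lemma \ref{lem7}--Remark \ref{rem2} style of coordinate reduction used in the proof of Theorem \ref{thm14}, none of the individual cases is deep, but the enumeration has to be redone carefully for the present setup.
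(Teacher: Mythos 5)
Your forward direction of (1), and your arguments for (2) and (3), track the paper closely and are sound (your route to condition $(a)$ via $E/[(1,0),(0,1)] \cong [1,x,y]/K$ and Corollary \ref{syscor1} is a mild variant of the paper's route through $E'$ being of type $\mathrm{\Rnum{11}}_4$, and both work). The problem is the converse direction of (1). Your plan is to verify directly that every $3$-dimensional subspace, after reduction to the form $[(1,0),(0,1),\mu(x,z)+\nu(y,w)]$, is of type $\mathrm{\Rnum{3}}_3$ "by (b), (c), and Theorem \ref{.4thm2}(1)". But Theorem \ref{.4thm2}(1) requires \emph{both} the equality of the two scaled distances \emph{and} the non-equivalence $\mu x+\nu y \nsim \mu z+\nu w$. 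Hypothesis $(c)$ supplies the former for all $\lambda=\nu/\mu$, but hypothesis $(b)$ supplies the non-equivalence only at $\lambda=0$; nothing in $(a),(b),(c)$ hands you $x+\lambda y \nsim z+\lambda w$ for $\lambda\neq 0$ (note that the unnamed lemma preceding the theorem lists this stronger non-equivalence as a separate conclusion, precisely because it is not subsumed by $(b)$). That non-equivalence is true, but only as a \emph{consequence} of $E$ being of type $\mathrm{\Rnum{16}}_4$; using it as an ingredient makes the argument circular. A second, smaller defect: you invoke Lemma \ref{lem13} to "force $E$ to be indecomposable", but indecomposability is a \emph{hypothesis} of that lemma, not a conclusion, so it cannot play that role.

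The paper avoids the case analysis entirely. From $(b)$ and $(c)$ at $\lambda=0$ it gets a single type-$\mathrm{\Rnum{3}}_3$ subspace $[(1,0),(0,1),(x,z)]$; from $(a)$ and $(c)$ it gets $E/[(0,1)]\cong([1,x,y],|\cdot|_t)$ of type $\mathrm{\Rnum{5}}_3$, hence $E'$ contains a type-$\mathrm{\Rnum{5}}_3$ subspace. Indecomposability then follows because a decomposable $E$ with $\dim_{K^{\lor}}E^{\lor}=2$ containing a type-$\mathrm{\Rnum{3}}_3$ subspace would be of type $\mathrm{\Rnum{6}}_4$, whose dual (again type $\mathrm{\Rnum{6}}_4$) has no type-$\mathrm{\Rnum{5}}_3$ subspace. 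Finally, among the remaining candidates $\mathrm{\Rnum{15}}_4,\mathrm{\Rnum{16}}_4,\mathrm{\Rnum{17}}_4$, the first and last are self-dual with $\dim_{K^{\lor}}(E')^{\lor}=2$, so by Lemma \ref{lem12} their duals contain no type-$\mathrm{\Rnum{5}}_3$ subspace; only $\mathrm{\Rnum{16}}_4$ survives, and the fact that all $3$-dimensional subspaces are of type $\mathrm{\Rnum{3}}_3$ then comes for free from the classification rather than being checked by hand. You should replace your case enumeration with this elimination argument (or find an independent proof of $x+\lambda y\nsim z+\lambda w$ for all $\lambda$, which is the real content you are currently assuming).
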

\begin{proof}
  First, we shall prove $(1)$. Suppose that $E$ is of type $\mathrm{\Rnum{16}}_4$. Then $E$ contains no $3$-dimensional subspace of type $\mathrm{\Rnum{2}}_3$. Therefore, by the preceding lemma, $E$ satisfies the conditions $(b)$ and $(c)$. By the condition $(c)$, it is easily seen that $E / [(0,1)] \cong ([1,x,y],|\cdot|_t)$. On the other hand, by Theorem \ref{thm17}, $E'$ is of type $\mathrm{\Rnum{11}}_4$ and $\dim_{K^{\lor}} (E')^{\lor} = 1$. In particular, we have $\dim_{K^{\lor}} (([1,x,y],|\cdot|_t)')^{\lor} = 1$. Therefore, $([1,x,y],|\cdot|_t)$ must be of type $\mathrm{\Rnum{5}}_3$. Thus, $E$ satisfies the condition $(a)$. \par
  Conversely, suppose the conditions $(a), (b)$ and $(c)$ hold. By the conditions $(a)$ and $(c)$, we have that $E / [(0,1)] \cong ([1,x,y],|\cdot|_t)$ is of type $\mathrm{\Rnum{5}}_3$. Therefore, $E'$ contains a $3$-dimensional subspace of type $\mathrm{\Rnum{5}}_3$. Moreover, by the conditions $(b)$ and $(c)$, we have $x \nsim z$ and $td(x,K) = s d(z,K)$. Thus, a $3$-dimensional subspace $[(1,0),(0,1),(x,z)]$ of $E$ is of type $\mathrm{\Rnum{3}}_3$. Now, let us prove that $E$ is indecomposable. Suppose that $E$ is decomposable. Since $\dim_{K^{\lor}} E^{\lor} = 2$ and $E$ contains a $3$-dimensional subspace of type $\mathrm{\Rnum{3}}_3$, $E$ is of type $\mathrm{\Rnum{6}}_4$. Therefore, $E'$ is of type $\mathrm{\Rnum{6}}_4$ and contains no $3$-dimensional subspace of type $\mathrm{\Rnum{5}}_3$, which is a contradiction. Thus, $E$ is indecomposable. \par
  Finally, we determine the type of $E$. Since $E$ contains a $3$-dimensional subspace of type $\mathrm{\Rnum{3}}_3$, $E$ is exactly of one type of type $\mathrm{\Rnum{15}}_4$, $\mathrm{\Rnum{16}}_4$ or $\mathrm{\Rnum{17}}_4$. On the other hand, since $E'$ contains a $3$-dimensional subspace of type $\mathrm{\Rnum{5}}_3$, it follows from Lemma \ref{lem12} and Theorem \ref{thm17} that $E$ must be $\mathrm{\Rnum{16}}_4$. Thus, $(1)$ is proved. Moreover, $(2)$ immediately follows from Corollary \ref{syscor1} and Theorem \ref{thm17}. \par
  Now, we shall prove $(3)$. Since $\dim_{K^{\lor}} (E')^{\lor} = 1$, we can apply Corollary \ref{syscor3}. Hence, $E$ satisfies (SE) if and only if $\|F'\| \cap V_K = \emptyset$ and $F$ satisfies (SE) where $F = ([(1,0),(0,1),(x,z)],|\cdot|_t \times |\cdot|_s)$. Thus, by Theorem \ref{.4thm2}, $E$ satisfies (SE) if and only if $t, s, t d(x,K) \notin V_K$.
\end{proof}

Finally, we give a necessary and sufficient condition for a $4$-dimensional normed space of type $\mathrm{\Rnum{17}}_4$ to satisfy (SE). Thus, we have given a necessary and sufficient condition for a $4$-dimensional normed space to satisfy (SE) according to an embedding into $((K^{\lor})^n, |\cdot|_{t_1} \times \cdots \times |\cdot|_{t_n})$.

\begin{thm}
  Let $(E, \|\cdot\|)$ be a $4$-dimensional normed space of type $\mathrm{\Rnum{17}}_4$. Then $E$ satisfies $\mathrm{(SE)}$ if and only if $\|E\| \cap V_K = \emptyset$.
\end{thm}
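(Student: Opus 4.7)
My plan is to reduce the theorem to Lemma \ref{selem1}, exploiting the self-duality of type $\mathrm{\Rnum{17}}_4$ established in Theorem \ref{thm17}. Recall that Lemma \ref{selem1} asserts $E$ satisfies $(\mathrm{SE})$ if and only if (i) every $3$-dimensional subspace of $E$ satisfies $(\mathrm{SE})$, and (ii) every $2$-dimensional subspace $F$ of $E'$ either has $\dim_{K^{\lor}} F^{\lor} = 2$ or is isometrically isomorphic to $([1,x], |\cdot|_t)$ with $t \cdot d(x,K) \notin V_K$.

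First, I would verify that condition (ii) holds automatically. By Theorem \ref{thm17}, $E'$ is itself of type $\mathrm{\Rnum{17}}_4$, so every $3$-dimensional subspace of $E'$ is of type $\mathrm{\Rnum{3}}_3$, and by Theorem \ref{.4thm2}(3) every $2$-dimensional subspace of such a $3$-dimensional space has an orthogonal base. Since $\dim_K E' = 4 \ge 3$, any $2$-dimensional subspace of $E'$ sits inside some $3$-dimensional subspace of $E'$ and therefore has an orthogonal base, i.e. $\dim_{K^{\lor}} F^{\lor} = 2$. Hence condition (ii) is vacuous, and the problem reduces to characterising when every $3$-dimensional subspace of $E$ satisfies $(\mathrm{SE})$.

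Next, I would translate this into a condition on $\|E\|$. By the definition of type $\mathrm{\Rnum{17}}_4$, every $3$-dimensional subspace $F$ of $E$ is of type $\mathrm{\Rnum{3}}_3$, so by Theorem \ref{.4thm2}(2), writing $F \cong [(1,0),(0,1),(x,y)]$ as a subspace of $((K^{\lor})^2, |\cdot|_{t_1} \times |\cdot|_{t_2})$, the space $F$ satisfies $(\mathrm{SE})$ exactly when $t_1, t_2 \notin V_K$. Since $t_1 = \|(1,0)\|$ and $t_2 = \|(0,1)\|$ lie in $\|F\|$, and every nonzero element of $F$ has norm in $t_1 V_K \cup t_2 V_K$, this is equivalent to $\|F\| \cap V_K = \emptyset$.

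Finally, since $\dim_K E = 4$, every nonzero vector of $E$ belongs to some $3$-dimensional subspace (extend it to a linearly independent triple), so $\|E\|$ is the union of $\|F\|$ as $F$ ranges over the $3$-dimensional subspaces of $E$. The condition ``$\|F\| \cap V_K = \emptyset$ for all such $F$'' is therefore equivalent to $\|E\| \cap V_K = \emptyset$, completing the argument. I anticipate no real obstacle: once the self-dual nature of type $\mathrm{\Rnum{17}}_4$ collapses condition (ii) of Lemma \ref{selem1}, the rest is a direct application of Theorem \ref{.4thm2}(2) together with the definition of type $\mathrm{\Rnum{17}}_4$.
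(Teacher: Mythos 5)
Your proposal is correct and follows essentially the same route as the paper: both reduce to Lemma \ref{selem1}, use Theorem \ref{thm17} (self-duality of type $\mathrm{\Rnum{17}}_4$) to dispose of condition $(2)$, and apply Theorem \ref{.4thm2} to the type $\mathrm{\Rnum{3}}_3$ subspaces. The only cosmetic difference is that you identify $\|E\|$ with the union of the $\|F\|$ over all $3$-dimensional subspaces $F$, whereas the paper notes that $E$ is an immediate extension of any single such $F$ so that $\|E\| = \|F\|$; both observations do the same job.
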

\begin{proof}
  We will use Lemma \ref{selem1}. Since $E'$ is of type $\mathrm{\Rnum{17}}_4$ by Theorem \ref{thm17}, each $2$-dimensional subspace of $E'$ has an orthogonal base. Hence, $E$ satisfies (SE) if and only if each $3$-dimensional subspace of $E$ satisfies (SE). Moreover, since $E$ is of type $\mathrm{\Rnum{17}}_4$, each $3$-dimensional subspace of $E$ is of type $\mathrm{\Rnum{3}}_3$. Thus, if $\|E\| \cap V_K = \emptyset$, by Theorem \ref{.4thm2}, $E$ satisfies (SE). Conversely, suppose that $E$ satisfies (SE). Let $F$ be a $3$-dimensional subspace of $E$. Then $F$ satisfies (SE). Moreover, $F$ is of type $\mathrm{\Rnum{3}}_3$ and $\dim_{K^{\lor}} F^{\lor} = 2$. Hence, $E$ is an immediate extension of $F$ and $\|E\| = \|F\|$. Thus, by Theorem \ref{.4thm2}, we have $\|E\| \cap V_K = \emptyset$.
\end{proof}

Unfortunately, we do not know a necessary and sufficient condition for a subspace of $(K^{\lor})^2$ to be of type $\mathrm{\Rnum{17}}_4$, that is, the following problem is unsolved.

\begin{prob*} \label{prob2}
 Let $x, y, z, w \in K^{\lor} \setminus K$ and $t, s \in \mathbb{R}_{> 0}$. We set
\begin{align*}
  (E, \|\cdot\|) := ([(1,0),(0,1),(x,z),(y,w)],|\cdot|_t \times |\cdot|_s).
\end{align*} 
What is a necessary and sufficient condition for $E$ to be of type $\mathrm{\Rnum{17}}_4$?
\end{prob*}

In the next section, we will give a sufficient condition for $E$ to be of type $\mathrm{\Rnum{17}}_4$.

\subsection{Existence problem} In the same way as Proposition \ref{.4prop1}, we have the following.

\begin{prop} \label{prop2}
  Suppose that $K$ is separable as a metric space. Then for each $\alpha = \mathrm{\Rnum{1}}_4, \cdots, \mathrm{\Rnum{16}}_4$, there exists a $4$-dimensional normed space of type $\alpha$.
\end{prop}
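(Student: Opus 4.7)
The plan is to exhibit for each type $\alpha \in \{\mathrm{\Rnum{1}}_4, \ldots, \mathrm{\Rnum{16}}_4\}$ an explicit realization, combining the structure/characterization theorems of this section with two existence inputs that depend on the separability hypothesis: Theorem \ref{systhm5}, which produces for any $r > 0$ and $n \in \mathbb{N}$ elements $x_1, \ldots, x_n \in K^{\lor} \setminus K$ with $d(x_i, K) = r$ and $\{\pi(x_1), \ldots, \pi(x_n)\}$ orthogonal in $K^{\lor}/K$, and Proposition \ref{.4prop1}, which produces a $3$-dimensional normed space of any prescribed subtype $(x, y, t)$.

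The decomposable types are immediate: type $\mathrm{\Rnum{1}}_4$ is $(K^4, |\cdot|^{\times 4})$; type $\mathrm{\Rnum{2}}_4$ is $K^2 \oplus ([1,x],|\cdot|)$ for any $x \in K^{\lor} \setminus K$; type $\mathrm{\Rnum{7}}_4$ is $([1,x],|\cdot|) \oplus ([1,x],|\cdot|)$; and type $\mathrm{\Rnum{5}}_4$ is $K \oplus E_3$ with $E_3$ the output of Proposition \ref{.4prop1} for some subtype $(x, y, t)$. For types $\mathrm{\Rnum{3}}_4$, $\mathrm{\Rnum{4}}_4$, and $\mathrm{\Rnum{6}}_4$, Theorem \ref{systhm5} with $n = 2$ supplies $x \nsim y$ with equal distances from $K$, and rescaling the factor norms produces the required ratio condition; the corresponding characterization theorems then identify the types.

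For the indecomposable types with $\dim_{K^{\lor}} E^{\lor} = 1$ (types $\mathrm{\Rnum{8}}_4$--$\mathrm{\Rnum{12}}_4$), I would take $E := ([1, x_1, x_2, x_3], |\cdot|_t) \subseteq (K^{\lor}, |\cdot|_t)$ where $x_1, x_2, x_3 \in K^{\lor}$ are chosen so that the quotient $E/K$ realizes the prescribed $3$-dimensional type (type $\mathrm{\Rnum{1}}_3$ for $\mathrm{\Rnum{8}}_4$, and so on through $\mathrm{\Rnum{5}}_3$ for $\mathrm{\Rnum{12}}_4$). Since $K^{\lor}/K$ is spherically complete, any $3$-dimensional $K$-normed space embeds isometrically into it; pulling back along $\pi: K^{\lor} \to K^{\lor}/K$ gives the required $x_i$, while the needed $3$-dimensional examples come from Theorem \ref{systhm5} (for types $\mathrm{\Rnum{1}}_3$--$\mathrm{\Rnum{4}}_3$) and Proposition \ref{.4prop1} (for type $\mathrm{\Rnum{5}}_3$). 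Type $\mathrm{\Rnum{13}}_4$ is handled directly by Theorem \ref{systhm5} with $n = 3$ combined with the characterization in Theorem \ref{thm12}.

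The hard part will be the indecomposable types $\mathrm{\Rnum{14}}_4$, $\mathrm{\Rnum{15}}_4$, $\mathrm{\Rnum{16}}_4$ with $\dim_{K^{\lor}} E^{\lor} = 2$, each defined by fine additional constraints. For $\mathrm{\Rnum{16}}_4$ I would start with a hyper-symmetric $([1, x, y], |\cdot|_t)$ of subtype $(x, y', t)$ from Proposition \ref{.4prop1}, choose $z \in K^{\lor} \setminus K$ with $x \nsim z$ via Theorem \ref{systhm5}, and select $w \in K^{\lor} \setminus K$ and rescale $s$ so that $t \cdot d(x + \lambda y, K) = s \cdot d(z + \lambda w, K)$ holds uniformly in $\lambda \in K$; Theorem \ref{thm18} then identifies $E := ([(1,0),(0,1),(x,z),(y,w)], |\cdot|_t \times |\cdot|_s)$ as type $\mathrm{\Rnum{16}}_4$. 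For $\mathrm{\Rnum{14}}_4$ and $\mathrm{\Rnum{15}}_4$ the same template is used with the underlying $3$-dimensional subspace taken of subtype $(x, y', t)$ satisfying $y' \nsim z$ (respectively, of type $\mathrm{\Rnum{4}}_3$ with $w \nsim z$ for every $w \in [1, x, y] \setminus K$ satisfying $w \nsim x$), and the type is confirmed by Theorems \ref{thm11} and \ref{thm14}. The delicate point, where the separability hypothesis is exercised most heavily, is simultaneously forcing the independence of $\pi(x), \pi(y), \pi(z)$ to the correct extent to distinguish $\mathrm{\Rnum{14}}_4$ from $\mathrm{\Rnum{15}}_4$ and $\mathrm{\Rnum{16}}_4$; this is supplied by repeated application of Theorem \ref{systhm5} together with the spherical completeness of $K^{\lor}/K$, which allows us to freely prescribe the geometry of $\pi(z)$ relative to $[\pi(x), \pi(y)]$.
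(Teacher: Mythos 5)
Your proposal follows essentially the same route as the paper, whose own proof is only a two-sentence sketch: it invokes Theorem \ref{systhm5} to embed $((K^{\lor})^3, |\cdot| \times |\cdot| \times |\cdot|)$ into the spherically complete space $K^{\lor}/K$, realizes the five $3$-dimensional types there and pulls back to settle types $\mathrm{\Rnum{8}}_4$ through $\mathrm{\Rnum{12}}_4$, and then says to use Theorem \ref{systhm5} and Proposition \ref{.4prop1} for the remaining types --- exactly the two inputs and the same characterization theorems you use, so your write-up is if anything more detailed than the paper's. One small caution: the assertion that ``any $3$-dimensional $K$-normed space embeds isometrically into $K^{\lor}/K$'' is more than the cited results deliver (Theorem \ref{systhm5} only produces orthogonal elements with a common distance $r$, so only spaces whose scale parameters are commensurable modulo $V_K$ are obtained this way), but since the existence statement only requires one representative of each type and such representatives with commensurable scales exist, this does not affect the argument.
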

\begin{proof}
  Since $K^{\lor} / K$ is spherically complete, by Theorem \ref{systhm5}, $K^{\lor} / K$ contains a subspace, isometrically isomorphic to $((K^{\lor})^3, |\cdot| \times |\cdot| \times |\cdot|)$ (as a $K$-normed space). Therefore, for each $\alpha = \mathrm{\Rnum{8}}_4, \cdots, \mathrm{\Rnum{12}}_4$, there exists a $4$-dimensional normed space of type $\alpha$. For the others, use Theorem \ref{systhm5} and Proposition \ref{.4prop1} if necessary.
\end{proof}

For the existence of a normed space of type $\mathrm{\Rnum{17}}_4$, the following theorem is needed.

\begin{thm}
  Let $x, y, z, w \in K^{\lor} \setminus K$ and $t, s \in \mathbb{R}_{> 0}$. We set
\begin{align*}
  (E, \|\cdot\|) := ([(1,0),(0,1),(x,z),(y,w)],|\cdot|_t \times |\cdot|_s).
\end{align*} 
Suppose that the following two conditions hold: \\
$(1)$ $t d(x,K) = s d(z,K)$ and $t d(y,K) = s d(w,K)$. \\
$(2)$ $\{\pi(x), \pi(y), \pi(z), \pi(w)\}$ is an orthogonal set where $\pi : K^{\lor} \to K^{\lor} / K$ is the canonical quotient map. \par
Then the normed space $E$ is of type $\mathrm{\Rnum{17}}_4$.
\end{thm}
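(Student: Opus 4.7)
My plan is to verify each defining property of type $\mathrm{\Rnum{17}}_4$ and rule out the remaining candidate types via Theorem \ref{thm19}. I first show $\dim_K E = 4$: if $a(1,0)+b(0,1)+c(x,z)+d(y,w)=0$ with $a,b,c,d \in K$, reducing modulo $K$ in the first coordinate gives $c\pi(x)+d\pi(y)=0$ in $K^{\lor}/K$, forcing $c=d=0$ by orthogonality of $\{\pi(x),\pi(y)\}$, hence $a=b=0$. Since $\{(1,0),(0,1)\}$ is orthogonal in $E$ by definition of the product norm, and $E$ embeds isometrically into the spherically complete $K^{\lor}$-Banach space $((K^{\lor})^2,|\cdot|_t \times |\cdot|_s)$ of $K^{\lor}$-dimension $2$, I obtain $\dim_{K^{\lor}} E^{\lor} = 2$. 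Type $\mathrm{\Rnum{16}}_4$ is ruled out via Theorem \ref{thm18}: since $\{\pi(x),\pi(y)\}$ is orthogonal, Theorem \ref{thmh4} gives $x \nsim y$, so by Theorem \ref{.4thm3}, $([1,x,y],|\cdot|_t)$ is of type $\mathrm{\Rnum{4}}_3$ (not $\mathrm{\Rnum{5}}_3$), violating condition (a) of Theorem \ref{thm18}(1).

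The central step is to show every $3$-dimensional subspace $G$ of $E$ is of type $\mathrm{\Rnum{3}}_3$. I first claim $\dim_K(K^{\lor} v \cap E) \leq 2$ for every nonzero $v \in (K^{\lor})^2$. Parameterizing $v = (1,q)$ (after scaling) and setting $\mu v \in E$ with $\mu = a+cx+dy$, the second-coordinate condition reduces to the single $K$-linear constraint $a\pi(q) + c\pi(xq-z) + d\pi(yq-w) = 0$ in $K^{\lor}/K$, and orthogonality of $\{\pi(x),\pi(y),\pi(z),\pi(w)\}$ together with $x,y,z,w\notin K$ forces the rank of these coefficients to be at least $1$, giving at most $2$-dimensional solutions; the cases $v = (0,1)$ and $v = (p,0)$ are handled directly by the same orthogonality. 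It follows that $\dim_{K^{\lor}} G^{\lor} = 2$, so $G$ is of type $\mathrm{\Rnum{2}}_3$ or $\mathrm{\Rnum{3}}_3$. To exclude $\mathrm{\Rnum{2}}_3$, I suppose $G = G_1 \oplus G_2$ orthogonally with $\dim_K G_2 = 2$ and $\dim_{K^{\lor}} G_2^{\lor}=1$; then $G_2 \subseteq K^{\lor} v$ with $\dim_K(K^{\lor} v \cap E) = 2$, so the rank above drops to $1$, forcing $q \in K^{\lor} \setminus K$ of the form $q = (z+k_2)/(x-k_1) = (w+k_4)/(y-k_3)$ for some $k_i \in K$. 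The required orthogonality in $E$ of an element $u \in G_1$ against a generating pair of $G_2$ can then be shown to contradict the pairing $t\,d(x,K)=s\,d(z,K)$, $t\,d(y,K) = s\,d(w,K)$ together with the orthogonality hypothesis.

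With all $3$-dimensional subspaces of type $\mathrm{\Rnum{3}}_3$, types $\mathrm{\Rnum{4}}_4$ and $\mathrm{\Rnum{5}}_4$ (which have $3$-d subspaces of $\dim_{K^{\lor}}=1$) and $\mathrm{\Rnum{6}}_4, \mathrm{\Rnum{7}}_4$ (which contain type $\mathrm{\Rnum{2}}_3$ $3$-d subspaces by Lemma \ref{lem6}) are excluded, as are $\mathrm{\Rnum{14}}_4$ (all $3$-d of type $\mathrm{\Rnum{2}}_3$) and $\mathrm{\Rnum{15}}_4$ (mixed). Combined with the exclusion of $\mathrm{\Rnum{16}}_4$ and $\dim_{K^{\lor}} E^{\lor}=2$, Theorem \ref{thm19} forces $E$ to be of type $\mathrm{\Rnum{17}}_4$. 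The main obstacle is the last clause of the second paragraph: the algebraic relation forced by rank $1$ is delicate, and deriving the contradiction requires careful use of the multiplicative structure of $K^{\lor}$ implicit in $xq-z$ and $yq-w$ together with the $K$-linear orthogonality in $K^{\lor}/K$ and the pairing condition.
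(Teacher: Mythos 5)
Your overall strategy --- verify directly that $\dim_{K^{\lor}}E^{\lor}=2$ and that every $3$-dimensional subspace is of type $\mathrm{\Rnum{3}}_3$, then invoke the classification of Theorem \ref{thm19} --- is genuinely different from the paper's, and the final reduction (given those facts, only type $\mathrm{\Rnum{17}}_4$ survives) is sound. But the central step is not actually proved, and one inference feeding into it is invalid as stated. First, the passage from ``$\dim_K(K^{\lor}v\cap E)\le 2$ for every nonzero $v$'' to ``$\dim_{K^{\lor}}G^{\lor}=2$ for every $3$-dimensional subspace $G$'' does not go through: a subspace whose maximal orthogonal subsets are singletons need not be contained in a $K^{\lor}$-line. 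For instance, $[(1,0),(x,\epsilon)]\subseteq((K^{\lor})^2,|\cdot|_{t}\times|\cdot|_{s})$ with $\epsilon\neq 0$ and $|\epsilon|s\le d(x,K)t$ is isometrically isomorphic to $([1,x],|\cdot|_t)$, hence has a one-element maximal orthogonal subset, yet it meets every $K^{\lor}$-line in $K$-dimension at most one. So bounding intersections with lines does not by itself exclude $3$-dimensional subspaces of type $\mathrm{\Rnum{4}}_3$ or $\mathrm{\Rnum{5}}_3$; an argument specific to $E$ is still required. Second, and more seriously, the exclusion of type $\mathrm{\Rnum{2}}_3$ subspaces --- the step you yourself flag as ``delicate'' --- is precisely the hard content of the theorem, and you give no argument beyond asserting that a contradiction ``can then be shown.'' As written this is a placeholder, not a proof.

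The paper sidesteps this computation entirely. By Theorem \ref{.4thm2} together with conditions $(1)$ and $(2)$ (and Theorem \ref{thmh4}), the visible subspaces $[(1,0),(0,1),(x,z)]$ and $[(1,0),(0,1),(y,w)]$ are of type $\mathrm{\Rnum{3}}_3$, so $E$ can only be of type $\mathrm{\Rnum{6}}_4$, $\mathrm{\Rnum{15}}_4$, $\mathrm{\Rnum{16}}_4$ or $\mathrm{\Rnum{17}}_4$. Each of the first three is then eliminated by a counting argument: the structure theory of those types (Lemma \ref{lem6} for $\mathrm{\Rnum{6}}_4$, Remark \ref{rem3} with Theorem \ref{thm20} for $\mathrm{\Rnum{15}}_4$, Theorem \ref{thm18} for $\mathrm{\Rnum{16}}_4$) forces every hole occurring in $E$ to be equivalent to an element of the $K$-span of at most three elements of $K^{\lor}$, which contradicts $\dim_{K^{\lor}}[\pi(x),\pi(y),\pi(z),\pi(w)]^{\lor}=4$. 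If you wish to keep your direct approach, you must supply complete arguments for both missing steps; otherwise the counting argument against the three competing types is the intended route.
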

\begin{proof}
  By assumption $E$ contains two $3$-dimensional subspaces of $\mathrm{\Rnum{3}}_3$
  \begin{align*}
    [(1,0),(0,1),(x,z)] \ \text{and} \ [(1,0),(0,1),(y,w)].
  \end{align*}
  Therefore, since $\dim_{K^{\lor}} E^{\lor} = 2$, $E$ is of type $\mathrm{\Rnum{6}}_4$, $\mathrm{\Rnum{15}}_4$, $\mathrm{\Rnum{16}}_4$ or $\mathrm{\Rnum{17}}_4$. For instance, we will prove that $E$ is not of type $\mathrm{\Rnum{15}}_4$. If $E$ is of type $\mathrm{\Rnum{15}}_4$, then there exist $x', y', z' \in K^{\lor} \setminus K$ and $t', s' \in \mathbb{R}_{> 0}$ for which
  \begin{align*}
    E \cong ([(1,0),(0,1),(x',0),(y',z')],|\cdot|_{t'} \times |\cdot|_{s'}).
  \end{align*}
  Then by Remark \ref{rem3} and Theorem \ref{thm20}, there exist $x_1, y_1, z_1, w_1 \in [1, x', y', z']$ such that
  \begin{align*}
    x \sim x_1, y \sim y_1, z \sim z_1 \ \text{and} \ w \sim w_1.
  \end{align*}
  On the other hand, by assumption, we have $\dim_{K^{\lor}} [\pi(x), \pi(y), \pi(z), \pi(w)]^{\lor} = 4$. Therefore, we obtain $\dim_{K^{\lor}} [\pi(x'), \pi(y'), \pi(z')]^{\lor} \ge 4$, which is a contradiction. Hence, $E$ is not of type $\mathrm{\Rnum{15}}_4$. For the same reason as above, using Lemma \ref{lem6} and Theorem \ref{thm18}, $E$ is neither of type $\mathrm{\Rnum{6}}_4$ nor $\mathrm{\Rnum{16}}_4$. Thus, $E$ must be of type $\mathrm{\Rnum{17}}_4$.
\end{proof}

Now, by Theorem \ref{systhm5}, we have the following.

\begin{thm} \label{thm21}
  Suppose that $K$ is separable as a metric space. Then there exists a $4$-dimensional normed space of type $\mathrm{\Rnum{17}}_4$.
\end{thm}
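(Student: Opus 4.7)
The plan is to directly combine the preceding theorem with Theorem \ref{systhm5}. Since Theorem \ref{systhm5} is stated for the separable case (which is our hypothesis), I would apply it with $n = 4$ and, say, $r = 1$ to produce elements $x, y, z, w \in K^{\lor} \setminus K$ with $d(x,K) = d(y,K) = d(z,K) = d(w,K) = 1$ and such that $\{\pi(x), \pi(y), \pi(z), \pi(w)\}$ is an orthogonal subset of $K^{\lor}/K$, where $\pi : K^{\lor} \to K^{\lor}/K$ is the canonical quotient map.

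Next, I would set $t = s = 1$ and consider the normed space
\begin{align*}
  (E, \|\cdot\|) := ([(1,0),(0,1),(x,z),(y,w)], |\cdot|_t \times |\cdot|_s) \subseteq ((K^{\lor})^2, |\cdot|_t \times |\cdot|_s).
\end{align*}
Before invoking the preceding theorem, I need to check that $\dim_K E = 4$; this follows because the $K$-linear independence of $\{1, x, y\}$ (a consequence of $\pi(x)$ and $\pi(y)$ being part of an orthogonal set, hence linearly independent over $K$ modulo $K$) implies that any $K$-linear relation among $(1,0), (0,1), (x,z), (y,w)$ is trivial. The conditions $t d(x,K) = s d(z,K)$ and $t d(y,K) = s d(w,K)$ hold trivially from the uniform choice of distances and the equality $t = s$, and the orthogonality of $\{\pi(x), \pi(y), \pi(z), \pi(w)\}$ is built into the construction.

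Therefore, the hypotheses of the preceding theorem are satisfied, and that theorem concludes immediately that $E$ is of type $\mathrm{\Rnum{17}}_4$, which is what we wanted to produce. Since all the heavy lifting is done by Theorem \ref{systhm5} and the preceding existence criterion, there is essentially no obstacle; the only thing requiring a brief verification is the 4-dimensionality of $E$, which is routine.
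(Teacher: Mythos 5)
Your proposal is correct and follows exactly the paper's route: the paper likewise obtains Theorem \ref{thm21} by feeding the output of Theorem \ref{systhm5} (four elements with orthogonal images in $K^{\lor}/K$ and equal distances to $K$) into the sufficient condition of the preceding theorem. Your additional check of $\dim_K E = 4$ is a reasonable (if routine) point of care that the paper leaves implicit.
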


\section{Appendix} \label{appe}

In this appendix, we summarize the types introduced in this paper. By Proposition \ref{seprop1}, it is natural to ask the question: 
\begin{center}
  Does a normed space $(E, \|\cdot\|)$ with $\|E\| \cap V_K = \emptyset$ satisfy (SE)?
\end{center}
In the following tables, "$\|E\| \cap V_K = \emptyset \Rightarrow \mathrm{(SE)}$" means that given one type, each normed space $E$ of that type with $\|E\| \cap V_K = \emptyset$ satisfies (SE). \par
First, we summarize the data on $\dim_{K^{\lor}} E^{\lor}$, the decomposability, the dual spaces and the condition (SE).

\begin{table}[htbp]
    \centering
    \caption{Types of $3$-dimensional normed spaces E}
    \begin{tabular}{|c||c|c|c|c|}
        \hline
        & $\dim_{K^{\lor}} E^{\lor}$ & Decomposability & Dual space &   $\|E\| \cap V_K = \emptyset \Rightarrow \mathrm{(SE)}$ \\ \hline \hline
        $\mathrm{\Rnum{1}}_3$ & 3 & decomposable & $\mathrm{\Rnum{1}}_3$ & true \\ \hline
        $\mathrm{\Rnum{2}}_3$ & 2 & decomposable & $\mathrm{\Rnum{2}}_3$ & true \\ \hline
        $\mathrm{\Rnum{3}}_3$ & 2 & indecomposable & $\mathrm{\Rnum{4}}_3$ & true \\ \hline
        $\mathrm{\Rnum{4}}_3$ & 1 & indecomposable & $\mathrm{\Rnum{3}}_3$ & true \\ \hline
        $\mathrm{\Rnum{5}}_3$ & 1 & indecomposable & $\mathrm{\Rnum{5}}_3$ & false \\ \hline
    \end{tabular}
\end{table}

\begin{table}[htbp]
    \centering
    \caption{Types of $4$-dimensional normed spaces E}
    \begin{tabular}{|c||c|c|c|c|}
        \hline
        & $\dim_{K^{\lor}} E^{\lor}$ & Decomposability & Dual space &   $\|E\| \cap V_K = \emptyset \Rightarrow \mathrm{(SE)}$ \\ \hline \hline
        $\mathrm{\Rnum{1}}_4$ & 4 & decomposable & $\mathrm{\Rnum{1}}_4$ & true \\ \hline
        $\mathrm{\Rnum{2}}_4$ & 3 & decomposable & $\mathrm{\Rnum{2}}_4$ & true \\ \hline
        $\mathrm{\Rnum{3}}_4$ & 3 & decomposable & $\mathrm{\Rnum{4}}_4$ & true \\ \hline
        $\mathrm{\Rnum{4}}_4$ & 2 & decomposable & $\mathrm{\Rnum{3}}_4$ & true \\ \hline
        $\mathrm{\Rnum{5}}_4$ & 2 & decomposable & $\mathrm{\Rnum{5}}_4$ & false \\ \hline
        $\mathrm{\Rnum{6}}_4$ & 2 & decomposable & $\mathrm{\Rnum{6}}_4$ & true \\ \hline
        $\mathrm{\Rnum{7}}_4$ & 2 & decomposable & $\mathrm{\Rnum{7}}_4$ & true \\ \hline
        $\mathrm{\Rnum{8}}_4$ & 1 & indecomposable & $\mathrm{\Rnum{13}}_4$ & true \\ \hline 
        $\mathrm{\Rnum{9}}_4$ & 1 & indecomposable & $\mathrm{\Rnum{14}}_4$ & false \\ \hline
        $\mathrm{\Rnum{10}}_4$ & 1 & indecomposable & $\mathrm{\Rnum{10}}_4$ & false \\ \hline
        $\mathrm{\Rnum{11}}_4$ & 1 & indecomposable & $\mathrm{\Rnum{16}}_4$ & false \\ \hline
        $\mathrm{\Rnum{12}}_4$ & 1 & indecomposable & $\mathrm{\Rnum{12}}_4$ & false \\ \hline
        $\mathrm{\Rnum{13}}_4$ & 3 & indecomposable & $\mathrm{\Rnum{8}}_4$ & true \\ \hline
        $\mathrm{\Rnum{14}}_4$ & 2 & indecomposable & $\mathrm{\Rnum{9}}_4$ & false \\ \hline
        $\mathrm{\Rnum{15}}_4$ & 2 & indecomposable & $\mathrm{\Rnum{15}}_4$ & true \\ \hline
        $\mathrm{\Rnum{16}}_4$ & 2 & indecomposable & $\mathrm{\Rnum{11}}_4$ & false \\ \hline
        $\mathrm{\Rnum{17}}_4$ & 2 & indecomposable & $\mathrm{\Rnum{17}}_4$ & true \\ \hline
    \end{tabular}
\end{table}

Next, we summarize the characteristic properties of $4$-dimensional normed spaces with respect to their types.

\begin{table}[htbp]
    \centering
    \caption{Decomposable normed spaces}
    \begin{tabular}{|c||l|c|}
        \hline
        & Structure & $3$-dimensional subspaces \\ \hline \hline
        $\mathrm{\Rnum{1}}_4$ & $K \oplus \mathrm{\Rnum{1}}_3$ & $\mathrm{\Rnum{1}}_3$ \\ \hline
        $\mathrm{\Rnum{2}}_4$ & $K \oplus \mathrm{\Rnum{2}}_3$ & $\mathrm{\Rnum{1}}_3$, $\mathrm{\Rnum{2}}_3$ \\ \hline
        $\mathrm{\Rnum{3}}_4$ & $K \oplus \mathrm{\Rnum{3}}_3$ & $\mathrm{\Rnum{1}}_3$, $\mathrm{\Rnum{3}}_3$ \\ \hline
        $\mathrm{\Rnum{4}}_4$ & $K \oplus \mathrm{\Rnum{4}}_3$ & $\mathrm{\Rnum{2}}_3$, $\mathrm{\Rnum{4}}_3$ \\ \hline
        $\mathrm{\Rnum{5}}_4$ & $K \oplus \mathrm{\Rnum{5}}_3$ & $\mathrm{\Rnum{2}}_3$, $\mathrm{\Rnum{5}}_3$ \\ \hline
        $\mathrm{\Rnum{6}}_4$ & $\mathrm{\Rnum{2}}_3 \oplus \mathrm{\Rnum{2}}_3$ & $\mathrm{\Rnum{2}}_3$, $\mathrm{\Rnum{3}}_3$ \\ \hline
        $\mathrm{\Rnum{7}}_4$ & $\mathrm{\Rnum{2}}_3 \oplus \mathrm{\Rnum{2}}_3$ & $\mathrm{\Rnum{2}}_3$ \\ \hline
    \end{tabular}
\end{table}

\begin{table}[htbp]
    \centering
    \caption{$\dim_{K^{\lor}} E^{\lor} = 1$}
    \begin{tabular}{|c||c|}
        \hline
        & $E / [u]$ $(u \neq 0)$  \\ \hline \hline
        $\mathrm{\Rnum{8}}_4$ & $\mathrm{\Rnum{1}}_3$ \\ \hline
        $\mathrm{\Rnum{9}}_4$ & $\mathrm{\Rnum{2}}_3$ \\ \hline
        $\mathrm{\Rnum{10}}_4$ & $\mathrm{\Rnum{3}}_3$ \\ \hline
        $\mathrm{\Rnum{11}}_4$ & $\mathrm{\Rnum{4}}_3$ \\ \hline
        $\mathrm{\Rnum{12}}_4$ & $\mathrm{\Rnum{5}}_3$ \\ \hline
    \end{tabular}
\end{table}

\begin{table}[htbp]
    \centering
    \caption{Indecomposable normed spaces $E$ with $\dim_{K^{\lor}} E^{\lor} = 2$}
    \begin{tabular}{|c||c|c|}
        \hline
        & $3$-dimensional subspaces &  \\ \hline \hline
        $\mathrm{\Rnum{14}}_4$ & $\mathrm{\Rnum{2}}_3$ & \\ \hline
        $\mathrm{\Rnum{15}}_4$ & $\mathrm{\Rnum{2}}_3$, $\mathrm{\Rnum{3}}_3$ & \\ \hline
        $\mathrm{\Rnum{16}}_4$ & $\mathrm{\Rnum{3}}_3$ & $\dim_{K^{\lor}} (E / F)^{\lor} = 1$ for all $F \subseteq E$ with $\dim_K F = 2$ \\ \hline
        $\mathrm{\Rnum{17}}_4$ & $\mathrm{\Rnum{3}}_3$ & $\dim_{K^{\lor}} (E / F)^{\lor} = 2$ for all $F \subseteq E$ with $\dim_K F = 2$ \\ \hline
    \end{tabular}
\end{table}

\section{Acknowledgment}

This work was supported by JST SPRING, Grant Number JPMJSP2114.


\begin{thebibliography}{99}

  \bibitem{gru} L. Gruson, M. van der Put, Banach spaces, in Table Ronde Anal. non archim.,
Bull. Soc. Math. France, M\'{e}moire 39-40, (1974) 55--100.

  \bibitem{fin} A. Kubzdela, On finite-dimensional normed spaces over $\mathbb{C}_p$, Contemp. Math. 384 (2005) 169--185.

  \bibitem{ort} A. Kubzdela, On orthocomplemented subspaces in $p$-adic Banach spaces, Indag. Math., New Ser. 16, No. 2, (2005) 225--235.

  \bibitem{note} A. C. M. van Rooij, Notes on $p$-adic Banach spaces, I-V: Report 7633, 1--62, Mathematisch Instituut, Katholieke Universiteit, Nijmegen, The Netherlands (1976).

  \bibitem{van} A. C. M. van Rooij, \textit{Non-Archimedean functional analysis}, Monographs and Textbooks in
Pure and Applied Math., vol. 51, Marcel Dekker, Inc., New York (1978).

  \bibitem{open} A.C.M. van Rooij, W.H. Schikhof, Open problems, in: J.M. Bayod, N. De Grande-De Kimpe, J. Martinez-Maurica
(Eds.), $p$-adic Functional Analysis, in: Lecture Notes in Pure and Applied Mathematics, vol. 137, Marcel Dekker,
New York, (1992) 209–219.

 \bibitem{equ} W.H. Schikhof, The equalization of $p$-adic Banach spaces and compactoids. Report 9435,
Department of Mathematics, University of Nijmegen (1994).
\end{thebibliography}
\end{document}